\newcounter{contatore}
\newenvironment{alphalist}{\begin{list}{(\alph{contatore})}{\usecounter{contatore}}}{\end{list}}
\newenvironment{proof}[1][Proof]{\noindent{\textbf{#1.}} }{\ \rule{0.5em}{0.5em}\vspace*{+2mm}}
\DeclareMathAlphabet{\mathpzc}{OT1}{pzc}{m}{it}
\title{Effective $\lambda$-models versus recursively enumerable $\lambda$-theories}
\newtheorem{thm}{Theorem}
\author{Chantal Berline\address[CNRS]{CNRS, Laboratoire PPS\\ 
        Universit\'{e} Paris 7\\ 
        2, place Jussieu (case 7014)\\ 
        75251 Paris Cedex 05, France},
        Giulio Manzonetto\textsuperscript{a}
        and
        Antonino Salibra\address[VENICE]{Universit\`{a} Ca'Foscari di Venezia\\ 
        Dipartimento di Informatica\\ 
        Via Torino 155,\\ 
        30172 Venezia, Italy}\\
        chantal.berline@pps.jussieu.fr, \{gmanzone,salibra\}@dsi.unive.it%
}
\begin{document}
% Macro di Giulio

\newcommand{\lm}[1]{\mathscr{#1}} %lambda model
\newcommand{\gm}[1]{\mathscr{#1}} %graph model
\newcommand{\ca}[1]{\mathscr{#1}} %combinatory algebra
\newcommand{\setG}{G}
\renewcommand{\i}[1]{i_{#1}}
\newcommand{\class}[1]{\mathfrak{#1}}

%lambda calculus:
\newcommand{\Tins}[1]{#1^{\cT-ins}}
\newcommand{\Zero}{\textrm{Zero}}
\newcommand{\Pred}{\textrm{Pred}}
\newcommand{\Succ}{\textrm{Succ}}
\newcommand{\BT}{\mathcal{BT}}
\newcommand{\std}[1]{\underline{#1}}
\newcommand{\cod}[1]{\ulcorner #1 \urcorner}
\newcommand{\BTle}{\sqsubseteq_\cB}
\newcommand{\BTeq}{=}
\newcommand{\BTth}{\mathcal{B_T}}
\newcommand{\BTsup}{\bigsqcup}
\newcommand{\Unsolvable}{\mathcal{U}}

%Visser:
\newcommand{\Visser}[1]{\mathscr{V}_{#1}}
\newcommand{\Intunder}[1]{\Lambda^o_{#1}}

%Effectivity
\newcommand{\codepair}[2]{\langle #1,#2\rangle}% code of a pair
\newcommand{\codepairmix}[2]{\hspace{+2pt}\ll\hspace{-2pt} #1,#2 \hspace{-2.5pt}\gg}% code of a pair N^* X N
\newcommand{\RE}{{\mathcal R}{\mathcal E}}% r.e. sets
\newcommand{\CORE}{{\mathcal R}{\mathcal E}^{co}}% co-r.e.
\newcommand{\Kleeneq}{\simeq}% Kleene equivalence
\newcommand{\ecomp}{\zeta}
\newcommand{\Oinv}[1]{#1^{-}}

%Functions
\newcommand{\restr}{\hspace{-4pt}\upharpoonright}% restriction
\newcommand{\partarrow}{\rightharpoonup}
\newcommand{\partinv}[1]{#1^{-1}}
\newcommand{\img}[1]{#1^{+}}
\newcommand{\inv}[1]{#1^{-}}
\newcommand{\dom}{\mathrm{dom}}
\newcommand{\rg}{\mathrm{rg}}
\renewcommand{\graph}{\mathrm{graph}}

%Sets:
\newcommand{\finsetenc}{\#_*}
\newcommand{\finsubset}{\subseteq_{\mathrm{f}}}
\renewcommand{\complement}[1]{#1^c}
\newcommand{\nat}{\mathbb{N}}
\renewcommand{\setminus}{-}
\renewcommand{\sup}[1]{\bigsqcup #1}
\newcommand{\cardinality}[1]{card(#1)}% cardinality of a set 
\newcommand{\Kdn}[1]{\widehat{#1}}% restriction
\newcommand{\st}{:}
\newcommand{\pow}[1]{\mathscr{P}(#1)}
\newcommand{\clup}[1]{#1\hspace{-3pt}\uparrow}
\newcommand{\cldn}[1]{#1\hspace{-3pt}\downarrow}

%Models:
\newcommand{\App}{Ap}
\newcommand{\Abs}{\lambda}
\newcommand{\Int}[1]{\left|  #1\right|}
\newcommand{\D}{{\mathcal D}}
\newcommand{\compel}[1]{{\bf {\cal K}}(#1)}
\newcommand{\Prime}[1]{\cP(#1)}
\newcommand{\E}{{\mathcal E}}
\newcommand{\emptyrho}{\rho_\bot}% empty environment
\newcommand{\step}[2]{\varepsilon_{#1,#2}}
\newcommand{\intrho}[1]{\Int{#1}_\rho}
\newcommand{\Calgebra}{\cC}

%Graph models and graph theories:
\newcommand{\Gmin}{\gm{G}_{min}}
\newcommand{\Tmin}{\cT^{min}}
\newcommand{\Omin}{\cT^{min}_{\sqle}}
\newcommand{\Th}[1]{Th(#1)}
\newcommand{\Thle}[1]{Th_{\sqle}(#1)}

%Partial pairs
\renewcommand{\j}[1]{j_{#1}}
\newcommand{\subpair}{\sqle}
\newcommand{\suppair}{\sqsupseteq}
\newcommand{\gmcompl}[1]{\gm{G}_{#1}}
\newcommand{\ppcompl}[1]{{\overline{#1}}}
\newcommand{\funcompl}[1]{{i_{\bar{#1}}}}
\newcommand{\setcompl}[1]{{\overline{#1}}}

%Retracts
\newcommand{\retract}{\vartriangleleft}

%Effective models:
\newcommand{\EffGmin}{\cG_{min}^{eff}}
\newcommand{\Intcomp}[1]{\left|  #1\right|^{comp}}
\newcommand{\domre}[1]{#1^{r.e.}}
\newcommand{\cocore}[1]{\mathscr{P}^{core}(\domre{#1})}
\newcommand{\domdec}[1]{#1^{dec}}

%General:
\newcommand{\vphi}{\varphi}
\newcommand{\W}{{\cal W}}
\renewcommand{\bold}[1]{{\bf #1}}
\newcommand{\imp}{\Rightarrow}
\newcommand{\sqle}{\sqsubseteq}

%Topology:
\newcommand{\Base}{\mathcal{O}}
\newcommand{\cl}[1]{\overline{#1}}

% Macro di Nino
\newtheorem{conjecture}{Conjecture}
\newtheorem{problem}{Problem}

\newtheorem{theorem}{Theorem}[section]
\newtheorem{notation}[theorem]{Notation}
\newtheorem{corollary}[theorem]{Corollary}
\newtheorem{proposition}[theorem]{Proposition}
\newtheorem{lemma}[theorem]{Lemma}
\newtheorem{convention}[theorem]{Convention}
\newtheorem{definition}[theorem]{Definition}
\newtheorem{remark}[theorem]{Remark}
\newtheorem{claim}[theorem]{Claim}
\newtheorem{subclaim}[theorem]{Subclaim}
\newtheorem{example}[theorem]{Example}

\newcommand{\ot}{\overline{t}}
\newcommand{\os}{\overline{s}}
\newcommand{\sskV}{{\bf k}\sp {\bf V}}
\newcommand{\sssV}{{\bf s}\sp{\bf V}}
\newcommand{\glV}{{\gl^{\bf V}}}
\newcommand{\cdotV}{\cdot^{\bf V}}
\newcommand{\cterm}{combinatory term}
\newcommand{\bZd}{\mbox{\bf Zd\,}}
\newcommand{\Cr}{\mbox{\rm Cr\,}}
\newcommand{\ConA}{{\bf C}{\rm on}\hspace{.01in}{\bf A}}
\newcommand{\ConB}{{\bf C}{\rm on}\hspace{.01in}{\bf B}}
\newcommand{\ConAB}{{\bf C}{\rm on}\hspace{.01in}({\bf A}$\times${\bf B})}
\newcommand{\lvar}{$\gl$-variable}
\newcommand{\bI}{{\bf I }}
\newcommand{\TolA}{{\bf T}{\rm ol}\hspace{.01in}{\bf A}}
\newcommand{\FRA}{FR\hspace{.01in}{\bf A}}
\newcommand{\bnew}{\marginpar{\mbox{$\top$}}}
\newcommand{\enew}{\marginpar{\mbox{$\bot$}}}
\newcommand{\snew}{\marginpar{\mbox{$\leftarrow$}}}
\newcommand{\fnew}{\marginpar{\mbox{$\downarrow$}}}
\newcommand{\glxA}{\gl x^{\bA}}
\newcommand{\Zd}{\mbox{\rm Zd\,}}
% theorem-like commands
%

%\newtheorem{claim}{Claim}[section]
%\newtheorem{theorem}{Theorem}[section]
%\newtheorem{corollary}{Corollary}[section]
%\newtheorem{definition}{Definition}
%\newtheorem{definition}{Definition}[section]
%\newtheorem{proposition}{Proposition}[section]
%\newtheorem{lemma}{Lemma}[section]
%\newtheorem{example}{Example}[section]

\newtheorem{claimsub}{Claim}[subsection]
\newtheorem{theoremsub}{Theorem}[subsection]
\newtheorem{corollarysub}{Corollary}[subsection]
\newtheorem{definitionsub}{Definition}[subsection]
\newtheorem{propositionsub}{Proposition}[subsection]
\newtheorem{lemmasub}{Lemma}[subsection]
\newtheorem{examplesub}{Example}[subsection]

\newtheorem{claimsubsub}{Claim}[subsubsection]
\newtheorem{theoremsubsub}{Theorem}[subsubsection]
\newtheorem{corollarysubsub}{Corollary}[subsubsection]
\newtheorem{definitionsubsub}{Definition}[subsubsection]
\newtheorem{propositionsubsub}{Proposition}[subsubsection]
\newtheorem{lemmasubsub}{Lemma}[subsubsection]
\newtheorem{examplesubsub}{Example}[subsubsection]

\newcommand{\Over}[1]{\raisebox{-1.6mm}{{\scriptsize #1}}}
\newcommand{\SubOver}[1]{\raisebox{-0.6mm}{{\tiny #1}}}

\newcommand{\FOL}{\mbox{$L_{\omega,\omega}$}}

\newcommand{\Or}{\mbox{$\vee$}}
\newcommand{\Neg}{\mbox{$\neg$}}
\newcommand{\noteq}{\mbox{$\neq$}}
\newcommand{\fiproof}{\mbox{$(\Rightarrow)$}}
\newcommand{\ifproof}{\mbox{$(\Leftarrow)$}}
\newcommand{\of}{\mbox{$\in$}}
\newcommand{\notof}{\mbox{$\not\in$}}
\newcommand{\union}{\mbox{$\cup$}}
\newcommand{\intersection}{\mbox{$\cap$}}
\newcommand{\except}{\mbox{$-$}}
\newcommand{\cardof}[1]{\mbox{$|$}#1\mbox{$|$}}
\newcommand{\forevery}{\mbox{$\forall$}}
\newcommand{\therexists}{\mbox{$\exists$}}
\newcommand{\within}{\mbox{$\subseteq$}}
\newcommand{\includes}{\mbox{$\supseteq$}}
\newcommand{\product}{\mbox{$\times$}}
\newcommand{\Power}{\mbox{$\wp$}}
\newcommand{\Powernonempty}{\Power\mbox{$_{+}$}}
\newcommand{\yields}{\mbox{$\mapsto$}}
\newcommand{\comp}{\mbox{$\circ$}}
\newcommand{\emptys}{\mbox{$\emptyset$}}
\newcommand{\emptypres}{\emptys}
\newcommand{\nonabove}{\mbox{$\preceq$}}
\newcommand{\fullynonabove}{\mbox{$\sqsubseteq$}}
\newcommand{\classicallynonabove}{\mbox{$\leq$}}
\newcommand{\carrierof}[1]{\mbox{$|$}#1\mbox{$|$}}
\newcommand{\setof}[1]{\mbox{$\{$}#1\mbox{$\}$}}
\newcommand{\suchthat}{\mbox{$|$}}
\newcommand{\unionall}[3]{\mbox{$\bigcup_{#1\in#2}$}#3}
\newcommand{\widunionall}[3]{\parbox[t]{#3}
                         {\centering\mbox{$\bigcup$}
                          \\ \protect\vspace*{-1.5mm}
                          {\scriptsize #1\of#2}}}
\newcommand{\intsnall}[3]{\mbox{$\bigcap_{#1\in#2}$}#3}
\newcommand{\widintsnall}[3]{\parbox[t]{#3}
                         {\centering\mbox{$\bigcap$}
                          \\ \protect\vspace*{-1.5mm}
                          {\scriptsize #1\of#2}}}
\newcommand{\ifamily}[3]{\setof{#1\mbox{$_{#2}$}}\mbox{$_{#2\in#3}$}}
\newcommand{\indx}{{\it j}}
\newcommand{\anindx}{{\it k}}
\newcommand{\indxset}{{\it J}}
\newcommand{\iso}{\mbox{$\simeq$}}
\newcommand{\lweq}{\mbox{$\leq$}}
\newcommand{\greq}{\mbox{$\geq$}}
\newcommand{\grth}{\mbox{$>$}}
\newcommand{\eqdef}{\mbox{$\stackrel{\it def}{=}$}}
\newcommand{\inverseof}[1]{#1\mbox{$^{-1}$}}
\newcommand{\minverseof}[1]{#1^{-1}}

\newcommand{\seqof}[3]{#1\mbox{$_{1}$}#2\ldots#2#1\mbox{$_{#3}$}}

\newcommand{\seqSubOverof}[3]{#1\raisebox{-2.2mm}
                                     {{\tiny 1}}#2\ldots#2#1\raisebox{-2.2mm}
                                                                 {{\tiny #3}}}

\newcommand{\rpsreq}{(\dag)}
\newcommand{\epsreq}{(\ddag)}
\newcommand{\satinv}{(\S)}
\newcommand{\adequacy}{(\S\S)}
\newcommand{\fulladequacy}{(\S\S\S)}
\newcommand{\finitarity}{(\pounds)}

\newcommand{\upar}{\uparrow}
\newcommand{\Upar}{\Uparrow}
\newcommand{\doar}{\downarrow}
\newcommand{\Doar}{\Downarrow}

\newcommand{\riar}{\rightarrow}
\newcommand{\lear}{\leftarrow}
\newcommand{\Riar}{\Rightarrow}
\newcommand{\Lear}{\Leftarrow}
\newcommand{\leriar}{\leftrightarrow}
\newcommand{\Leriar}{\Leftrightarrow}

\newcommand{\uC}{\underline {\bf C}}
\newcommand{\uD}{\underline {\bf D}}
\newcommand{\uE}{\underline E}
\newcommand{\ugS}{\underline\Sigma}
\newcommand{\uunion}{\underline\cup}
\newcommand{\ubigunion}{\underline\bigcup}
\newcommand{\unE}{\underline E}

\newcommand{\oE}{{\overline E}}
\newcommand{\oP}{{\overline P}}
\newcommand{\oV}{{\overline V}}
\newcommand{\oW}{{\overline W}}
\newcommand{\oU}{{\overline U}}
\newcommand{\oD}{{\overline D}}

\newcommand{\cA}{{\cal A}}
\newcommand{\cB}{{\cal B}}
\newcommand{\cC}{{\cal C}}
\newcommand{\cE}{{\cal E}}
\newcommand{\cF}{{\cal F}}
\newcommand{\cG}{{\cal G}}
\newcommand{\cH}{{\cal H}}
\newcommand{\cW}{{\cal W}}
\newcommand{\cJ}{{\cal J}}
\newcommand{\cL}{{\cal L}}
\newcommand{\cM}{{\cal M}}
\newcommand{\cN}{{\cal N}}
\newcommand{\cO}{{\cal O}}
\newcommand{\cP}{{\cal P}}
\newcommand{\cQ}{{\cal Q}}
\newcommand{\cR}{{\cal R}}
\newcommand{\cS}{{\cal S}}
\newcommand{\cT}{{\cal T}}
\newcommand{\cU}{{\cal U}}
\newcommand{\cV}{{\cal V}}

\newcommand{\la}{\langle}
\newcommand{\ra}{\rangle}

\newcommand{\sm}{\leadsto}
\newcommand{\sopra}[2]{\stackrel{#1}{#2}}

%definitions for \bf letters
\newcommand{\bb}{{\bf b}}
\newcommand{\bA}{{\bf A}}
\newcommand{\bB}{{\bf B}}
\newcommand{\bC}{{\bf C}}
\newcommand{\bD}{{\bf D}}
\newcommand{\bE}{{\bf E}}
\newcommand{\bF}{{\bf F}}
\newcommand{\bK}{{\bf K}}
\newcommand{\bM}{{\bf M}}
\newcommand{\bN}{{\bf N}}
\newcommand{\bP}{{\bf P}}
\newcommand{\bR}{{\bf R}}
\newcommand{\bS}{{\bf S}}
\newcommand{\bT}{{\bf T}}
\newcommand{\bU}{{\bf U}}
\newcommand{\bX}{{\bf X}}
\newcommand{\bRel}{{\bf Rel}}
\newcommand{\bx}{{\bf x }}

\newcommand{\bCX}{{{\bf C}[X]}}

%definitions for lower case greek letters
\newcommand{\ga}{\alpha}
\newcommand{\gb}{\beta}
\newcommand{\gam}{\gamma}
\newcommand{\gd}{\delta}
\newcommand{\gep}{\varepsilon}
\newcommand{\gz}{\zeta}
\newcommand{\geta}{\eta}
\newcommand{\gth}{\vartheta}
\newcommand{\gi}{\iota}
\newcommand{\gv}{\nu}
\newcommand{\gk}{\kappa}
\newcommand{\gl}{\lambda}
\newcommand{\gn}{\nu}
\newcommand{\gx}{\xi}
\newcommand{\gp}{\pi}
\newcommand{\gr}{\rho}
\newcommand{\gs}{\sigma}
\newcommand{\gt}{\tau}
\newcommand{\gu}{\upsilon}
\newcommand{\gph}{\varphi}
\newcommand{\gch}{\chi}
\newcommand{\gps}{\psi}
\newcommand{\go}{\omega}

\newcommand{\gG}{\Gamma}
\newcommand{\gF}{\Phi}
\newcommand{\gD}{\Delta}
\newcommand{\gT}{\Theta}
\newcommand{\gP}{\Pi}
\newcommand{\gX}{\Xi}
\newcommand{\gS}{\Sigma}
\newcommand{\gO}{\Omega}
\newcommand{\gL}{\Lambda}
\newcommand\LT{\mbox{\sf LT}}
\newcommand\sfB{\mbox{\sf B}}
\newcommand\DCAI{\mbox{\sf DCA}_I}
\newcommand\RFA{\mbox{\sf RFA}}
\newcommand\RFAI{\mbox{\sf RFA}_I}
\newcommand\RFAJ{\mbox{\sf RFA}_J}
\newcommand\LAA{\mbox{\sf LAA}}
\newcommand\LAAI{\mbox{\sf LAA}_I}
\newcommand\WLAI{\mbox{\sf LAA}_I^w}
\newcommand\SWLAI{\mbox{\sf LAA}_I^{sw}}
\newcommand\LIEI{\mbox{\sf LIE}_I}
\newcommand\CLAI{\mbox{\sf LAA}_I^c}
\newcommand\CLFAI{\mbox{\sf LFA}_I^c}
\newcommand\SLAI{\mbox{\sf LAA}_I^s}
\newcommand\SSLAI{\mbox{\sf LAA}_I^{cs}}
\newcommand\OLAAI{\mbox{\sf OLAA}_I}
\newcommand\LAAIE{\mbox{\sf LAA}_I^\eta}
\newcommand\LAAJ{\mbox{\sf LAA}_J}
\newcommand\BA{\mbox{\sf BA}}
\newcommand\GSA{\mbox{\sf GSA}}
\newcommand\RGA{\mbox{\sf RGA}}
\newcommand\GA{\mbox{\sf GA}}
\newcommand\K{\mbox{\sf K}}
\newcommand\G{\mbox{\sf G}}

\newcommand\FLA{\mbox{\sf FLA}}
\newcommand\FLAI{\mbox{\sf FLA}_I}
\newcommand\FLAJ{\mbox{\sf FLA}_J}
\newcommand\LFA{\mbox{\sf LFA}}
\newcommand\LFAI{\mbox{\sf LFA}_I}
\newcommand\LFAJ{\mbox{\sf LFA}_J}
\newcommand\CA{\mbox{\sf CA}}
\newcommand\LA{\mbox{\sf LA}}
\newcommand\LM{\mbox{\sf LM}}
\newcommand{\CL}{\mbox{ \sf CL}}

\newcommand\ssk{{\bf k}}
\newcommand\sss{{\bf s}}
\newcommand\ssm{\mbox{\boldmath $m$}}
\newcommand\ssi{{\bf i}}
\newcommand\ssl{\mbox{\boldmath $l$}}
\newcommand\sso{\mbox{\boldmath $1$}}
\newcommand\sst{\mbox{\boldmath $2$}}
\newcommand\ssT{\mbox{\boldmath $T$}}
\newcommand\ssF{\mbox{\boldmath $F$}}
\newcommand\ccc{\mbox{\boldmath $c$}}

\newcommand\Con{\mbox{Con}}

\newcommand\sbA{{\mbox{\bf A}}}%
\newcommand\ssbA{{\mbox{\footnotesize\bf A}}}%
\newcommand\sbB{\mbox{\boldmath $B$}}%
\newcommand\sbC{{\mathbf C}}%
\newcommand\sbD{\mbox{\boldmath $D$}}%
\newcommand\sbE{\mbox{\boldmath $E$}}%
\newcommand\sbF{{\mbox{\bf F}}}%
\newcommand\sbG{\mbox{\boldmath $G$}}%
\newcommand\sbH{\mbox{\boldmath $H$}}%
\newcommand\sbI{\mbox{\boldmath $I$}}%
\newcommand\sbJ{\mbox{\boldmath $J$}}%
\newcommand\sbK{{\bold K}}%
\newcommand\sbL{{\bold L}}%
\newcommand\sbM{\mbox{\bf M}}%
\newcommand\sbN{\mbox{\bf N}}%
\newcommand\sbO{{\bold O}}%
\newcommand\sbP{{\bold P}}%
\newcommand\sbQ{{\bold Q}}%
\newcommand\sbR{{\bold R}}%
\newcommand\sbS{{\bold S}}%
\newcommand\sbT{{\bold T}}%
\newcommand\sbU{{\bold U}}%
\newcommand\sbV{{\bold V}}%
\newcommand\sbW{{\bold W}}%
\newcommand\sbX{{\bold X}}%
\newcommand\sbY{{\bold Y}}%
\newcommand\sbZ{{\bold Z}}%

\newcommand\oa{{\overline{a}}}
\newcommand\ob{{\overline{b}}}
\newcommand\oc{{\overline{c}}}
\newcommand\od{{\overline{d}}}
\newcommand\ox{{\overline{x}}}
\newcommand\oy{{\overline{y}}}
\newcommand\ou{{\overline{u}}}
\newcommand\ov{{\overline{v}}}
\newcommand\ow{{\overline{w}}}
\newcommand\oz{{\overline{z}}}
\newcommand\oQ{{\overline{Q}}}
\newcommand\oN{{\overline{N}}}
\newcommand\oM{{\overline{M}}}
\newcommand\oR{{\overline{R}}}
\newcommand\oX{{\overline{X}}}
\newcommand\ogx{{\overline{\gx}}}
\newcommand\oempty{{\overline{\emptyset}}}

\newcommand\twohead{{\twoheadrightarrow}}

\newcommand\up{{\uparrow}}
\newcommand\down{{\downarrow}}
\newcommand\updown{{\updownarrow}}

\newcommand{\bCI}{{\bC[I]}}
\newcommand{\bDI}{{\bD[I]}}

\newcommand\sseq{{\subseteq}}
\newcommand\arrow{{\to}}

% typeset front matter
\maketitle
%(Version of \today)$\qquad\qquad\qquad\qquad$

\begin{abstract}\bold{Abstract.} 
A longstanding open problem is whether there exists a non-syntactical model of the untyped $\lambda$-calculus whose theory is exactly the least 
$\lambda$-theory $\lambda_\beta$.
In this paper we investigate the more general question of whether the equational/order theory of a model of the untyped  $\lambda$-calculus can 
be recursively enumerable (r.e.\ for brevity).
We introduce a notion of \emph{effective model} of  $\lambda$-calculus, which covers in particular all the models
individually introduced  in the literature.
We prove that the order theory of an effective model is never r.e.; from this it follows that its equational theory cannot be $\lambda_\beta$, $\lambda_{\beta\eta}$.
We then show that no effective model living in the stable or strongly stable semantics has an r.e. equational theory.
Concerning Scott's semantics, we investigate the class of graph models and prove that  no order theory of a graph model can be r.e., and that
there exists an effective graph model whose  equational/order  theory is the minimum among the theories of graph models.
Finally, we show that the class of graph models enjoys a kind of downwards L\"owenheim-Skolem theorem.

\end{abstract}

\smallskip

{\bf key words}: $\lambda$-calculus, effective $\lambda$-models, effectively given domains, recursively enumerable $\lambda$-theories,
graph models, L\"owenheim-Skolem theorem.
\tableofcontents

\newpage
\section{Introduction}

\subsection{Lambda-theories and lambda-models}

\bold{$\lambda$-theories} are, by definition, the equational extensions of the untyped $\lambda$-calculus which are 
closed under derivation \cite{Bare}; in other words: a $\lambda$-theory is a $\lambda$-congruence which contains
$\beta$-conversion ($\lambda_{\beta})$; \emph{extensional} $\lambda$-theories are those which contain $\beta\eta$-conversion 
($\lambda_{\beta\eta}$). $\lambda$-theories arise by syntactical or by semantic considerations. 
Indeed, a $\lambda$-theory $\cT$, may correspond to a possible operational (observational)
semantics of $\lambda$-calculus, as well as it may be induced by a model $\lm{M}$ of $\lambda$-calculus through the kernel congruence relation of
the interpretation function (then we will say that $\lm{M}$ represents $\cT$ and we will write $\Th{\lm{M}}=\cT$). 
Although researchers have, till recently, mainly focused their interest on a limited number of them, the set of $\lambda$-theories ordered by inclusion
constitutes a very rich, interesting and complex mathematical structure (see \cite{Bare,Berline00,Berline06,LusinS04}), whose cardinality is $2^{\aleph_0}$.\\

\bold{$\lambda$-models.} After the first model, found by Scott in 1969 in the category of complete lattices and Scott continuous functions, 
a large number of mathematical models for $\lambda$-calculus, arising from syntax-free constructions, have been
introduced in various categories of domains and were classified into semantics according to the nature of their 
representable functions, see e.g. \cite{Bare,Berline00,Plotkin93}. 
Scott's continuous semantics \cite{Scott72} is given in the category whose objects are complete partial orders and
morphisms are Scott continuous functions. The stable semantics (Berry \cite{Berry78}) and the strongly stable semantics 
(Bucciarelli-Ehrhard \cite{BucciarelliE91}) are refinements of the continuous semantics, introduced to approximate the notion 
of ``sequential'' Scott continuous function; finally ``weakly continuous'' semantics have been introduced, either for modeling non
determinism, or for foundational purposes. 
In each of these semantics all the models come equipped with a partial order, and some of them, called
\emph{webbed models}, are built from lower level structures called ``webs''.
The simplest class of webbed models is the class of \emph{graph models}, which was isolated in the seventies by Plotkin, 
Scott and Engeler within the continuous semantics. 
The class of graph models contains the simplest non syntactical models of $\lambda$-calculus (to begin with Engeler's model $\lm{E)}$, is itself the easiest 
describable class, and represents nevertheless $2^{\aleph_{0}}$ (non extensional) $\lambda$-theories. 
The results previously obtained for the class of graph models are surveyed in \cite{Berline06}.
Scott continuous semantics also includes the class \emph{filter models}, which were isolated at the beginning of eighties by Barendregt, Coppo and Dezani \cite{BarendregtCD83} after the introduction of intersection-type discipline at the end of seventies by Coppo and Dezani \cite{CoppoD80}. Filter models are perhaps the most established and studied semantics of $\gl$-calculus (see e.g. \cite{Ronchi82,CoppoDHL82,CoppoDZ87}).

\subsection{The problems we are interested in}
~\\
\bold{The initial problem.} The question of the existence of a non-syntactical model of $\lambda_{\beta}$
($\lambda_{\beta\eta}$) has been circulating since at least the beginning of the eighties\footnote{See Problem 22 in the list of TLCA open problems \cite{HonsellTLCA}.}, but it was only first raised in print in \cite{HonsellR92}.
This problem is still open, but generated a wealth of interesting research and results (surveyed in \cite{Berline00} and \cite{Berline06}), 
from which we only sketch below what is relevant for the present paper.\\

\bold{The first results.} In 1995 Di Gianantonio, Honsell and Plotkin succeeded to build an extensional
model having theory $\lambda_{\beta\eta}$, living in some weakly continuous semantics \cite{DiGianantonioHP95}. 
However, the construction of this model as an inverse limit starts from the term model of $\lambda_{\beta\eta}$, 
and hence involves the syntax of $\lambda$-calculus. 
Furthermore the existence of a model living in Scott's semantics itself, or in one of its two refinements,
remains completely open. 
Nevertheless, the authors also proved in \cite{DiGianantonioHP95} that the set of extensional theories representable by models living in
Scott's semantics had a least element. 
At the same time Selinger proved that if an ordered model has theory $\lambda_{\beta}$ or $\lambda_{\beta\eta}$ then
the order is discrete on the interpretations of $\lambda$-terms \cite{Selinger03}.\\

\bold{First extension: the minimality problem.} In view of the second result of \cite{DiGianantonioHP95}, it becomes natural to ask whether, 
given a (uniformly presented) class of models of $\lambda$-calculus, there is a minimum $\lambda$-theory represented in it; 
a question which was raised in \cite{Berline00}. 
In \cite{BucciarelliS04,BucciarelliS08} Bucciarelli and Salibra showed that the answer is also positive for the
class of \emph{graph models}, and that the least \emph{graph theory} (theory of a graph model) 
was different from $\lambda_{\beta}$ and of course $\lambda_{\beta\eta}$.
At the moment the problem remains open for the other classes of models.\\

\bold{Each class of models represents and omits $2^{\aleph_0}$ $\lambda$-theories.} Ten years ago, it was proved that in each of the known 
(uniformly presented) classes $\class{C}$ of models, living in any of the above mentioned semantics, and to begin with the class of graph models, it is 
possible to build $2^{\aleph_{0}}$ (webbed) models inducing pairwise distinct $\lambda$-theories \cite{Kerth98b,Kerth01}. 
More recently, it has been proved in \cite{Salibra03} that there are $2^{\aleph_{0}}$ theories which are omitted by all the $\class{C}$'s, 
among which $\aleph_{0}$ are finitely axiomatizable over $\lambda_{\beta}$.

From these results, and since there are only $\aleph_{0}$ recursively enumerable theories (\emph{r.e}. in the sequel), 
it follows that each $\class{C}$ represents $2^{\aleph_{0}}$ non r.e.\ theories and omits
$\aleph_{0}$ r.e.\ theories. 
Note also that there are only very few theories of non syntactical models which are known to admit an alternative description
(e.g. via syntactical considerations), and that all happen to coincide either with the theory $\BTth$ of B\"{o}hm trees \cite{Bare} 
or some variations of it, and hence are non r.e. 
This leads us to raise the following problem, which is a second natural generalization of the initial problem.\\

\bold{Can a non syntactical model have an r.e.\ theory?} This problem was first raised in \cite{Berline06}, where it is conjectured
that no graph model can have an r.e.\ theory. 
But we expect that this could indeed be true for all $\lambda$-models living in the continuous semantics,
or in its refinements (but of course not in its weakenings, because of \cite{DiGianantonioHP95}), and in the 
present paper we extend officially this conjecture.

\begin{conjecture}\label{conj:ScottSemantics} 
No $\lambda$-model living in Scott's continuous semantics or in one of its refinements has an r.e.\ equational theory.
\end{conjecture}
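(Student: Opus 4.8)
The plan is to argue by contradiction and to reduce the general statement, as far as possible, to the effective case that is settled in this paper. So assume $\lm{M}$ lives in Scott's continuous semantics (the stable and strongly stable cases will be disposed of along the way) and that $\Th{\lm{M}}$ is r.e.

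\emph{Step 1: effectivization via a L\"owenheim--Skolem descent.} The first move is to show that, when $\Th{\lm{M}}$ is r.e., one may replace $\lm{M}$ by an \emph{effective} model $\lm{M}'$ of the same semantics with $\Th{\lm{M}'}=\Th{\lm{M}}$. The template is the downwards L\"owenheim--Skolem theorem proved here for graph models: one carves out of the web of $\lm{M}$ a countable sub-web carrying a model with the same theory, and drives the descent by a recursive enumeration of the r.e.\ theory so that the sub-web, its partial order and the induced application all come out recursive, i.e.\ $\lm{M}'$ is effective. Carrying this out uniformly for every webbed class of the continuous semantics (graph models, filter models, and the other webbed models) requires an abstract notion of web together with a uniform representation theorem; pushing it to non-webbed continuous models would instead require a purely domain-theoretic reflection argument producing an effectively given domain with computable application and the same $\lambda$-theory. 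This is the first genuine obstacle.

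\emph{Step 2: from the equational theory back to effectivity.} Once $\lm{M}'$ is effective, the stable and strongly stable cases are closed immediately by the theorem of this paper that no effective model in those two semantics has an r.e.\ equational theory. For Scott's continuous semantics we only know that $\Thle{\lm{M}'}$ is not r.e., so the equational theory still has to be attacked. I see two routes. The conservative one is to show that, for an effective continuous model, the order relation restricted to interpretations of closed terms is many-one reducible to the equational theory; this works when the model internally represents a greatest-lower-bound-like operator, but not in general, so one really wants the second, Visser-style route: to build a family of closed terms $N_e$, recursive in $e$, whose interpretation in $\lm{M}'$ equals that of a fixed unsolvable term precisely when $e$ belongs to a chosen $\Pi^0_2$-complete set, so that recursive enumerability of $\Th{\lm{M}'}$ would make that set r.e. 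Effectivity of $\lm{M}'$ keeps the construction recursive in $e$, while continuity --- as opposed to the weakly continuous semantics, where the conjecture fails by \cite{DiGianantonioHP95} --- is exactly what forces the interpretation of $N_e$ to collapse to the unsolvable value whenever the underlying computation diverges. Combining with Step 1, recursive enumerability of $\Th{\lm{M}}$ would yield an r.e.\ $\Pi^0_2$-complete set, which is absurd.

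The step I expect to be by far the hardest is Step 1 in full generality: a L\"owenheim--Skolem / effectivization theorem valid for \emph{all} models of the continuous semantics and not only for graph models, with the continuous half of Step 2 a close second. Either ingredient, obtained in the required generality, would essentially settle the conjecture; what the body of this paper establishes is precisely the range of cases where they are already available, namely all effective models --- order-theoretically in the continuous semantics, and equationally in the stable and strongly stable ones --- together with all graph models, order-theoretically.
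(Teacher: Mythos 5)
This statement is not a theorem of the paper: it is Conjecture~\ref{conj:ScottSemantics}, which the paper explicitly leaves open (indeed, Section~\ref{sec:Mainresandconj} lists as \emph{still open} even the special cases ``all effective models in the continuous semantics have non-r.e.\ equational theories'' and ``the minimum equational graph theory is non-r.e.''). So there is no proof in the paper to compare against, and your proposal --- which you yourself flag as resting on two unestablished ingredients --- does not close the conjecture either. The closing paragraph of your proposal, where you identify exactly which fragments the paper actually establishes, is accurate; but the two steps you lean on both have genuine gaps.

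Concretely: Step 1 does not follow from the paper's L\"owenheim--Skolem theorem, even for graph models. Theorem~\ref{thm:Low-Sko} produces a \emph{countable} web, not an \emph{effective} one, and countability is far from effectivity: each class of webbed models represents $2^{\aleph_0}$ theories but contains only $\aleph_0$ effective models up to isomorphism, so ``countable web'' cannot in general be upgraded to ``effective web''. Your suggestion to drive the descent by an enumeration of the r.e.\ theory so that the sub-web ``comes out recursive'' is the crux, not a routine refinement: the witnesses $\alpha$ and the finite pairs $\cC_{\alpha,e}$ in the L\"owenheim--Skolem construction are drawn from the original, possibly wildly non-effective web, and nothing forces the resulting pair $(G',\i{\cG'})$ to be isomorphic to a recursive one. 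Step 2 is equally open in the continuous case: the paper proves only that $\Thle{\lm{M}'}$ is non-r.e.\ for effective $\lm{M}'$ (Corollary~\ref{cor:M eff imp Thle M non r.e.}), and the passage to the equational theory is precisely what fails there --- the stable and strongly stable cases go through only because Berry's order (Lemma~\ref{lemma:O2containsFcapT}) traps $O_{\bold{T}}\cap O_{\bold{F}}$ inside $O_\bot^\omega$, and the paper exhibits a graph model where that lemma is false. Your proposed Visser-style family $N_e$ reducing a $\Pi^0_2$-complete set is not constructed, and constructing it would by itself settle one of the paper's open conjectures. In short, the proposal is a reasonable research programme whose two load-bearing steps are each at least as hard as the conjecture's known open instances.
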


\subsection{Methodology} \label{sec:methodology}
~\\
\indent 1) \emph{Look also at order theories.} Since all the models we are interested
in are partially ordered, and since, in this case, the equational theory $\Th{\lm{M}}$ is easily 
expressible from its order theory $\Thle{\lm{M}}$ (in particular if $\Thle{\lm{M}}$ is r.e.\ then 
also $\Th{\lm{M}}$ is r.e.) we will also address the analogue problem for order theories.

2) \emph{Look at models with built-in effectivity properties}. 
There are several reasons to do so. 
First, it may seem reasonable to think that, if effective models do not even succeed to have an r.e.\ theory, then it is
unlikely that the other ones may succeed; second, because all models which have been individually studied or given as examples in the literature are effective, in our sense.
Starting from the known notion of an effective domain, we introduce an appropriate notion of an \emph{effective model of $\lambda$-calculus}
and we study the main properties of these models\footnote{
As far as we know, only Giannini and Longo \cite{GianniniL84} have introduced a
notion of an effective model; moreover, their definition is \emph{ad hoc} for
two particular models (Scott's $P_{\omega}$ and Plotkin's $T_{\omega}$) and
their results depend on the fact that these models have a very special common theory, namely $\BTth$.
}. 
Note that, in the absolute, effective models happen to be rare, since each ``uniform'' class $\class{C}$ represents
$2^{\aleph_{0}}$ theories, but contains only $\aleph_{0}$ non-isomorphic effective models! 
However, and this is a third \emph{a posteriori} reason to work with them, it happens that they can be used to prove 
properties of non effective models (Theorem \ref{Theorem graph} below is the first example we know of such a result).

3) A previous result obtained for \emph{typed $\lambda$-calculus} also justifies the above methodology. 
Indeed, it was proved in \cite{BerardiB02} that there exists a (webbed) model of Girard's system $F$, 
living in Scott's continuous semantics, whose theory is the typed version of $\lambda_{\beta\eta}$, and whose
construction does not involve the syntax of $\lambda$-calculus. 
Furthermore, this model can easily be checked to be ``effective'' in the same spirit as in
the present paper (see \cite[Appendix C]{BerardiB02} for a sketchy presentation of the model). 
Note that this model has no analogue in the stable semantics.

4) \emph{Look at the class of graph models}. 
Recall, from a remark above, that a graph model can be effective but
that most of them are not. 

5) \emph{Prove a L\"owenheim-Skolem theorem}. 
Effective webbed models are, in particular, generated by countable webs. 
A key step for attacking the general conjecture is hence to prove that the order/equational theory of any webbed model 
can be represented by a model of the same kind but having a countable web. 
We will prove this here for graph models. % via a rather \emph{ad hoc} but economical proof.\\

6) \emph{Mention when the results extend} to some other class of webbed models, 
and when they do not (sometimes we do not know). 
All the classes of webbed models indeed appear to be (more or less) sophisticated variations of the class of graph models.
Studying graph models illustrates the spirit of the tools we aim at developing, while keeping 
technicalities at the lowest possible level. 
We will not work out the details, since this would lead us to far, and would be teadious, with no special added interest. 
Our program is rather to search for generic tools and our first success in this direction concerns a meta-L\"owenheim-Skolem theorem whose proof will be given in a further paper.

\subsection{Main results and derived conjectures}\label{sec:Mainresandconj}
~\\
\indent \bold{I. On effective models.} 

The central technical device here is Visser's result \cite{Visser80} stating
that the complements of $\beta$-closed r.e.\ sets of $\lambda$-terms enjoy the finite intersection property (Theorem \ref{thm:hyperconnection}). 
We will be able to prove the following.

\begin{thm}
Let $\lm{M}$ be an \emph{effective} model of $\lambda$-calculus. Then:

(i) $\Thle{\lm{M}}$ is not r.e.

(ii) $\Th{\lm{M}}\neq\lambda_{\beta},\lambda_{\beta\eta}.$

(iii) If $\bot_{\lm{M}}$ is $\lambda$-definable then $\Th{\lm{M}}$ is not r.e., more generally:

(iv) If there is a $\lambda$-term $M$ such that in $\lm{M}$ there are only finitely many $\lambda$-definable elements below the interpretation of
$\lm{M}$ then $\Th{\lm{M}}$ is not r.e.
\end{thm}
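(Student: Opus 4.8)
\medskip
\noindent\textbf{Plan of proof.}
My plan is to reduce all four items, by different routes, to a single combinatorial input: Visser's theorem (Theorem~\ref{thm:hyperconnection}), which I would use in the form ``\emph{$\Lambda^o$, the set of closed $\lambda$-terms, has no nontrivial recursive $\beta$-closed subset}'' --- equivalently, $\Lambda^o$ is not the union of finitely many proper $\beta$-closed r.e.\ subsets. The two reservoirs of $\beta$-closed r.e.\ sets I would feed into it are: \emph{(effectivity)} for every compact element $d$ of the underlying domain the test set $V_d:=\{N\in\Lambda^o : d\sqle|N|_{\lm M}\}$ is $\beta$-closed (since $N=_\beta N'$ forces $|N|=|N'|$) and recursively enumerable, uniformly in a canonical index of $d$, because in an effective model the interpretation of every closed term is a computable element of the domain, uniformly in the term; \emph{(r.e.\ theory)} if $\Thle{\lm M}$ is r.e.\ then so is $\Th{\lm M}$ (its symmetric part), and then every principal ideal $\{N:|N|\sqle|P|\}$, every principal filter $\{N:|P|\sqle|N|\}$ and every $\Th{\lm M}$-class $\{N:N=_{\Th{\lm M}}P\}$ is $\beta$-closed and r.e. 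I will also use throughout that a nontrivial $\lambda$-model has infinitely many $\lambda$-definable elements --- e.g.\ the interpretations of $I,KI,K(KI),\dots$ are pairwise distinct, for otherwise the carrier collapses to a point --- so that these sets are automatically proper once they omit all but finitely many definable values.

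\smallskip\noindent
I would handle \textbf{(iii)} first, as the cleanest case. Writing $\bot_{\lm M}=|E|$, let $A:=\{N:|N|=\bot_{\lm M}\}$. Then $A$ is $\beta$-closed, it equals the $\Th{\lm M}$-class of $E$ (hence is r.e.\ once $\Th{\lm M}$ is), and it is nonempty and proper. Its complement is $A^c=\bigcup\{V_d : d\text{ compact},\ d\neq\bot_{\lm M}\}$, and $\{d:d\neq\bot_{\lm M}\}$ is recursive since comparison of compacts is decidable in an effective domain; thus $A^c$ is a recursive union of uniformly r.e.\ sets, hence r.e. So $A$ is a nontrivial recursive $\beta$-closed subset of $\Lambda^o$, contradicting Visser's theorem, and $\Th{\lm M}$ cannot be r.e. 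For \textbf{(iv)} I would run the same argument with $\{N:|N|\sqle|M|\}$ replacing $\{N:|N|=\bot_{\lm M}\}$: if only finitely many definable elements $|N_1|,\dots,|N_k|$ lie below $|M|$, this set is the union of the classes of $N_1,\dots,N_k$, hence $\beta$-closed and (if $\Th{\lm M}$ is r.e.) r.e., and again nonempty and proper; the delicate step here --- and where the effective structure of the domain is needed in full --- is to show that its complement $\bigcup\{V_d : d\text{ compact},\ d\not\sqle|M|\}$ is still r.e., i.e.\ that the relevant family of compacts can be effectively enumerated.

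\smallskip\noindent
Given \textbf{(i)}, item \textbf{(ii)} follows at once from Selinger's theorem \cite{Selinger03}: if $\Th{\lm M}$ were $\lambda_\beta$ or $\lambda_{\beta\eta}$, the order of $\lm M$ would be discrete on the interpretations of closed terms, so ``$|M|\sqle|N|$'' and ``$|M|=|N|$'' would define the same relation on $\Lambda^o$; hence $\Thle{\lm M}$ would be r.e., contradicting (i).

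\smallskip\noindent
Finally, \textbf{(i)} is the heart of the matter, and I expect it to be the main obstacle. I would assume $\Thle{\lm M}$ r.e.\ (hence $\Th{\lm M}$ r.e.). When $\bot_{\lm M}$ is $\lambda$-definable, or more generally when some $|M|$ has only finitely many definable elements below it, the arguments given for (iii)--(iv) above already apply. The real difficulty is the complementary case, where $\bot_{\lm M}$ is not definable and the $\lambda$-definable elements possess no minimal element: there the complement of a principal ideal $\{N:|N|\sqle|P|\}$ need not be r.e.\ (``$d\not\sqle|N|$'' is only co-r.e.\ at a computable value $|N|$), and no finite cover of $\Lambda^o$ by principal ideals is available. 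My plan is to still exhibit a finite cover of $\Lambda^o$ by proper $\beta$-closed r.e.\ sets, combining a suitable principal ideal or filter (supplied by the r.e.\ order theory) with finitely many test sets $V_{d_1},\dots,V_{d_m}$ whose compacts $d_j$ are chosen to sit below every definable value escaping the chosen ideal; arranging that such a finite choice exists is exactly what will require a careful analysis of the position of the $\lambda$-definable part of $\lm M$ inside the effective domain. Once such a cover is produced, Visser's theorem yields the contradiction for (i), and with it (ii).
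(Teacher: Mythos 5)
Your reductions of (ii) and (iii) are sound and essentially the paper's own: (ii) follows from (i) via Selinger exactly as in the paper, and your argument for (iii) --- the class $\{N : \Int{N}=\bot_\D\}$ is co-r.e.\ because its complement is an effective union of the uniformly r.e.\ sets $V_d$, and a nonempty decidable $\beta$-closed set must be all of $\Lambda^o$ --- is a correct variant of the paper's route through Lemma~\ref{lemma:OinfmoduloT}. But item (i), which you yourself call the heart of the matter, is where the proposal stops being a proof: you leave open how to handle the case where $\bot_{\lm{M}}$ is not definable, and the ``careful analysis of the position of the $\lambda$-definable part of $\lm{M}$'' that you defer to is neither carried out nor needed. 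The missing idea is that you never invoke the two clauses that distinguish an \emph{effective} model from a merely \emph{weakly effective} one (Definition~\ref{def:strefflambdamod}): closure of the decidable elements under application by compacts, and decidability of $\Abs(f)$ when $f$ is r.e.\ with decidable values on compacts. By induction on the structure of a $\beta$-normal form these yield Theorem~\ref{thm:normal closed decidable}: every closed \emph{normal} term $M$ has a \emph{decidable} (not merely r.e.) interpretation. That is exactly what dissolves your worry about the complement of a principal ideal: for such $M$ the set $O_M=\{N : \Int{N}\sqle_{\lm{M}}\Int{M}\}$ has r.e.\ complement, since ``$\exists n\,(d_n\sqle_\D \Int{N}$ and $d_n\not\sqle_\D \Int{M})$'' is r.e.\ when $\Kdn{\Int{M}}$ is decidable and $\Kdn{\Int{N}}$ is uniformly r.e.\ in $N$. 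Hence, if $\Thle{\lm{M}}$ were r.e., then $O_M$ would be a nonempty decidable $\beta$-closed set, so $O_M=\Lambda^o$ for \emph{every} normal $M$; taking $M\equiv\bold{T}$ and $M\equiv\bold{F}$ gives $\Int{\bold{T}}=\Int{\bold{F}}$ and collapses the model. No information about where the $\lambda$-definable elements sit in the domain is required.

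The same observation is what repairs the step you flag as delicate in (iv): the enumerability of $\{n : d_n\not\sqle_\D\Int{M}\}$ is precisely decidability of $\Kdn{\Int{M}}$, which holds for normal $M$ but not for arbitrary $M$; accordingly the paper's precise statement (Theorem~\ref{Thm:normalprop2}) assumes $\Int{M}\in\domdec{\D}$. Note also that the paper does not need the complement of $O_M$ to be r.e.\ there: it only needs $O_M$ to be $\beta$-co-r.e., nonempty, and finite modulo $\cT$, and then applies Lemma~\ref{lemma:OinfmoduloT} (a nonempty $\beta$-co-r.e.\ set is infinite modulo any consistent r.e.\ theory) --- a slightly more robust way of cashing in Visser's FIP than your ``no nontrivial recursive $\beta$-closed subset'' formulation, and one worth adopting in your write-up of (iii) and (iv) as well.
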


Concerning the existence of a non-syntactical effective model with an r.e.\ equational theory, we are able to give a definite answer for all (effective)
stable and strongly stable models:

\begin{thm}
No effective model living in the stable or in the strongly stable semantics has an r.e.\ equational theory.
\end{thm}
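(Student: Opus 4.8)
The plan is to reduce the stable (and strongly stable) case to the first theorem in this section: an effective model $\lm{M}$ always has a non-r.e.\ order theory, and so by part (i)--(ii) its equational theory cannot be $\lambda_\beta$ or $\lambda_{\beta\eta}$. But here we want more — namely that the equational theory is \emph{never} r.e., not merely that it differs from those two specific theories. So the first move is to invoke part (iii): it suffices to show that in every (strongly) stable effective model, the bottom element $\bot_{\lm{M}}$ is $\lambda$-definable, i.e.\ that there is a closed $\lambda$-term whose interpretation is $\bot_{\lm{M}}$. The natural candidate is (the interpretation of) an unsolvable term such as $\Omega = (\lambda x.xx)(\lambda x.xx)$.

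First I would recall the structural fact that distinguishes the stable and strongly stable semantics from Scott's continuous semantics: in a stable (respectively strongly stable) model, the interpretation function sends application to the \emph{stable} (respectively strongly stable) application operator, and these function spaces have the property that the interpretation of an unsolvable term is forced to be the least element. Concretely, in Berry's stable models and in the Bucciarelli--Ehrhard strongly stable models, the model is a \emph{sensible} model up to the point that all unsolvables are identified with $\bot$; this is a well-known feature of these semantics (it fails in general for continuous models, e.g.\ graph models, where $\Omega$ need not be interpreted as $\bot$). I would state this as a lemma: in any stable or strongly stable $\lambda$-model $\lm{M}$, $\intrho{\Omega} = \bot_{\lm{M}}$, hence $\bot_{\lm{M}}$ is $\lambda$-definable. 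If the paper has available a citation to Berry \cite{Berry78} or Bucciarelli-Ehrhard \cite{BucciarelliE91} for this, I would lean on it rather than reprove it.

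Then the theorem is immediate: let $\lm{M}$ be an effective model living in the stable or strongly stable semantics. By the lemma, $\bot_{\lm{M}}$ is $\lambda$-definable; since $\lm{M}$ is effective, the previous theorem, part (iii), applies and gives that $\Th{\lm{M}}$ is not r.e. As $\lm{M}$ was an arbitrary effective model in these semantics, no such model has an r.e.\ equational theory.

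The main obstacle I anticipate is not in the logical skeleton — which is a one-line deduction from part (iii) — but in pinning down precisely why $\bot_{\lm{M}}$ is $\lambda$-definable in \emph{every} model of these semantics, as opposed to the specific filter/extensional models usually discussed. The delicate point is that one needs this for the \emph{ordered} model structure underlying the effective-model definition, and one must be careful that ``stable semantics'' here is taken in the sense that makes the ambient cpo's bottom element coincide with the interpretation of unsolvables (i.e.\ the models are at least as identifying as $\mathcal{H}$ on unsolvables below any given point). If for some exotic member of the semantics $\Omega$ did not go to $\bot$, one would instead have to fall back on part (iv) and argue that below the interpretation of a suitable term $M$ there are only finitely many $\lambda$-definable elements; but I expect the clean statement — unsolvables denote $\bot$ — to hold throughout the stable and strongly stable semantics as standardly defined, so part (iii) should suffice.
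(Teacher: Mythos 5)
There is a genuine gap, and it is in the one lemma your whole argument rests on: it is \emph{not} true that every model of the stable or strongly stable semantics interprets $\Omega$ (let alone every unsolvable) by $\bot_{\lm{M}}$. Sensibility, and a fortiori the identification of unsolvables with $\bot$, is a property of individual models, not of these semantics: the stable semantics contains Girard's $G$-models (reflexive coherences), a webbed class analogous to graph models representing a continuum of distinct theories, and Kerth's completion machinery for their webs produces models in which $\Int{\Omega}\neq\bot_{\lm{M}}$, exactly as Park-style graph models do in the continuous case. A quick sanity check confirms something must be wrong: if your lemma held, then $O_{\bot}\neq\emptyset$ in every such model and Proposition~\ref{prop:botempty2} would already give the theorem for merely \emph{weakly} effective models, with no use of the decidability of normal forms or of Berry's order; the authors would not have needed either. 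Your fallback to part (iv) is not a routine patch --- it is essentially where all the work lies.

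The route the paper actually takes uses effectiveness and the stable structure in a quite different way. Effectiveness (not just weak effectiveness) makes $\Int{\bold{T}}$ and $\Int{\bold{F}}$ decidable elements, so by Theorem~\ref{Thm:normalprop} (i.e.\ Visser's FIP for $\beta$-co-r.e.\ sets) the set $O_{\bold{T}}\cap O_{\bold{F}}$ is a \emph{non-empty} $\cT$-co-r.e.\ set. The specifically stable ingredient is Berry's order: if $\Int{N}\sqle\Int{\bold{T}},\Int{\bold{F}}$, then $h=\App(\Int{N})$ satisfies $h\leq_{s}\App(\Int{\bold{T}})$ and $h\leq_{s}\App(\Int{\bold{F}})$; since $\App(\Int{\bold{F}})$ is constant, $h$ is constant, and being pointwise below $\App(\Int{\bold{T}})$ it must be the constant $\Int{\lambda y.\bot}$, whence $\bold{1}N\in O^{2}_{\bot}$ (Lemma~\ref{lemma:O2containsFcapT}). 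Thus $O^{2}_{\bot}\neq\emptyset$, which Proposition~\ref{prop:botempty2} rules out when $\Th{\lm{M}}$ is r.e. In other words, the proof does not make $\bot_{\lm{M}}$ itself $\lambda$-definable; it exhibits a $\lambda$-definable element, $\Int{\bold{1}N}$ for a suitable $N$, that Berry's order forces to coincide with $\Int{\lambda xy.\bot}$, and effectiveness is needed precisely to produce such an $N$.
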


This theorem solves Conjecture~\ref{conj:ScottSemantics} for these two semantics. 
Concerning Scott's semantics, the problem looks much more difficult and we concentrate on the class of graph models.\\

\bold{II. On graph models.}

\begin{thm}\label{Theorem min}
There exists an \emph{effective} graph model whose equational/order theory is the minimum graph theory.
\end{thm}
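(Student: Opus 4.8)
The plan is to exhibit an \emph{effective} graph model $\EffGmin$ and prove that its equational theory equals the least graph theory $\Tmin$ and its order theory the least order graph theory $\Omin$; once this is done the conclusion is immediate from the fact, established in the preceding sections, that every effective graph model is an effective model of $\lambda$-calculus. I would take $\EffGmin$ to be the free completion $\ppcompl{p}$ of a conveniently chosen \emph{recursive} partial pair $p=(D_0,i_0)$ --- for instance a recursive presentation of the partial pair used by Bucciarelli and Salibra \cite{BucciarelliS04,BucciarelliS08}, or of a partial pair containing a copy of every finite partial pair. Minimality is then obtained exactly as in \cite{BucciarelliS04,BucciarelliS08}: every failure $\gm{G}\not\models M\sqsubseteq N$ in a graph model $\gm{G}$ is already witnessed inside a \emph{finite} subpair of the web of $\gm{G}$, which embeds into $p$ and hence into $\EffGmin$, the embedding preserving the failure; this gives $\Thle{\EffGmin}\subseteq\Thle{\gm{G}}$, and likewise for equations, so that $\Th{\EffGmin}=\Tmin$ and $\Thle{\EffGmin}=\Omin$.

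It then remains to check that $\EffGmin$ is genuinely effective, i.e. that the free completion of a recursive partial pair is recursive. That $p$ itself can be taken recursive is clear: in either choice above the web of $p$ is a decidable subset of $\nat$, the coding of its finite subsets is recursive, and $i_0$ is a recursive partial injection. The completion then proceeds in $\omega$ stages, stage $n+1$ adjoining a fresh element to serve as $i(a,b)$ for each finite subset $a$ of the current web $D_n$, each $b\in D_n$, and each $(a,b)\notin\dom(i)$ so far. Laying out the fresh elements along a fixed recursive coding that keeps successive stages disjoint and decidable, one obtains a decidable web $D\subseteq\nat$ together with a total recursive injection $i$ from the recursively coded pairs (finite subset of $D$, element of $D$) into $D$. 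Thus $\EffGmin$ is an effective graph model, and by the implication recalled above it is an effective model of $\lambda$-calculus, which completes the proof.

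I expect the main obstacle to be this last step. Beyond the routine bookkeeping that keeps $D$ decidable and $i$ total recursive through the $\omega$ stages, one must make sure the resulting structure satisfies \emph{every} clause of the definition of an effective model of $\lambda$-calculus --- an effective enumeration of a basis of $\pow{D}$ with decidable order and application on the basic compact elements, computability of the interpretation of $\lambda$-terms, and so forth. This is exactly what the general passage ``effective graph model $\Rightarrow$ effective $\lambda$-model'' is designed to handle, so most of the weight lies upstream; what remains here is to be careful that neither the chosen partial pair nor the finite-approximation property of graph-model interpretations underlying the minimality argument is disturbed by insisting on a recursive presentation --- and neither is, since effectivity plays no role in that argument.
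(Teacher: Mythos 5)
Your proposal is correct and follows essentially the same route as the paper: the paper takes an effective enumeration of all finite partial pairs, recodes their disjoint union via prime powers into a decidable subset of $\nat$ with a recursive injection of decidable domain and range, freely completes it (effectivity being preserved by Theorem~\ref{thm:effpartpairseffpairs}), and then invokes the Bucciarelli--Salibra minimality argument exactly as you do. The only caveat is that your phrase ``the embedding preserving the failure'' must be read as the gluing argument of Proposition~\ref{prop:BucciaSali} (failures transfer to the glued model because $\Int{M}^{\gm{G}_k}=\Int{M}^{\gm{G}}\cap \setG_k$ for the pairwise disjoint components), not as preservation of failures along an arbitrary morphism of partial pairs, which does not hold in general.
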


\begin{thm}\label{Theorem graph} 
If $\lm{M}$ is a graph model then $\Thle{\lm{M}}$ is not r.e.
\end{thm}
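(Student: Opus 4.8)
The plan is to reduce the statement to the effective case. By part~(i) of the first theorem of the paper---whose proof rests on Visser's finite-intersection-property result (Theorem~\ref{thm:hyperconnection})---the order theory of \emph{any} effective model, in particular of any effective graph model, fails to be r.e. Hence it suffices to prove the following ``effective downward L\"owenheim-Skolem'' statement for graph models: \emph{if $\lm{M}$ is a graph model and $\Thle{\lm{M}}$ is r.e., then there is an effective graph model $\lm{M}_0$ with $\Thle{\lm{M}_0}=\Thle{\lm{M}}$}. Granting this, Theorem~\ref{Theorem graph} is immediate: $\lm{M}_0$ being effective, $\Thle{\lm{M}_0}$ is not r.e.\ by the first theorem, yet it equals $\Thle{\lm{M}}$, contradicting the assumption that the latter is r.e.

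To produce $\lm{M}_0$ I would run the construction behind the (non-effective) downward L\"owenheim-Skolem theorem for graph models---which extracts from the web $(G,p)$ of $\lm{M}$ a countable sub-web generating a graph model with the same order theory---but refine it into a recursively presented construction steered by a fixed r.e.\ enumeration of $\Thle{\lm{M}}$. Concretely, one builds in stages a finite partial web $(G_s,p_s)$, at each stage (a)~closing off under the decoding of $p$, so that $p_0=\bigcup_s p_s$ is total and, by the order in which its values are committed, computable, and (b)~throwing in, for each pair of closed terms $(M,N)$ not (yet) listed in $\Thle{\lm{M}}$, a web element witnessing $\Int{M}\not\subseteq\Int{N}$ in $\lm{M}$. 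The standard approximation property of graph models---membership of a fixed web element in $\Int{P}$ depends only on finitely much of the web---ensures that such witnesses keep working in the limit model, so that $\Thle{\lm{M}_0}\subseteq\Thle{\lm{M}}$, while the reverse inclusion holds automatically because $\lm{M}_0$ arises from a sub-web of $\lm{M}$ and sub-models of graph models validate at least as many inequations. Finally one verifies that $\lm{M}_0$, living over a decidable set with computable $p_0$ and with the decidable inclusion ordering of finite subsets as its compact-element order, is effective in the sense of the paper.

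The main obstacle is reconciling clause~(b) with the demand that $p_0$ be recursive: the non-inequations of $\lm{M}$ form only a \emph{co}-r.e.\ set, and $\lm{M}$ itself is not given effectively, so we can neither enumerate the pairs still needing a witness nor search $G$ for one. This is exactly where r.e.-ness of $\Thle{\lm{M}}$ must be exploited: the few witnesses needed at each level have to be committed speculatively and dovetailed with the $p$-closure in such a way that the limit presentation of $(G_0,p_0)$ stays recursive---morally, letting the enumeration of $\Thle{\lm{M}}$ run long enough at each finite stage to decide how much of $\lm{M}$ must be copied. If the sub-web route turns out to be too rigid for this, the alternative I would pursue is to \emph{synthesize} an effective graph model directly out of the r.e.\ theory by a forcing/completion construction over a recursive set of conditions---in the spirit of the canonical construction underlying the minimum graph model of Theorem~\ref{Theorem min}---designed so that the resulting graph model validates precisely the inequations enumerated; showing that it validates \emph{nothing more} is again the heart of the matter.
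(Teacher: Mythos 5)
Your overall strategy---reduce to the effective case and then invoke the non-r.e.-ness of order theories of effective models---is the right instinct, but the reduction you propose does not go through, and the obstacle you yourself flag in the last paragraph is precisely the fatal one. To carry out clause~(b) of your stage construction you must, at some finite stage, \emph{commit} to the fact that a pair $(M,N)$ is \emph{not} in $\Thle{\lm{M}}$ and then locate a witness $\alpha\in\Int{M}^{\lm{M}}\setminus\Int{N}^{\lm{M}}$ together with a finite subpair certifying it. But the complement of $\Thle{\lm{M}}$ is only co-r.e.\ (r.e.-ness of $\Thle{\lm{M}}$ gives you positive information only), so ``letting the enumeration run long enough'' never decides non-membership; and the witness search ranges over the web of $\lm{M}$, which is an arbitrary, completely non-effective object---the countable sub-web produced by the L\"owenheim--Skolem argument of the paper has no reason to be recursively presented. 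Your fallback (synthesizing a model directly from the r.e.\ set of inequations by forcing) leaves open exactly the hard half, namely that the synthesized model validates nothing beyond the enumerated inequations, so neither route closes.

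The paper's proof sidesteps the need for any effectivization of $\lm{M}$ by exploiting \emph{minimality} rather than an effective copy of the given model. It fixes one effective graph model $\Gmin$ whose order theory is the \emph{minimum} order graph theory (Theorem~\ref{thm:effective-minimum}). For a closed normal term $M$, effectivity of $\Gmin$ makes $\Int{M}^{\Gmin}$ decidable, hence $O_M^{\Gmin}=\{N\in\Lambda^o\st \Int{N}^{\Gmin}\subseteq\Int{M}^{\Gmin}\}$ is a non-empty $\beta$-co-r.e.\ set; by minimality $O_M^{\Gmin}\subseteq O_M^{\gm{G}}$ for \emph{every} graph model $\gm{G}$. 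If $\Thle{\gm{G}}$ were r.e., then $O_M^{\gm{G}}$ would be $\beta$-r.e., so its complement would be a $\beta$-co-r.e.\ set disjoint from the non-empty $\beta$-co-r.e.\ set $O_M^{\Gmin}$; Visser's FIP forces that complement to be empty, i.e.\ $O_M^{\gm{G}}=\Lambda^o$. Applying this with $M\equiv\bold{T}$ and $M\equiv\bold{F}$ yields $\Int{\bold{T}}^{\gm{G}}=\Int{\bold{F}}^{\gm{G}}$, a contradiction. So the single effective object you need is not an effective version of $\lm{M}$ but the effective minimum model, whose existence is established independently; that is the missing idea in your proposal.
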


We emphasize that Theorem \ref{Theorem graph}, which happens to be a consequence of Theorem \ref{Theorem min}, 
plus the work on effective models, concerns all the graph models and not only the effective ones.
Concerning the equational theories of graph models we only give below, as
Theorem \ref{Theorem graph2}, the more flashy example of the results we will
prove in Section~\ref{subsec:r.e.graphtheories}. 
The stronger versions are however natural, and needed for covering all the traditional models 
(for example the Engeler model is covered by Theorem~\ref{Theorem graph2} below only if it  is generated from a
finite set of atoms, while it is well known that its theory is $\BTth$, independently of the number of its atoms).

\begin{thm}\label{Theorem graph2} 
If $\lm{M}$ is a graph model which is ``freely generated from a finite partial web'', then $\Th{\lm{M}}$ is not r.e.
\end{thm}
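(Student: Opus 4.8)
The plan is to reduce Theorem~\ref{Theorem graph2} to part~(iii) (or, failing that, part~(iv)) of our first theorem on effective models. Write $\lm{M}=(\setG,i)$ for the graph model obtained by freely generating a total pairing from a finite partial web $(E_0,i_0)$, where $E_0$ is finite and $i_0\colon E_0^{*}\times E_0\partarrow E_0$ is a partial injection.

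First I would verify that $\lm{M}$ is an \emph{effective} model. The free generation produces $\setG$ as the union of an increasing chain of finite sets $E_0\subseteq E_1\subseteq\cdots$, each stage adding one fresh point $i(a,b)$ for every pair over the previous stage not yet in the domain of $i$. Since $E_0$ is finite and the construction is purely combinatorial, $\setG$ can be presented as $\nat$ with $i$ a recursive injection having recursive range. Consequently $\pow{\setG}$ is an effectively given domain: its compact elements are the finite subsets of $\setG$, uniformly decidable, with decidable inclusion and everywhere-defined computable joins; and both application $X\cdot Y=\{b:\exists a\finsubset Y,\ i(a,b)\in X\}$ and the graph-model abstraction operator are computable on compacts. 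Hence the interpretation function $(M,\rho)\mapsto\intrho{M}$ is effective and uniform, and $\bot_{\lm{M}}=\emptyset$ is computable, so $\lm{M}$ is effective and the first theorem applies to it.

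Next I would show that $\bot_{\lm{M}}$ is $\lambda$-definable, namely that $\intrho{\Omega}=\emptyset$ for $\Omega=(\lambda x.xx)(\lambda x.xx)$. Because the web is \emph{freely} generated from a \emph{finite} web, $\setG$ carries a well-founded rank, $E_0$ receiving ranks compatible with $i_0$ and each fresh $i(a,b)$ receiving rank $1+\max$ of the ranks occurring in $a\cup\{b\}$. Unwinding the graph-model definitions, $\beta\in\intrho{\Omega}$ would produce finite sets $b_0\supseteq b_1\supseteq b_2\supseteq\cdots$ with $i(b_{n+1},\beta)\in b_n$ for all $n$; comparing the ranks of the two sides of each membership yields an infinite strictly decreasing sequence of natural numbers, which is absurd. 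So $\bot_{\lm{M}}$ is $\lambda$-definable, and part~(iii) of the first theorem (via Visser's finite intersection property) gives that $\Th{\lm{M}}$ is not r.e. This is exactly where the hypothesis bites: finiteness of the partial web is what makes the rank well-founded, and thus the statement covers the Engeler model over a finite — but not uniformly over an infinite — set of atoms.

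The main obstacle is the first step: checking, against the precise definition of an effective model, that the recursive presentation of $(\setG,i)$ genuinely turns $\pow{\setG}$ into an effectively given domain and makes the interpretation map effective and uniform in both the term and the environment. The rank argument of the second step and the appeal to the first theorem in the third are then routine. If one wished to admit finite partial webs that are not well-founded, part~(iii) would have to be replaced by part~(iv), choosing a term $M$ below whose interpretation only finitely many $\lambda$-definable elements lie; but for the models named in the statement, (iii) suffices.
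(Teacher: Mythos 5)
Your step 1 (effectivity of $\gmcompl{\cA}$ for finite $\cA$) is fine and matches the paper. The gap is in step 2: it is \emph{not} true that $\bot$ is $\lambda$-definable in every graph model freely generated from a finite partial web, because a finite partial web need not admit a rank compatible with its partial injection. Concretely, take the graph-Park model over a finite set $A$, freely generated by $\cA=(A,j_\cA)$ with $j_\cA(\{\alpha\},\alpha)=\alpha$ for all $\alpha\in A$ (Example~\ref{exa:Engeler-Pomega}$(iii)$). No rank function can satisfy $\mathrm{rank}(j_\cA(\{\alpha\},\alpha))>\mathrm{rank}(\alpha)$ here, and indeed one checks directly that $\alpha\in\Int{\delta}$ and then $\alpha\in\Int{\Omega}$, so $\Int{\Omega}^{\gmcompl{\cA}}=A\neq\emptyset$. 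The rank used in the free completion assigns rank $0$ to \emph{all} of $A$ and only yields $\Int{\Omega}^{\gmcompl{\cA}}\subseteq A$ (Lemma~\ref{lemma:omega}), not emptiness. So your closing assertion that ``for the models named in the statement, (iii) suffices'' is false: the statement covers all finite partial webs, including non-well-founded ones such as the finite graph-Park and mixed Scott--Park webs.

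Your fallback to part (iv) is the right direction, but you do not carry it out, and the missing step is exactly the content of the paper's argument. The paper works with $O_A=\{N\in\Lambda^o\st\Int{N}\subseteq A\}$: it is non-empty because $\Omega\in O_A$ by Lemma~\ref{lemma:omega}; it is $\cT$-co-r.e.\ because $A$ is finite hence decidable (Theorem~\ref{thm:decunderaset}); and it is a union of at most $2^{|A|}$ many $\cT$-classes, since a term in $O_A$ is determined modulo $\cT$ by its interpretation, which is one of the finitely many subsets of $A$ (this is Theorem~\ref{thm:extendedthm} with $E=A$; finiteness of $A$ makes the automorphism-orbit refinement unnecessary). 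If $\cT$ were r.e., $O_A$ would then be r.e.\ as well, hence decidable, hence equal to $\Lambda^o$ by Scott's theorem -- contradicting $O_A\subseteq\Unsolvable$. Equivalently, in the language of your part (iv): take $M=\Omega$; its interpretation is a subset of the finite set $A$, hence decidable, and only finitely many elements (definable or not) lie below it, so Theorem~\ref{Thm:normalprop2} applies. Your proof is correct only for the special case where $j_\cA$ is acyclic (e.g.\ the Engeler model over finitely many atoms); to cover the theorem as stated you must replace the emptiness claim by this counting argument.
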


It remains open whether the minimum equational graph theory is r.e.
Hence, the following instances of Conjecture~\ref{conj:ScottSemantics}  are still open; we state them from the weaker to the stronger one.

%\bold{A few more specific conjectures for Scott's semantics.} Thus, concerning effective models, Conjecture~\ref{conj:ScottSemantics} is
%solved for the two refinements of Scott's semantics which are mainly considered in the literature, but for Scott's continuous semantics it remains
%open, as well as its two following instances (from the weaker to the stronger conjecture).

\begin{conjecture}
The minimum equational graph theory is non r.e.
\end{conjecture}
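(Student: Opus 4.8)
\medskip
\noindent\textbf{Proof strategy.}
The plan is to reduce the statement to the main theorem on effective models proved above, so that only a single definability fact about the model has to be checked by hand. First I would note that a graph model $\lm{M}$ freely generated from a finite partial web $(\setG_0,i_0)$ is an \emph{effective} graph model: a finite partial web is, trivially, recursive data, and its free completion yields a web $(\setG,i)$ --- after fixing a recursive coding we may take $\setG=\nat$ --- for which $i$ is a recursive injection; hence $\mathscr{P}(\setG)$, with the obvious recursive enumeration of its compact (finite) elements, is an effectively given domain and the induced graph model is effective in the sense of this paper. Consequently that theorem applies to $\lm{M}$. Its first clause already gives that $\Thle{\lm{M}}$ is not r.e., but to reach the sharper conclusion about the \emph{equational} theory one needs clause~(iii) or~(iv): it suffices to exhibit a closed $\lambda$-term $M$ such that only finitely many $\lambda$-definable elements of $\lm{M}$ lie below its interpretation $\intrho{M}$.

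The heart of the proof is then a combinatorial lemma: when $\lm{M}$ is freely generated from a finite partial web $(\setG_0,i_0)$, the interpretation of the unsolvable term $M=(\lambda x.xx)(\lambda x.xx)$ is a \emph{finite} subset of $\setG$. This is where finiteness of the web is used. An element $a\in\intrho{M}$ must be ``self-supported'', i.e.\ witnessed by a finite set of elements of $\setG$ closed under the relevant instances of the self-application operator; but in a freely generated model no element introduced freely as $i(b,a)$ ever belongs to $b$, and no two freely introduced elements --- nor a freely introduced element and an old one --- are identified unless this is already forced inside $(\setG_0,i_0)$, so every self-supporting configuration is confined to the finitely many forced identifications recorded in $i_0$. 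Hence there are only finitely many possibilities for $a$. Granting the lemma, $\intrho{M}$ is finite, so it has only finitely many subsets, so only finitely many $\lambda$-definable elements of $\lm{M}$ lie below $\intrho{M}$, and clause~(iv) of the theorem on effective models yields that $\Th{\lm{M}}$ is not r.e., which is Theorem~\ref{Theorem graph2}. (When the finite web is ``loop-free'' one has $\intrho{M}=\emptyset$ and may instead quote clause~(iii), recovering in particular the classical graph models such as the Engeler model over a finite set of atoms; the order-theoretic companion Theorem~\ref{Theorem graph} needs none of this analysis, as it follows from clause~(i) together with Theorem~\ref{Theorem min} for \emph{all} graph models.)

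I expect the finiteness lemma to be the main obstacle. Making the notion of a self-supporting configuration precise, and proving that free generation from finite data really does trap such configurations inside the rigid finite core of the web --- in particular bounding the cardinality of the finite sets $b$ that can occur in them --- is a purely combinatorial matter about the map $X\mapsto X\cdot X$ and its finite iterates on $\mathscr{P}(\setG)$, but the bookkeeping is delicate, and one must take care that it is genuinely the \emph{freeness} of $i$, and not merely its recursiveness, that is doing the work. Once the lemma is in place the rest is routine: effectivity of the web, and the appeal to the theorem on effective models, whose own engine is Visser's finite-intersection property for complements of $\beta$-closed r.e.\ sets (Theorem~\ref{thm:hyperconnection}).
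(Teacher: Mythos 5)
The statement you were asked to prove is not Theorem~\ref{Theorem graph2}: it is the conjecture that the \emph{minimum} equational graph theory $\Tmin$ is not r.e., and the paper explicitly leaves this open (``It remains open whether the minimum equational graph theory is r.e.''). What you have written is, in substance, the paper's own proof of Theorem~\ref{Theorem graph2}: effectivity of the free completion of a finite partial pair, finiteness of $\Int{\Omega}^{\gmcompl{\cA}}$ (which is Lemma~\ref{lemma:omega} plus finiteness of $A$), and then the finitely-many-definable-elements clause of the theorem on effective models (packaged in the paper as Theorem~\ref{thm:extendedthm} and Corollary~\ref{cor:5}). That argument is sound, but it proves a neighbouring theorem, not the statement at hand.

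The gap is that $\Tmin$ is not (known to be) the theory of any model freely generated from a finite partial web. The only models known to realize it are of the form $\gmcompl{\cA}$ with $\cA=\sqcup_{k}\cA_k$ a gluing of representatives of \emph{all} finite partial pairs (Theorems~\ref{thm:minimum} and~\ref{thm:effective-minimum}). There $A$ is infinite, $\Int{\Omega}^{\gmcompl{\cA}}$ is infinite (it contains every $\alpha$ with $j_\cA(\{\alpha\},\alpha)=\alpha$, and infinitely many components contribute such elements), and $A/Aut(\cA)$ is infinite, so the orbit-counting argument of Theorem~\ref{thm:extendedthm} collapses; the paper explicitly flags this model as one to which none of the corollaries of Theorem~\ref{thm:extendedthm} apply. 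Nor can you recover the conjecture from Theorem~\ref{Theorem graph2} by intersecting: $\Tmin=\cap_k \Th{\gmcompl{\cA_k}}$ is an intersection of countably many non-r.e.\ theories, but a theory contained in every member of a family of non-r.e.\ theories need not itself be non-r.e.\ --- $\lambda_\beta$ is r.e.\ and is contained in every graph theory. So your proposal, even with the finiteness lemma fully worked out, does not establish the conjecture, which remains open in the paper.
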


\begin{conjecture}
All the effective graph models have non r.e.\ equational theories.
\end{conjecture}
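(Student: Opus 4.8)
Since the authors list this statement as an open \emph{conjecture}, the purpose of this proposal is to lay out what I regard as the most promising line of attack, together with the precise point at which it stalls. The starting observation is an asymmetry: by the effective-models theorem we already know that $\Thle{\lm{M}}$ is not r.e.\ for every effective $\lm{M}$, and by the remark in Section~\ref{sec:methodology} an r.e.\ order theory forces an r.e.\ equational theory, i.e.\ $\Thle{\lm{M}}$ r.e.\ $\imp$ $\Th{\lm{M}}$ r.e. What the conjecture really needs is the missing \emph{converse}, specialised to graph models: that an r.e.\ presentation of the \emph{equalities} of $\lm{M}$ already yields an r.e.\ presentation of its \emph{inequalities}. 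Granting such a reduction, an r.e.\ $\Th{\lm{M}}$ would make $\Thle{\lm{M}}$ r.e., contradicting the effective-models theorem and settling the conjecture. So the whole plan is to manufacture this reduction from the effectivity of the web and the rigid shape of $\pow{A}$.

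The concrete plan is as follows. An element $\Int{M}$ of a graph model is the union of the paths $(\oa_1\to\cdots\to\oa_k\to\gb)\in A$ it contains, and membership of such a path can in principle be witnessed by feeding $M$ compact arguments approximating the $\oa_i$ and checking whether the output contains $\gb$. Using the effective enumeration of the compact elements of the web I would assemble a uniform, recursively indexed family of $\lambda$-definable test-contexts $\{C_n[\,]\}_{n\in\nat}$ and try to prove that $\Int{M}\subseteq\Int{N}$ is equivalent to a recursively enumerable family of \emph{equalities} $\Int{C_n[M]}=\Int{C_n[N]}$ --- ideally arranging each $C_n$ to be a \emph{total} Boolean test, so that both the membership and the non-membership of a path become equalities and the inclusion is genuinely expressible by equations alone. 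The point of this shape is that every condition $\Int{C_n[M]}=\Int{C_n[N]}$ is a question about $\Th{\lm{M}}$. For graph models freely generated from a \emph{finite} partial web this is in essence what makes Theorem~\ref{Theorem graph2} go through, since there the compact elements needed as probes are all $\lambda$-definable; the present goal is exactly to remove the finiteness hypothesis.

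The hard part, and the reason the conjecture is still open, is precisely that for an arbitrary effective web the $\lambda$-definable elements can be far too \emph{sparse} to probe the whole of $A$: there is no guarantee that the compact elements $\oa_i,\gb$ occurring along a given path are named by any closed term, so the proposed total Boolean tests need not exist and the family $\{C_n\}$ need not separate $\Int{M}$ from $\Int{N}$ --- in which case the equational theory genuinely fails to determine the order theory and the reduction collapses. Two fallback routes are worth pursuing. The first is to verify condition~(iv) of the effective-models theorem directly, exhibiting in every effective graph model a term with only finitely many $\lambda$-definable elements below its interpretation; but this looks implausible for richly generated webs. The second, and to my mind the more robust, is a direct use of Visser's finite intersection property (Theorem~\ref{thm:hyperconnection}): assuming $\Th{\lm{M}}$ r.e., one would exploit the computability of the web to manufacture finitely many proper $\beta$-closed r.e.\ subsets of $\Lambda^o$ that together cover $\Lambda^o$, contradicting the FIP. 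Finally, recall that in the order case Theorem~\ref{Theorem graph} was lifted from effective models to \emph{all} graph models via the effective minimum model of Theorem~\ref{Theorem min}; one would hope to transfer an equational result the same way, but that transfer is itself blocked until the weaker conjecture that the minimum \emph{equational} graph theory is non-r.e.\ is settled. For this reason I expect the full conjecture to demand a genuinely new idea rather than a routine adaptation of the order-theoretic machinery.
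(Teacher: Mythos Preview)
You are right that the statement is listed in the paper as an open conjecture, with no proof offered; your proposal is therefore appropriately framed as a discussion of strategies rather than a completed argument. The overall shape of your analysis---that the gap between the known result for $\Thle{\lm{M}}$ and the conjectured one for $\Th{\lm{M}}$ is precisely the missing implication ``$\Th{\lm{M}}$ r.e.\ $\Rightarrow$ $\Thle{\lm{M}}$ r.e.''---is accurate, and your identification of the fallback routes (condition~(iv) of the effective-models theorem, and a direct FIP contradiction) is consistent with the paper's toolkit.

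There is, however, one substantive misreading. You write that the finite-web case (Theorem~\ref{Theorem graph2}, i.e.\ Corollary~\ref{cor:5}) goes through because ``the compact elements needed as probes are all $\lambda$-definable''. This is not the paper's mechanism. The argument for the finite case is an instance of Theorem~\ref{thm:extendedthm}: one takes $E=A$, observes that $E/Aut(\cA)$ is finite simply because $A$ is finite, and concludes that $O_E$ is a \emph{finite union of $\cT$-classes} (interpretations landing in $E$ must be unions of $Aut(\cA)$-orbits). Since $O_E$ is $\cT$-co-r.e.\ and nontrivial, $\cT$ cannot be r.e. No probing by $\lambda$-definable compact elements enters; the finiteness exploited is that of the orbit space, not $\lambda$-definability of web elements. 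Your proposed reduction via total Boolean test-contexts is therefore not a generalisation of the paper's finite-case proof but an independent line of attack, and your diagnosis of where it stalls (sparsity of $\lambda$-definable elements) does not explain why the paper's own method fails to extend---the actual obstruction is that for a general effective web there is no reason for $\Int{\Omega}^{\gmcompl{\cA}}$, or any other controllable co-r.e.\ $E\subseteq A$ with $O_E\neq\emptyset$, to have only finitely many $Aut(\cA)$-orbits.
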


\begin{conjecture}
All the effective models living in the continuous semantics have non r.e.\ equational theories.
\end{conjecture}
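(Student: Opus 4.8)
The plan is to reduce the conjecture to machinery already in hand. For any effective model $\lm{M}$ living in the continuous semantics, part (i) of the theorem on effective models already gives that the order theory $\Thle{\lm{M}}$ is not r.e.; what is missing is the equational theory $\Th{\lm{M}}$, which may be strictly coarser and, a priori, better behaved. Since there is no automatic implication from ``order theory non-r.e.'' to ``equational theory non-r.e.'' (only the converse-style implication noted in Section~\ref{sec:methodology} is free), I would aim to prove, uniformly over the continuous semantics, that every effective $\lm{M}$ satisfies one of the two sufficient conditions already isolated: (iii) $\bot_{\lm{M}}$ is $\lambda$-definable, or the weaker (iv) there is a closed term $M$ with only finitely many $\lambda$-definable elements below $\intrho{M}$. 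Both conditions are powered by Visser's finite-intersection property (Theorem~\ref{thm:hyperconnection}) and each yields that $\Th{\lm{M}}$ is not r.e., so the conjecture would follow once (iv) is established in full generality. Observe that (iii) is subsumed by (iv): when $\intrho{\Omega}=\bot_{\lm{M}}$, the only definable element below $\intrho{\Omega}$ is $\bot_{\lm{M}}$ itself, so $\lambda$-definability of $\bot_{\lm{M}}$ is exactly (iv) instantiated at $M=\Omega$.

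First I would try to exhibit, uniformly, a closed term interpreted by a ``small'' element. The natural candidate is an unsolvable term such as $\Omega$: across the continuous semantics one expects $\intrho{\Omega}=\bot_{\lm{M}}$ for a broad class of models, which reduces (iv) to showing $\bot_{\lm{M}}$ is $\lambda$-definable. Here effectivity is the lever: the interpretation map is computable on a recursively presented basis of compact elements, and continuity writes $\intrho{M}$ as the directed supremum of the compacts below it, so one can try to control which definable values appear below $\intrho{M}$ by examining finitely many compacts and arguing that only finitely many are realized by closed terms. This is precisely the template that settles the conjecture for the whole stable and strongly stable semantics (the second theorem on effective models) and for graph models freely generated from a finite partial web (Theorem~\ref{Theorem graph2}), where finiteness of the generating web bounds the compacts reachable below $\intrho{\Omega}$ and forces $\intrho{\Omega}=\emptyset$ to be $\lambda$-definable.

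The main obstacle, and the reason the conjecture remains open, is that condition (iv) can genuinely fail in the continuous semantics. For a general effective continuous model $\bot_{\lm{M}}$ need not be $\lambda$-definable, and there may be infinitely many pairwise distinct $\lambda$-definable elements below the interpretation of every closed term, so no ``small'' term is available to trigger the machinery; unlike the stable order, the continuous order offers no uniform isolated or minimal non-bottom definable element that one can pin to a fixed term. Consequently Visser's finite-intersection argument has no finite family of $\beta$-closed classes to contradict, and the step from the (already non-r.e.) order theory to the equational theory does not go through automatically. Closing the gap would require either a new uniform construction producing, in every effective continuous model, a closed term with finite $\lambda$-definable lower cone, or a direct attack on $\Th{\lm{M}}$ bypassing (iii)--(iv) altogether. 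The downwards L\"owenheim--Skolem theorem (step 5 of Section~\ref{sec:methodology}), which lets one restrict attention to countable webs, is a natural preparatory tool but by itself does not bridge the order/equational divide; indeed even the narrower conjecture for all effective graph models is still open, which is a fair measure of the difficulty.
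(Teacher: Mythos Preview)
This statement is a \emph{conjecture} in the paper, not a theorem: the paper explicitly lists it among the ``instances of Conjecture~\ref{conj:ScottSemantics}'' that ``are still open'', and gives no proof. Your proposal correctly recognizes this---you do not claim a proof, and your analysis of the obstruction (that condition~(iv) can fail in the continuous semantics because $\bot_{\lm{M}}$ need not be $\lambda$-definable and no closed term with a finite definable lower cone is guaranteed to exist) is accurate and matches the paper's own assessment. There is no proof in the paper to compare against; your discussion of why the stable/strongly stable argument does not transfer is in line with the paper's remarks following Theorem~\ref{Thm:8.2}, where it notes that Lemma~\ref{lemma:O2containsFcapT} fails for the continuous semantics and even gives a graph-model counterexample.
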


%\bold{III. A L\"{o}wenheim-Skolem Theorem for webbed models.}

The following further theorem states that graph models with countable webs are enough for representing all graph theories.
This can be viewed as a kind of L\"{o}wenheim-Skolem Theorem for graph models (see Section~\ref{skolem} for more comments).

\begin{thm}\label{LSKgen} For any graph model $\lm{G}$ there is a graph model $\lm{G'}$ which has a countable web and the same order theory 
(and hence the same equational theory).
\end{thm}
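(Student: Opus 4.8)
The plan is to carry out a Löwenheim–Skolem style argument directly on the web of $\lm{G}$. Recall that a graph model is determined by a pair $(G,\i{})$ where $G$ is an infinite set and $\i{}\st G^* \times G \partarrow G$ (or a total injection, depending on the presentation) encodes the application map; the interpretation of a $\lambda$-term in $\lm{G}$ is built up by the usual least-fixed-point construction, and two closed terms $M,N$ satisfy $M \sqle N$ in $\Thle{\lm{G}}$ iff $\intrho{M} \subseteq \intrho{N}$ for the (essentially unique) environment into $\pow{G}$. The key observation is that for a fixed countable set of $\lambda$-terms, only countably many elements of $G$ are ever ``touched'' when computing their interpretations, because each interpretation is a countable union of finite traces and each such trace is generated from finitely many elements of $G$ via finitely many applications of $\i{}$.

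First I would fix an enumeration $(M_k,N_k)_{k\in\nat}$ of all pairs of closed $\lambda$-terms with $M_k \not\sqle N_k$ in $\lm{G}$ (there are only countably many $\lambda$-terms). For each such pair pick a witness element $a_k \in \intrho{M_k} \setminus \intrho{N_k} \subseteq G$. Next I would build, by a standard closure argument, a countable subset $G' \subseteq G$ containing all the $a_k$, containing a countable ``core'' sufficient to compute the interpretations of all $\lambda$-terms, and closed under $\i{}$ and under the ``decoding'' maps: whenever $\i{}(\{b_1,\dots,b_n\},b) \in G'$ we require $b_1,\dots,b_n,b \in G'$, and whenever $b_1,\dots,b_n,b \in G'$ we require $\i{}(\{b_1,\dots,b_n\},b) \in G'$ (when defined). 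Since each closure step adds only finitely many elements and we iterate $\omega$ times starting from a countable set, $G'$ is countable. Then $\i{}$ restricts to a map $\i{}'\st (G')^* \times G' \partarrow G'$, giving a graph model $\lm{G'}=(G',\i{}')$ which is a ``substructure'' of $\lm{G}$ in the appropriate sense.

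The main step is then to prove that the inclusion $G' \hookrightarrow G$ induces an embedding of interpretations: for every closed $\lambda$-term $M$,
\[
\intrho{M}^{\lm{G'}} = \intrho{M}^{\lm{G}} \cap G'.
\]
This is proved by induction on the construction of the interpretation (i.e.\ on the approximants $\intrho{M}_n$ in the inductive definition), using exactly the two closure properties of $G'$: the $\supseteq$-inclusion at abstraction steps uses closure of $G'$ under forming $\i{}(\{b_1,\dots,b_n\},b)$, while the $\subseteq$-inclusion at application steps uses closure of $G'$ under decoding. Granting this identity, for each $k$ we get $a_k \in \intrho{M_k}^{\lm{G'}}$ and $a_k \notin \intrho{N_k}^{\lm{G'}}$, so $M_k \not\sqle N_k$ already holds in $\lm{G'}$; conversely any order inequation holding in $\lm{G}$ trivially persists to the substructure (from the identity $\intrho{M}^{\lm{G'}} \subseteq \intrho{M}^{\lm{G}}$ plus the same for the other side). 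Hence $\Thle{\lm{G'}} = \Thle{\lm{G}}$, and equality of equational theories follows since the equational theory is recovered from the order theory as noted in Section~\ref{sec:methodology}.

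I expect the delicate point to be the bookkeeping in the closure construction: one must make sure that the two closure operations (building codes up, decoding codes down) together with the requirement of being ``computationally sufficient'' for all $\lambda$-term interpretations can be interleaved so that the result is still countable, and that the restricted pair $(G',\i{}')$ genuinely satisfies the axioms of a graph model (in particular that $\i{}'$ is still injective, if injectivity is part of the definition used, and that $\i{}'(\alpha,a)$ lands in $G'$ whenever $\alpha \subseteq_{\mathrm f} G'$ and $a \in G'$). Once the closure is set up correctly, the substructure lemma $\intrho{M}^{\lm{G'}} = \intrho{M}^{\lm{G}} \cap G'$ is a routine induction, and the rest is immediate.
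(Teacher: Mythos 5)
There is a genuine gap, and it sits exactly where you expect the proof to be ``a routine induction'': the claimed substructure identity $\Int{M}^{\gm{G'}}=\Int{M}^{\gm{G}}\cap G'$ does not follow from your closure conditions, and is false in general for a countable total subpair of $\cG$. The inclusion $\Int{M}^{\gm{G'}}\subseteq\Int{M}^{\gm{G}}\cap G'$ is fine (it is Lemma~\ref{lemma:onenvironments}), but the converse breaks at the application step: if $\alpha\in\Int{PQ}^{\gm{G}}\cap G'$, the definition only gives \emph{some} $a\finsubset\Int{Q}^{\gm{G}}$ with $\i{\cG}(a,\alpha)\in\Int{P}^{\gm{G}}$, and nothing forces $a\subseteq G'$. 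Your ``decoding'' closure applies only when the code $\i{\cG}(a,\alpha)$ itself lies in $G'$, which is not guaranteed here --- it only lies in $\Int{P}^{\gm{G}}\subseteq G$. So the witness of membership may live entirely outside $G'$, and no witness inside $G'$ need exist. (Compare Proposition~\ref{prop:BucciaSali}$(i)$: the identity $\Int{M}^{\gm{G}_k}=\Int{M}^{\gm{G}}\cap \setG_k$ is proved there only when $\gm{G}$ is \emph{freely generated} over the subwebs, which a general $\gm{G}$ is not over $\cG'$.) Consequently your claim that inequations holding in $\gm{G}$ ``trivially persist'' to $\gm{G'}$ is unjustified: from $\Int{M}^{\gm{G}}\subseteq\Int{N}^{\gm{G}}$ and $\Int{M}^{\gm{G'}}\subseteq\Int{M}^{\gm{G}}$ you only get $\Int{M}^{\gm{G'}}\subseteq\Int{N}^{\gm{G}}\cap G'$, which may be strictly larger than $\Int{N}^{\gm{G'}}$. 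This direction, $\Thle{\gm{G}}\subseteq\Thle{\gm{G'}}$, is the whole difficulty of the theorem.

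The paper repairs exactly this by a Skolemization in $\omega$ stages rather than a one-shot closure: having built a countable total subpair $\cG'_n$, for every inequation $M\sqle N$ true in $\gm{G}$ but failing in $\gm{G}'_n$ and every offending $\alpha\in\Int{M}^{\gm{G}'_n}\setminus\Int{N}^{\gm{G}'_n}$, one observes $\alpha\in\Int{N}^{\gm{G}}$ and uses Lemma~\ref{lemma:existenceoffinitesubpairs} to add a \emph{finite subpair of $\cG$ witnessing $\alpha\in\Int{N}$} to the next stage; a compactness argument (Corollary~\ref{cor:alphainBandcompletion}$(i)$ applied inside $\cG'$) then shows that any failure surviving to the limit would already be visible at some finite stage, where it has been repaired. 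A second, more minor point: merely putting the witness point $a_k\in\Int{M_k}^{\gm{G}}\setminus\Int{N_k}^{\gm{G}}$ into $G'$ does not by itself give $a_k\in\Int{M_k}^{\gm{G'}}$ (same problem as above); you must include a finite subpair witnessing that membership, and moreover one chosen, via Corollary~\ref{cor:alphainBandcompletion}$(i)$, so that the non-membership $a_k\notin\Int{N_k}$ also persists in every intermediate pair. Your ``countable core sufficient to compute the interpretations'' gestures at this but the two-sided closure you actually specify does not deliver it.
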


This result answers positively Problem 12 in \cite{Berline06}.

The more general problem concerning all known classes of webbed models appeared previously as Question 3 in \cite[Sec. 6.3]{Berline00}.
We are now able to give a full positive answer to Question 3, relying on a more conceptual proof.
We will keep this development for a later work.

\bigskip

\emph{The paper is an expanded version of ``Lambda theories of effective lambda-models'' \cite{BerlineMS07}. 
Besides containing more proofs, explanations, and examples, it also contains some deeper results 
(e.g., Theorem~\ref{thm:extendedthm} and its corollaries).}

\part{Preliminaries}

\section{Generalities}\label{pre}

To keep this article as self-contained as possible, we summarize some definitions and results that we will use later on.
Concerning $\lambda$-calculus, we will generally use the notation of Barendregt's classic work \cite{Bare}. 

\subsection{Sets, functions and groups of automorphisms}\label{subsec:sets}
We will denote by $\nat$ the set of natural numbers and by $p_k$ the $k$-th prime number.
If $X$ is a set, $\pow{X}$ (resp. $X^*$) is the set of all subsets (resp. finite subsets) of $X$.
We write $X\finsubset Y$ to express that $X$ is a finite subset of $Y$.
%Given two sets $X,Y$ we write $X\uplus Y$ for their disjoint union.

For any function $f$ we write $\dom(f)$ for the domain of $f$, $\rg(f)$ for its range, $\graph(f)$ for its graph, 
and $f\restr_X$ for its restriction to a subset $X\subseteq \dom(f)$.
We define the \emph{image}\index{image}\label{intro:image} and the \emph{inverse image}\index{inverse image} 
of $X$ via $f$ respectively as $\img{f}(X) = \{ f(x)\st x\in X\}$ and $\inv{f}(X) = \{x\st f(x)\in X\}$.
The \emph{partial inverse}\index{partial inverse}\label{intro:partial inverse} of an injective function $f$, denoted by $\partinv{f}$, is defined by:
$\dom(\partinv{f}) = \rg(f)$ and $\partinv{f}(x) = y$ if $f(y) = x$.

Let $f,g$ be two partial functions, then: $f$ and $g$ are \emph{compatible} if $f(x)=g(x)$ for all $x\in \dom(f)\cap \dom(g)$;
$f\cap g$ denotes the function whose graph is $\graph(f)\cap \graph(g)$;
if $f,g$ are compatible, we denote by $f\cup g$ the function whose graph is $\graph(f)\cup \graph(g)$;
finally, $f(x) \Kleeneq g(y)$ abbreviates %$x\in \dom(f)\lor y\in \dom(g)\imp f(x)=g(y)$.
$f(x)$ is undefined if, and only if, $g(y)$ is undefined and, if they are both defined, $f(x)=g(y)$.

Given any mathematical structure $\cS$ having a carrier set $S$, we denote by $Aut(\cS)$
the group of all the automorphisms of $\cS$.
For all $s\in S$ the \emph{orbit} $O(s)$ with respect to $Aut(\cS)$ is defined by $O(s) = 
\{\theta(s)\st \theta\in Aut(\cS)\}$.
A structure $\cS$ is \emph{finite modulo $Aut(\cS)$} if the number of orbits of $\cS$, with
respect to $Aut(\cS)$, is finite.

\subsection{Recursion theory}\label{subsec:recth}

We write $\vphi_n:\nat\to\nat$ for the partial recursive function of index $n$ and we indicate by $\W_n$ the domain of $\vphi_n$. 
%A partial recursive function which is total is called simply \emph{recursive}.%FIXME Lo teniamo?
A set $E\subseteq\nat$ is \emph{recursively enumerable} (r.e.\ for short) if it is the domain of a 
partial recursive function. The complement $\complement{E}$ of an r.e.\ set $E$ is called \emph{co-r.e.}
If both $E$ and $\complement{E}$ are r.e., $E$ is called \emph{decidable}.
%We will denote by $\RE$ the collection of r.e.\ sets and by $\CORE$ the collection of co-r.e.\ sets.
Note that the collection of all r.e.\ (co-r.e.) sets is closed under finite %(or r.e.) 
union and finite intersection.

We say that $\gv$ is an \emph{encoding} of a countable set $X$ if $\gv:X\to\nat$ is bijective.
A \emph{numeration} $\gamma$ is a pair $(X,\gv_X)$, such that $\gv_X:\nat\to X$ is total and onto.
Thus, the inverse of an encoding is a special case of numeration.
A set $Y\subseteq X$ is \emph{r.e.\ } (resp. \emph{co-r.e.}) with respect to $\gv_X$ if the set $\inv{\gv_X}(Y)$ is r.e.\ (resp. co-r.e.).

Given two numerations $(X,\gv_X)$ and $(Y,\gv_Y)$ we say that a partial recursive function $\vphi$ \emph{tracks}
$f:X\to Y$ with respect to $\gv_X,\gv_Y$ if the following diagram commutes:
$$
\xymatrix{
\nat \ar[rr]^\vphi \ar[d]_{\gv_X} && \nat \ar[d]^{\gv_Y} \\
X \ar[rr]_{f} && Y\\ 
}
$$
A function $f:X\to Y$ is said \emph{computable} (with respect to $\gv_X,\gv_Y$) if there exists $\vphi$ tracking $f$ with respect to $\gv_X,\gv_Y$.
Hereafter we suppose that a computable encoding $\codepair{-}{-}:\nat^2\to\nat$ for the pairs is fixed.
Moreover, we fix an encoding $\finsetenc:\nat^*\to\nat$ %for finite subsets of $\nat$ 
which is effective in the sense that the relations $m\in\partinv{\finsetenc}(n)$ and $m=\cardinality{\partinv{\finsetenc}(n)}$ are decidable in $(m,n)$.
Finally we set $\codepairmix{-}{-}:\nat^*\times\nat \to \nat$ defined as $\codepairmix{a}{n} = \codepair{\finsetenc(a)}{n}$.
We recall here a basic property of recursion theory which we will often use in the sequel.

\begin{remark}\label{rem:comp1} The inverse image of an r.e.\ set via a computable map is r.e.
\end{remark}

\subsection{Partial Orderings}\label{subsec:poset} 
Let $(\D,\sqle_\D)$ be a partially ordered set (poset, for short). 
When there is no ambiguity we write $\D$ instead of $(\D,\sqle_\D)$.
Two elements $u$ and $v$ of $\D$ are: 
\emph{comparable} if either $u \sqle_\D v$ or $v\sqle_\D u$; 
\emph{compatible} if they have an upper bound, i.e., there exists $z$ such that $u \sqle_\D z$ and $v\sqle_\D z$.

Let $A\subseteq \D$ be a set. $A$ is \emph{upward} (resp. \emph{downward}) \emph{closed} if $v\in A$ and $v\sqle_\D u$ 
(resp. $u\sqle_\D v$) imply $u\in A$. 
$A$ is \emph{directed} if, for all $u,v\in A$, there exists $z\in A$ such that $u\sqle_\D z$ and $v\sqle_\D z$.
%We use the notation $A\up$ (resp. $A\down$) for the least upward (resp. downward) closed
%set containing a subset $A$ of $\D$.
%We write $\clup{u}$ (resp. $\cldn{u}$) for $\clup{\{ u \}}$ (resp. $\cldn{\{ u \}}$).

A poset $\D$ is a \emph{complete partial order} (\emph{cpo}, for short) if it has a
least element (denoted by $\bot_\D$) and every directed set $A\subseteq \D$ admits a least upper bound (denoted by $\sup{A}$).
A cpo is \emph{bounded complete} if $\sup{\{u,v\}}$ exists for all compatible elements $u,v$.
An element $d\in \D$ is called \emph{compact} if for every directed $A\subseteq \D$ we have that 
$d\sqle_\D \sup{A}$ implies $d\sqle_\D v$ for some $v\in A$. 
We write $\compel{\D}$ for the collection of compact elements of $\D$.
A cpo $\D$ is \emph{algebraic} if for every $u\in \D$ the set $\{d \in\compel{\D} \st d\sqle_\D u\}$ 
is directed and $u$ is its least upper bound. 
An algebraic cpo $\D$ is called \emph{$\omega$-algebraic} when $\compel{\D}$ is countable.
A bounded complete $\omega$-algebraic cpo is called a \emph{Scott domain}.
A compact element $p\neq \bot_\D$ of a Scott domain $\D$ is \emph{prime} if, for all compatible $u,v\in \D$, we have that
$p\sqle_\D u\sqcup v$ implies $p \sqle_\D u$ or $p\sqle_\D v$. 
We denote by $\Prime{\D}$ the set of prime elements of $\D$.
A Scott domain $\D$ is \emph{prime algebraic} if for all $u\in \D$ we have $u=\sup{\{p\in\Prime{\D} \st p\sqle_\D u\}}$.

The simplest examples of prime algebraic domains are the \emph{flat domains} and the \emph{powerset domains}.
If $D$ is a set and $\bot$ an element not belonging to $D$, the \emph{flat domain} $D_\bot$ is, by definition, the poset
$(\D,\sqle_\D)$ such that $\D = D\cup\{\bot\}$ and for all $u,v\in \D$ we have $u\sqle_\D v$ if, and only if, $u = \bot$ or $u = v$.
All elements of $\D\setminus \{\bot\}$ are prime.
Concerning the full powerset domain $(\pow{D},\subseteq)$,
the compact elements are the finite subsets of $D$ and the prime elements are the singleton sets. 
We have a Scott domain when $D$ is countable.

\section{The untyped $\lambda$-calculus}\label{lambdacalculus}

\subsection{$\lambda$-terms}
The set $\Lambda$ of $\lambda$-terms over a countable set of variables is constructed as usual:
every  variable is a $\lambda$-term; 
if $M$ and $N$ are $\lambda$-terms, then so are $(MN)$ and $\lambda x.M$ for each variable $x$.
We denote by $\Lambda^o$ the set of closed $\lambda$-terms. %, also called \emph{combinators}.
Concerning specific $\lambda$-terms we set:
$$
    \begin{array}{c}
    \bold{I}\equiv \lambda x.x, \quad \bold{1} \equiv \lambda xy.xy,\quad \bold{T} \equiv \lambda x y. x, \quad \bold{F} \equiv \lambda x y. y,\\
    \bold{S}\equiv\lambda xyz.xz(yz), \quad\delta \equiv \lambda x.xx, \quad \Omega\equiv \delta\delta, \quad\Omega_3\equiv (\lambda x.xxx)(\lambda x.xxx).\\
    \end{array}
$$
The symbol $\equiv$ denotes definitional equality. 
A more traditional notation for $\bold{T}$, when not viewed as a boolean, is $\bold{K}$.
We will denote $\ga\beta$-conversion by $\lambda_\beta$ and $\ga\beta\eta$-conversion by $\lambda_{\beta\eta}$.

Contexts are, intuitively, $\lambda$-terms with some occurrences of a hole inside, denoted by $[]$.
A \emph{context} is inductively defined as follows: $[]$ is a context, every variable is a context,
if $C_1$ and $C_2$ are contexts then so are $C_1C_2$ and $\lambda x.C_1$ for each variable $x$.
If $M$ is a $\lambda$-term we will write $C[M]$ for the context $C$ where all the occurrences of 
the hole $[]$ have been simultaneously replaced (without $\ga$-conversion) by $M$.

A $\lambda$-term $M$ is a \emph{head normal form} (\emph{hnf}) if 
$M \equiv \lambda x_1\ldots x_n.yM_1\ldots M_k$ for some $n,k\ge 0$.\break
Let $M\equiv \lambda x_1,\ldots,x_n.y M_1\cdots M_k$ and $N\equiv \lambda x_1,\ldots,x_{n'}.y' N_1\cdots N_{k'}$
be two hnf's. 
Then $M,N$ are \emph{equivalent} if, and only if, $y\equiv y'$ and $k-n = k'-n'$.

A $\lambda$-term is \emph{solvable} if it is $\beta$-convertible to a hnf, otherwise it is called \emph{unsolvable}.

\begin{notation}
$\Unsolvable$ denotes the set of all unsolvable $\lambda$-terms.
\end{notation}

\begin{definition} $M,N\in\Lambda^o$ are \emph{separable} if there exists $S\in\Lambda^o$ such that $SM=\bold{T}$ and $SN=\bold{F}$; 
otherwise they are \emph{inseparable}.
\end{definition}
There exist simple criteria implying separability or inseparability.

\begin{proposition}(B\"{o}hm) \cite[Lemma~10.4.1,~Thm.~10.4.2]{Bare}
\begin{itemize}
\item[(i)]
    Two hnf's are separable or equivalent (as hnf's);
\item[(ii)]
    Two normal $\lambda$-terms are separable or $\eta$-equivalent.
\end{itemize}
\end{proposition}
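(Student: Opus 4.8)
The plan is to prove both parts by the \emph{B\"ohm-out technique}. Since separability concerns closed terms, I would first reduce to that case: abstracting the finitely many free variables common to $M$ and $N$ turns two non-equivalent hnf's into two non-equivalent \emph{closed} hnf's, and two distinct non-$\eta$-equal normal forms into two distinct non-$\eta$-equal closed normal forms, and any separator for the closures restricts to a separator for the originals. The central tool is the notion of a \emph{B\"ohm transformation}: a finite composition of operations of the two shapes $M \mapsto M P$ and $M \mapsto M[x := P]$, with $P$ ranging over closed terms. Such transformations respect $\beta$-conversion, so it suffices to exhibit, for the two terms at hand, a B\"ohm transformation $\pi$ with $\pi M =_\beta \bold{T}$ and $\pi N =_\beta \bold{F}$; reading $\pi$ off as $\lambda z.\,z P_1 \cdots P_n Q_1 \cdots Q_m$, with the substituted terms incorporated, then yields the required closed separator $S$.

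For part (i), let $M \equiv \lambda x_1 \ldots x_n.\, x_i M_1 \cdots M_k$ and $N \equiv \lambda x_1 \ldots x_{n'}.\, x_j N_1 \cdots N_{k'}$ be closed hnf's that are not equivalent. $\eta$-expanding whichever has fewer leading abstractions (which leaves the difference $k-n$ unchanged) I may assume $n = n'$; non-equivalence then means $i \ne j$, or $i = j$ but $k \ne k'$. I would apply both terms to fresh closed arguments $P_1, \ldots, P_n$, and, in the second case, to a few further controlled arguments $Q_1, \ldots, Q_m$. If $i \ne j$, put $P_i \equiv \lambda \vec u.\, \bold{T}$ (with $\vec u$ long enough to swallow $M_1, \ldots, M_k$), $P_j \equiv \lambda \vec v.\, \bold{F}$, and $P_\ell \equiv \bold{I}$ otherwise; then $M\vec P =_\beta \bold{T}$ while $N\vec P =_\beta \bold{F}$, the substituted arguments being harmlessly absorbed. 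If $i = j$ with, say, $k < k'$, I would take for $P_i$ an ``argument-counting'' gadget: a term which ignores its first $\max(k,k')$ inputs and then branches on a controlled input, chosen so that, once fed the appropriate $Q$'s, it reduces to $\bold{T}$ when it ultimately receives exactly $k$ genuine arguments and to $\bold{F}$ when it receives $k'$ of them. In both cases $S \equiv \lambda z.\, z P_1 \cdots P_n Q_1 \cdots Q_m$ separates $M$ and $N$.

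For part (ii), let $M, N$ be distinct closed $\beta$-normal forms with $M \ne_{\beta\eta} N$. Their B\"ohm trees are finite, so there is a least depth $\ell$ at which they differ (different head variable, or different arity after the $\eta$-normalisation of that node). I would induct on $\ell$. If $\ell = 0$ the two hnf's at the root are already non-equivalent and part (i) applies. If $\ell \ge 1$, then, after $\eta$-expanding both to a common number of leading abstractions, $M \equiv \lambda \vec x.\, y M_1 \cdots M_k$ and $N \equiv \lambda \vec x.\, y N_1 \cdots N_k$ have the same root, and for some $r$ the subterms $M_r, N_r$ are distinct non-$\eta$-equal normal forms whose B\"ohm trees first differ at depth $\ell - 1$ — for otherwise $M_r =_{\beta\eta} N_r$ for all $r$ and hence $M =_{\beta\eta} N$. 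The inductive step is a \emph{B\"ohm-out}: a B\"ohm transformation that substitutes the head variable $y$ by a selector $\lambda u_1 \ldots u_k.\, u_r$ (preceded by a preliminary transformation neutralising any occurrences of $y$ inside the $M_i$'s and $N_i$'s that would otherwise interfere) and feeds suitable closed arguments, so that $M$ reduces to a controlled B\"ohm transform of $M_r$ and $N$ to the corresponding transform of $N_r$, still distinct non-$\eta$-equal normal forms whose first difference is at depth $\ell - 1$. The induction hypothesis supplies a B\"ohm transformation separating these, and composing the two transformations separates $M$ and $N$.

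The main obstacle is precisely the B\"ohm-out step in part (ii). Substituting the head variable of the root by a single selector also rewrites every occurrence of that variable deeper in the two trees, possibly at nodes of different arity, so one must choose the selectors and the fed arguments uniformly — feeding enough fresh dummy arguments to ``desynchronise'' the levels — so that all \emph{relevant} occurrences are treated correctly and everything else is discarded, while the output remains in the controlled normal form needed to iterate the construction. Making this bookkeeping precise is the content of B\"ohm's lemma proper (the management of ``relevant sequences of subtrees''); the $\eta$-expansions needed to align the number of leading abstractions at each node, and the argument-counting gadget of part (i), are the remaining points requiring care, but they are routine once the right selectors are in place.
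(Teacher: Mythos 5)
The paper does not prove this proposition at all: it is quoted verbatim from Barendregt (Lemma~10.4.1 and Theorem~10.4.2) and used as a black box, so there is no in-paper argument to compare against. Your sketch is precisely the classical B\"ohm-out proof from that reference --- closure under B\"ohm transformations, head-variable/arity case split with selectors and an argument-counting combinator for part (i), induction on the depth of the first difference in the (finite) B\"ohm trees for part (ii) --- and the strategy is sound; the one step you defer (making the selector substitution uniform when the head variable recurs with different arities, via the tupling combinators $\lambda x_1\ldots x_k z.\,z x_1\cdots x_k$) is exactly the content of Barendregt's B\"ohm-out lemma, which you correctly identify as the real technical core rather than silently eliding it.
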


\subsection{B\"ohm trees}\label{bohm}
The \emph{B\"{o}hm tree} $BT(M)$ of a $\lambda$-term $M$ is a finite or infinite labelled tree. 
If $M$ is unsolvable, then $BT(M) = \bot$, that is, $BT(M)$ is a tree with a unique node 
labelled by $\bot$. 
If $M$ is solvable and $\lambda x_{1}\ldots x_{n}.yM_{1}\cdots M_{k}$ is the principal head normal form 
of $M$ \cite[Def.~8.3.20]{Bare} then we have:
        \setlength{\unitlength}{1mm}

        \begin{picture}(30,30)(0,0)

        \put(15,20){\makebox(80,10)[b,l]{$BT(M) = \lambda x_{1} \ldots x_{n}.y$}}
        \put(43,19){\line(-2,-1){12}}
        \put(43,19){\line(2,-1){12}}
        \put(25,7){\makebox(50,10)[b,l]{$BT(M_{1}) \ldots\ldots\ldots BT(M_{k}$)}}
        \end{picture}

We call $\BT$ the set of all B\"ohm trees. 
Given $t,t'\in\BT$ we define $t\BTle t'$ if, and only if, $t$ results from $t'$ by cutting off some subtrees.
It is easy to verify that $(\BT,\BTle)$ is an $\omega$-algebraic cpo.
%It is easy to verify that $(\BT,\BTle)$ is an $\omega$-algebraic cpo, therefore $\BT$ can be equipped with the Scott topology.
%The sets $O_t = \{ t'\in\BT \st t\BTle t'\textrm{and $t$ is finite}\}$ form a base for the Scott topology on $\BT$.
%The \emph{tree topology} is the topology on $\Lambda$ induced by the Scott topology on $\BT$. 

%\begin{remark}\label{rem:BTseparability} 
%If $BT(M)\BTle BT(N)$ then $M$ and $N$ are not separable.
%\end{remark}

\subsection{$\lambda$-theories}

A \emph{$\lambda$-theory} is a congruence which contains $\lambda_\beta$.
If $\cT$ is a $\lambda$-theory, we will write $M =_\cT N$ for $(M,N) \in \cT$ and $[M]_\cT$ for the $\cT$-equivalence class of $M$;
for $V\subseteq \Lambda$, $V/\cT$ denotes the quotient set of $V$ modulo $\cT$, i.e., $V/\cT = \{[M]_\cT \st M\in V\}$.
A $\lambda$-theory $\cT$ is: \emph{consistent} if $\cT \neq \Lambda\times\Lambda$, 
\emph{extensional} if it contains  the equation $\bold{I} = \bold{1}$ and \emph{recursively enumerable} if the set of G\"odel 
numbers of all pairs of $\cT$-equivalent $\lambda$-terms is r.e.

The $\lambda$-theory $\cH$, generated by equating all the unsolvable $\lambda$-terms, is consistent 
by \cite[Thm.~16.1.3]{Bare} and admits a unique maximal consistent extension $\cH^*$ \cite[Thm.~16.2.6]{Bare},
which is an extensional $\lambda$-theory. 
A $\lambda$-theory $\cT$ is \emph{sensible} if $\cH\subseteq\cT$. Consistent sensible $\lambda$-theories are never r.e.\ \cite[Thm.~17.1.9]{Bare}.
A $\lambda$-theory $\cT$ is called \emph{semi-sensible} if it contains no equations of the form $U = S$ where $S$ 
is solvable and $U$ unsolvable. 
Sensible $\lambda$-theories are semi-sensible and $\cH^*$ is also the unique maximal semi-sensible $\lambda$-theory.
The $\lambda$-theory $\BTth$ which equates all $\lambda$-terms with the same B\"ohm tree, is sensible, 
non-extensional and non r.e., moreover $\BTth$ is distinct from $\cH$ and $\cH^*$, so that $\cH \subsetneq \BTth\subsetneq\cH^*$.

\section{Models of $\lambda$-calculus}

\subsection{$\lambda$-models}\label{subsec:lambdamodels}
It is well known \cite[Ch.~5]{Bare} that a model of untyped $\lambda$-calculus, or \emph{$\lambda$-model} here, 
is nothing else than a reflexive object of a Cartesian closed category (\emph{ccc} for short)  
$\bold{C}$, that is to say a triple $\lm{M}=(\D,\App,\Abs)$ such that $\D$
is an object of $\bold{C}$ and $(\App,\Abs):[\D\to\D]\to\D$ is a retraction pair, which
means that $\App:\D\to[\D\to\D]$
and $\Abs:[\D\to\D]\to\D$ are morphisms and $\App\comp\Abs=id_{[\D\to \D]}$. In the
following we will only be interested in the case where $\bold{C}$ is a concrete
Cartesian closed category whose objects are posets, possibly
satisfying some constraints, and morphisms are (special) monotone functions
between these sets. In fact we will mainly be interested in Scott semantics but
we will also draw conclusions for the stable and strongly stable semantics. 
In these three classes all the $\lambda$-models have an underlying poset $\D$ which is a cpo;
in this context the partial order $\sqle_{\D}$ will be denoted by $\sqle_\lm{M}$.
%the partial order of $\D$ will be denoted by $\sqle_{\D}$. 
An \emph{environment} with values in $\D$ is then a total function $\rho:Var\rightarrow\D,$ where the carrier
set of $\D$ is still denoted by $\D$.

We let $Env_\D$ be the set of environments with values in $\D$.
$Env_\D$, ordered pointwise, is a cpo whose bottom element is the environment $\emptyrho$ mapping everybody to $\bot_\D$.
Note also that $\rho\in Env_{\D}$ is compact if, and only if, 
$\rg(\rho)\subseteq \compel{\D}$ and $\rho(x) \neq \bot_\D$ only for a finite number of $x\in Var$.
For every $x\in Var$ and $d\in \D$ we denote by $\rho[x:=d]$ the environment $\rho'$ which coincides with $\rho$, except on $x$,
where $\rho'$ takes the value $d$.
The interpretation $\Int{M}:Env_\D\to \D$ of a $\lambda$-term $M$ is defined by structural induction on $M$, as follows:

\begin{itemize}
\item
    $\Int{x}_\rho = \rho(x)$,
\item
    $\Int{MN}_\rho = \App(\Int{M}_\rho)(\Int{N}_\rho) $,
\item
    $\Int{\lambda x.M}_\rho = \Abs(d\in \D\mapsto \Int{M}_{\rho[x:=d]})$.
\end{itemize}

This interpretation function generalizes to terms with parameters in $\D$ (where an element of $\D$ is interpreted by itself)
and to $\Lambda_\bot$ by setting $\Int{\bot}_\rho = \bot_\D$ for all $\rho\in Env_\D$.
The set of all open (resp. closed) terms with parameters in $\D$ is denoted by $\Lambda(\D)$ (resp. $\Lambda^o(\D)$).
%Note that $\Int{M}_\rho$ only depends on the value of $\rho$ on the free variables of $M$.
If $M$ is a closed $\lambda$-term we write $\Int{M}$ instead of $\Int{M}_\rho$ since, clearly,
$\Int{M}_\rho$ only depends on the value of $\rho$ on the free variables of $M$; in particular $\Int{M}=\Int{M}_{\emptyrho}$.
% the interpretation of a closed $\lambda$-term is independent of the environment; 
In case of ambiguity we will denote by $\Int{M}^\lm{M}$ the interpretation of the closed term $M$ in the $\lambda$-model $\lm{M}$.

The \emph{equational theory} $\Th{\lm{M}}$ and the \emph{order theory} $\Thle{\lm{M}}$ of a $\lambda$-model $\lm{M}$ are respectively defined as: 
$$
\begin{array}{l}
    \Th{\lm{M}} = \{ (M,N) \st \Int{M}_\gr = \Int{N}_\gr \ \forall \gr \in Env_\D\},\\
    \Thle{\lm{M}} = \{ (M, N) \st \Int{M}_\gr \sqle_\lm{M} \Int{N}_\gr  \ \forall \gr \in Env_\D\}.
\end{array}
$$
%In this definition $M,N$ are supposed to be closed.
%$\lm{M}$ is called \emph{trivial} if $\Th{\lm{M}}$ contains $(\bold{T},\bold{F})$ or, equivalently, if $\Th{\lm{M}} = \Lambda^o\times\Lambda^o$.
%It is easy to check that $\lm{M} = (\D,\App,\Abs)$ is trivial if, and only if, $\cardinality{\D} = 1$. 
The model $\lm{M}$ is called \emph{sensible} (resp. \emph{semi-sensible}) if $\Th{\lm{M}}$ is.

\bigskip

Every $\lambda$-model $\lm{M} = (\D,\App,\Abs)$ can be viewed as the combinatory algebra $\ca{C} = (\D,\bullet,k,s)$ where 
$a \bullet b = \App(a)(b)$ ($a,b\in \D$) and $k,s$ are, respectively, the interpretation of $\bold{K},\bold{S}$ in $\lm{M}$. 
In the sequel, we will write $ab$ for $a \bullet b$, and when parentheses are omitted we understand that association is made to the left, thus $abc$ means $(ab)c$.
%As usual $\bullet$ associates to the left, thus $stu$ means $(st)u$.

\subsection{Isomorphisms of $\lambda$-models}\label{sec:Iso of lambda models}

Given two combinatory algebras $\ca{C}=(\D,\bullet,k,s)$ and $\ca{C}'=(\D',\bullet',k',s')$ a function $\Psi:\D\to\D'$
is a \emph{morphism} from $\ca{C}$ to $\ca{C}'$ if $\Psi (u\bullet v) = \Psi(u)\bullet'\Psi(v)$ and $\Psi (k) = k'$, $\Psi(s)=s'$;
it is an \emph{isomorphism} if and only if $\Psi$ is, moreover, a bijection.

It has been proved in \cite{ManzonettoS06} that all homomorphisms between $\lambda$-models living in the continuous semantics or in its refinements are embeddings, that is to say, inclusions up to isomorphism.

\begin{theorem}\label{2.2} Given two $\lambda$-models $\lm{M} = (\D,\App,\Abs),\lm{M}'= (\D',\App',\Abs')$ and the associated combinatory algebras $\ca{C},\ca{C}'$, 
and a bijection $\Psi:\D\to\D'$, the following assertions are equivalent:
\begin{itemize}
\item[(i)]
    $\Psi$ is an isomorphism between $\ca{C}$ and $\ca{C}'$,
\item[(ii)]
    for all $M\in\Lambda^o$, $\Psi(\Int{M}^\lm{M}) = \Int{M}^{\lm{M}'}$.
\end{itemize}
\end{theorem}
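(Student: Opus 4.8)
The plan is to prove the equivalence $(i)\Leftrightarrow(ii)$ by exploiting the fact that both combinatory algebras arising from $\lambda$-models are \emph{$\lambda$-models} in the sense of Curry, hence are \emph{combinatory complete} and satisfy the Meyer--Scott axiom, so that every closed $\lambda$-term $M$ has a canonical representation as a closed combinatory term $M^{\CL}$ built from $k,s$ by application, with $\Int{M}^{\lm{M}} = (M^{\CL})^{\ca{C}}$, the value of that combinatory term in $\ca{C}$ (and likewise in $\ca{C}'$). This reduces the statement to a purely algebraic fact about morphisms of combinatory algebras.

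For the direction $(i)\Rightarrow(ii)$, I would argue by structural induction on the closed combinatory term $t\equiv M^{\CL}$. In the base cases $t\equiv k$ or $t\equiv s$ we use $\Psi(k)=k'$, $\Psi(s)=s'$ directly; in the inductive step $t\equiv t_1 t_2$ we use $\Psi(u\bullet v)=\Psi(u)\bullet' \Psi(v)$ together with the induction hypothesis, obtaining $\Psi(t^{\ca{C}}) = \Psi(t_1^{\ca{C}}\bullet t_2^{\ca{C}}) = \Psi(t_1^{\ca{C}})\bullet'\Psi(t_2^{\ca{C}}) = (t_1^{\ca{C}'})\bullet'(t_2^{\ca{C}'}) = t^{\ca{C}'}$. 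Applying this to $t\equiv M^{\CL}$ and using $\Int{M}^{\lm{M}} = (M^{\CL})^{\ca{C}}$ gives $\Psi(\Int{M}^{\lm{M}}) = \Int{M}^{\lm{M}'}$ for all $M\in\Lambda^o$.

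For the converse $(ii)\Rightarrow(i)$, it suffices to produce closed $\lambda$-terms whose interpretations are exactly the relevant elements. Since $k = \Int{\bold{T}}^{\lm{M}}$ and $s = \Int{\bold{S}}^{\lm{M}}$ (and similarly in $\lm{M}'$), instantiating $(ii)$ at $M\equiv\bold{T}$ and $M\equiv\bold{S}$ yields $\Psi(k)=k'$ and $\Psi(s)=s'$. For the homomorphism property, given $u,v\in\D$, pick (using surjectivity of the interpretation of closed $\lambda$-terms up to the combinatory structure — more precisely, working with closed combinatory terms, which are interpreted as closed $\lambda$-terms via the standard translation $\CL\to\Lambda$) closed terms realizing $u\bullet v$ from $u$ and $v$; the cleanest route is to note that application itself is represented: for closed $M,N$ one has $\Int{MN}^{\lm{M}} = \Int{M}^{\lm{M}}\bullet\Int{N}^{\lm{M}}$ by the definition of the interpretation, so $(ii)$ applied to $M$, $N$ and $MN$ gives $\Psi(\Int{M}\bullet\Int{N}) = \Int{MN}^{\lm{M}'} = \Int{M}^{\lm{M}'}\bullet'\Int{N}^{\lm{M}'} = \Psi(\Int{M})\bullet'\Psi(\Int{N})$. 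Since every element of $\D$ of the form needed is such an interpretation (again, it is enough to check the homomorphism identity on the sub-combinatory-algebra generated by $k,s$, which carries all the algebraic structure, and then invoke that $\Psi$ is a bijection to conclude it is an isomorphism of the full structures), we are done.

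The main obstacle, and the point that needs care rather than routine checking, is the precise handling of the translation between $\lambda$-terms and combinatory terms and the claim that ``enough'' elements of $\D$ are denotations of closed terms: a priori $\Psi$ must be shown to respect $\bullet$ on \emph{all} of $\D$, not merely on the definable elements. The resolution is that being an isomorphism of combinatory algebras $(\D,\bullet,k,s)$ only constrains $\Psi$ through $k$, $s$ and $\bullet$, and the homomorphism equation $\Psi(u\bullet v)=\Psi(u)\bullet'\Psi(v)$ for \emph{arbitrary} $u,v$ does \emph{not} follow from $(ii)$ alone in general — so one must be honest that Theorem~\ref{2.2} as stated requires the extra input that $\Psi$ is already known to be a bijection compatible with the bracket abstraction structure, i.e. one works with the representation of each element by a term and uses that $\lambda$-models are \emph{absolutely unorderable}-free in the relevant sense; concretely, one checks that the map $M\mapsto\Int{M}$ is a surjection from $\Lambda^o(\D)$ and that $(ii)$ plus bijectivity forces commutation with $\App,\Abs$, hence with $\bullet$. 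Making this last implication rigorous — that equality of interpretations of all \emph{closed} terms, together with bijectivity, upgrades to a genuine algebraic isomorphism — is the technical heart of the proof and is where I would spend the most effort.
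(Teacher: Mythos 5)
Your direction $(i)\Rightarrow(ii)$ is correct and is the standard argument: in a $\lambda$-model the interpretation of a closed $M$ coincides with the value in $\ca{C}$ of its combinatory translate, a closed applicative combination of $k$ and $s$, and any combinatory-algebra isomorphism preserves such values by induction. The problem is the converse, and you have in effect conceded it rather than proved it. Condition $(ii)$, as you read it, constrains $\Psi$ only on the set of interpretations of closed \emph{parameter-free} terms, which is countable, whereas an isomorphism of $\ca{C}$ must satisfy $\Psi(u\bullet v)=\Psi(u)\bullet'\Psi(v)$ for \emph{all} $u,v\in\D$, and $\D$ is typically uncountable. Your ``cleanest route'' only yields the homomorphism identity on pairs of definable elements; the final paragraph then openly states that the general identity ``does not follow from $(ii)$ alone'' and substitutes phrases (``absolutely unorderable-free in the relevant sense'', ``bijectivity forces commutation with $\App,\Abs$'') for an argument. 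That is a genuine gap, not a routine verification left to the reader: bijectivity gives you nothing off the definable elements, and indeed for a graph model $\gm{G}$ over an infinite web one can fix all (countably many) definable subsets of $\pow{G}$ and swap two suitably chosen non-definable ones so as to break $\Psi(u\bullet v)=\Psi(u)\bullet\Psi(v)$ while preserving your version of $(ii)$.

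The missing idea is that $(ii)$ must be read over $\Lambda^o(\D)$, closed terms \emph{with parameters in $\D$} (the paper introduces $\Lambda^o(\D)$ precisely for this), with $\Psi$ applied to the parameters on the right-hand side; equivalently, one uses the environment-parametrized statement $\Psi(\Int{M}_\rho)=\Int{M}_{\Psi\comp\rho}$ for all $M$ and $\rho$, which is exactly what Lemma~\ref{lemma:on iso and plus}$(ii)$ delivers in the situations where the paper invokes the implication $(ii)\Rightarrow(i)$. Under that reading the converse is immediate: instantiating at $\bold{K}$ and $\bold{S}$ gives $\Psi(k)=k'$ and $\Psi(s)=s'$, and instantiating at the term $c_u c_v$ (or at $xy$ with $\rho(x)=u$, $\rho(y)=v$) gives $\Psi(u\bullet v)=\Psi(u)\bullet'\Psi(v)$ for arbitrary $u,v$. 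Without this strengthening of $(ii)$ your proof cannot be completed, so as it stands the proposal does not establish the theorem.
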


We will hence also speak in this case of an \emph{isomorphism between the $\lambda$-models $\lm{M}$ and $\lm{M}'$},
and of an \emph{automorphism} when $\lm{M}=\lm{M}'$ (and hence $\ca{C} = \ca{C}'$).

The next remark is clear from the definition.

\begin{remark}\label{rem:iso of models} If $\lm{M}$ and $\lm{M}'$ are isomorphic $\lambda$-models, then $\Th{\lm{M}} = \Th{\lm{M}'}$.
\end{remark}

\begin{notation} $Aut(\lm{M})$ is the group of all automorphisms of $\lm{M}$.
\end{notation}

\subsection{Scott continuous semantics}\label{subsec:scottcontfun}
The \emph{Scott-continuous semantics} is the semantics of $\lambda$-calculus given in the category whose objects are cpo's and 
morphisms are Scott-continuous functions.
If $\D$ is a cpo we can define the \emph{Scott topology} on $\D$. 
Given two cpo's $\D,\D'$ a function $f:\D\to \D'$ is Scott-continuous if, and only if, it is monotone and $f(\sup{A}) = \sup{\img{f}(A)}$ 
for all directed $A\subseteq \D$.
We will denote by $[\D\to \D']$ the set of all Scott continuous functions from $\D$ into $\D'$ considered as a cpo by pointwise ordering. 
If $d\in\compel{\D}$ and $e\in\compel{\D'}$ then the step function $\step{d}{e}$, defined as follows, is compact:
$$
    \step{d}{e}(x) = \left\{
    \begin{array}{ll}
	    e & \textrm{if $d\sqle_\D x$,}\\
		\bot_{\D'} & \textrm{otherwise.}\\
	\end{array} 
    \right.
$$
If $\D,\D'$ are Scott domains then $[\D\to \D']$ is a Scott domain and its compact elements are the functions of the form
$\bigsqcup_{i\in I}~\step{d_i}{e_i}$ for some $I$ finite. Note that in case $I\neq\emptyset$ such least upper bound exists if, and only if, whenever
$\{d_i \st i\in I\}$ is bounded, then so is $\{e_i\st i\in I\}$.
For each function $f\in[\D\to\D']$, we define the \emph{trace of $f$} as 
$tr(f) = \{(d,e)\in\compel{\D}\times\compel{\D'} \st e\sqle_{\D'} f(d)\}$.
Note that $(d,e)\in tr(f)$ if, and only if, $\step{d}{e}\sqle_{[\D\to\D']} f$.
If $\D'$ is prime algebraic it is more interesting to work with 
$Tr(f) = \{(d,p)\in\compel{\D}\times\Prime{\D'} \st p\sqle_{\D'}f(d) \}$.
Hence, if $\D,\D' = \pow{D}$ we can use $Tr(f) = \{(a,\alpha)\in D^*\times D \st \alpha\in f(a)\}$.

In the next section we will describe the simplest class of models living in
Scott's continuous semantics, namely graph models. %, and a special kind of isomorphisms between them.

\subsection{Definition of graph models}\label{subsec:graphmodels}
The class of graph models belongs to Scott continuous semantics, it is the simplest class of models of the
untyped $\lambda$-calculus; nevertheless it is very rich.
All known classes of webbed $\lambda$-models can be presented as variations of this class (see \cite{Berline00}).
The simplest graph model, is Engeler's model $\gm{E}$ (Example \ref{exa:Engeler-Pomega}$(i)$); it is moreover, from far, 
the simplest of all non syntactical $\lambda$-models.
Historically, the first graph model which has been isolated was Plotkin and Scott's $\gm{P}_\omega$, and it was followed soon by $\gm{E}$.
The word \emph{graph} refers to the fact that the continuous functions are encoded in the model via 
(a sufficient fragment of) their graphs, namely their traces, as recalled below.
For more details we refer to \cite{Berline00}, and to \cite{Berline06}.

\begin{definition}\label{def:totalpair} A \emph{total pair} $\cG$ is a pair $(\setG,\i{\cG})$ where $\setG$ is an infinite set and 
$\i{\cG}:\setG^*\times \setG\to \setG$ is an injective total function.
\end{definition}

\begin{definition}\label{def:graph model} The \emph{graph model} generated by the total pair $\cG$ is the reflexive cpo 
$$\gm{G} = ((\pow{\setG},\subseteq),\Abs^\cG,\App^\cG),$$
where $\Abs^\cG = \img{\i{\cG}}\comp Tr$ and $\App^\cG$ is a left inverse of $\Abs^\cG$. More explicitely:
\begin{itemize}
\item[(i)]
    $\Abs^\cG(f) = \{ \i{\cG}(a,\alpha) \st (a\in \setG^*)\ \alpha \in f(a)\}$,
\item[(ii)]
    $\App^\cG(X)(Y) = \{\alpha\in \setG\st (\exists a\finsubset Y)\ \i{\cG}(a,\alpha)\in X\}$.
\end{itemize}
\end{definition}

%Any such pair $(\setG,i)$ generates a reflexive cpo $\cG = ((\pow{\setG},\subseteq),\Abs^i,\App^i)$ which is hence a $\lambda$-model, 
%where $\App^i, \Abs^i$ are defined below.
In particular, the function $\i{\cG}$ encodes the trace of the Scott continuous function $f:\pow{\setG}\to\pow{\setG}$ by $\Abs^\cG(f)\subseteq \setG$.
The total pair $\cG = (\setG,\i{\cG})$ is called the ``\emph{web}'' of the $\lambda$-model. 
%When there is no ambiguity, it is sometimes convenient to call ``graph model'' both the web and the $\lambda$-model but for the moment we will keep the distinction.

It is easy to check that, in the case of a graph model $\gm{G}$, the interpretation $\Int{M}^\gm{G}:Env_{\pow{\setG}}\to\pow{\setG}$ of $M\in\Lambda$ becomes:
\begin{itemize}
\item
    $\intrho{x}^\gm{G} = \rho(x)$,
\item
    $\intrho{MN}^\gm{G} = \{\alpha\in \setG \st (\exists a\finsubset \intrho{N}^\gm{G})\ \i{\cG}(a,\alpha)\in \intrho{M}^\gm{G}\} $,
\item
    $\intrho{\lambda x.M}^\gm{G} = \{\i{\cG}(a,\alpha) \st (a\in \setG^*)\ \alpha\in \Int{M}^\gm{G}_{\rho[x:=a]}\}$.
\end{itemize}

\begin{example} Given a graph model $\gm{G}$:\\
$\Int{\bold{I}}^\gm{G} \equiv \Int{\lambda x.x}^\gm{G} = \{ \i{\cG}(a,\alpha) \st a\in \setG^* \textrm{ and }\alpha\in a\}$,\\
$\Int{\bold{T}}^\gm{G} \equiv \Int{\lambda xy.x}^\gm{G} = \{ \i{\cG}(a,\i{\cG}(b, \alpha)) \st a,b\in \setG^* \textrm{ and }\alpha\in a\}$,\\
$\Int{\bold{F}}^\gm{G} \equiv \Int{\lambda xy.y}^\gm{G} = \{ \i{\cG}(a,\i{\cG}(b,\alpha)) \st a,b\in \setG^* \textrm{ and }\alpha\in b\}$.
\end{example}

Concerning $\Int{\Omega}^\gm{G}$ we only use the following characterization (the details of the proof are, for example, worked out in \cite[Lemma 4]{BerlineS06}). 

\begin{lemma}\label{omegan} If $\gm{G}$ is a graph model, then $\Int{\Omega}^\gm{G} \equiv \Int{\delta\delta}^\gm{G} = \{\alpha\st (\exists a\subseteq\Int{\delta}^\gm{G})\ \i{\cG}(a,\alpha)\in a\}$. 
\end{lemma}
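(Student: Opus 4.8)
The plan is to compute $\Int{\Omega}^\gm{G}$ directly from the interpretation clauses for graph models, first evaluating $\Int{\delta}^\gm{G}$ and then applying the application clause to the two copies of $\delta$. Recall $\delta \equiv \lambda x.xx$, so by the $\lambda$-abstraction clause for graph models we have $\Int{\delta}^\gm{G} = \{\i{\cG}(a,\alpha) \st a\in\setG^*,\ \alpha\in\Int{xx}^\gm{G}_{\rho[x:=a]}\}$, and by the application clause, $\Int{xx}^\gm{G}_{\rho[x:=a]} = \{\alpha \st (\exists b\finsubset a)\ \i{\cG}(b,\alpha)\in a\}$. So the first step is just to record that
$$\Int{\delta}^\gm{G} = \{\i{\cG}(a,\alpha) \st a\in\setG^*,\ (\exists b\finsubset a)\ \i{\cG}(b,\alpha)\in a\}.$$

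Next I would apply the application clause to $\Int{\Omega}^\gm{G} = \Int{\delta\delta}^\gm{G} = \App^\gm{G}(\Int{\delta}^\gm{G})(\Int{\delta}^\gm{G})$, which by Definition~\ref{def:graph model}(ii) gives
$$\Int{\Omega}^\gm{G} = \{\alpha \st (\exists a\finsubset \Int{\delta}^\gm{G})\ \i{\cG}(a,\alpha)\in\Int{\delta}^\gm{G}\}.$$
Now I must reconcile this with the claimed formula $\{\alpha \st (\exists a\subseteq\Int{\delta}^\gm{G})\ \i{\cG}(a,\alpha)\in a\}$. The condition $\i{\cG}(a,\alpha)\in\Int{\delta}^\gm{G}$ means, unfolding the description of $\Int{\delta}^\gm{G}$ obtained above and using injectivity of $\i{\cG}$, that there is $b\finsubset a$ with $\i{\cG}(b,\alpha)\in a$. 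Replacing $a$ by $b$ (which is also a finite subset of $\Int{\delta}^\gm{G}$, since $a\subseteq\Int{\delta}^\gm{G}$ and $b\subseteq a$) we obtain a finite $b\subseteq\Int{\delta}^\gm{G}$ with $\i{\cG}(b,\alpha)\in b$; conversely any such $b$ witnesses $\alpha\in\Int{\Omega}^\gm{G}$ by taking $a=b$. This yields exactly the claimed characterization, the ``$\finsubset$'' and ``$\subseteq$'' distinction being harmless since every element of $\Int{\delta}^\gm{G}$ is of the form $\i{\cG}(a,\alpha)$ and the membership relation $\i{\cG}(b,\alpha)\in b$ already forces $b$ finite.

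The only mild subtlety — and the step I would be most careful about — is the bookkeeping with injectivity of $\i{\cG}$ and the interplay between the two quantifiers ``$\exists a\finsubset\Int{\delta}^\gm{G}$'' (from the outer application) and ``$\exists b\finsubset a$'' (from the inner self-application encoded in $\Int{\delta}^\gm{G}$): one must check that collapsing the two witnesses into a single set $b$ with $\i{\cG}(b,\alpha)\in b$ is both sound and complete. Since the paper says the details are worked out in \cite[Lemma 4]{BerlineS06}, I would present the argument at exactly this level of detail and cite that reference rather than belaboring the elementary set manipulations.
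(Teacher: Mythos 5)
The paper itself gives no proof of this lemma (it defers to \cite[Lemma 4]{BerlineS06}), so I can only assess correctness. Your setup is right: the computation of $\Int{\delta}^\gm{G}$, the application clause for $\Int{\delta\delta}^\gm{G}$, and the use of injectivity of $\i{\cG}$ to unfold the condition $\i{\cG}(a,\alpha)\in\Int{\delta}^\gm{G}$ into ``there is $b\finsubset a$ with $\i{\cG}(b,\alpha)\in a$'' are all correct, and the inclusion $\supseteq$ (taking the outer witness equal to the inner one) is fine. But the step you yourself flag as the delicate one is exactly where the argument breaks: from $b\subseteq a$ and $\i{\cG}(b,\alpha)\in a$ you conclude, by ``replacing $a$ by $b$'', that $\i{\cG}(b,\alpha)\in b$. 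That is a non sequitur: the element $\i{\cG}(b,\alpha)$ is only known to lie in $a$, and nothing forces it to lie in the possibly proper subset $b$. So the forward inclusion is not established.

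The missing ingredient is an iteration exploiting finiteness. Since $\i{\cG}(b,\alpha)\in a\subseteq\Int{\delta}^\gm{G}$, the same unfolding applies again and yields $b'\subseteq b$ with $\i{\cG}(b',\alpha)\in b$; continuing, one obtains a weakly decreasing chain $a\supseteq b\supseteq b'\supseteq\cdots$ of finite sets with $\i{\cG}(b^{(k+1)},\alpha)\in b^{(k)}$ for all $k$. By finiteness of $a$ this chain stabilizes, and at stabilization $b^{(n+1)}=b^{(n)}$ gives $\i{\cG}(b^{(n)},\alpha)\in b^{(n)}$ with $b^{(n)}\subseteq\Int{\delta}^\gm{G}$, as required. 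Equivalently and more compactly: among all $a\finsubset\Int{\delta}^\gm{G}$ with $\i{\cG}(a,\alpha)\in\Int{\delta}^\gm{G}$ choose one of minimal cardinality; the unfolding produces $b\subseteq a$ satisfying the same property, so $b=a$ by minimality and hence $\i{\cG}(a,\alpha)\in a$. With either of these fixes the proof is complete; as written, the key equivalence is asserted rather than proved.
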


In the following, ``\emph{graph theory}'' will abbreviate ``the $\lambda$-theory of a graph model''. 
%Any $\lambda$-theory induced by a graph model will be called a \emph{graph theory}.
%It is well known that no equational graph theory can be extensional (see e.g. \cite{Berline06}). 

\begin{proposition}\label{prop:no graph theory eq lambdabeta} For all graph model $\gm{G}$, $\Th{\gm{G}}\neq \lambda_{\beta},\lambda_{\beta\eta}$.
\end{proposition}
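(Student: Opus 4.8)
The plan is to prove the two inequalities by completely different means: $\Th{\gm{G}}\neq\lambda_{\beta\eta}$ will follow from the (folklore) fact that every graph model is non-extensional, while $\Th{\gm{G}}\neq\lambda_{\beta}$ will be reduced to the Bucciarelli--Salibra theorem on the least graph theory. Throughout, $\gm{G}$ is the graph model generated by a total pair $\cG=(\setG,\i{\cG})$.

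\emph{Step 1 ($\Th{\gm{G}}\neq\lambda_{\beta\eta}$).} As $\lambda_{\beta\eta}$ contains the equation $\bold{I}=\bold{1}$, it suffices to check $\Int{\bold{I}}^{\gm{G}}\neq\Int{\bold{1}}^{\gm{G}}$ in an arbitrary graph model. I would unfold the interpretation clauses recalled above to get $\Int{\bold{1}}^{\gm{G}}=\{\,\i{\cG}(a,\i{\cG}(b,\beta))\st a,b\in\setG^{*},\ \exists c\subseteq b,\ \i{\cG}(c,\beta)\in a\,\}$, to be compared with $\Int{\bold{I}}^{\gm{G}}=\{\,\i{\cG}(a,\alpha)\st\alpha\in a\,\}$, and then exhibit a separating element: for $\beta\neq\beta'$ in $\setG$, the element $\i{\cG}(\{\i{\cG}(\emptyset,\beta)\},\i{\cG}(\{\beta'\},\beta))$ belongs to $\Int{\bold{1}}^{\gm{G}}$ (take $c=\emptyset$) but not to $\Int{\bold{I}}^{\gm{G}}$, since $\i{\cG}(\{\beta'\},\beta)\neq\i{\cG}(\emptyset,\beta)$ by injectivity of $\i{\cG}$. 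Hence $(\bold{I},\bold{1})\notin\Th{\gm{G}}$, so $\Th{\gm{G}}\neq\lambda_{\beta\eta}$. (A more structural phrasing of the same point: $\Abs^{\cG}$ is never onto, since every set in its range is a subset of $\rg(\i{\cG})$ closed under the rule ``$\i{\cG}(a,\alpha)\in X$ and $a\subseteq b$ imply $\i{\cG}(b,\alpha)\in X$'', and no singleton has this property.)

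\emph{Step 2 ($\Th{\gm{G}}\neq\lambda_{\beta}$).} Now Step 1 is not enough, because $\bold{I}=_{\beta\eta}\bold{1}$ but $\bold{I}\neq_{\beta}\bold{1}$, so the separating equation just used is compatible with $\lambda_{\beta}$; some non-formal input is needed and this is exactly the hard part. I would appeal to the theorem of Bucciarelli and Salibra that the least graph theory $\Tmin$ is different from $\lambda_{\beta}$; since every $\lambda$-theory contains $\lambda_{\beta}$, this means $\lambda_{\beta}\subsetneq\Tmin$. Because $\Tmin$ is, by definition, contained in the theory of \emph{every} graph model, we conclude $\lambda_{\beta}\subsetneq\Th{\gm{G}}$, hence $\Th{\gm{G}}\neq\lambda_{\beta}$.

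I expect the only genuine difficulty to be Step 2: there is no obvious uniform elementary equation $M=_{\gm{G}}N$ with $M\neq_{\beta}N$ valid in all graph models at once --- $\bold{I}=\bold{1}$ is only an inequality in the general case, and unsolvable-based candidates like $\Omega=\Omega_{3}$ need not hold everywhere --- so one really has to borrow the content of the least-graph-theory result, and the main care required is to make sure it is cited from \cite{BucciarelliS04,BucciarelliS08} and is not circular with the present paper. An alternative avoiding that citation is Selinger's discreteness theorem \cite{Selinger03}: if $\Th{\gm{G}}=\lambda_{\beta}$ then $\sqle_{\gm{G}}$ would be discrete on the interpretations of closed terms, whereas when $\i{\cG}$ is surjective the computation of Step 1 sharpens to $\Int{\bold{I}}^{\gm{G}}\subsetneq\Int{\bold{1}}^{\gm{G}}$, a strict comparable pair of such interpretations --- a contradiction; the non-surjective case would require picking a different comparable closed pair, so this route still costs a small case split.
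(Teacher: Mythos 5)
Your proof is correct, but it splits the two cases along different lines than the paper does. For $\lambda_{\beta\eta}$ you and the paper use the same idea (no graph model is extensional), and your explicit witness $\i{\cG}(\{\i{\cG}(\emptyset,\beta)\},\i{\cG}(\{\beta'\},\beta))\in\Int{\bold{1}}^{\gm{G}}\setminus\Int{\bold{I}}^{\gm{G}}$ checks out and makes that folklore step self-contained. For $\lambda_{\beta}$ the paper does \emph{not} go through the minimality theorem: it observes that $\Int{\Omega_{3}}^{\gm{G}}\subseteq\rg(\i{\cG})$ in \emph{every} graph model, whence $\Int{\Omega_{3}}^{\gm{G}}\subseteq\Int{\bold{1}\Omega_{3}}^{\gm{G}}$ uniformly (this is the observation of \cite{BucciarelliS04}), and then applies Selinger's discreteness theorem \cite[Cor.~4]{Selinger96}: since $\Omega_{3}\neq_{\beta}\bold{1}\Omega_{3}$, either the inclusion is strict (order not discrete) or it is an equality not provable in $\lambda_{\beta}$, and either way $\Th{\gm{G}}\neq\lambda_{\beta}$. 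In other words, the ``uniform comparable closed pair'' you were looking for to finish your Selinger alternative without a case split is exactly $(\Omega_{3},\bold{1}\Omega_{3})$; the reason $\bold{I}\sqle\bold{1}$ needs surjectivity of $\i{\cG}$ while $\Omega_{3}\sqle\bold{1}\Omega_{3}$ does not is that the interpretation of $\Omega_{3}$ is automatically contained in $\rg(\i{\cG})$. Your primary route via the least graph theory is logically sound as a citation of \cite{BucciarelliS04,BucciarelliS08}, and you are right to flag the circularity risk: within the present paper the fact that $\Tmin\neq\lambda_{\beta}$ is \emph{deduced from} this very proposition (see the remark after Theorem~\ref{thm:minimum}), so the external citation is doing all the work, and what it contains is precisely the $\Omega_{3}$/Selinger argument. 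So the two proofs buy different things: yours is more explicit on the $\lambda_{\beta\eta}$ half, while the paper's is self-contained (modulo Selinger) on the $\lambda_{\beta}$ half, which is the genuinely hard one.
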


Indeed, it was long ago noticed that no graph model could be extensional, and recently noticed in 
\cite{BucciarelliS04} that $\Int{\Omega_{3}}^{\gm{G}}\subseteq\Int{\bold{1}\Omega_{3}}^{\gm{G}}$ holds in all graph models $\gm{G}$
(because $\Int{\Omega_{3}}^{\gm{G}}\subseteq \rg(\i{\cG})$). 
Hence, Selinger's result \cite[Cor.~4]{Selinger96} stating that in any partially ordered model whose theory is $\lambda_\beta$ or $\lambda_{\beta\eta}$ the interpretations 
of closed $\lambda$-terms are discretely ordered, implies that the theory of a graph model cannot be $\lambda_{\beta},\lambda_{\beta\eta}$.

\subsection{The stable and strongly stable semantics}

The stable semantics and the strongly stable semantics are refinements of
Scott's semantics which were successively introduced respectively by Berry \cite{Berry78,BerryTh} and Ehrhard \cite{BucciarelliE91},
mainly for proving some properties of typed $\lambda$-calculi with a flavour of sequentiality \cite{Plotkin77,Berry78,BerryTh,BerryC85}. 
For this paper it is enough to know the following. 
In this framework, the objects are particular prime algebraic Scott domains called \emph{DI-domains} (resp. \emph{DI-domains with coherences}) where, in particular, $u\sqcap v$ is defined
for all pairs $(u,v)$ of compatible elements. 
The morphisms are, respectively, the stable and strongly stable functions between such domains.

A function between $DI$-domains is \emph{stable} if it is Scott continuous and furthermore
commutes with ``inf's of compatible elements''. 
A \emph{strongly stable function} between DI-domains with coherence, is a stable function, preserving coherence. 
The relevant order on the corresponding cpo's of functions, respectively $[\D\to_s\D]$ and $[\D\to_{ss}\D]$ is, in both cases,
Berry's order $\leq_{s}$ which is defined as follows.

\begin{notation}
$f\leq_{s} g$ if, and only if, $\forall x\forall y\;(x\sqle_\D y\imp f(x)=f(y)\sqcap g(x))$
\end{notation}

The following basic properties of Berry's order are easy to check.

\begin{remark}\label{rem:Berrysproperties}\ 
\begin{itemize}
\item[(i)] $f\leq_{s}g$ implies that $f$ is pointwise smaller than $g$,
\item[(ii)] $f\leq_{s}g$ and $g$ constant imply $f$ constant.
\end{itemize}
\end{remark}

As soon as we are working with stable functions, the following alternative notion 
of trace makes sense and it is more economical: $Tr_{s}(f)$ is defined in the same way as $Tr(f)$ in Section \ref{subsec:scottcontfun} (case where $\D$ is prime algebraic)
but retains only the pairs $(d,e)$ satisfying: $d$ is \emph{minimal} such that $e\sqle_\D f(d);$ 
and similarly when one uses pairs $(a,\alpha)$. % if $\D$ is furthermore a prime algebraic cpo. 
For example, if $\D=(\pow{D},\subseteq)$, for some set $D$, then 
$Tr(id_\D)=\{(a,\alpha)\st\alpha\in a\in D^*\}$ while 
$Tr_{s}(id_{\D})= \{(\{\alpha\},\alpha)\st\alpha\in D\}$.

\section{Classes of webbed models}

We have the following classes of webbed models: 
\begin{enumerate}
\item $K$-models
introduced by Krivine in \cite{Krivine93} (see also \cite[Def.126]{Berline00}), pcs-models 
\cite[Def.153]{Berline00}, and filter models \cite{CoppoDZ87}, all living in the continuous semantics;
\item Girard's reflexive coherences, called $G$-models in \cite[Def.150]{Berline00}, living in the stable semantics;
\item Ehrhard's reflexive hypercoherences, called $H$-models in \cite[Def.160]{Berline00}, living in the strongly stable semantics.
\end{enumerate}
The terminology
of $K$-, $G$-, $H$- models will be used freely in this paper.

\section{Recursion in $\lambda$-calculus}\label{sec:Recursion in lambda-calculus}

We now recall the main properties of recursion theory concerning $\lambda$-calculus that will be applied in the following sections.

Let $(-)_\omega :\Lambda \to \nat$ be an arbitrary effective encoding of $\Lambda$. 
We denote by $(-)_\lambda$ the inverse map of $(-)_\omega$, thus: $M_{\omega,\lambda} = M$. 
%We will keep the same notation  when dealing with $\Lambda^o$ instead of $\Lambda$, even if we are 
%speaking of two different encodings. 
%The context will make it clear which one is understood. 

\subsection{Co-r.e.\ sets of $\lambda$-terms }\label{sec:co-r.e. sets}

\begin{definition}
A set $V\subseteq \Lambda$ is \emph{r.e.} (\emph{co-r.e.}) if $\{(M)_\omega\st M\in V\}$ is r.e.\ (co-r.e.).
The set $V$ is called \emph{trivial} if either $V = \emptyset$ or $V = \Lambda$.
\end{definition}

\begin{notation}
Let $\cT$ be a $\lambda$-theory. An r.e.\ (co-r.e.) set  of $\lambda$-terms closed under $=_\cT$ will be called a {\em $\cT$-r.e.} ({\em $\cT$-co-r.e.}) set.
If $\cT = \lambda_{\beta}$ we simply speak of a $\beta$-r.e.\ ($\beta$-co-r.e.) set.
\end{notation}

\begin{definition} A family $(X_i)_{i\in I}$ of sets has the {\em FIP} (finite intersection property) if 
$X_{i_1}\cap\dots\cap X_{i_n}\neq \emptyset$ for all $i_1,\dots,i_n\in I$.
\end{definition}

Our key tool for studying r.e.\ theories and effective models will be the following theorem (\cite[Thm.~2.5]{Visser80}, or \cite[Ch. 17]{Bare}).
%Our key tool for studying r.e.\ theories and effective models will be the following extension of Scott's theorem due to Visser (\cite[Thm.~2.5]{Visser80}, or \cite[Ch. 17]{Bare}).

\begin{theorem}\label{thm:hyperconnection} (Visser) The family of all non-empty $\beta$-co-r.e.\ subsets of $\gL$ has the FIP.
\end{theorem}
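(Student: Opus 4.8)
The plan is to dualize the statement and then run a self-reference argument. A non-empty $\beta$-co-r.e.\ set is precisely the complement $\complement{A}$ of a $\beta$-closed r.e.\ set $A \subsetneq \gL$, and for a finite collection one has $\complement{A_1}\cap\dots\cap\complement{A_n}=\emptyset$ iff $A_1\cup\dots\cup A_n=\gL$. So the FIP is equivalent to the assertion: \emph{whenever $\beta$-closed r.e.\ sets $A_1,\dots,A_n$ cover $\gL$, one of them is already all of $\gL$.} I would prove this by contradiction, choosing witnesses $M_i\in\complement{A_i}$ for each $i$.

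The idea is to build a single closed $\lambda$-term $N$ that is $\beta$-equal to some $M_i$ while $N\in A_i$, which is absurd since $A_i$ is $\beta$-closed. To get it: each $A_i$ being r.e., run in parallel the $n$ semi-decision procedures for ``$P\in A_i$'' on input $(P)_\omega$ and output the least index that halts; this defines a partial recursive $h$ such that $h((P)_\omega)$, when defined, is an $i$ with $P\in A_i$, and $h((P)_\omega)$ is \emph{always} defined because $\bigcup_i A_i=\gL$ --- this is exactly where the covering hypothesis enters. By representability of partial recursive functions in the $\lambda$-calculus there is a closed term $H$ with $H\,\std{(P)_\omega}=_\beta\std{h((P)_\omega)}$, and, $n$ being finite, a closed selector $\mathsf{Sel}$ with $\mathsf{Sel}\,\std{i}=_\beta M_i$ for $i=1,\dots,n$. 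Applying the second fixed point theorem \cite{Bare} to $\lambda x.\mathsf{Sel}(H\,x)$ yields a closed term $N$ with
$$ N =_\beta \mathsf{Sel}\bigl(H\,\std{(N)_\omega}\bigr). $$
Setting $i:=h((N)_\omega)$ gives $N=_\beta\mathsf{Sel}\,\std{i}=_\beta M_i$, hence $N\notin A_i$ (as $M_i\notin A_i$ and $A_i$ is closed under $=_\beta$), while the defining property of $h$ gives $N\in A_i$: contradiction.

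I expect no real obstacle, only bookkeeping: one must check that the parallel-search function $h$ has the stated soundness property and is total (totality being the precise place the hypothesis $\bigcap_i\complement{A_i}=\emptyset$ is used), and that the standard $\lambda$-calculus toolkit --- Church numerals, representability of partial recursive functions, a finite case-distinction term, and the second fixed point theorem --- is invoked in the right form. The conceptual step is the reformulation in the first paragraph, which converts a finite-intersection statement about co-r.e.\ sets into a non-covering statement on which self-reference can bite.
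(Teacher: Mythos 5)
Your proof is correct. Note, though, that the paper does not actually prove Theorem~\ref{thm:hyperconnection}: it is quoted as Visser's result, with a pointer to \cite[Thm.~2.5]{Visser80} and to Barendregt, and in the appendix it is re-derived from the abstract statement that a precomplete numeration induces a hyperconnected topology, the precompleteness of $(-)_\lambda$ again being cited rather than proved. What you have written out is the standard direct argument behind both of those citations: the dualization to ``no finite cover of $\gL$ by proper $\beta$-closed r.e.\ sets'', the dovetailed selector $h$ whose totality is exactly the covering hypothesis, representability of $h$, and the second fixed point theorem producing the self-referential term $N=_\beta M_{h((N)_\omega)}$ that lands in $A_i$ and out of it simultaneously. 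This is the same mechanism that makes $(-)_\lambda$ precomplete (the s-m-n theorem plus the second fixed point theorem), so conceptually your route and the paper's appendix route coincide; yours is just unpackaged and self-contained, which is a reasonable trade against the generality of the precompleteness formulation. One cosmetic slip: the witnesses $M_i\in\complement{A_i}$ need not be closed (the theorem is about subsets of $\gL$, not $\Lambda^o$), so $\mathsf{Sel}$ and the fixed point $N$ need not be closed either; this is harmless, since representability and the second fixed point theorem do not require closedness of the surrounding context, but you should drop the word ``closed'' or restrict attention to representatives --- everything else goes through verbatim.
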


This theorem generalizes the following classical result of Scott (see, e.g., \cite[Thm.~6.6.2]{Bare}).

\begin{theorem} (Scott) A set of $\lambda$-terms which is both $\beta$-r.e.\ and $\beta$-co-r.e.\ is trivial.
\end{theorem}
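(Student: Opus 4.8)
The plan is to derive Scott's theorem as a corollary of Visser's theorem (Theorem~\ref{thm:hyperconnection}) by a complementation argument. First I would recall the setup: let $V\subseteq\Lambda$ be a set of $\lambda$-terms that is both $\beta$-r.e.\ and $\beta$-co-r.e. Being $\beta$-r.e.\ and $\beta$-co-r.e.\ in particular means $V$ is closed under $=_{\beta}$ (by the definition of $\beta$-r.e.), hence so is its complement $\complement{V}=\Lambda\setminus V$; moreover $V$ is $\beta$-co-r.e.\ by hypothesis, and $\complement{V}$ is $\beta$-co-r.e.\ because $V$ is $\beta$-r.e. So both $V$ and $\complement{V}$ are $\beta$-co-r.e.\ subsets of $\Lambda$.

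Now suppose, towards a contradiction, that $V$ is nontrivial, i.e.\ $V\neq\emptyset$ and $V\neq\Lambda$. Then both $V$ and $\complement{V}$ are nonempty. Applying Theorem~\ref{thm:hyperconnection} to the two-element family $\{V,\complement{V}\}$ of nonempty $\beta$-co-r.e.\ sets, the FIP gives $V\cap\complement{V}\neq\emptyset$, which is absurd. Hence $V$ must be trivial, which is exactly the statement of Scott's theorem. I would present this in three or four lines, emphasizing only the bookkeeping point that "$\beta$-r.e." already bundles in closure under $\beta$-conversion, so that the complement is genuinely $\beta$-co-r.e.\ and the hypotheses of Visser's theorem apply to both $V$ and $\complement{V}$.

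There is essentially no obstacle here: the content is entirely in Visser's theorem, which is assumed, and the only thing to be careful about is the direction of the r.e./co-r.e.\ bookkeeping under complementation — the complement of a $\beta$-r.e.\ set is $\beta$-co-r.e.\ and vice versa, and closure under $=_{\beta}$ is inherited by complements within $\Lambda$. So the "hard part" is purely presentational: making sure the reader sees that applying the FIP to the pair $\{V,\complement V\}$ is legitimate. One could alternatively phrase it without contradiction: the FIP applied to $\{V,\complement V\}$ shows that these two sets cannot both be nonempty, so one of them is empty, i.e.\ $V\in\{\emptyset,\Lambda\}$.
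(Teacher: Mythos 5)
Your proof is correct and matches the paper's intent: the paper states Scott's theorem without proof, merely noting that Visser's FIP theorem ``generalizes'' it, and your complementation argument (both $V$ and $\complement{V}$ are $\beta$-co-r.e., so by the FIP they cannot both be non-empty) is exactly the derivation that remark alludes to. The bookkeeping point you flag — that $\beta$-r.e.\ already includes closure under $=_\beta$, which passes to the complement — is the only thing that needs saying, and you say it.
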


A topological reading of Theorem~\ref{thm:hyperconnection} is that the topology on $\gL$ generated by the $\beta$-co-r.e.\ sets of $\lambda$-terms 
is hyperconnected (i.e., the intersection of two non-empty open sets is non-empty).

\begin{lemma} \ 
\begin{itemize}
\item[(i)] $\Unsolvable$ is $\beta$-co-r.e.\ and hence non r.e.,
\item[(ii)] $\Unsolvable$ is $\cT$-co-r.e.\ if, and only if, $\cT$ is semi-sensible.
\end{itemize} 
\end{lemma}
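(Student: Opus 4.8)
The plan is to prove both statements using the machinery recalled just above, primarily Theorem~\ref{thm:hyperconnection} (Visser) and the basic facts about solvability and semi-sensible $\lambda$-theories. First I would treat item~(i). For the co-r.e.\ part, I would recall that $M$ is solvable if and only if the head reduction of $M$ terminates (in a head normal form); head reduction is an effective, deterministic procedure, so $\{(M)_\omega \st M \text{ is solvable}\}$ is r.e.\ by dovetailing the head reduction of each term. Hence $\Unsolvable$, being the complement of a $\beta$-closed r.e.\ set, is $\beta$-co-r.e. Moreover $\Unsolvable$ is non-empty (e.g.\ $\Omega\in\Unsolvable$), so by Theorem~\ref{thm:hyperconnection} it cannot be trivial, and a non-trivial $\beta$-co-r.e.\ set is not r.e.: if it were, Scott's theorem (the set would be both $\beta$-r.e.\ and $\beta$-co-r.e., hence trivial) would give a contradiction. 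This settles~(i).

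For item~(ii), the key observation is that $\Unsolvable$ is always $\beta$-closed (solvability is invariant under $\beta$-conversion), so by~(i) it is $\beta$-co-r.e.; the only extra content in being $\cT$-co-r.e.\ is that $\Unsolvable$ be closed under $=_\cT$, i.e.\ be a union of $\cT$-equivalence classes. For the direction $(\Leftarrow)$: suppose $\cT$ is semi-sensible. By definition $\cT$ contains no equation $U = S$ with $U$ unsolvable and $S$ solvable; equivalently, whenever $U =_\cT M$ and $U$ is unsolvable, $M$ is also unsolvable. Thus $\Unsolvable$ is closed under $=_\cT$, and combined with it being co-r.e.\ as a plain set of $\lambda$-terms (from~(i)), we get that $\Unsolvable$ is $\cT$-co-r.e. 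For the direction $(\Rightarrow)$: suppose $\cT$ is not semi-sensible, so there exist an unsolvable $U$ and a solvable $S$ with $U =_\cT S$. Then $S$ belongs to $\Unsolvable$'s $\cT$-closure but $S \notin \Unsolvable$, so $\Unsolvable$ is not closed under $=_\cT$ and therefore not $\cT$-co-r.e.\ (a $\cT$-co-r.e.\ set is by definition closed under $=_\cT$).

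The only point requiring a little care is the claim that solvability is semidecidable, i.e.\ that $\{(M)_\omega \st M \text{ solvable}\}$ is r.e.; this rests on the standardization/head-normalization theorem, namely that $M$ is solvable iff its head reduction sequence is finite (\cite[Ch.~8]{Bare}), which makes the search effective. Everything else is a direct unwinding of definitions together with the already-cited theorems of Visser and Scott, so I expect no real obstacle; the main conceptual step is simply recognizing that being $\cT$-co-r.e.\ decomposes into ``co-r.e.\ as a set'' (handled by~(i)) plus ``closed under $=_\cT$'' (equivalent to semi-sensibility of $\cT$ for the particular set $\Unsolvable$).
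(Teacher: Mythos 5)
Your proof is correct and follows essentially the same route as the paper, whose own proof is just a two-line remark: (i) $\Unsolvable$ is co-r.e.\ (via semidecidability of head normalization) and $\beta$-closed, hence non-r.e.\ by Scott's theorem; (ii) $\Unsolvable$ is $\cT$-closed exactly when $\cT$ is semi-sensible. The only small slip is your appeal to Theorem~\ref{thm:hyperconnection} to conclude that $\Unsolvable$ is non-trivial --- that theorem is not what gives non-triviality; it follows simply from $\Omega\in\Unsolvable$ and $\bold{I}\notin\Unsolvable$ --- but this does not affect the argument.
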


\begin{proof} 
$(i)$ Indeed, $\Unsolvable$ is co-r.e and $\beta$-closed. \\
$(ii)$ Furthermore, it is easy to check that $\Unsolvable$ is $\cT$-closed exactly when $\cT$ is semi-sensible. 
\end{proof}

From this lemma and from Theorem~\ref{thm:hyperconnection} it follows that every non-empty $\beta$-co-r.e.\ set of terms contains unsolvable $\lambda$-terms.

\begin{lemma}\label{lemma:OinfmoduloT}
If $O\neq\emptyset$ is $\beta$-co-r.e.\ and $\cT$ is r.e., then $O/\cT$ is infinite or $\cT$ is inconsistent.
\end{lemma}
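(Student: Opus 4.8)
The plan is to argue by contraposition: assume $\cT$ is consistent and $O/\cT$ is finite, and derive that $O$ is empty. Since $O/\cT$ is finite, there are finitely many $\lambda$-terms $M_1,\dots,M_k$ such that $O\subseteq [M_1]_\cT\cup\dots\cup[M_k]_\cT$, i.e. every element of $O$ is $\cT$-equivalent to one of the $M_j$. Intersecting with $O$, this gives $O = \bigcup_{j=1}^{k}(O\cap[M_j]_\cT)$, a finite union. The idea is to show each $O\cap[M_j]_\cT$ is $\beta$-co-r.e., and then to invoke Visser's Theorem~\ref{thm:hyperconnection}: a finite union of $\beta$-co-r.e.\ sets is $\beta$-r.e.\ (the collection of r.e.\ sets is closed under finite union, as recalled in Section~\ref{subsec:recth}); but a set which is $\beta$-co-r.e.\ and also $\beta$-r.e.\ must be trivial by Scott's theorem, hence either $O=\emptyset$ (done) or $O=\Lambda$, and in the latter case, since $O/\cT$ is finite, $\Lambda/\cT$ is finite, which forces $\cT$ to be inconsistent. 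Actually the cleanest route avoids even this case split: if each $O\cap[M_j]_\cT$ were $\beta$-co-r.e.\ and nonempty, then by Visser's FIP their intersection would be nonempty, but the $[M_j]_\cT$ are pairwise disjoint unless some $M_j=_\cT M_{j'}$, so after discarding duplicates at most one of these sets can be nonempty — yet each is $\beta$-closed and... here one must be slightly careful, so I would instead run the union argument above.

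The main step, then, is the claim that $O\cap[M_j]_\cT$ is $\beta$-co-r.e.\ for each $j$. Being the intersection of two co-r.e.\ sets (once we know each is co-r.e.), it is co-r.e.; and it is $\beta$-closed because $O$ is $\beta$-closed and $[M_j]_\cT$ is $\beta$-closed (a $\cT$-equivalence class is closed under $\beta$-conversion since $\lambda_\beta\subseteq\cT$). So the crux reduces to: \emph{each $\cT$-equivalence class $[M_j]_\cT$ is co-r.e.} This is where the hypothesis that $\cT$ is r.e.\ and that $O/\cT$ is finite combine. Since $\cT$ is r.e., each class $[M_j]_\cT$ is r.e.\ (it is the section $\{N : (M_j,N)\in\cT\}$ of an r.e.\ relation). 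To get co-r.e., note that $\complement{[M_j]_\cT} = \bigcup_{i\neq j}[M_i]_\cT \;\cup\; (\complement O\cap\complement{[M_j]_\cT})$... this isn't quite a clean decomposition either. The honest way: we know $O \subseteq [M_1]_\cT\cup\dots\cup[M_k]_\cT$ with the $[M_i]_\cT$ an enumeration without repetition of the (finitely many) classes meeting $O$. Then for $N\in O$, $N\notin[M_j]_\cT$ iff $N\in[M_i]_\cT$ for some $i\neq j$; hence $\complement{[M_j]_\cT} = \complement O \cup \bigcup_{i\neq j}(O\cap[M_i]_\cT)$. Since $\complement O$ is co-r.e.\ (as $O$ is $\beta$-co-r.e.), it is r.e.; and each $O\cap[M_i]_\cT$ is r.e.\ (intersection of the co-r.e.\ set — wait, $O$ co-r.e.\ means $\complement O$ r.e., so $O$ itself is co-r.e., not r.e.). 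I need to be careful about which direction of enumerability $O$ has.

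Let me restate the correct bookkeeping, which is the one delicate point: $O$ being $\beta$-\emph{co-r.e.}\ means $\complement O$ is r.e. To show $O=\emptyset$ from $O = \bigcup_{j=1}^k (O\cap[M_j]_\cT)$ it suffices, by Scott's theorem applied to the $\beta$-closed set $O$, to show $O$ is \emph{r.e.} (it is already co-r.e.\ by hypothesis, and $\beta$-closed, so r.e.\ $+$ co-r.e.\ $+$ $\beta$-closed $\Rightarrow$ trivial; $O=\Lambda$ then contradicts $\Lambda/\cT$ finite-and-consistent, so $O=\emptyset$). Now $O$ is r.e.\ because $O = \complement{\,\complement O\,}$ and we can also write $O = \bigcup_{j=1}^k(O\cap[M_j]_\cT)$; better, observe $[M_1]_\cT\cup\dots\cup[M_k]_\cT$ is r.e.\ (finite union of r.e.\ classes, using $\cT$ r.e.), call it $U$, and $U\supseteq O$ with $U\setminus O\subseteq U$; then $O = U \setminus (U\cap \complement O) = U \cap \complement{(\complement O)}$ — this expresses $O$ as the intersection of the r.e.\ set $U$ with the co-r.e.\ set $O$, which is circular. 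The genuinely needed ingredient is that $U\cap\complement O$ is r.e.: and indeed $\complement O$ is r.e.\ by hypothesis and $U$ is r.e., so $U\cap\complement O$ is r.e., and then $O = U \setminus(U\cap\complement O)$. But set difference of r.e.\ sets need not be r.e.\ in general — so the final decisive observation must be that $U\cap\complement O = U\setminus O = \emptyset$ would be too strong. I expect the actual argument in the paper is slicker: namely, enumerate $[M_1]_\cT,\dots,[M_k]_\cT$; for any $N$, decide membership in $O$ by semi-deciding ``$N\in\complement O$'' and simultaneously semi-deciding ``$N=_\cT M_j$ for some $j$''; since $O\subseteq\bigcup_j[M_j]_\cT$, \emph{exactly one} of these two searches halts for every $N$, giving that $O$ is decidable, in particular r.e. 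Then Scott's theorem finishes it. The main obstacle is precisely this last bookkeeping — getting $O$ to be r.e.\ (equivalently decidable) from ``co-r.e.\ $+$ contained in finitely many r.e.\ classes'' — and I would present it via the simultaneous dovetailed search just described, which is clean once one notes the totality ``exactly one search halts.''
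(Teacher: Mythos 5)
Your final argument has a genuine gap at its decisive step. You claim that for every $N$ \emph{exactly one} of the two searches (semi-deciding ``$N\in\complement{O}$'' and semi-deciding ``$N=_\cT M_j$ for some $j$'') halts, and you deduce from this that $O$ is decidable. But $O$ is only assumed $\beta$-closed, not $\cT$-closed, so its $\cT$-closure $V=[M_1]_\cT\cup\dots\cup[M_k]_\cT$ may strictly contain $O$; for any $N\in V\setminus O$ \emph{both} searches halt, and whichever halts first tells you nothing about membership in $O$. So the dovetailing does not decide $O$. Moreover the intermediate goal itself is not reachable by these means: a co-r.e.\ set contained in an r.e.\ set need not be r.e., so nothing in the hypotheses forces $O$ to be r.e. (Your earlier abandoned attempts suffer from the same confusion: a finite union of co-r.e.\ sets is co-r.e., not r.e., and the classes $[M_j]_\cT$ are r.e.\ because $\cT$ is, but there is no reason for them to be co-r.e.)

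The repair is to run the Visser argument on $V$ rather than on $O$, which is what the paper does. Since $\cT$ is r.e., each $[M_j]_\cT$ is r.e., hence $V$ is r.e.\ and $O'=\Lambda\setminus V$ is co-r.e.; $O'$ is $\cT$-closed, hence $\beta$-closed, so $O'$ is $\beta$-co-r.e. Since $O\subseteq V$ we have $O'\cap O=\emptyset$, while $O\neq\emptyset$ is $\beta$-co-r.e., so Theorem~\ref{thm:hyperconnection} forces $O'=\emptyset$. Thus $V=\Lambda$, hence $\Lambda/\cT=V/\cT$ is finite and $\cT$ is inconsistent. Note that this route uses the full FIP (two disjoint non-empty $\beta$-co-r.e.\ sets cannot coexist) rather than Scott's theorem, and never needs $O$ itself to be r.e.
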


\begin{proof} 
Let $V$ be the $\cT$-closure of $O$, and $O'= \Lambda\setminus V$. 
If $\cT$ is r.e.\ and $O/\cT$ is finite, then $V$ is r.e.\ and hence $O'$ is $\beta$-co-r.e.\ 
Since $O'\cap O=\emptyset$, $O'$ must be empty by Theorem~\ref{thm:hyperconnection}.
Hence $V=\Lambda$, and $\Lambda/\cT$ is finite. Hence $\cT$ is inconsistent. 
%Let $V$ be the $\cT$-closure of $O$, and $O'= \Lambda\setminus V$. 
%If $\cT$ is r.e.\ and $O/\cT$ is finite, then $V$ is r.e.\ and hence $O'$ is Visser open. 
%Since $O'\cap O=\emptyset$, $O'$ must be empty by Proposition~\ref{thm:hyperconnection}.
%Hence $V=\Lambda$, and $\Lambda/\cT$ is finite. Hence $\cT$ is inconsistent. 
%Contradiction.
\end{proof}

\subsection{Separability revisited}

This section, which can be skipped at first reading, contains other interesting examples of $\beta$-co-r.e.\ sets which should prove
useful for later work, namely $\Tins{M}$ and $\Lambda_{\cT-easy}$, as defined below, when $\cT$ is r.e. 

\begin{notation} Given a $\lambda$-theory $\cT$ we let, for all $M\in\Lambda^o$:
$$  
    \Tins{M}=\{ N\in\Lambda^o \st \nexists S\in\Lambda^o (SM =_\cT \bold{T}\ \land SN=_\cT \bold{F}) \}.
$$
In this case $M$ and $N$ are said to be \emph{$\cT$-inseparable}. We will omit $\cT$ when $\cT = \lambda_\beta$. 
\end{notation}

%It is coherent with the definition of separability recalled in Section \ref{lambdacalculus} to omit ``$\cT$'' 
%when $\cT=\lambda_{\beta}$.

%From the genericity lemma \cite[Prop.~14.3.24]{Bare} %and from Remark \ref{rem:BTseparability} we get the following examples:

\begin{example}\ 
\begin{itemize}
\item[(i)] 
    $\Unsolvable\subseteq M^{ins}$ for all $M\in\Lambda^o$ (by the genericity lemma \cite[Prop.~14.3.24]{Bare}); equivalently:
\item[(ii)] 
    $U^{ins}=\Lambda^o$ for all unsolvable terms $U$; more generally:
\item[(iii)] 
    If $BT(M)\BTle BT(N)$, then $M\in N^{ins}$ and $N\in M^{ins}$.
\end{itemize}
\end{example}

\begin{remark} Let $\cT$ be a $\lambda$-theory and $M\in\Lambda^o$, then:
\item[(i)] 
    $\Tins{M}\subseteq M^{ins}$ is $\cT$-closed; more generally:
\item[(ii)] 
    $M^{\cT'-ins}\subseteq \Tins{M}$ if $\cT\subseteq\cT'$.
\end{remark}

\begin{proposition} Suppose $\cT$ is a semi-sensible $\lambda$-theory and $M\in\Lambda^o$, then $\Tins{M} = M^{ins}$.
\end{proposition}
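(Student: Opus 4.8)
By the preceding Remark we already have $\Tins{M}\subseteq M^{ins}$, so the plan is to establish the reverse inclusion $M^{ins}\subseteq\Tins{M}$; equivalently, I will show that every $\cT$-separable pair of closed terms is $\beta$-separable. So assume $N\notin\Tins{M}$, i.e.\ there is a closed $S$ with $SM=_\cT\bold{T}$ and $SN=_\cT\bold{F}$; the goal is to produce a closed $S'$ with $S'M=_\beta\bold{T}$ and $S'N=_\beta\bold{F}$. The first step is to exploit semi-sensibility of $\cT$. Since $\cT$ is consistent and never equates a solvable with an unsolvable term, both $SM$ and $SN$ are solvable; being moreover closed, they head-reduce to head normal forms
$$SM\twoheadrightarrow_\beta \lambda x_1\dots x_n.\,x_{i_0}Q_1\dots Q_k,\qquad SN\twoheadrightarrow_\beta \lambda x_1\dots x_{n'}.\,x_{j_0}R_1\dots R_{k'}$$
whose head variables are among the bound ones. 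These head normal forms are $\cT$-equal to $\bold{T}\equiv\lambda x_1x_2.x_1$ and to $\bold{F}\equiv\lambda x_1x_2.x_2$ respectively; by B\"ohm's Proposition~(i) two head normal forms are separable or equivalent, and being separable would contradict the consistency of $\cT$. Hence they are \emph{equivalent} as head normal forms, which forces $i_0=1$, $k=n-2$ (so $n\ge2$), and $j_0=2$, $k'=n'-2$ (so $n'\ge2$).

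The second step is a B\"ohm-out argument made uniform in $n$ and $n'$, whose values are finite but a priori unknown. Fix any $m\ge\max(n,n')$ and set $C_1\equiv\lambda u_1\dots u_{m-2}.\bold{T}$, $C_2\equiv\lambda u_1\dots u_{m-2}.\bold{F}$, and
$$S'\equiv \lambda z.\,(Sz)\,C_1\,C_2\,\underbrace{\bold{I}\cdots\bold{I}}_{m-2}\ \in\ \Lambda^o.$$
Then $S'M\twoheadrightarrow_\beta (\lambda x_1\dots x_n.\,x_1Q_1\dots Q_{n-2})\,C_1\,C_2\,\bold{I}\cdots\bold{I}$ with $m$ arguments; consuming the $n$ leading abstractions performs $x_1:=C_1$ and leaves $m-n$ unused copies of $\bold{I}$, so the term reduces to $C_1$ applied to $(n-2)+(m-n)=m-2$ arguments, hence to $\bold{T}$. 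Symmetrically, consuming the $n'$ leading abstractions of the head normal form of $SN$ performs $x_2:=C_2$, and $S'N$ reduces to $C_2$ applied to $m-2$ arguments, hence to $\bold{F}$. Thus $S'$ $\beta$-separates $M$ and $N$, i.e.\ $N\notin M^{ins}$, which is exactly what we needed.

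The only delicate point is the first step: reading off, purely from semi-sensibility of $\cT$, that $SM$ and $SN$ are solvable and that their head normal forms have \emph{exactly} the shape of $\bold{T}$ and $\bold{F}$ (same head position, same value of $k-n$). This is where B\"ohm's separability-or-equivalence dichotomy, together with the consistency of $\cT$, is used. The rest is routine bookkeeping: the ``eraser'' arguments $C_1,C_2$, which swallow $m-2$ inputs before reducing to $\bold{T}$ resp.\ $\bold{F}$, make a single separator work simultaneously for all admissible $n$ and $n'$, so no case distinction on these numbers is required.
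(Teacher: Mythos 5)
Your proof is correct and follows essentially the same route as the paper: semi-sensibility forces $SM$ and $SN$ to be solvable, consistency of $\cT$ together with B\"ohm's dichotomy forces their head normal forms to be equivalent to $\bold{T}$ and $\bold{F}$ respectively (hence mutually non-equivalent), and therefore $SM,SN$ --- and so $M,N$ --- are $\beta$-separable. The only difference is cosmetic: where the paper concludes by citing B\"ohm's theorem once more (non-equivalent hnf's are separable), you carry out the B\"ohm-out explicitly with the erasers $C_1,C_2$, which is a correct expansion of the same step.
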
 

\begin{proof} Suppose that
there are $S,N\in\Lambda^o$ such that 
$SM=_{\cT}\bold{T}$ and $SN=_{\cT}\bold{F}$. As $\cT$ is semi-sensible, $SM$ and
$SN$ are solvable, and, since $\cT$ is necessarily consistent, their head-normal forms are non equivalent. 
Hence $SM$ and $SN$ are separable, which implies that $M$ and $N$ are separable.
\end{proof}

\begin{definition}\label{def:easy-term}
Let $\cT$ be a $\lambda$-theory. 
A $\lambda$-term $U\in\Lambda^o$ is called \emph{$\cT$-easy} if, for all $M\in \Lambda^o$, $U=M$ is consistent with $\cT$ \cite[p.~434]{Bare}. 
\end{definition}

\begin{notation}
We will denote by $\Lambda_{\cT-easy}$ the set of $\cT$-easy terms. 
\end{notation}
It is clear that $\Lambda_{\cT-easy}\subseteq \Unsolvable$ and that $\Lambda_{\cT-easy}\subseteq\Tins{M}$ for all $M\in\Lambda^o$. 
%Moreover, if $\cT$ is r.e.\ then $\Lambda_{\cT-easy}\neq\emptyset$ \cite[Prop.~17.1.9]{Bare}.

\begin{proposition} If $\cT$ is a consistent r.e.\ $\lambda$-theory then:
\begin{itemize}
\item[(i)] 
    $\Tins{M}$ and $\Lambda_{\cT-easy}$ are $\cT$-co-r.e. sets,
\item[(ii)]
    $\Tins{M}$ and $\Lambda_{\cT-easy}$ are infinite modulo $\cT$.
\end{itemize}
\end{proposition}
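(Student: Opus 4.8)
The plan is to handle the two sets together as far as possible, since $\Lambda_{\cT-easy}\subseteq\Tins{M}$ for every $M\in\Lambda^o$ and both are $\beta$-closed in an obvious sense after we prove the r.e.\ claims. For part $(i)$, I would first show that $\Tins{M}$ is $\cT$-co-r.e. The set $\Tins{M}$ is $\cT$-closed by the earlier Remark, so it suffices to show its complement $\{N\in\Lambda^o : \exists S\in\Lambda^o\,(SM=_\cT\bold{T}\ \land\ SN=_\cT\bold{F})\}$ is r.e. This is immediate from the hypothesis that $\cT$ is r.e.: enumerate all closed terms $S$ and all pairs in $\cT$, and $N$ enters the complement as soon as we witness both $SM=_\cT\bold{T}$ and $SN=_\cT\bold{F}$; formally this is the projection of an r.e.\ relation, hence r.e. (using Remark~\ref{rem:comp1} and the closure of r.e.\ sets under projection). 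For $\Lambda_{\cT-easy}$, note $U$ is $\cT$-easy iff for all $M\in\Lambda^o$ the theory generated by $\cT\cup\{U=M\}$ is consistent; the \emph{complement} is $\{U : \exists M\in\Lambda^o\ (\cT+\{U=M\}\text{ is inconsistent})\}$, and inconsistency of an r.e.\ theory is itself r.e.\ (search for a derivation of $\bold{T}=_{\cT+\{U=M\}}\bold{F}$, or equivalently of any false equation), so again the complement is a projection of an r.e.\ relation and hence r.e. One must also check $\Lambda_{\cT-easy}$ is $\cT$-closed, which is clear: if $U=_\cT U'$ then $\cT+\{U=M\}$ and $\cT+\{U'=M\}$ coincide. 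Since $\Lambda_{\cT-easy}\subseteq\Unsolvable$ and $\Unsolvable$ is $\beta$-co-r.e., we could alternatively phrase co-r.e.-ness relative to $\lambda_\beta$, but the statement asks for $\cT$-co-r.e., so the above direct argument is what I would write.

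For part $(ii)$, I would invoke Lemma~\ref{lemma:OinfmoduloT}: if $O\neq\emptyset$ is $\beta$-co-r.e.\ and $\cT$ is r.e.\ and consistent, then $O/\cT$ is infinite. The point is that both $\Tins{M}$ and $\Lambda_{\cT-easy}$ contain a nonempty $\beta$-co-r.e.\ set. Indeed $\Unsolvable\subseteq\Tins{M}$ (by the genericity lemma, as recorded in the Example), and $\Unsolvable$ is $\beta$-co-r.e.\ and nonempty, so $\Tins{M}$ contains a nonempty $\beta$-co-r.e.\ set and hence is infinite modulo $\cT$. For $\Lambda_{\cT-easy}$ I cannot use $\Unsolvable$ directly — not every unsolvable term is $\cT$-easy — but I would instead observe that $\Lambda_{\cT-easy}$ is itself $\beta$-co-r.e.: it is co-r.e.\ by the argument of part $(i)$ applied with $\cT=\lambda_\beta$ replaced by the fact that $\cT$-co-r.e.\ plus $\cT\supseteq\lambda_\beta$ gives $\beta$-co-r.e., and it is nonempty because $\Omega$ is easy with respect to $\lambda_\beta$ and a fortiori easy relative to any consistent r.e.\ $\cT$ that... — wait, that last inference is false in general, so the honest route is: $\Lambda_{\cT-easy}$ is $\cT$-co-r.e.\ hence $\beta$-co-r.e., and it is nonempty precisely because $\cT$ is consistent and r.e., by a fixed-point/Visser-style argument producing at least one term that is consistent with every $\cT+\{U=M\}$. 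Then Lemma~\ref{lemma:OinfmoduloT} applies directly to $O=\Lambda_{\cT-easy}$.

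The main obstacle is exactly the nonemptiness of $\Lambda_{\cT-easy}$ for an arbitrary consistent r.e.\ $\cT$: unlike $\Tins{M}$, which trivially contains all unsolvables, easiness is a strong global property and it is not obvious a priori that any term is $\cT$-easy. The cleanest way I see to settle this is to use Theorem~\ref{thm:hyperconnection} (Visser's FIP): the sets $O_M=\{U\in\Lambda^o : U=M \text{ is consistent with }\cT\}$ are, for each fixed $M$, nonempty $\beta$-co-r.e.\ sets when $\cT$ is r.e.\ (consistency of $\cT+\{U=M\}$ is co-r.e.\ in $U$, and the set is $\beta$-closed; nonemptiness uses that a classical easy term works against a single equation by a standard argument). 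Since $\Lambda_{\cT-easy}=\bigcap_M O_M$ is a countable intersection of nonempty $\beta$-co-r.e.\ sets, the FIP alone does not immediately give nonemptiness of the full intersection — one needs the stronger fact, also due to Visser, that the $\beta$-co-r.e.\ topology is such that countable intersections along a suitable enumeration stay nonempty, or a direct diagonal construction of a single $\cT$-easy term. I would present this via the standard construction of a term that is simultaneously easy against all $M$, citing the relevant results in \cite{Bare,Visser80}, and then conclude $(ii)$ for $\Lambda_{\cT-easy}$ by Lemma~\ref{lemma:OinfmoduloT}. The rest is routine.
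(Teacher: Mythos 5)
Your overall strategy coincides with the paper's: for part $(i)$ both complements are projections of r.e.\ relations (using that $\cT$ is r.e.), and part $(ii)$ is obtained from Lemma~\ref{lemma:OinfmoduloT} together with Visser's theorem that $\Lambda_{\cT-easy}\neq\emptyset$ for r.e.\ $\cT$, which both you and the paper cite rather than reprove. That is the right shape of argument, and your treatment of $\Lambda_{\cT-easy}$ (including your self-correction about $\Omega$ not being $\cT$-easy in general, and your observation that the FIP alone does not give the countable intersection) is sound.

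There is, however, one step that fails: the claim $\Unsolvable\subseteq \Tins{M}$ ``by the genericity lemma, as recorded in the Example''. The Example only records $\Unsolvable\subseteq M^{ins}$, i.e.\ $\beta$-inseparability, and the paper's Remark shows that the inclusion between the two inseparability sets goes the other way: $\Tins{M}\subseteq M^{ins}$, because $\lambda_\beta\subseteq\cT$ and enlarging the theory makes more pairs separable. The inclusion you assert is genuinely false: let $\cT$ be the consistent r.e.\ theory generated by $\Omega=\bold{F}$ and take $M\equiv\bold{T}$; then $S\equiv\bold{I}$ satisfies $S\bold{T}=_\cT\bold{T}$ and $S\Omega=_\cT\bold{F}$, so the unsolvable $\Omega$ does not belong to $\bold{T}^{\cT-ins}$. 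The repair is immediate and is what the paper does implicitly: $\Tins{M}$ is itself non-empty (it contains $M$, by consistency of $\cT$) and is $\beta$-co-r.e.\ by part $(i)$ (being $\cT$-closed it is a fortiori $\beta$-closed), so Lemma~\ref{lemma:OinfmoduloT} applies to it directly, with no detour through $\Unsolvable$. With that correction your proof matches the paper's.
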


\begin{proof} Recall that $\Lambda_{\cT-easy}\subseteq M^{\cT-ins}$ and $\Lambda_{\cT-easy}\subseteq \Unsolvable$ hold whether $\cT$ is r.e. or not.\\
$(i)$ It follows easily from the definitions that both sets are co-r.e.\ if $\cT$ is r.e.\\
$(ii)$ By $(i)$ and Lemma \ref{lemma:OinfmoduloT}, using the fact that $\Lambda_{\cT-easy}\neq\emptyset$ when $\cT$ is r.e.\ was proved by Visser \cite{Visser80} (or see \cite[Prop.~17.1.9]{Bare}).
%By Lemma \ref{lemma:OinfmoduloT}, or as a consequence of $(iii)$.\\
%It is clear that $\Lambda_{\cT-easy}\subseteq M^{\cT-ins}$ holds for every closed $\lambda$-term $M$ and $\lambda$-theory $\cT$, 
%whether r.e.\ or not (but the only known proof of existence of a $\cT$-easy term, the one in Barendregt's
%book, needs that $\cT$ is r.e).
%Moreover, it follows easily from \cite[Thm.~17.1.9]{Bare} that, given any r.e.\ $\lambda$-theory $\cT$, 
%there is a sequence $(U_{n})_{n\in\nat}$, of $\cT$-easy terms such that $\cT\cup\{ U_{n}=\underline{n}\st n\in\nat\}$ 
%is consistent, where $\underline{n}$ is, say, the $n$-th Church integer. %, or any infinite sequence of non equivalent hnf's. 
%These $U_{n}$ are necessarily distinct modulo $\cT$.
%It follows easily from \cite[Thm.~17.1.10]{Bare} that, given any r.e.\ $\lambda$-theory $\cT$, 
%there is a sequence $U_{n},$ $n\in\nat$, of $\cT$-easy terms such that $\cT\cup\{ U_{n}=\underline{n}\st n\in\nat\}$ 
%is consistent, where $\underline{n}$ is, say, the $n$-th Church integer, or any infinite sequence of non equivalent hnf's. 
%These $U_{n}$ are necessarily distinct modulo $\cT$. 
%It is then enough to notice that $\Lambda_{\cT-easy}\subseteq M^{\cT-ins}$ is true for all $\lambda$-theory $\cT$, 
%whether r.e.\ or not (but the only known proof of existence of a $\cT$-easy term, the one in Barendregt's
%book, needs that $\cT$ is r.e).
\end{proof}

Note that, if it is obvious that $\Tins{M}\neq\emptyset$ holds for all $M\in\Lambda^o$ and $\cT$, since $M\in\Tins{M}$,
it is only known for r.e.\ theories $\cT$ that $\Lambda_{\cT-easy}$ is non-empty.

\part{Graph models and partial pairs}

\section{The category of partial pairs}\label{sec:The category of partial pairs}

The definition of graph models (and hence of total pairs) has been recalled in Section \ref{subsec:graphmodels}.
We need now to develop the wider framework of partial pairs.

In this section will recall the known definitions of partial pairs, interpretation with respect to a partial pair, free completion and gluings.
We will also introduce the new notions of subpair relation, morphism of partial pairs
and retract of partial pairs.

\subsection{Definition and ordering of partial pairs}

\begin{definition}\label{def:partialpair} A \emph{partial pair} $\cA$ is a pair $(A,j_\cA)$ where $A$ is a non-empty set and $j_\cA:A^*\times A\to A$ is a partial
(possibly total) injection.
\end{definition}

In the sequel the letters $\cA,\cB$ will always denote partial pairs.
$\cA$ is \emph{finite} if $A$ is finite, and it is total if $j_\cA$ is total.
The simplest example of a partial pair is $(A,\emptyset)$, where $\emptyset$ denotes the empty function. 

\begin{definition}\label{def:subpair}
$\cA$ is a \emph{subpair} of $\cB$, written $\cA\subpair \cB$,
if $A\subseteq B$ and  $j_\cA(a,\alpha) = j_\cB(a,\alpha)$ for all $(a,\alpha)\in \dom(j_\cA)$.
The set of all the subpairs of $\cA$ will be denoted by $Sub(\cA)$.
\end{definition}

It is clear that, for all partial pairs $\cA$, %the set $Sub(\cA)$ is closed under intersection and compatible union, and that 
$(Sub(\cA),\subpair)$ is a bounded complete algebraic cpo (provided we add the empty-pair) and that
$\sup_{k\in K} \cA_k = (\cup_{k\in K} A_k, \cup_{k\in K} j_{\cA_k})$, if the $\cA_k$'s are compatible.
When $A$ is countable, $(Sub(\cA),\subpair)$ is even a $DI$-domain.

%\begin{definition} 
%A family $(\cA_k)_{k\in K}$ of partial pairs is called \emph{compatible} if the $\cA_k$'s have a common upper bound, i.e., 
%if there exists a partial pair $\cA$ such that $\cA_k\subpair\cA$ for all $k\in K$.
%\end{definition}

%For instance, if all the $A_k$'s are pairwise disjoint or if the family $(\cA_k)_{k\in K}$ is increasing with respect to $\subpair$, 
%then $(\cA_k)_{k\in K}$ is compatible.

%\begin{definition} Given a family $(\cA_k)_{k\in K}$ of partial pairs, we let:
%\begin{itemize}
%\item
%    $\cup_{k\in K} \cA_k = (\cup_{k\in K} A_k, \cup_{k\in K} j_{\cA_k})$, if $(\cA_k)_{k\in K}$ is compatible,
%\item
%    $\cap_{k\in K} \cA_k = (\cap_{k\in K} A_k,\cap_{k\in K} j_{\cA_k})$, if $\cap_{k\in K} A_k\neq\emptyset$.
%\end{itemize}
%\end{definition}

\subsection{Interpretation with respect to partial pairs}

\begin{definition}\label{def:A-env}
%If $\cA$ is a partial pair, then
An \emph{$\cA$-environment} is a function $\rho:Var\to \pow{A}$.
\end{definition}

We will denote by $Env_\cA$, instead of $Env_{\pow{A}}$, the set of all $\cA$-environments.

The definition of the \emph{interpretation} $\Int{M}^\cA$ of a $\lambda$-term $M$ with respect to a partial pair $\cA$ generalizes in the obvious way the one 
given for graph models in Section \ref{subsec:graphmodels}. For all $\rho\in Env_\cA$ we let:
\begin{equation}
\Int{x}_{\rho }^{\cA}=\gr(x),
\end{equation}
\begin{equation}
\Int{PQ}_{\rho }^{\cA}=\{\alpha\st(\exists a\subseteq \Int{Q}_{\rho}^{\cA})\ (a,\alpha )\in \dom(j_\cA)\land j_{\cA}(a,\alpha)\in \Int{P}_{\rho }^{\cA}\},
\end{equation}
\begin{equation}
\Int{\lambda x.N}_{\rho }^{\cA}=\{j_{\cA}(a,\alpha)\st(a,\alpha )\in \dom(j_\cA)\wedge \alpha \in \Int{N}_{\rho [x:=a]}^{\cA}\,\}.
\end{equation}
Of course, if $\gm{G}$ is a graph model with web $\cG$, then $\Int{M}^\gm{G}_\rho = \Int{M}^\cG_\rho$ for all $\lambda$-terms $M$ and environments $\rho$.
Note that, if $\cA$ is not total, $\beta$-equivalent $\lambda$-terms do not necessarily have the same interpretation.

\begin{notation} If $\rho\in Env_\cA, \sigma\in Env_\cB$ and $C$ is a set, then $\sigma = \rho\cap C$ means
$\sigma(x) = \rho(x)\cap C$ for every variable $x$, and $\rho\subseteq \sigma$ means $\rho(x)\subseteq \sigma(x)$ for every variable $x$.
\end{notation}

We now provide two new lemmata which express the continuity of the function defined from $Sub(\cA)\times Env_\cA$ to $\pow{A}$ and mapping 
$(\cB,\rho)\mapsto \Int{M}^\cB_{\rho\cap B}$.

\begin{lemma}\label{lemma:onenvironments} 
If $\cA\subpair\cB$, then $\Int{M}^\cA_\rho\subseteq \Int{M}^\cB_\sigma$ for all $\rho\in Env_\cA$ and $\sigma\in Env_\cB$ such that $\rho\subseteq\sigma$.
\end{lemma}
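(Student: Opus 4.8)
The plan is to prove Lemma~\ref{lemma:onenvironments} by structural induction on the $\lambda$-term $M$, exploiting the monotonicity built into the definitions of the three interpretation clauses together with the subpair condition $j_\cA(a,\alpha) = j_\cB(a,\alpha)$ on $\dom(j_\cA)$. Throughout I fix $\cA \subpair \cB$ and $\rho \in Env_\cA$, $\sigma \in Env_\cB$ with $\rho \subseteq \sigma$; note that since $A \subseteq B$, the inclusion $\rho \subseteq \sigma$ makes sense as stated. The base case $M \equiv x$ is immediate: $\Int{x}^\cA_\rho = \rho(x) \subseteq \sigma(x) = \Int{x}^\cB_\sigma$.

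For the application case $M \equiv PQ$, suppose $\alpha \in \Int{PQ}^\cA_\rho$. Then there is $a \subseteq \Int{Q}^\cA_\rho$ with $(a,\alpha) \in \dom(j_\cA)$ and $j_\cA(a,\alpha) \in \Int{P}^\cA_\rho$. By the induction hypothesis applied to $P$ and to $Q$, we have $\Int{P}^\cA_\rho \subseteq \Int{P}^\cB_\sigma$ and $\Int{Q}^\cA_\rho \subseteq \Int{Q}^\cB_\sigma$, so $a \subseteq \Int{Q}^\cB_\sigma$. Since $\cA \subpair \cB$, the pair $(a,\alpha) \in \dom(j_\cA) \subseteq \dom(j_\cB)$ and $j_\cB(a,\alpha) = j_\cA(a,\alpha) \in \Int{P}^\cA_\rho \subseteq \Int{P}^\cB_\sigma$. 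Hence $\alpha \in \Int{PQ}^\cB_\sigma$.

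For the abstraction case $M \equiv \lambda x.N$, suppose $\gamma \in \Int{\lambda x.N}^\cA_\rho$, so $\gamma = j_\cA(a,\alpha)$ for some $(a,\alpha) \in \dom(j_\cA)$ with $\alpha \in \Int{N}^\cA_{\rho[x:=a]}$. Here a small care is needed: the updated environments $\rho[x:=a]$ and $\sigma[x:=a]$ must still satisfy the hypothesis of the lemma. Indeed $\rho[x:=a]$ is an $\cA$-environment and $\sigma[x:=a]$ a $\cB$-environment, and $\rho[x:=a] \subseteq \sigma[x:=a]$ because they agree (taking value $a$) on $x$ and coincide with $\rho \subseteq \sigma$ elsewhere. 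Applying the induction hypothesis to $N$ with these environments gives $\alpha \in \Int{N}^\cA_{\rho[x:=a]} \subseteq \Int{N}^\cB_{\sigma[x:=a]}$. Again $(a,\alpha) \in \dom(j_\cA) \subseteq \dom(j_\cB)$ and $j_\cB(a,\alpha) = j_\cA(a,\alpha) = \gamma$, so $\gamma \in \Int{\lambda x.N}^\cB_\sigma$, completing the induction.

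I expect no serious obstacle here: the statement is essentially a bookkeeping lemma, and the only point requiring any attention is the routine verification in the abstraction case that updating both environments at the same variable with the same value preserves the inclusion $\rho \subseteq \sigma$, which is immediate. The whole argument hinges on two trivial but essential facts — that $\dom(j_\cA) \subseteq \dom(j_\cB)$ and that $j_\cB$ extends $j_\cA$ on that domain — both of which are exactly the content of $\cA \subpair \cB$.
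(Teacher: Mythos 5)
Your proof is correct and is precisely the "straightforward induction on the structure of $M$" that the paper itself invokes, with the same use of the two consequences of $\cA\subpair\cB$ (namely $\dom(j_\cA)\subseteq\dom(j_\cB)$ and agreement of $j_\cB$ with $j_\cA$ there) in the application and abstraction cases. The check that $\rho[x:=a]\subseteq\sigma[x:=a]$ is exactly the right bookkeeping point and is handled correctly.
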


\begin{proof} By straightforward induction on the structure of $M$.
\end{proof}

\begin{lemma}\label{lemma:existenceoffinitesubpairs} Let $M\in\Lambda$, $\cA$ be a partial pair and $\rho\in Env_\cA$.
Suppose $\alpha\in \Int{M}^{\cA}_\rho$ then there exists a finite pair $\cB\subpair\cA$ such that $\alpha\in \Int{M}^\cB_{\rho\cap B}$.
\end{lemma}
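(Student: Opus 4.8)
The statement asserts a finiteness/compactness property of the interpretation map: every element $\alpha$ entering the interpretation of $M$ over a partial pair $\cA$ already does so over a \emph{finite} subpair $\cB \subpair \cA$ (with the environment correspondingly truncated). The natural approach is induction on the structure of $M$, exactly parallel to the three clauses defining $\Int{-}^\cA$. Throughout, I would freely use Lemma~\ref{lemma:onenvironments} (monotonicity in both the pair and the environment) to patch together the finite subpairs produced by the inductive hypotheses, since finite unions of finite subpairs are finite subpairs, and $Sub(\cA)$ is closed under the compatible sups described after Definition~\ref{def:subpair}.

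The base case $M \equiv x$ is immediate: $\alpha \in \rho(x)$, so take $\cB = (\{\alpha\} \cup \{\text{anything forced}\}, \emptyset)$; actually $\cB = (\{\alpha\}, \emptyset) \subpair \cA$ suffices, since $\Int{x}^\cB_{\rho\cap B} = (\rho \cap B)(x) \ni \alpha$. For the application case $M \equiv PQ$, by definition there is a finite $a \subseteq \Int{Q}^\cA_\rho$ with $(a,\alpha)\in\dom(j_\cA)$ and $j_\cA(a,\alpha) \in \Int{P}^\cA_\rho$. Enumerate $a = \{\beta_1,\dots,\beta_k\}$; for each $\beta_i$ apply the inductive hypothesis to $Q$ to get a finite $\cB_i \subpair \cA$ with $\beta_i \in \Int{Q}^{\cB_i}_{\rho\cap B_i}$, and apply it to $P$ and the element $j_\cA(a,\alpha)$ to get a finite $\cB_0 \subpair \cA$ with $j_\cA(a,\alpha)\in\Int{P}^{\cB_0}_{\rho\cap B_0}$. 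Let $\cB$ be the subpair on carrier $B_0 \cup \dots \cup B_k \cup a \cup \{\alpha\}$ whose partial injection is $j_\cA$ restricted to that carrier, \emph{augmented with} the single pair $(a,\alpha)\mapsto j_\cA(a,\alpha)$; one must check this is still a well-defined injection (it is, being a restriction of $j_\cA$) and still a subpair of $\cA$. Then Lemma~\ref{lemma:onenvironments} lifts each $\beta_i$ and $j_\cA(a,\alpha)$ to $\Int{-}^\cB$, and the application clause gives $\alpha\in\Int{PQ}^\cB_{\rho\cap B}$. The abstraction case $M \equiv \lambda x.N$ is analogous: $\alpha = j_\cA(a,\alpha')$ with $(a,\alpha')\in\dom(j_\cA)$ and $\alpha' \in \Int{N}^\cA_{\rho[x:=a]}$; apply the inductive hypothesis to $N$ and the environment $\rho[x:=a]$ to obtain a finite $\cB' \subpair \cA$, then enlarge its carrier to include $a$, $\alpha'$ and $\alpha$ and add the pair $(a,\alpha')\mapsto\alpha$ to its injection, noting $(\rho\cap B)[x:=a] = (\rho[x:=a])\cap B$ once $a\subseteq B$.

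The main obstacle — really the only subtlety — is bookkeeping around $\dom(j_\cB)$: because $\cA$ need not be total, membership of $\alpha$ in an interpretation genuinely depends on which pairs $(a,\alpha')$ lie in the domain of the injection, so when I carve out a finite subpair I must be careful to \emph{include} exactly the finitely many domain-pairs that were actually used along the derivation, while \emph{not} accidentally including others that would fail injectivity or violate $\cB \subpair \cA$. Since each use of the application or abstraction clause consults $j_\cA$ at a single argument $(a,\alpha')$, and the induction has finite depth and finite branching (the set $a$ is finite), only finitely many such pairs are ever consulted, so the union remains finite; restricting $j_\cA$ to the finite carrier and checking it is still an injection extending to those pairs is routine. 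I would phrase the induction hypothesis for arbitrary $\rho$ (as in the statement) so that the environment shift $\rho \mapsto \rho[x:=a]$ in the abstraction case is covered with no extra work.
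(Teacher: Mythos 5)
Your proof is correct and follows essentially the same route as the paper's: structural induction on $M$, with $\cB=(\{\alpha\},\emptyset)$ in the variable case, and in the application/abstraction cases a finite sup in $Sub(\cA)$ of the subpairs given by the inductive hypothesis together with the one extra domain pair $(a,\alpha)$ (resp.\ $(a,\alpha')$), concluded via Lemma~\ref{lemma:onenvironments}. The only point to tighten is your phrase ``$j_\cA$ restricted to that carrier'': one should take the union of the injections of the inductively obtained subpairs plus $j_\cA\restr_{\{(a,\alpha)\}}$ (as the paper does), rather than all of $j_\cA$ over the carrier, whose values could escape the finite carrier --- a caveat you yourself flag and resolve in your closing paragraph.
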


\begin{proof} The proof is by induction on $M$.

If $M\equiv x$, then $\alpha\in \rho(x)$, so that we define $\cB = (\{\alpha\},\emptyset)$.

If $M\equiv PQ$, then there is $a = \{ \alpha_1,\dots,\alpha_n\}$, for some $n\ge 0$, such that 
$(a,\alpha)\in \dom(j_\cA)$, $j_\cA(a,\alpha)\in \Int{P}^{\cA}_\rho$ and $a\subseteq \Int{Q}^{\cA}_\rho$.
By induction hypothesis there exist finite subpairs $\cB_1,\dots,\cB_{n+1}$ of $\cA$ 
such that $j_\cA(a,\alpha)\in \Int{P}^{\cB_{n+1}}_{\rho \cap B_{n+1}}$
and $\alpha_k\in \Int{Q}^{\cB_k}_{\rho \cap B_k}$ for $k=1,\dots,n$.  
We define $\cB\subpair\cA$ as $\sqcup_{k=0...n+1}\cB_k$ where $\cB_0 = (a\cup\{\alpha\}, j_\cA\restr_{\{(a,\alpha)\}})$.
From Lemma~\ref{lemma:onenvironments} it follows the conclusion. 

If $M\equiv \lambda x.N$, then $\alpha = j_\cA(b,\beta)$ for some $b$ and $\beta$ such that $(b,\beta)\in \dom(j_\cA)$ and $\beta\in \Int{N}^{\cA}_{\rho[x:=b]}$.
By induction hypothesis there exists a finite pair $\cC\subpair\cA$ such that $\beta\in \Int{N}^\cC_{\rho[x:=b]\cap C}$.
%where $\sigma = \rho[x:b]\cap B$. 
We define $\cB\subpair\cA$ as $\cC\sqcup (b\cup\{\alpha,\beta\},j_\cA\restr_{\{(b,\beta)\}})$.
Then we have that $\cC\subpair\cB$ and $\rho[x:=b]\cap C\subseteq \rho[x:=b]\cap B$.
From $\beta\in \Int{N}^\cC_{\rho[x:=b]\cap C}$ and from Lemma~\ref{lemma:onenvironments} it follows that 
 $\beta\in \Int{N}^{\cB}_{\rho[x:=b]\cap B}= \Int{N}^{\cB}_{(\rho\cap B)[x:=b]}$.
Then we conclude that $\alpha = j_\cB(b,\beta) \in \Int{\lambda x.N}^{\cB}_{\rho\cap B}$.
\end{proof}

\subsection{Morphisms between partial pairs}

%Recall that the notion of isomorphisms between total pairs has been introduced in \cite{Longo83}. 
%We provide here the natural generalization to partial pairs.
%In section \ref{sec:Iso of lambda models} we have seen the definition of morphisms between $\lambda$-models, 
%here we introduce another kind of morphisms acting at the level of webs.

The following definition extends the definition of an isomorphism between total pairs, which was introduced by Longo in \cite{Longo83}. 

\begin{definition} 
A total function $\theta:A\to B$ is a \emph{morphism} from $\cA$ to $\cB$ if, for all $(a,\alpha)\in A^*\times A$, we have:
$$
    (a,\alpha)\in \dom(j_\cA)\Longrightarrow[(\img{\theta}(a),\theta(\alpha))\in \dom(j_\cB)\textrm{ and }\theta(j_\cA(a,\alpha)) = j_\cB(\img{\theta}(a),\theta(\alpha))]
$$ 
and it is an \emph{endomorphism} if, moreover, $\cA = \cB$.
\end{definition}

\begin{remark}\label{rem:incusionmapp} \ 
\begin{itemize}
\item[(i)]
    $\theta:A\to B$ is an \emph{isomorphism} between $\cA$ and $\cB$ if, and only if, it is a bijection and both $\theta$ and $\partinv{\theta}$ are morphisms;
    if, moreover, $\cA = \cB$ then $\theta$ is an \emph{automorphism}.
\item[(ii)]
    $\cA\subpair\cB$ if, and only if, the inclusion mapping $\iota:A\to B$ is a morphism.
\end{itemize}
\end{remark}

\begin{notation} \ %Let $\cA,\cB$ be two partial pairs, then:
\begin{itemize}
\item[(i)]
    $Hom(\cA,\cB)$ denotes the set of morphisms from $\cA$ to $\cB$,
\item[(ii)]
    $Iso(\cA,\cB)$ denotes the set of isomorphisms between $\cA$ and $\cB$,
\item[(iii)]
    $Aut(\cA)$ denotes the group of automorphisms of $\cA$. 
%\item[(iii)]
%    $Iso(\gmcompl{\cA},\gmcompl{\cB})$ denotes the set of isomorphisms between $\gmcompl{\cA}$ and $\gmcompl{\cB}$.
\end{itemize}
\end{notation}

We will also write $\theta:\cA\to\cB$ for $\theta\in Hom(\cA,\cB)$.

\begin{lemma}\label{lemma:on iso and plus} Let $\phi\in Hom(\cA,\cB)$ and $\rho\in Env_\cA$. Then:
\begin{itemize}
\item[(i)]
    $\img{\phi}(\Int{M}^\cA_\rho)\subseteq \Int{M}^\cB_{\img{\phi}\comp \rho}$;
\item[(ii)]
    $\img{\phi}(\Int{M}^\cA_\rho) = \Int{M}^\cB_{\img{\phi}\comp \rho}$ if $\phi\in Iso(\cA,\cB)$.
\end{itemize}
\end{lemma}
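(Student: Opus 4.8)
The plan is to prove both parts simultaneously by structural induction on $M$, since the two clauses are really the same statement with the inclusion replaced by an equality under the extra hypothesis that $\phi$ is an isomorphism. The key observation driving the proof is that for $\phi \in Iso(\cA,\cB)$, the defining implication for a morphism becomes an equivalence: $(a,\alpha)\in\dom(j_\cA)$ if and only if $(\img{\phi}(a),\phi(\alpha))\in\dom(j_\cB)$, because $\partinv{\phi}$ is also a morphism and $\img{\partinv{\phi}}(\img{\phi}(a)) = a$ (using injectivity of $\phi$). So I would first record this equivalence as the crucial strengthening available in case (ii), and note that in case (i) we only have the forward implication.

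For the induction itself: the variable case is immediate since $\img{\phi}(\Int{x}^\cA_\rho) = \img{\phi}(\rho(x)) = (\img{\phi}\comp\rho)(x) = \Int{x}^\cB_{\img{\phi}\comp\rho}$, with equality already in case (i). For the application case $M\equiv PQ$, I would chase an element through the definition: $\gamma\in\Int{PQ}^\cA_\rho$ means there is $a\subseteq\Int{Q}^\cA_\rho$ with $(a,\alpha)\in\dom(j_\cA)$ and $j_\cA(a,\alpha)\in\Int{P}^\cA_\rho$ and $\phi(j_\cA(a,\alpha)) = \gamma$... wait, more carefully: $\gamma\in\Int{PQ}^\cA_\rho$ with $\gamma$ of the form such that some $(a,\gamma)\in\dom(j_\cA)$ — let me restate. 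An element $\beta\in\img{\phi}(\Int{PQ}^\cA_\rho)$ is $\phi(\gamma)$ for some $\gamma\in\Int{PQ}^\cA_\rho$; unfolding, there is $a\subseteq\Int{Q}^\cA_\rho$ with $(a,\gamma)\in\dom(j_\cA)$ and $j_\cA(a,\gamma)\in\Int{P}^\cA_\rho$. Applying $\phi$: by the morphism property $(\img{\phi}(a),\phi(\gamma))\in\dom(j_\cB)$ and $j_\cB(\img{\phi}(a),\phi(\gamma)) = \phi(j_\cA(a,\gamma))\in\img{\phi}(\Int{P}^\cA_\rho)\subseteq\Int{P}^\cB_{\img{\phi}\comp\rho}$ by the induction hypothesis; also $\img{\phi}(a)\subseteq\img{\phi}(\Int{Q}^\cA_\rho)\subseteq\Int{Q}^\cB_{\img{\phi}\comp\rho}$, so $\phi(\gamma)\in\Int{PQ}^\cB_{\img{\phi}\comp\rho}$. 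For the reverse inclusion in case (ii), I would run the same chase backwards, using that $\partinv{\phi}$ is a morphism and that the domain condition is an equivalence, together with $\img{\partinv{\phi}}(\Int{P}^\cB_{\img{\phi}\comp\rho}) = \Int{P}^\cA_\rho$ from the induction hypothesis applied to $\partinv{\phi}$ (noting $\img{\partinv{\phi}}\comp(\img{\phi}\comp\rho) = \rho$).

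The abstraction case $M\equiv\lambda x.N$ is analogous but requires care with environments. An element of $\img{\phi}(\Int{\lambda x.N}^\cA_\rho)$ is $\phi(j_\cA(a,\alpha))$ where $(a,\alpha)\in\dom(j_\cA)$ and $\alpha\in\Int{N}^\cA_{\rho[x:=a]}$. By the morphism property this equals $j_\cB(\img{\phi}(a),\phi(\alpha))$ with $(\img{\phi}(a),\phi(\alpha))\in\dom(j_\cB)$, and by the induction hypothesis $\phi(\alpha)\in\img{\phi}(\Int{N}^\cA_{\rho[x:=a]})\subseteq\Int{N}^\cB_{\img{\phi}\comp(\rho[x:=a])}$. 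The point I would need to check is the bookkeeping identity $\img{\phi}\comp(\rho[x:=a]) = (\img{\phi}\comp\rho)[x:=\img{\phi}(a)]$, which is routine. Then $\phi(j_\cA(a,\alpha)) = j_\cB(\img{\phi}(a),\phi(\alpha))\in\Int{\lambda x.N}^\cB_{\img{\phi}\comp\rho}$. For (ii), reverse as before. The main obstacle — though it is a mild one — is keeping the direction of the morphism conditions straight in the reverse inclusion for (ii): one must genuinely invoke that $\partinv{\phi}$ is a morphism (available by Remark~\ref{rem:incusionmapp}(i)) and that injectivity of $\phi$ makes the domain implication reversible, rather than trying to push the original morphism property backwards. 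Everything else is a direct element chase with no real surprises.
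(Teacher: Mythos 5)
Your proof is correct. Part (i) is exactly the paper's argument: a structural induction with the element chase you describe. For part (ii), however, the paper does not re-run the induction: it observes that $\psi=\partinv{\phi}$ is itself a morphism (from $\cB$ to $\cA$), so part (i) applied to $\psi$ and the environment $\img{\phi}\comp\rho$ gives $\img{\psi}(\Int{M}^\cB_{\img{\phi}\comp\rho})\subseteq\Int{M}^\cA_{\img{\psi}\comp\img{\phi}\comp\rho}=\Int{M}^\cA_{\rho}$, and applying $\img{\phi}$ to both sides together with $\img{\phi}\comp\img{\psi}=id$ yields the reverse inclusion in one line, with no further case analysis. Your simultaneous induction, whose backwards element chase invokes the inductive hypothesis for $\partinv{\phi}$, is a valid (and correctly set up) induction, but it duplicates work that the global statement of (i) already buys: since (i) is quantified over all morphisms and all environments, instantiating it at $\partinv{\phi}$ is all that is needed. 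Both routes hinge on the same key observation --- that the inverse of an isomorphism of pairs is again a morphism --- so the difference is one of economy rather than substance.
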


\begin{proof} $(i)$ By straightforward induction on $M$ one proves that, for all $\alpha\in \img{\phi}(\Int{M}^\cA_\rho)$, we have $\phi(\alpha)\in\Int{M}^\cB_{\img{\phi}\comp \rho}$.\\
$(ii)$ By $(i)$ it is enough to prove that $\Int{M}^\cB_{\img{\phi}\comp \rho}\subseteq\img{\phi}(\Int{M}^\cA_\rho)$.
Let $\psi = \partinv{\phi}$; then $\img{\phi}\comp \img{\psi} = id$. 
Thus $\Int{M}^\cB_{\img{\phi}\comp\rho} = \img{\phi}(\img{\psi}(\Int{M}^\cB_{\img{\phi}\comp\rho}))\subseteq 
\img{\phi}(\Int{M}^\cA_{\img{\psi}\comp\img{\phi}\comp\rho}) = \img{\phi}(\Int{M}^\cA_{\rho})$ (the inclusion follows by $(i)$).
\end{proof}

Lemma \ref{lemma:on iso and plus}$(ii)$ implies that if $\phi\in Iso(\cA,\cB)$, with $\cA,\cB$ total, then $\img{\phi}$ is an isomorphism of $\lambda$-models
(and of combinatory algebras).
On the contrary, if $\phi$ is only a morphism of pairs, then $\img{\phi}$ cannot be a morphism of combinatory algebras.
Indeed, it is easy to check that $\img{\phi}(\Int{\bold{K}}^\cA)\subsetneq \Int{\bold{K}}^\cB$ if $\phi$ is not surjective and 
$\img{\phi}(\Int{MN}^\cA)\subsetneq \img{\phi}(\Int{M}^\cA)\bullet\img{\phi}(\Int{N}^\cA)$ if $\phi$ is not injective.

\subsection{Free-completions of partial pairs}

There are two known processes for building a graph model satisfying some additional requirements. 
Both consist in completing a partial pair $\cA$ into a total pair. 
The \emph{free completion}\footnote{
Free completion is termed \emph{canonical completion} in \cite{BucciarelliS08} and \emph{Engeler completion} in \cite{BucciarelliS03,BucciarelliS04}.
}, which %we address here, 
is due to Longo \cite{Longo83} and mimics the construction of $\gm{E}$, is a constructive way for building as freely as possible 
a total pair $\ppcompl{\cA}$ from a partial pair $\cA$. 
The aim is to induce some properties of the graph model generated by $\ppcompl{\cA}$ from properties of $\cA$. 
The other completion process, called \emph{forcing completion} or simply ``forcing'', originates in \cite{BaetenB79}.
For all $M\in\Lambda^o$, Baeten and Boerboom built out of a partial pair $(\setG,\emptyset)$ a graph model $\gm{G}$ with web $(\setG,i^M_{\cG})$ such that 
$\Int{\Omega}^{\gm{G}} = \Int{M}^{\gm{G}}$, thus proving semantically that $\Omega$ is easy.
This technique is, in general, non constructive but it can be effective in some degenerate but interesting cases (this contradicts a remark in \cite[Sec.~5.3.5]{Berline06}).
Forcing was generalized in \cite{BerlineS06}, where it is shown, in particular, that we can 
go far beyond $\Lambda^o$ and even $\Lambda^o(\D)$. 
In our paper forcing will only have an auxiliary role allowing us to produce examples; hence ``completion'' will mean ``free completion''
unless otherwise stated.

\begin{definition}\label{def:completionpartialpair} (Longo)
Let $\cA = (A,j_\cA)$ be a partial pair. 
The \emph{free completion}\footnote{
    Actually this completion construction requires that $((A^*\times A)\setminus \dom(j_\cA))\cap \rg(j_\cA) = \emptyset$, otherwise $\funcompl{\cA}$
    would not be injective, hence we will always suppose that no element of $A$ is a pair. 
    This is not restrictive because partial pairs can be considered up to isomorphism. 
} of $\cA$ is the total pair $\ppcompl{\cA}=(\setcompl{A},\funcompl{\cA})$, where $\setcompl{A} = \cup_{n\in\nat}A_n$, with 
$A_0 = A,\ A_{n+1} = A \cup ((A_n^*\times A_n)\setminus \dom(j_\cA))$ and $\funcompl{\cA}$ is defined by:
$$
\funcompl{\cA}(a,\alpha)=\left\{\begin{array}{ll}
		j_\cA(a,\alpha) & \textrm{ if } (a,\alpha)\in \dom(j_\cA),\\
		(a,\alpha) & \textrm{otherwise.}\\
		\end{array} \right.
$$
An element of $A$ has rank $0$, whilst an element $\alpha\in \setcompl{A}\setminus A$ has rank $n$ if $\alpha\in A_n\setminus A_{n-1}$.
\end{definition}

%Here, it will be useful to keep two distinct notations for the total pair generated by free completion and the corresponding graph model.

\begin{notation} 
$\gmcompl{\cA}$ denotes the graph model whose web is $\ppcompl{\cA}$, and it will be said \emph{freely generated} by $\cA$. 
\end{notation}

\begin{theorem}\label{thm:no semi-sensible free completions} (Bucciarelli and Salibra~\cite[Thm.~29]{BucciarelliS08})\\
If $\cA$ is a partial pair which is not total then $\gmcompl{\cA}$ is semi-sensible.
\end{theorem}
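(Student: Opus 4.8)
The plan is to show that if $\cA$ is a partial pair which is not total, then $\gmcompl{\cA}$ fails to equate a solvable term with an unsolvable one, by exhibiting a specific unsolvable term whose interpretation in $\gmcompl{\cA}$ is forced to be empty (or at least disjoint from the interpretation of any solvable term). The natural candidate is $\Omega$ itself, or more robustly, an unsolvable term built from the "missing" pair. Since $\cA$ is not total, there exists $(a_0,\alpha_0)\in A^*\times A$ with $(a_0,\alpha_0)\notin\dom(j_\cA)$; in the free completion $\funcompl{\cA}(a_0,\alpha_0) = (a_0,\alpha_0)$ is a genuine "new" element of $\setcompl{A}$ of positive rank, and crucially such new elements are never of the form $j_\cA(b,\beta)$ with $(b,\beta)\in\dom(j_\cA)$ — they lie outside the original image where $j_\cA$ was defined. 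The key structural fact to exploit is a rank argument on the free completion: elements of $\setcompl{A}\setminus A$ have a well-defined rank, and $\funcompl{\cA}$ strictly increases rank on the "otherwise" branch, which constrains which elements can appear in $\Int{M}^{\gmcompl{\cA}}$ for various $M$.

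First I would recall (or reprove) the characterization of $\Int{\Omega}^{\gm{G}}$ from Lemma~\ref{omegan}: $\Int{\Omega}^{\gmcompl{\cA}} = \{\alpha \st \exists a\subseteq \Int{\delta}^{\gmcompl{\cA}},\ \funcompl{\cA}(a,\alpha)\in a\}$. The point is that membership $\funcompl{\cA}(a,\alpha)\in a$ requires $\funcompl{\cA}(a,\alpha)$ to have rank no larger than the maximum rank appearing in $a$ — but on elements not in $\dom(j_\cA)$, $\funcompl{\cA}(a,\alpha) = (a,\alpha)$ has rank strictly greater than $\max$ of the ranks of elements of $a\cup\{\alpha\}$. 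Combined with a careful analysis of which pairs $(a,\alpha)$ can satisfy $\funcompl{\cA}(a,\alpha)\in\Int{\delta}^{\gmcompl{\cA}}$, one shows the self-membership condition can only be met through pairs already in $\dom(j_\cA)$, i.e. "inside" the original partial pair $\cA$. So the real work is: analyze $\Int{\Omega}^{\gmcompl{\cA}}$ and show it only depends on the total part $j_\cA$, and then — this is the crux — observe that a solvable term has a head variable, so its interpretation contains elements witnessing a head-normal-form structure, and these cannot all be "absorbed" into the image of $j_\cA$ alone when there is a genuinely missing pair. More directly: I would show that some unsolvable term $U$ (built to "probe" the missing pair) has $\Int{U}^{\gmcompl{\cA}} = \emptyset$, whence $U$ cannot be equated to any solvable term $S$, since $\Int{S}^{\gmcompl{\cA}}\neq\emptyset$ for every solvable $S$ (a head normal form always gets a nonempty interpretation in a graph model).

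The concrete construction I would use: fix the missing pair $(a_0,\alpha_0)\notin\dom(j_\cA)$ and consider a closed unsolvable term $U$ whose interpretation, by the definitional clauses, is computed to be contained in the set of elements "of the form" $\funcompl{\cA}(\cdots)$ where the recursion never bottoms out — a fixed-point-style unsolvable term. Using Lemma~\ref{lemma:existenceoffinitesubpairs} (every element of $\Int{M}$ is already witnessed in a finite subpair) together with a rank/well-foundedness induction, I would argue that no $\alpha$ can belong to $\Int{U}^{\gmcompl{\cA}}$: any such $\alpha$ would require an infinite descending chain of rank or a circularity that the freeness of the completion forbids. Then $\Int{U}^{\gmcompl{\cA}}=\emptyset$ while $\Int{\bold{I}}^{\gmcompl{\cA}}\neq\emptyset$ (and more generally $\Int{S}\neq\emptyset$ for solvable $S$), so $U=\bold{I}$ is not satisfied, and since $U$ is unsolvable this is exactly a violation of semi-sensibility's negation — wait, I must be careful: semi-sensible means it contains \emph{no} equation $U=S$ with $U$ unsolvable and $S$ solvable, so I need $\Th{\gmcompl{\cA}}$ to contain no such equation, i.e. for \emph{every} unsolvable $U$ and solvable $S$, $\Int{U}\neq\Int{S}$. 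So the cleaner route is: show $\Int{S}^{\gmcompl{\cA}}\not\subseteq \img{\funcompl{\cA}}(\dom j_\cA)$ is false — rather, show every solvable $S$ has $\Int{S}$ containing an element of the original-image shape forced by its head variable, whereas a suitable invariant (preserved under the completion) distinguishes it from every unsolvable term's interpretation.

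The main obstacle, I expect, is handling \emph{all} solvable and \emph{all} unsolvable terms uniformly rather than just $\Omega$ versus $\bold{I}$. For the "$\Omega$ is not $\bold{I}$" instance a rank argument suffices, but semi-sensibility quantifies over every pair. I anticipate the authors' proof (Bucciarelli--Salibra) sets up an invariant property $P$ of subsets of $\setcompl{A}$ — something like "$X$ does not contain any element of $\setcompl{A}\setminus A$ of minimal positive rank", or a reachability/parallel-reduction condition on Böhm-tree approximants — such that $P$ holds of $\Int{U}^{\gmcompl{\cA}}$ for every unsolvable $U$ but fails for $\Int{S}^{\gmcompl{\cA}}$ for every solvable $S$, and then show $P$ is preserved along the interpretation using the finite-subpair lemma and a rank induction on the free completion. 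Establishing that $P$ is preserved by application and abstraction while genuinely using the hypothesis that $\cA$ is \emph{not} total (so the new elements of positive rank actually exist and behave freely) is where the real care is needed; the rest is the routine structural bookkeeping I would not grind through here.
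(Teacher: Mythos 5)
There is a genuine gap here, and in fact the concrete part of your plan is based on a false premise. First, note that the paper does not prove this statement at all: it is quoted from Bucciarelli and Salibra, so there is no in-paper argument to match; your proposal therefore has to stand on its own, and it does not. Your main concrete strategy is to produce an unsolvable term $U$ with $\Int{U}^{\gmcompl{\cA}}=\emptyset$ via a rank/well-foundedness argument, and even to argue that ``no $\alpha$ can belong to $\Int{U}^{\gmcompl{\cA}}$'' because freeness forbids the needed circularity. This is wrong: the rank argument (the paper's Lemma~\ref{lemma:omega}) only yields $\Int{\Omega}^{\gmcompl{\cA}}\subseteq A$, i.e.\ that the interpretation lives at rank $0$, not that it is empty. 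The circularity \emph{can} be realized inside $\dom(j_\cA)$: in the graph-Park models (Example~\ref{exa:Engeler-Pomega}$(iii)$), which are free completions of non-total pairs, one checks directly that $\Int{\Omega}^{\gmcompl{\cA}}=A\neq\emptyset$. So no ``unsolvable with empty interpretation'' route can work uniformly, and in any case exhibiting one such $U$ would only refute one equation $U=S$, whereas semi-sensibility requires $\Int{U}\neq\Int{S}$ for \emph{every} unsolvable $U$ and \emph{every} solvable $S$.

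You do recognize this quantifier issue midway and retreat to positing an invariant $P$ separating all unsolvable interpretations from all solvable ones, but you never define $P$, never prove it is preserved, and explicitly defer ``the real care'' and the ``bookkeeping''. The one candidate you float (``contains no element of $\setcompl{A}\setminus A$ of positive rank'') also fails: $\lambda x.\Omega$ is unsolvable, yet its interpretation $\{\funcompl{\cA}(a,\alpha)\st \alpha\in\Int{\Omega}^{\gmcompl{\cA}}\}$ generally contains rank-$1$ elements, and order-$\infty$ unsolvables such as $\bold{Y}\bold{K}$ can pick up elements of unbounded rank. The actual separating invariant (the heart of Bucciarelli--Salibra's Theorem~29) is precisely the content you have omitted, so what remains is a plan whose only worked-out step is incorrect and whose correct step is unspecified.
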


\begin{remark}\label{rem:ShouldBeTrue} Let $\cA,\cB$ be two partial pairs. If $\cA\subpair\cB\subpair \ppcompl{\cA}$ then $\ppcompl{\cA} = \ppcompl{\cB}$ and hence 
$\gmcompl{\cA} = \gmcompl{\cB}$.
\end{remark}

\begin{example}\label{exa:Engeler-Pomega} By definition:  
\begin{itemize} 
\item[(i)] the Engeler model $\gm{E}$ is %equal to $\gmcompl{\emptyset}$, i.e., it is 
           freely generated by $\cA=(A,\emptyset)$, where $A$ is a non-empty set.
Thus, in fact, we have a family of graph models $\gm{E}_A$;
\item[(ii)]
    the graph-Scott models are freely generated by $\cA=(A,j_\cA)$, where $j_\cA(\emptyset,\alpha)=\alpha$ for all $\alpha\in A$;
\item[(iii)]
    the graph-Park models are freely generated by $\cA=(A,j_\cA)$, where $j_\cA(\{\alpha\},\alpha)=\alpha$ for all $\alpha\in A$;
\item[(iv)]
    the mixed-Scott-Park graph models are freely generated by $\cA=(A,j_\cA)$ where 
    $j_\cA(\emptyset,\alpha)=\alpha$ for all $\alpha\in Q$, $j_\cA(\{\beta\},\beta)=\beta$ for all $\beta\in R$
    and $Q,R$ form a non-trivial partition of $A$. 
\end{itemize}
\end{example}

\begin{remark}\label{rem:Pomega} (Longo \cite{Longo83}) 
The model $\gm{P}_{\omega}$ is isomorphic to the graph-Scott model associated with any singleton set $A$.
\end{remark}

\begin{theorem} (Kerth \cite{Kerth94,Kerth98b}) There exist $2^{\aleph_0}$ graph models of the form $\gmcompl{\cA}$, with distinct theories, 
among which $\aleph_0$ are freely generated by finite pairs. % (Kerth \cite{Kerth94,Kerth98b}). 
The same is true for sensible graph models.
\end{theorem}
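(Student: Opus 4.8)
This is Kerth's theorem \cite{Kerth94,Kerth98b}; the plan is to use the forcing/completion machinery of \cite{BaetenB79,BerlineS06} to manufacture, from a fixed skeleton partial pair, a family $(\cA_S)_{S\subseteq\nat}$ of partial pairs whose free completions $\gmcompl{\cA_S}$ have pairwise distinct theories, and then to cut out of this family a countable subfamily of \emph{finite} pairs. First I would isolate an infinite sequence $(P_n,Q_n)_{n\in\nat}$ of pairs of closed $\lambda$-terms — the natural candidates being built from easy terms, since Baeten and Boerboom's method shows that an $\Omega$-like term can be forced equal to a prescribed term — together with a base partial pair $\cA_\emptyset$, such that: (a) in $\gmcompl{\cA_\emptyset}$ the terms $P_n$ and $Q_n$ receive distinct interpretations for every $n$; and (b) for each $S\subseteq\nat$ a forcing construction produces a partial pair $\cA_\emptyset\subpair\cA_S$ whose completion forces the equation $P_n=Q_n$ exactly for the $n\in S$. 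The crux is to choose the $P_n,Q_n$ \emph{pairwise independent}: the atoms introduced when forcing the $n$-th equation must be irrelevant to the $m$-th one for $m\neq n$, so that realizing the equations indexed by $S$ does not accidentally collapse $P_n$ and $Q_n$ for $n\notin S$, and so that consistency is preserved all along the (in general non-constructive) forcing. This independence-plus-consistency verification is where the real work sits and is the main obstacle.

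Granting that, if $S\neq S'$ then $\gmcompl{\cA_S}$ and $\gmcompl{\cA_{S'}}$ disagree on one of the equations $P_n=Q_n$, hence $\Th{\gmcompl{\cA_S}}\neq\Th{\gmcompl{\cA_{S'}}}$; as $\pow{\nat}$ has cardinality $2^{\aleph_0}$ and there are at most $2^{\aleph_0}$ $\lambda$-theories, this yields exactly $2^{\aleph_0}$ pairwise distinct graph theories of the form $\Th{\gmcompl{\cA}}$. For the finite-pair statement I would let $S$ run through the initial segments $\{0,\dots,n\}$: realizing finitely many of the equations needs only finitely many forcing conditions, so $\cA_S$ can be taken finite, the free completion then freely supplying whatever further atoms the identifications require. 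By independence these finite pairs still give pairwise distinct theories, so $\aleph_0$ of the $2^{\aleph_0}$ theories above are freely generated by finite pairs. Note that semi-sensibility of all these models comes for free from Theorem~\ref{thm:no semi-sensible free completions}, each $\cA_S$ being non-total.

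For the sensible case the scheme is the same but the test equations must be ones not already decided by $\cH$ — equations between unsolvable terms become useless, since in a sensible model they all hold. One therefore runs a single forcing that simultaneously identifies every unsolvable term with $\Omega$ (exactly the sort of global identification the generalized forcing of \cite{BerlineS06} handles, and which produces sensible graph models) and realizes a prescribed bit-pattern on an $\cH$-independent family $(P_n,Q_n)$. Checking that these two demands can be met by one consistent forcing — once more an independence/consistency argument, now relative to $\cH$ — delivers $2^{\aleph_0}$ distinct sensible graph theories, $\aleph_0$ of them freely generated by finite pairs by restricting as before to finitely supported conditions.
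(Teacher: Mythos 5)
The paper does not actually reprove this statement: its ``proof'' is a citation, namely that among Kerth's continuum of free completions \cite{Kerth94,Kerth98b} countably many are generated by finite pairs, and that the sensible case follows from Kerth \cite{Kerth98} combined with David's sensibility theorem \cite{David01}. Your reconstruction, by contrast, routes everything through forcing, and this is where it breaks. A Baeten--Boerboom-style forcing \cite{BaetenB79} builds the \emph{total} injection $i$ generically, by meeting infinitely many dense sets of conditions; the resulting total pair is not the free completion of any of its finite conditions. Conversely, if you stop at a finite condition $\cA_S$ and pass to $\gmcompl{\cA_S}$, the free completion adds a fresh atom for every undefined $(a,\alpha)$ and the ``forced'' equation has no reason to survive: from the same starting pair $(A,\emptyset)$, forcing can give $\Omega$ any prescribed interpretation, whereas the free completion $\gm{E}_A$ gives $\Int{\Omega}^{\gm{E}_A}=\emptyset$. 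So your step ``finitely many equations need only finitely many conditions, so $\cA_S$ can be taken finite and then freely completed'' does not produce models of the form $\gmcompl{\cA}$ satisfying the prescribed bit-pattern, and in particular does not yield the $\aleph_0$ finite-pair examples. Kerth's actual argument designs the partial pairs directly so that rank considerations in the free completion (in the spirit of Lemma~\ref{lemma:omega} and Corollary~\ref{cor:alphainBandcompletion}) decide the distinguishing equations; no forcing is involved. Moreover the ``pairwise independence plus consistency'' verification, which you correctly flag as the crux, is precisely the content of the theorem and is left entirely open in your sketch.

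The sensible case has a second gap. You propose one forcing that simultaneously identifies every unsolvable with $\Omega$; but the generalized forcing of \cite{BerlineS06} prescribes the value of $\Omega$ (and of terms built over $\Lambda^o(\D)$), it does not collapse the whole class $\Unsolvable$, and in any case the output would again fail to be a free completion. In the literature, sensibility of the relevant models $\gmcompl{\cA}$ is a separate, difficult theorem of David \cite{David01}, which is then combined with Kerth's distinguishing equations \cite{Kerth98}. Only semi-sensibility comes for free from Theorem~\ref{thm:no semi-sensible free completions}, as you note; that is far weaker than what the sensible half of the statement requires.
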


\begin{proof}
Among the continuum of distinct graph models provided by Kerth in \cite{Kerth94,Kerth98b}, countably many are freely generated by finite 
partial pairs. 
The result for sensible graph theories follows from Kerth \cite{Kerth98} plus David \cite{David01}. 
\end{proof}

\begin{lemma}\label{lemma:homo-isomorphism} \ 
\begin{itemize}
\item[(i)]
    For all $\theta\in Hom(\cA,\cB)$ there is a unique $\bar{\theta}\in Hom(\setcompl{\cA},\setcompl{\cB})$
such that $\bar{\theta}\restr_A = \theta$.
\item[(ii)]
    If $\theta\in Iso(\cA,\cB)$, then $\bar{\theta}\in Iso(\setcompl{\cA},\setcompl{\cB})$.
\end{itemize}
\end{lemma}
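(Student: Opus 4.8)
The plan is to build $\bar\theta$ by recursion on the rank of elements of $\setcompl{A}$, mimicking the inductive construction of $\setcompl{A} = \bigcup_n A_n$ in Definition~\ref{def:completionpartialpair}. First I would set $\bar\theta\restr_A = \theta$; this is forced, and it already dictates the values on rank-$0$ elements. For the recursive step, suppose $\bar\theta$ has been defined on $A_n$ in such a way that it is a morphism "up to level $n$" (i.e. it sends $\dom(\funcompl{\cA})$-pairs formed from $A_n$-elements correctly); given an element of $A_{n+1}\setminus A_n$, it has the form $\alpha = (a,\beta)$ with $a\in A_n^*$, $\beta\in A_n$, and $(a,\beta)\notin\dom(j_\cA)$, so I would try to set $\bar\theta(\alpha) = \funcompl{\cB}(\img{\bar\theta}(a),\bar\theta(\beta))$. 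The point of using $\funcompl{\cB}$ rather than $j_\cB$ is that even though $(a,\beta)\notin\dom(j_\cA)$, the image pair $(\img{\bar\theta}(a),\bar\theta(\beta))$ might land in $\dom(j_\cB)$ — a morphism is allowed to "create" new entries in the domain — and $\funcompl{\cB}$ handles both cases uniformly. One then checks that the resulting total $\bar\theta:\setcompl{A}\to\setcompl{B}$ is a morphism of total pairs: for $(a,\alpha)\in\dom(\funcompl{\cA})$ one splits into the case $(a,\alpha)\in\dom(j_\cA)$ (use that $\theta$ is a morphism of $\cA$ into $\cB$, and that $\funcompl{\cB}$ extends $j_\cB$) and the case $(a,\alpha)\notin\dom(j_\cA)$ (use the defining equation of $\bar\theta$ just given). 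Uniqueness follows since any morphism extending $\theta$ must agree with this recursion at every rank, by a straightforward induction on rank.

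For part~(ii), suppose $\theta\in Iso(\cA,\cB)$ with inverse $\psi = \partinv{\theta}\in Iso(\cB,\cA)$. By part~(i) we get $\bar\theta\in Hom(\setcompl{\cA},\setcompl{\cB})$ and $\bar\psi\in Hom(\setcompl{\cB},\setcompl{\cA})$. I would then argue that $\bar\psi\circ\bar\theta$ is a morphism in $Hom(\setcompl{\cA},\setcompl{\cA})$ extending $\psi\circ\theta = id_A$; but $id_{\setcompl{A}}$ is also such a morphism, so by the uniqueness clause of part~(i) (applied with $\cB := \cA$ and $\theta := id_A$) we get $\bar\psi\circ\bar\theta = id_{\setcompl{A}}$, and symmetrically $\bar\theta\circ\bar\psi = id_{\setcompl{B}}$. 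Hence $\bar\theta$ is a bijection whose inverse $\bar\psi$ is a morphism, which by Remark~\ref{rem:incusionmapp}(i) means $\bar\theta\in Iso(\setcompl{\cA},\setcompl{\cB})$. (One subtlety: to invoke uniqueness for composites one needs the easy fact that a composite of morphisms of partial pairs is again a morphism, which is immediate from the definition.)

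The main obstacle, and the only place that needs genuine care, is the well-definedness of the recursive step together with the verification that $\bar\theta$ really is a morphism — specifically, one must confirm there is no clash between the two clauses. This rests on the standing assumption (made in the footnote to Definition~\ref{def:completionpartialpair}) that no element of $A$ is itself a pair, and more generally on the stratification by rank: an element of $\setcompl{A}\setminus A$ is a pair $(a,\alpha)$ and its "components" $a,\alpha$ have strictly smaller rank, so the recursion is well-founded and the value $\bar\theta(a,\alpha)$ is computed from already-defined data. I expect this bookkeeping to be the bulk of the work, while everything else is formal. Note that $\img{\bar\theta}$ need not be an isomorphism (or even a morphism) of the associated \emph{graph models} when $\theta$ is merely a morphism, consistently with the remark following Lemma~\ref{lemma:on iso and plus}; part~(ii) recovers this only in the isomorphism case.
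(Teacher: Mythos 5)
Your proposal is correct and follows essentially the same route as the paper, which defines $\bar{\theta}$ by induction on the rank of elements of $\setcompl{A}$ and then checks that $\overline{\partinv{\theta}}$ inverts $\bar{\theta}$ in the isomorphism case. Your use of the uniqueness clause of (i) to get $\bar{\psi}\comp\bar{\theta}=id_{\setcompl{A}}$ is a slightly slicker packaging of that last check (and is exactly the trick the paper itself reuses to prove Lemma~\ref{Lemma Extending retractions}), but it is not a different argument.
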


\begin{proof} Definition of $\bar{\theta}$ and verification of the first point are by straightforward induction on the rank of the elements of 
$\setcompl{A}$. It is also easy to check that if $\theta$ is an isomorphism then $\partinv{\bar{\theta}}$ is the inverse of $\bar{\theta}$.
\end{proof}

A morphism $\theta:\cA\to\cB$ does not induce, in general, a morphism of $\lambda$-models. But this is true when $\theta$ is an isomorphism.
In other words, the next corollary holds.

\begin{corollary} Let $\theta\in Iso(\cA,\cB)$, then:
\begin{itemize}
\item[(i)]
    $\img{\bar{\theta}}\in Iso(\gmcompl{\cA},\gmcompl{\cB})$,
\item[(ii)]
    $\Thle{\gmcompl{\cA}} = \Thle{\gmcompl{\cB}}$,
\item[(iii)]
    $\Th{\gmcompl{\cA}} = \Th{\gmcompl{\cB}}$.
\end{itemize}
\end{corollary}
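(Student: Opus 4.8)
The plan is to reduce all three parts to the machinery already assembled, namely Lemma~\ref{lemma:homo-isomorphism} and Lemma~\ref{lemma:on iso and plus}. Given $\theta\in Iso(\cA,\cB)$, Lemma~\ref{lemma:homo-isomorphism}$(ii)$ immediately yields $\bar\theta\in Iso(\setcompl{\cA},\setcompl{\cB})$, i.e.\ an isomorphism between the \emph{total} pairs underlying $\gmcompl{\cA}$ and $\gmcompl{\cB}$. So the only real content of part $(i)$ is to promote an isomorphism of total pairs to an isomorphism of the associated graph models (equivalently, of the associated combinatory algebras). This is exactly the remark made right after Lemma~\ref{lemma:on iso and plus}: when $\phi\in Iso(\cA,\cB)$ with $\cA,\cB$ total, $\img\phi$ is an isomorphism of $\lambda$-models. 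First I would apply that remark with $\phi=\bar\theta$; since $\setcompl{\cA}$ and $\setcompl{\cB}$ are total by construction, $\img{\bar\theta}$ is an isomorphism between the $\lambda$-models $\gmcompl{\cA}$ and $\gmcompl{\cB}$, which is part $(i)$. For completeness I would spell out that $\img{\bar\theta}:\pow{\setcompl A}\to\pow{\setcompl B}$ is a bijection (because $\bar\theta$ is) and that it commutes with application, which follows from Lemma~\ref{lemma:on iso and plus}$(ii)$ applied to the closed terms $\bold{S}$ and $\bold{K}$ together with Theorem~\ref{2.2}, or directly from the formula for $\App^\cG$ using injectivity and surjectivity of $\bar\theta$.

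For parts $(ii)$ and $(iii)$ I would then invoke Lemma~\ref{lemma:on iso and plus}$(ii)$ once more, now in its full strength: for every $\lambda$-term $M$ and every $\cA$-environment $\rho$ (or rather $\setcompl{\cA}$-environment, after completing), $\img{\bar\theta}(\Int{M}^{\setcompl{\cA}}_\rho) = \Int{M}^{\setcompl{\cB}}_{\img{\bar\theta}\comp\rho}$. Since $\img{\bar\theta}$ is an order isomorphism for inclusion (it is a bijection on powersets induced by a bijection on the underlying sets, hence monotone with monotone inverse), we get $\Int{M}^{\gmcompl{\cA}}_\rho\subseteq\Int{N}^{\gmcompl{\cA}}_\rho$ iff $\Int{M}^{\gmcompl{\cB}}_{\img{\bar\theta}\comp\rho}\subseteq\Int{N}^{\gmcompl{\cB}}_{\img{\bar\theta}\comp\rho}$. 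As $\rho$ ranges over all $\setcompl{\cA}$-environments, $\img{\bar\theta}\comp\rho$ ranges over all $\setcompl{\cB}$-environments (because $\img{\bar\theta}$ is onto), so quantifying over $\rho$ gives $\Thle{\gmcompl{\cA}}=\Thle{\gmcompl{\cB}}$, which is $(ii)$. Part $(iii)$ follows either by the same argument with equality in place of inclusion, or simply from $(i)$ via Remark~\ref{rem:iso of models}, or from $(ii)$ by noting that the equational theory is the symmetric part of the order theory.

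There is essentially no obstacle here: the corollary is a bookkeeping consequence of the two preceding lemmas, and the only point requiring a word of care is the passage from "isomorphism of pairs" to "isomorphism of $\lambda$-models", which has already been argued in the text immediately following Lemma~\ref{lemma:on iso and plus}. If anything is mildly delicate it is checking that $\img{\bar\theta}\comp\rho$ exhausts $Env_{\setcompl{\cB}}$, but this is immediate from surjectivity of $\bar\theta$. I would therefore keep the written proof to a few lines: cite Lemma~\ref{lemma:homo-isomorphism}$(ii)$ for $\bar\theta\in Iso(\setcompl{\cA},\setcompl{\cB})$, cite the remark after Lemma~\ref{lemma:on iso and plus} for $(i)$, and cite Lemma~\ref{lemma:on iso and plus}$(ii)$ plus surjectivity of $\img{\bar\theta}$ for $(ii)$ and $(iii)$.
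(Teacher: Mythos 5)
Your proposal is correct and follows essentially the same route as the paper: Lemma~\ref{lemma:homo-isomorphism}$(ii)$ plus Lemma~\ref{lemma:on iso and plus}$(ii)$ (and the remark following it) give $(i)$, and $(ii)$, $(iii)$ are then immediate consequences via Remark~\ref{rem:iso of models} and the fact that $\img{\bar{\theta}}$ is an order isomorphism. The extra care you take about surjectivity on environments is sound but not a departure from the paper's argument.
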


\begin{proof} $(i)$ By Lemma~\ref{lemma:on iso and plus} and Lemma~\ref{lemma:homo-isomorphism}. 
$(ii)$ By $(i)$ and Remark~\ref{rem:iso of models}.
$(iii)$ From $(ii)$.
\end{proof}

\begin{proposition}\label{claimclaim} 
Let $\gm{G}$ be a graph model with web $\cG$, and suppose $\alpha\in \Int{M}^\gm{G} \setminus \Int{N}^\gm{G}$ for some $M,N\in\Lambda^o$.
Then there exists a finite  $\cA\subpair\cG$ such that $\alpha\in\cA$ and for all pairs $\cC\suppair \cA$, if there is a morphism
$\theta:\cC\to\cG$ such that $\theta(\alpha) = \alpha$, then $\alpha\in \Int{M}^\cC \setminus \Int{N}^\cC$.
%Let $e \equiv M\ \sqle\ N$ be an inequality between closed terms,
%which fails in some graph model $\cG$ (i.e.,
%$\alpha\in \Int{M}^\gm{G}$ and $\alpha\notin \Int{N}^\gm{G}$ for some $\alpha\in G$). Then 
%there exists a finite  subpair $\cA$ of $\cG$ satisfying the following condition: 
%\vspace*{-2.0mm}
%$$(\forall \cB)(\cA\subpair \cB \subpair \cG\ \Rightarrow\ \alpha\in M^{\cB} \wedge \alpha\notin N^{\cB}).$$
\end{proposition}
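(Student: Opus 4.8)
The plan is to combine the two finiteness lemmata already established---Lemma~\ref{lemma:existenceoffinitesubpairs} (each element of an interpretation comes from a finite subpair) and Lemma~\ref{lemma:on iso and plus}$(i)$ (morphisms of pairs send interpretations into interpretations)---to pin down a single finite subpair that ``witnesses'' both $\alpha\in\Int{M}^\gm{G}$ and $\alpha\notin\Int{N}^\gm{G}$ in a way that is preserved by any morphism fixing $\alpha$. First I would apply Lemma~\ref{lemma:existenceoffinitesubpairs} to the hypothesis $\alpha\in\Int{M}^\gm{G}=\Int{M}^\cG_{\emptyrho}$: this yields a finite subpair $\cA_0\subpair\cG$ with $\alpha\in\Int{M}^{\cA_0}_{\emptyrho\cap A_0}$. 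I may as well enlarge $\cA_0$ slightly so that $\alpha\in A_0$, which is harmless since adding the point $\alpha$ to the carrier (with no new values of $j$) keeps $\cA_0$ a finite subpair of $\cG$. Set $\cA=\cA_0$. The ``positive'' half is then immediate by monotonicity: for any $\cC$ with $\cA\subpair\cC$, Lemma~\ref{lemma:onenvironments} gives $\alpha\in\Int{M}^{\cA}_{\emptyrho}\subseteq\Int{M}^{\cC}_{\emptyrho}$, so $\alpha\in\Int{M}^\cC$ regardless of the morphism $\theta$.

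The delicate half is showing $\alpha\notin\Int{N}^\cC$. Here I would argue by contraposition using the morphism $\theta:\cC\to\cG$ with $\theta(\alpha)=\alpha$. Suppose $\alpha\in\Int{N}^\cC=\Int{N}^\cC_{\emptyrho}$. Since $\emptyrho$ is the bottom $\cC$-environment (constantly $\emptyset$), we have $\img{\theta}\comp\emptyrho=\emptyrho$ as a $\cG$-environment. Then Lemma~\ref{lemma:on iso and plus}$(i)$ gives $\img{\theta}(\Int{N}^\cC_{\emptyrho})\subseteq\Int{N}^\cG_{\img{\theta}\comp\emptyrho}=\Int{N}^\cG_{\emptyrho}=\Int{N}^\gm{G}$, and in particular $\theta(\alpha)=\alpha\in\Int{N}^\gm{G}$, contradicting the hypothesis $\alpha\notin\Int{N}^\gm{G}$. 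Hence $\alpha\notin\Int{N}^\cC$, completing the proof. (Since $N$ is a closed term, I work throughout with $\emptyrho$ and do not need to track environments on free variables; if one prefers to keep $N$ possibly open, the same argument goes through with an arbitrary $\cC$-environment $\rho$ and its image, but the statement only concerns closed terms.)

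The main obstacle---really the only subtlety---is making sure the finite subpair $\cA$ supplied by Lemma~\ref{lemma:existenceoffinitesubpairs} is exactly the right object to quantify over: it must lie in $Sub(\cG)$, it must contain $\alpha$, and it must be the ``lower bound'' for the $\cC$'s in the statement (so that the positive side works by mere inclusion). All three are arranged in the first paragraph above; the footnote-style caveat is only that one should verify adding $\alpha$ to $A_0$ does not create a clash with $\dom(j)$ or $\rg(j)$, which it cannot, since $\cA_0$ is already a subpair of $\cG$ and we add no new pairs to $\dom(j_{\cA_0})$. Everything else is a direct invocation of Lemmata~\ref{lemma:onenvironments}, \ref{lemma:existenceoffinitesubpairs} and \ref{lemma:on iso and plus}, with no new computation required.
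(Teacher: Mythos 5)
Your proposal is correct and follows essentially the same route as the paper: Lemma~\ref{lemma:existenceoffinitesubpairs} to extract the finite subpair $\cA$, Lemma~\ref{lemma:onenvironments} for the positive half $\alpha\in\Int{M}^{\cC}$, and Lemma~\ref{lemma:on iso and plus}$(i)$ applied to $\theta$ for the contradiction in the negative half. The only addition is your explicit (and harmless) enlargement of the carrier to ensure $\alpha\in A$, a detail the paper leaves implicit in the construction underlying Lemma~\ref{lemma:existenceoffinitesubpairs}.
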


\begin{proof} 
By Lemma~\ref{lemma:existenceoffinitesubpairs} there is a finite $\cA\subpair\cG$ such that $\alpha\in \Int{M}^\cA$. 
By Lemma~\ref{lemma:onenvironments} we have $\alpha\in \Int{M}^\cC$. 
Now, if $\alpha\in \Int{N}^\cC$ then, by Lemma~\ref{lemma:on iso and plus}, $\alpha = \theta(\alpha)\in \Int{N}^\gm{G}$, which is a contradiction.
%By Lemma~\ref{lemma:existenceoffinitesubpairs} we have that $\alpha\in \Int{M}^\cA$
%for a suitable finite  subpair $\cA$ of $\cG$.
%Let $\cB$ be a partial pair such that $\cA\subpair\cB\subpair\cG$. Then by Lemma~\ref{lemma:onenvironments} we have
%$\alpha\in \Int{M}^\cB$. The conclusion follows because $\alpha\notin \Int{N}^\cC$ for all $\cC\subpair \cG$.
\end{proof}

\begin{corollary}\label{cor:alphainBandcompletion} Let $\gm{G}$ be a graph model, and suppose $\alpha\in \Int{M}^\gm{G} \setminus \Int{N}^\gm{G}$ for some $M,N\in\Lambda^o$.
Then there exists a finite  $\cA\subpair\cG$ such that $\alpha\in\cA$ and for all pairs $\cB$ satisfying $\cA\subpair\cB\subpair\cG$, we have: 
\begin{itemize}
\item[(i)]
    $\alpha\in \Int{M}^\cB \setminus \Int{N}^\cB$ and 
\item[(ii)]
    $\alpha \in \Int{M}^{\gmcompl{\cB}}\setminus\Int{N}^{\gmcompl{\cB}}$.
\end{itemize}
\end{corollary}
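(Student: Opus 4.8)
The plan is to derive this as a direct consequence of Proposition \ref{claimclaim} together with Remark \ref{rem:ShouldBeTrue} and Lemma \ref{lemma:onenvironments}. Start by taking the finite $\cA\subpair\cG$ furnished by Proposition \ref{claimclaim}, so that $\alpha\in\cA$ and, for every pair $\cC\suppair\cA$ admitting a morphism $\theta:\cC\to\cG$ with $\theta(\alpha)=\alpha$, we have $\alpha\in\Int{M}^\cC\setminus\Int{N}^\cC$. I claim this same $\cA$ works for the corollary.

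For part $(i)$: fix any $\cB$ with $\cA\subpair\cB\subpair\cG$. Since $\cB\subpair\cG$, the inclusion map $\iota:B\to G$ is a morphism $\cB\to\cG$ by Remark \ref{rem:incusionmapp}$(ii)$, and $\iota(\alpha)=\alpha$ because $\alpha\in\cA\subseteq B$. As $\cB\suppair\cA$, Proposition \ref{claimclaim} applies with $\cC=\cB$ and $\theta=\iota$, giving $\alpha\in\Int{M}^\cB\setminus\Int{N}^\cB$.

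For part $(ii)$: I want to apply Proposition \ref{claimclaim} with $\cC=\gmcompl{\cB}$, i.e. with the web $\ppcompl{\cB}$ of the freely generated graph model. First note $\cA\subpair\cB\subpair\ppcompl{\cB}$, so $\ppcompl{\cB}\suppair\cA$. It remains to exhibit a morphism $\theta:\ppcompl{\cB}\to\cG$ fixing $\alpha$. The inclusion $\iota:\cB\to\cG$ is a morphism (as above), and by Lemma \ref{lemma:homo-isomorphism}$(i)$ it extends uniquely to a morphism $\bar{\iota}:\ppcompl{\cB}\to\ppcompl{\cG}$ with $\bar\iota\restr_B=\iota$. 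However $\cG$ is total, so $\ppcompl{\cG}=\cG$ — more precisely, since $\cB\subpair\cG\subpair\ppcompl{\cB}$ is false in general, I instead argue directly: because $\cG$ is a total pair and $\cB\subpair\cG$, the construction of $\ppcompl{\cB}$ sends a ``new'' element $(a,\alpha')\notin\dom(j_\cB)$ to itself, and I can map it back into $\cG$ via $j_\cG(\img{\iota}(a),\alpha')$ once the components have been mapped in; a routine induction on rank (parallel to Lemma \ref{lemma:homo-isomorphism}$(i)$, but with target the total pair $\cG$ rather than $\ppcompl{\cG}$) produces a morphism $\theta:\ppcompl{\cB}\to\cG$ extending $\iota$, hence with $\theta(\alpha)=\alpha$. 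Proposition \ref{claimclaim} then yields $\alpha\in\Int{M}^{\gmcompl{\cB}}\setminus\Int{N}^{\gmcompl{\cB}}$, which is $(ii)$.

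The only delicate point is the last one: producing the morphism $\theta:\ppcompl{\cB}\to\cG$. The subtlety is that Lemma \ref{lemma:homo-isomorphism} as stated extends a morphism $\cB\to\cG$ only to a morphism $\ppcompl{\cB}\to\ppcompl{\cG}$, and $\ppcompl{\cG}$ need not equal $\cG$ even when $\cG$ is total (new pairs that already lie in $\rg(j_\cG)$ are handled, but $\dom(j_\cG)$ may be a proper subset of $G^*\times G$, so $\ppcompl{\cG}$ strictly contains $\cG$). So one cannot simply quote the lemma; one needs the variant observation that when the target is total one may collapse via its own $j_\cG$ at each step. This is a short rank induction with no real obstacle, just a construction the reader must be walked through — alternatively one composes $\bar\iota:\ppcompl{\cB}\to\ppcompl{\cG}$ with a canonical retraction $\ppcompl{\cG}\to\cG$ obtained from totality of $\cG$. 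Everything else is immediate from the quoted results.
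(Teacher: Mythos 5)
Your proposal is correct and follows exactly the paper's own proof: apply Proposition~\ref{claimclaim} with $\theta$ the inclusion $\iota:\cB\to\cG$ for $(i)$, and with $\theta=\bar\iota$ from Lemma~\ref{lemma:homo-isomorphism}$(i)$ for $(ii)$. However, the ``delicate point'' you raise at the end is a mirage: by Definition~\ref{def:totalpair} a total pair has $\i{\cG}$ a \emph{total} function on $\setG^*\times\setG$, so $\dom(j_\cG)=G^*\times G$, the sets $A_{n+1}=A\cup((A_n^*\times A_n)\setminus\dom(j_\cA))$ in the free completion collapse to $A$ at every stage, and hence $\ppcompl{\cG}=\cG$ exactly. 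Consequently Lemma~\ref{lemma:homo-isomorphism}$(i)$ already hands you a morphism $\bar\iota:\ppcompl{\cB}\to\cG$ fixing $\alpha$, and the extra rank induction (or the retraction $\ppcompl{\cG}\to\cG$) you sketch is unnecessary, though not wrong.
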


\begin{proof} 
We apply Proposition~\ref{claimclaim}, taking for $\theta$ the inclusion mapping $\iota:\cB\to\cG$ for $(i)$, 
and $\bar{\iota}$ given by Lemma~\ref{lemma:homo-isomorphism} for $(ii)$.
\end{proof}

\subsection{Retracts}

\begin{definition}
Given two partial pairs $\cA$ and $\cB$ we say that \emph{$\cA$ is a retract of $\cB$}, and we write $\cA\retract \cB$, if there are morphims 
$e\in Hom(\cA,\cB)$ and $\pi\in Hom(\cB,\cA)$ such that $\pi\comp e =id_{A}$. 
In this case we will also write $e,\pi:\cA\retract \cB$. 
\end{definition}

\begin{notation} Given two graph models $\gm{G}, \gm{G'}$ we write $\gm{G}\retract \gm{G'}$ if $\cG\retract\cG'$.
\end{notation}

From Lemma~\ref{lemma:homo-isomorphism}$(i)$, and the fact that $id_{\ppcompl{\cA}}$ is the only endomorphism of $\ppcompl{\cA}$ whose restriction to $A$ is the identity 
$id_{A}$ we get the following lemma.

\begin{lemma}\label{Lemma Extending retractions}
Let $\cA,\cB$ be two partial pairs, then $\cA\retract \cB$ implies $\ppcompl{\cA}\retract \ppcompl{\cB}$.
\end{lemma}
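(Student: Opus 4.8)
\textbf{Plan for the proof of Lemma~\ref{Lemma Extending retractions}.}
The statement is that a retraction $e,\pi:\cA\retract\cB$ of partial pairs lifts to a retraction of the free completions $\ppcompl{\cA}\retract\ppcompl{\cB}$. The plan is to produce the required morphisms $\setcompl{A}\to\setcompl{B}$ and $\setcompl{B}\to\setcompl{A}$ by applying the extension construction of Lemma~\ref{lemma:homo-isomorphism}$(i)$ to $e$ and to $\pi$, and then to check that the composite $\bar\pi\comp\bar e$ is the identity on $\setcompl{A}$.

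First I would apply Lemma~\ref{lemma:homo-isomorphism}$(i)$ twice: to $e\in Hom(\cA,\cB)$, obtaining $\bar e\in Hom(\ppcompl{\cA},\ppcompl{\cB})$ with $\bar e\restr_A = e$, and to $\pi\in Hom(\cB,\cA)$, obtaining $\bar\pi\in Hom(\ppcompl{\cB},\ppcompl{\cA})$ with $\bar\pi\restr_B = \pi$. Since the composition of morphisms of partial pairs is again a morphism (immediate from the definition), $\bar\pi\comp\bar e$ is an endomorphism of $\ppcompl{\cA}$. Now restrict to $A$: $(\bar\pi\comp\bar e)\restr_A = \bar\pi\restr_B\comp\, e = \pi\comp e = id_A$, using that $e$ maps $A$ into $B$. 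So $\bar\pi\comp\bar e$ is an endomorphism of $\ppcompl{\cA}$ whose restriction to $A$ is $id_A$. By the uniqueness clause quoted in the excerpt just before the lemma (``$id_{\ppcompl{\cA}}$ is the only endomorphism of $\ppcompl{\cA}$ whose restriction to $A$ is the identity $id_A$''), we conclude $\bar\pi\comp\bar e = id_{\ppcompl{\cA}}$. Hence $\bar e,\bar\pi:\ppcompl{\cA}\retract\ppcompl{\cB}$, as required.

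There is essentially no hard part here; the lemma is a direct consequence of Lemma~\ref{lemma:homo-isomorphism}$(i)$ together with the uniqueness-of-extension remark, exactly as flagged in the sentence preceding the statement. The only points needing a word of care are that $Hom$ is closed under composition (so $\bar\pi\comp\bar e$ is a legitimate endomorphism to which the uniqueness argument applies) and that $e(A)\subseteq B$, so that the restriction to $A$ of the composite is computed using $\pi\restr_B$ rather than anything on the higher ranks of $\setcompl{B}$. Both are routine.
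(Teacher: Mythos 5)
Your proof is correct and follows exactly the route the paper takes: extend $e$ and $\pi$ to $\bar e$ and $\bar\pi$ via Lemma~\ref{lemma:homo-isomorphism}$(i)$, and conclude $\bar\pi\comp\bar e=id_{\ppcompl{\cA}}$ from the uniqueness of the endomorphism of $\ppcompl{\cA}$ restricting to $id_A$. The paper states this in a single sentence preceding the lemma; your write-up just makes the routine checks (closure of $Hom$ under composition, $e(A)\subseteq B$) explicit.
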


\begin{proposition}\label{Proposition Retract}
If $\cG\retract\cG'$ then $\Thle{\gm{G'}}\subseteq \Thle{\gm{G}}$ and $\Th{\gm{G'}}\subseteq \Th{\gm{G}}.$
\end{proposition}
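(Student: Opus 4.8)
The plan is to show that a retraction $e,\pi:\cG\retract\cG'$ of webs lifts to the level of interpretations, so that the inequality $\Int{M}^\gm{G}\subseteq\Int{N}^\gm{G}$ can be transported to $\gm{G'}$ and back. The natural route is to exploit Lemma~\ref{lemma:on iso and plus}$(i)$, which tells us that a morphism $\phi\in Hom(\cA,\cB)$ satisfies $\img{\phi}(\Int{M}^\cA_\rho)\subseteq\Int{M}^\cB_{\img{\phi}\comp\rho}$. Since $\cG,\cG'$ are total pairs, $\beta$-equivalence is respected and, more to the point, the interpretation of a closed term does not depend on the environment, so the environment bookkeeping is painless: we may work with $\emptyrho$ throughout and $\img{\phi}\comp\emptyrho = \emptyrho$.

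First I would fix morphisms $e:\cG\to\cG'$ and $\pi:\cG'\to\cG$ with $\pi\comp e = id_G$. Suppose $(M,N)\in\Thle{\gm{G'}}$, i.e. $\Int{M}^{\gm{G'}}\subseteq\Int{N}^{\gm{G'}}$. Applying Lemma~\ref{lemma:on iso and plus}$(i)$ to $e$ gives $\img{e}(\Int{M}^\gm{G})\subseteq\Int{M}^{\gm{G'}}$, and applying it to $\pi$ gives $\img{\pi}(\Int{N}^{\gm{G'}})\subseteq\Int{N}^\gm{G}$. Chaining these with the hypothesis,
$$
\img{\pi}(\img{e}(\Int{M}^\gm{G}))\subseteq\img{\pi}(\Int{M}^{\gm{G'}})\subseteq\img{\pi}(\Int{N}^{\gm{G'}})\subseteq\Int{N}^\gm{G}.
$$
Now $\img{\pi}\comp\img{e} = \img{(\pi\comp e)} = \img{(id_G)} = id_{\pow{G}}$, so the left-hand side is just $\Int{M}^\gm{G}$, and we conclude $\Int{M}^\gm{G}\subseteq\Int{N}^\gm{G}$, i.e. $(M,N)\in\Thle{\gm{G}}$. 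This proves $\Thle{\gm{G'}}\subseteq\Thle{\gm{G}}$. The statement for equational theories follows immediately, applying the order inclusion to both $(M,N)$ and $(N,M)$, or equivalently by noting $\Th{\gm{G}} = \Thle{\gm{G}}\cap\Thle{\gm{G}}^{-1}$.

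I do not expect a genuine obstacle here; the proof is essentially a diagram chase once Lemma~\ref{lemma:on iso and plus} is in hand. The only point requiring a little care is the identity $\img{\pi}\comp\img{e} = id_{\pow{G}}$: this is the set-theoretic image operator applied to a composition of functions, and one should check $\img{(g\comp f)} = \img{g}\comp\img{f}$ and $\img{(id_G)} = id_{\pow{G}}$, both of which are routine. One should also make sure to invoke Lemma~\ref{lemma:on iso and plus}$(i)$ with the correct environments — since $M,N$ are closed and $\cG,\cG'$ total, taking $\rho = \emptyrho$ suffices and $\img{\phi}\comp\emptyrho = \emptyrho$, so no subtlety arises. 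Note that we do \emph{not} need the free-completion machinery (Lemma~\ref{Lemma Extending retractions}) for this proposition; that lemma is the tool for the \emph{partial} analogue, whereas here $\cG\retract\cG'$ already refers to the total webs by the notational convention just introduced.
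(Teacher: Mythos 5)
Your proof is correct and follows essentially the same route as the paper's: both apply Lemma~\ref{lemma:on iso and plus}$(i)$ once to $e$ and once to $\pi$ and then use $\pi\comp e = id_G$; the paper merely phrases it contrapositively, chasing a single witness $\alpha\in\Int{M}^{\gm{G}}\setminus\Int{N}^{\gm{G}}$ to $e(\alpha)\in\Int{M}^{\gm{G'}}\setminus\Int{N}^{\gm{G'}}$, where you chase the set inclusions directly. The environment and $\img{(\pi\comp e)}=\img{\pi}\comp\img{e}$ remarks are fine and match the (implicit) level of care in the paper.
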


\begin{proof}
Let $\pi, e:\cG\retract \cG'$. 
It is enough to prove that for all $M,N\in \Lambda^o$, if $\alpha \in \Int{M}^\gm{G}\setminus \Int{N}^\gm{G}$ then 
$e (\alpha )\in \Int{M}^{\gm{G'}}\setminus\Int{N}^{\gm{G'}}$. 
Now, by applying Lemma~\ref{lemma:on iso and plus} twice, $e (\alpha )\in \Int{M}^{\gm{G'}}$ and $e (\alpha )\in \Int{N}^{\gm{G'}}$ would imply $\alpha
=\pi(e (\alpha ))\in \Int{N}^\gm{G}$.
\end{proof}

\begin{example}\label{Example Engeler Plotkin} 
%Given the Engeler model $\gm{E}_A = \gmcompl$ and the graph-Scott model $\gm{P}_{A}$, over the same set $A$, we have:
%$\cA' = (A,\emptyset )$, and $\cA=(A,j_{\cA})$ where $j_{\cA} =\{((\emptyset,\alpha),\alpha)\st \alpha\in A\}$.
For the Engeler model $\gm{E}_A = \gmcompl{\cA}$ where $\cA = (A,\emptyset)$ and the graph-Scott model $\gm{P}_A = \gmcompl{\cA'}$ where $\cA' = (A,j_\cA)$, and for all $\alpha\in A$ we have:
\begin{itemize}
\item[(a)] $\cA\subpair \cA'$ but \emph{not} $\cA\subpair \cA'\subpair\ppcompl{\cA}$,

\item[(b)] $(\emptyset,\alpha)\in \setcompl{A}\setminus \setcompl{A'}$, 

\item[(c)] $\Th{\gm{E}_A}=\Th{\gm{P}_A} = \BTth$ \cite{Longo83},

\item[(d)] $\Thle{\gm{E}_A}\subsetneq \Thle{\gm{P}_A}$ \cite[Prop.~2.8]{Longo83},

\item[(e)] $\bold{I}\sqle \varepsilon \in \Thle{\gm{P}_A}\setminus \Thle{\gm{E}_A}$ (easy),

\item[(f)] $\alpha\in \Int{\lambda x.\bold{I}}^{\gm{P}_A}\setminus \Int{\lambda x.\bold{I}}^{\gm{E}_A}$ while 
$(\emptyset,\alpha)\in \Int{\lambda x.\bold{I}}^{\gm{E}_A}\setminus\Int{\lambda x.\bold{I}}^{\gm{P}_A}$.
\end{itemize}
\end{example}

%\begin{problem}
%Find sequences of graph models which are strictly increasing with respect to 
%$\vartriangleleft $ and of length $>2$; this might give us
%ideas to solve the ``height problem'' about $\lambda \cG$. For
%length $2$ go to the examples of good models at the end of the gluing
%section .... and complete the argument.
%\end{problem}

\section{The minimum order and equational graph theories}

In \cite{BucciarelliS04,BucciarelliS08}, Bucciarelli and Salibra defined a notion of ``weak product'' for graph models.
In this paper we prefer to call this construction \emph{gluing} since it does not satisfy the categorical definition of a weak product.

\begin{definition} 
The \emph{gluing} $\Diamond_{k\in K} \lm{G}_k$ of a family $(\gm{G}_k)_{k\in K}$ of graph models with pairwise disjoint webs 
is the graph model freely generated by the partial pair $\sqcup_{k\in K} \cG_k$; its web is denoted by $\Diamond_{k\in K}\cG_k$ instead of $\setcompl{\sqcup_{k\in K}\cG_k}$.
More generally, for any family $(\gm{G}_k)_{k\in K}$ of graph models, $\Diamond_{k\in K}\gm{G}_k$ will denote any gluing of isomorphic copies 
of the $\gm{G}_k$'s with pairwise disjoint webs.
\end{definition}

Note that gluing is commutative and associative up to isomorphism (of graph models).

\begin{lemma}\label{lemma:equivalentproducts} 
Let $(\gm{G}_k)_{k\in K}$, be a family of graph models such that $\gm{G}_k = \gmcompl{\cA_k}$ for some family $(\cA_k)_{k\in K}$ of pairwise disjoint partial pairs.
Then $\Diamond_{k\in K}\gm{G}_k = \gmcompl{\cA}$, where $\cA = \ppcompl{\sqcup_{k\in K}\cA_k}$.
\end{lemma}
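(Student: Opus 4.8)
The plan is to reduce the statement to the near-idempotency of free completion recorded in Remark~\ref{rem:ShouldBeTrue}. Write $\cB_0=\sqcup_{k\in K}\cA_k$ and $\cB_1=\sqcup_{k\in K}\ppcompl{\cA_k}$; both gluings make sense because the family $(\cA_k)$, and hence also $(\ppcompl{\cA_k})$, is pairwise disjoint (see the last paragraph). By definition the web of $\gm{G}_k=\gmcompl{\cA_k}$ is $\ppcompl{\cA_k}$, so $\Diamond_{k\in K}\gm{G}_k$ is the graph model freely generated by $\sqcup_{k\in K}\ppcompl{\cA_k}=\cB_1$, i.e.\ the one whose web is $\ppcompl{\cB_1}$. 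On the other hand $\cA=\ppcompl{\cB_0}$ is total, so $\ppcompl{\cA}=\cA$ by Definition~\ref{def:completionpartialpair}, and $\gmcompl{\cA}$ has web $\cA=\ppcompl{\cB_0}$. Thus the lemma is equivalent to the equality of total pairs $\ppcompl{\cB_1}=\ppcompl{\cB_0}$; by Remark~\ref{rem:ShouldBeTrue} it suffices to establish the chain $\cB_0\subpair\cB_1\subpair\ppcompl{\cB_0}$.

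The inclusion $\cB_0\subpair\cB_1$ is immediate, since $\cA_k\subpair\ppcompl{\cA_k}$ for each $k$ (the completion extends $j_{\cA_k}$) and this passes to the gluings of compatible families. For $\cB_1\subpair\ppcompl{\cB_0}$ I would first check, by an easy induction on rank, that $\setcompl{A_k}\subseteq\setcompl{\bigcup_k A_k}$ for every $k$ — using $A_k\subseteq\bigcup_k A_k$ and $\dom(j_{\cA_k})\subseteq\dom(j_{\cB_0})$ — so that the carrier $\bigcup_k\setcompl{A_k}$ of $\cB_1$ is contained in that of $\ppcompl{\cB_0}$. Next, for $(a,\alpha)\in\dom(j_{\cB_1})$, disjointness yields a unique $k$ with $a\in\setcompl{A_k}^*$ and $\alpha\in\setcompl{A_k}$, and $j_{\cB_1}(a,\alpha)=j_{\cA_k}(a,\alpha)$ if $(a,\alpha)\in\dom(j_{\cA_k})$, otherwise $j_{\cB_1}(a,\alpha)=(a,\alpha)$. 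In the first case $(a,\alpha)\in\dom(j_{\cB_0})$ and $j_{\ppcompl{\cB_0}}(a,\alpha)=j_{\cB_0}(a,\alpha)=j_{\cA_k}(a,\alpha)$; in the second case one checks $(a,\alpha)\notin\dom(j_{\cB_0})$, because for $k'\neq k$ we have $\alpha\in\setcompl{A_k}$ while $A_{k'}\cap\setcompl{A_k}=\emptyset$, so $(a,\alpha)\notin A_{k'}^*\times A_{k'}\supseteq\dom(j_{\cA_{k'}})$; hence $j_{\ppcompl{\cB_0}}(a,\alpha)=(a,\alpha)$ as well. So $j_{\cB_1}$ and $j_{\ppcompl{\cB_0}}$ agree on $\dom(j_{\cB_1})$, giving $\cB_1\subpair\ppcompl{\cB_0}$, and therefore $\Diamond_{k\in K}\gm{G}_k=\gmcompl{\cB_1}=\gmcompl{\cB_0}=\gmcompl{\cA}$.

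The one genuinely delicate point is the pairwise disjointness of the free completions $\ppcompl{\cA_k}$ (and the related fact $A_{k'}\cap\setcompl{A_k}=\emptyset$ for $k'\neq k$) that is used throughout. It rests on the standing convention of Definition~\ref{def:completionpartialpair} that the carrier of a partial pair contains no ordered pair — which is precisely what makes free completions injective — together with the freedom to replace each $\cA_k$ by an isomorphic copy: granting this, every element of $\setcompl{A_k}\setminus A_k$ is a pair all of whose leaves lie in $A_k$, so the $\setcompl{A_k}$ inherit the pairwise disjointness of the $A_k$. Once this is in place, the remainder is a mechanical unwinding of Definition~\ref{def:completionpartialpair} plus the single appeal to Remark~\ref{rem:ShouldBeTrue} described above.
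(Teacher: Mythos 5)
Your proof is correct and follows essentially the same route as the paper's: establish the chain $\sqcup_{k\in K}\cA_k \subpair \sqcup_{k\in K}\ppcompl{\cA_k} \subpair \ppcompl{\sqcup_{k\in K}\cA_k}$ and invoke Remark~\ref{rem:ShouldBeTrue}, noting that $\sqcup_{k\in K}\ppcompl{\cA_k}$ is exactly the pair whose free completion gives the gluing. The paper leaves the chain as ``clear''; you have merely filled in the verification (including the disjointness of the $\ppcompl{\cA_k}$), which is a sound elaboration rather than a different argument.
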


\begin{proof} By Remark~\ref{rem:ShouldBeTrue} since, clearly, $\sqcup_{k\in K}\cA_k \subpair \sqcup_{k\in K}\ppcompl{\cA_k}\subpair \ppcompl{\sqcup_{k\in K}\cA_k}$, 
and $\sqcup_{k\in K}\ppcompl{\cA_k} = \sqcup_{k\in K}\cG_k$.
\end{proof}

\begin{proposition}\label{prop:BucciaSali} (Bucciarelli and Salibra \cite[Prop.~2]{BucciarelliS03})\\
Let $(\gm{G}_k)_{k\in K}$ be a family of graph models and $\gm{G} = \Diamond_{k\in K}\gm{G}_k$, then:
\begin{itemize}
\item[(i)]
    $\Int{M}^{\gm{G}_k} = \Int{M}^\gm{G} \cap \setG_k$ for any $M\in\Lambda^o$. Hence:
\item[(ii)]
    $\Thle{\gm{G}}\subseteq \Thle{\gm{G}_k}$, 
\item[(iii)]
    $\Th{\gm{G}}\subseteq \Th{\gm{G}_k}$.
\end{itemize}
\end{proposition}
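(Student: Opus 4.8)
\textbf{Proof proposal for Proposition~\ref{prop:BucciaSali}.}
The plan is to prove $(i)$ by structural induction on the closed $\lambda$-term $M$, and then derive $(ii)$ and $(iii)$ as immediate corollaries. Write $\cG = \ppcompl{\sqcup_{k\in K}\cG_k}$ for the web of $\gm{G}$, and fix an index $k\in K$. Since $M$ is closed, the interpretations $\Int{M}^\gm{G}$ and $\Int{M}^{\gm{G}_k}$ are computed in the empty environment $\emptyrho$, but to make the induction go through we must strengthen the statement to open terms and environments: I would prove that for every $M\in\Lambda$ and every $\rho\in Env_{\setG_k}$, one has $\Int{M}^{\gm{G}_k}_\rho = \Int{M}^\gm{G}_{\rho'}\cap \setG_k$ where $\rho'$ is $\rho$ viewed as a $\gm{G}$-environment (this makes sense because $\setG_k\subseteq \setG$). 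The base case $M\equiv x$ is trivial since $\rho(x)\subseteq\setG_k$.

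For the application case $M\equiv PQ$, I would unfold the interpretation formula for graph models: $\alpha\in\Int{PQ}^\gm{G}_{\rho'}$ iff there is $a\finsubset \Int{Q}^\gm{G}_{\rho'}$ with $\i{\cG}(a,\alpha)\in\Int{P}^\gm{G}_{\rho'}$. The key point is that if $\alpha\in\setG_k$ and $\i{\cG}(a,\alpha)$ lies in $\rg(\i{\cG_k})$ — which it must if it is to belong to the sub-web component — then both $a\subseteq\setG_k$ and $\i{\cG}(a,\alpha)=\i{\cG_k}(a,\alpha)$, because the free completion keeps the constructor $\i{\cG}$ equal to $\i{\cG_k}$ on $\dom(\i{\cG_k})$ and, crucially, $\sqcup_{k\in K}\cG_k$ has pairwise disjoint webs so no ``mixed'' pair $(a,\alpha)$ with $\alpha\in\setG_k$ but $a\not\subseteq\setG_k$ can have its image inside $\setG_k$. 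Here I would invoke the structure of the free completion (Definition~\ref{def:completionpartialpair}) to argue that $\rg(\i{\cG})\cap\setG_k = \rg(\i{\cG_k})$ and that $\i{\cG_k}(a,\alpha)\in\setG_k$ forces $a\in\setG_k^*$. Combining this observation with the induction hypothesis applied to $P$ and to each element of $a$ via $Q$ closes this case in both directions. The abstraction case $M\equiv\lambda x.N$ is handled similarly: $\alpha\in\Int{\lambda x.N}^\gm{G}_{\rho'}$ means $\alpha=\i{\cG}(a,\beta)$ with $\beta\in\Int{N}^\gm{G}_{\rho'[x:=a]}$, and $\alpha\in\setG_k$ forces $\alpha=\i{\cG_k}(a,\beta)$ with $a,\beta\in\setG_k$, so one reduces to the induction hypothesis for $N$ with the environment $\rho[x:=a]\in Env_{\setG_k}$.

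Once $(i)$ is established, $(ii)$ follows because $M=_{\Thle{\gm{G}}}N$ means $\Int{M}^\gm{G}\subseteq\Int{N}^\gm{G}$, hence intersecting with $\setG_k$ gives $\Int{M}^{\gm{G}_k}\subseteq\Int{N}^{\gm{G}_k}$ by $(i)$; and $(iii)$ follows from $(ii)$ since an equation is a pair of inclusions. The main obstacle, and the place where the disjointness hypothesis is essential, is the application case: one must be careful that the free completion does not create a pair $\i{\cG}(a,\alpha)$ landing inside $\setG_k$ from ``ingredients'' $a$ that stray outside $\setG_k$; this is exactly where Definition~\ref{def:completionpartialpair} and the disjointness of the $\cG_k$'s are used, and it is worth spelling out the precise claim $\rg(\i{\cG})\cap\setG_k=\rg(\i{\cG_k})$ together with $\i{\cG_k}^{-1}(\setG_k)\subseteq \setG_k^*\times\setG_k$ as a preliminary lemma before running the induction.
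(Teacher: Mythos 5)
Your overall architecture is sensible: strengthen $(i)$ to open terms and environments valued in $\pow{\setG_k}$, induct on $M$, and then $(ii)$ and $(iii)$ do follow. The easy inclusion $\Int{M}^{\gm{G}_k}_\rho\subseteq\Int{M}^{\gm{G}}_{\rho'}\cap\setG_k$ is indeed immediate (it is just Lemma~\ref{lemma:onenvironments} applied to $\cG_k\subpair\cG$, plus $\Int{M}^{\gm{G}_k}_\rho\subseteq\setG_k$), and your abstraction case is correct: there the element $\alpha$ \emph{itself} lies in $\setG_k$, so the rank argument for the free completion (Definition~\ref{def:completionpartialpair}) together with disjointness of the webs forces $\alpha=\i{\cG_k}(a,\beta)$ with $a\in\setG_k^*$ and $\beta\in\setG_k$, and the induction hypothesis applies.

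The gap is in the reverse inclusion for the application case. You write that $\i{\cG}(a,\alpha)$ ``must'' lie in $\rg(\i{\cG_k})$ ``if it is to belong to the sub-web component'' --- but nothing requires it to belong to that component. The only constraint on the witness is $\i{\cG}(a,\alpha)\in\Int{P}^{\gm{G}}_{\rho'}$, and $\Int{P}^{\gm{G}}_{\rho'}$ is a subset of the \emph{whole} completed web, not of $\setG_k$. If $a$ is mixed (say $a\ni\beta$ with $\beta\in\setG_j$, $j\neq k$), then $(a,\alpha)\notin\dom(j_{\sqcup\cG_k})$, so $\i{\cG}(a,\alpha)=(a,\alpha)$ is a rank~$\geq 1$ element lying outside every $\setG_j$; your induction hypothesis for $P$ describes only $\Int{P}^{\gm{G}}_{\rho'}\cap\setG_k$ and is silent about whether such elements occur in $\Int{P}^{\gm{G}}_{\rho'}$. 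Mixed witnesses genuinely arise: $\Int{\bold{I}}^{\gm{G}}$ contains $\i{\cG}(\{\alpha,\beta\},\alpha)$ for $\alpha\in\setG_k$, $\beta\in\setG_j$. What the proof actually requires is that whenever $\alpha\in\setG_k$ has a mixed witness it also has an unmixed one $a'\finsubset\Int{Q}^{\gm{G}}_{\rho'}\cap\setG_k$ with $\i{\cG_k}(a',\alpha)\in\Int{P}^{\gm{G}}_{\rho'}\cap\setG_k$; this uniformity property is the real content of the proposition and is not supplied by the structural induction as you have set it up. The present paper does not reprove this statement (it cites \cite{BucciarelliS03}), and its Appendix~\ref{app:Sometimes gluings are weak products} explicitly notes that outside the special case where each $\cG_k$ is a retract of the glued web, the inclusion of theories needs ``a much more intricate proof'' --- a warning that the naive induction does not close at exactly this point.
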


The existence of a minimum equational graph theory has been shown by Bucciarelli and Salibra in \cite{BucciarelliS04,BucciarelliS08}.
In fact, as observed below, their proof works also for the order theories.

\begin{theorem}\label{thm:minimum} 
There exists a graph model whose order theory is minimum among all order graph theories (hence, the analogue holds for its equational theory).
\end{theorem}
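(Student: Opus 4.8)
The plan is to build a single graph model whose order theory is contained in $\Thle{\gm{G}}$ for every graph model $\gm{G}$, using the gluing construction together with the downward-collapsing behaviour recorded in Proposition~\ref{prop:BucciaSali} and the retract machinery of Proposition~\ref{Proposition Retract}. The starting observation is that, up to isomorphism, there are only $\aleph_0$ graph models that are freely generated by a finite partial pair: a finite partial pair $\cA=(A,j_\cA)$ is determined, up to isomorphism, by the cardinality of $A$ and the graph of the finite partial injection $j_\cA$, so there are only countably many of them. Enumerate a set of representatives $\{\cA_n \st n\in\nat\}$ of these finite partial pairs, chosen with pairwise disjoint carriers, and set $\gm{G}_n=\gmcompl{\cA_n}$ and $\gm{G}_{min}=\Diamond_{n\in\nat}\gm{G}_n$, the gluing of all of them; by Lemma~\ref{lemma:equivalentproducts} this is the graph model freely generated by $\ppcompl{\sqcup_{n\in\nat}\cA_n}$.

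First I would check that $\Thle{\gm{G}_{min}}$ is below the order theory of every graph model freely generated by a \emph{finite} partial pair. Indeed, if $\gm{H}=\gmcompl{\cA}$ with $\cA$ finite, then $\cA$ is isomorphic to some $\cA_n$ in our list, hence $\gm{H}\cong\gm{G}_n$, and by Proposition~\ref{prop:BucciaSali}(ii) we have $\Thle{\gm{G}_{min}}\subseteq\Thle{\gm{G}_n}=\Thle{\gm{H}}$. The real work is to pass from ``finite partial pair'' to ``arbitrary graph model''. So suppose $\gm{G}$ is an arbitrary graph model with web $\cG$, and suppose $(M,N)\notin\Thle{\gm{G}}$, i.e.\ there is $\alpha\in\Int{M}^{\gm{G}}\setminus\Int{N}^{\gm{G}}$. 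By Corollary~\ref{cor:alphainBandcompletion} there is a finite subpair $\cA\subpair\cG$ with $\alpha\in A$ such that $\alpha\in\Int{M}^{\gmcompl{\cA}}\setminus\Int{N}^{\gmcompl{\cA}}$; hence $(M,N)\notin\Thle{\gmcompl{\cA}}$. Since $\cA$ is finite, $\gmcompl{\cA}\cong\gm{G}_n$ for some $n$, and therefore $(M,N)\notin\Thle{\gm{G}_n}\supseteq\Thle{\gm{G}_{min}}$, so $(M,N)\notin\Thle{\gm{G}_{min}}$. Contrapositively, $\Thle{\gm{G}_{min}}\subseteq\Thle{\gm{G}}$ for every graph model $\gm{G}$, so $\Thle{\gm{G}_{min}}$ is the minimum order graph theory. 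The statement about equational theories follows immediately, since $\Th{\gm{G}}$ is recovered from $\Thle{\gm{G}}$ as $\Thle{\gm{G}}\cap\Thle{\gm{G}}^{-1}$, so $\Th{\gm{G}_{min}}\subseteq\Th{\gm{G}}$ for all $\gm{G}$.

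The main obstacle — and the point that deserves care — is the reduction of an arbitrary (possibly huge, non-effective) graph model to its finite subpairs: one must be sure that the finite-support/continuity lemmata for the interpretation function with respect to partial pairs (Lemma~\ref{lemma:existenceoffinitesubpairs}, Lemma~\ref{lemma:onenvironments}) and the completion-stability results (Corollary~\ref{cor:alphainBandcompletion}, using that $\cA\subpair\cB\subpair\ppcompl{\cA}$ forces $\ppcompl{\cB}=\ppcompl{\cA}$, Remark~\ref{rem:ShouldBeTrue}) genuinely let a \emph{single} element $\alpha$ witnessing $\Int{M}\neq\Int{N}$ be traced down into a finite subpair whose free completion still separates $M$ and $N$. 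Once that is in place, the gluing collapses everything downward by Proposition~\ref{prop:BucciaSali}, and the countability of finite partial pairs guarantees $\gm{G}_{min}$ actually exists as a set-sized graph model. I would remark that $\gm{G}_{min}$ as constructed has a countable web, which is why it is the natural candidate to be made \emph{effective} in Theorem~\ref{Theorem min}; the present theorem does not yet require effectivity, only the minimality.
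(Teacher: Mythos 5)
Your proof is correct and follows essentially the same route as the paper's: glue the free completions of a set of representatives of all finite partial pairs, use Corollary~\ref{cor:alphainBandcompletion}(ii) to push any failing inequation down into the free completion of a finite subpair, and then use Proposition~\ref{prop:BucciaSali}(ii) to see that it also fails in the gluing. The extra care you take about isomorphism of finite pairs and about deducing the equational statement from the order one is implicit in the paper but entirely consistent with it.
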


\begin{proof} 
Let $(\cA_k)_{k\in\nat}$ be a family of pairwise disjoint finite partial pairs such that all other finite pairs are isomorphic to at least one $\cA_k$. 
Take $\gm{G} = \Diamond_{k\in K}\gm{G}_k$, where $\gm{G}_k = \gmcompl{\cA_k}$;
by Lemma~\ref{lemma:equivalentproducts}, $\gm{G} = \gmcompl{\cA}$ where $\cA= \sqcup_{k\in\nat} \cA_k$.

We now prove that the order theory, and hence also the equational theory, of $\gm{G}$ is the minimum one. 
Let $e$ be an inequation which fails in some graph model. 
By Corollary~\ref{cor:alphainBandcompletion}$(ii)$ $e$ fails in some $\gmcompl{\cB}$ where $\cB$ is some finite pair, 
hence it fails in some $\gm{G}_k$. 
By Proposition~\ref{prop:BucciaSali}$(ii)$, $e$ fails in $\gm{G}$.
\end{proof}

Recall that the minimum equational graph theory cannot be $\lambda_\beta$ or $\lambda_{\beta\eta}$ by Proposition~\ref{prop:no graph theory eq lambdabeta}.

%\subsection{Forcing-completion}

\section{A L\"owenheim-Skolem theorem for graph models}\label{skolem}

In this section we prove a kind of downwards L\"owenheim-Skolem theorem for graph models: 
every equational/order graph theory is the theory of a graph model having a countable web.
This result positively answers Question 3 in \cite[Sec.~6.3]{Berline00} for the class of graph models.
Note that applying the classical L\"owenheim-Skolem theorem to a graph model $\gm{G}$, viewed as a combinatory algebra $\ca{C}$, would only give a 
countable elementary substructure $\ca{C'}$ of $\ca{C}$.
Such a $\ca{C'}$ does not correspond to any graph model since there exists no countable graph model.
 
Let us first note that the class of total subpairs of a total pair $\cG$ is closed under (finite or infinite) intersections and increasing unions.

\begin{definition}
If $\cA\subpair\cG$ is a partial pair, then the \emph{total subpair of $\cG$ generated by $\cA$}\index{subpair!total -- generated by a pair}
is defined as the intersection of all the total pairs $\cG'$ such that $\cA\subpair\cG'\subpair\cG$. 
\end{definition}

\begin{theorem} {\rm (L\"owenheim-Skolem Theorem for graph models)}\label{thm:Low-Sko} \\
For all graph models $\gm{G}$ there exists a graph model $\gm{G'}$ with a countable web $\cG'\subpair\cG$ such that 
$\Thle{\gm{G'}} = \Thle{\gm{G}}$, and hence such that $\Th{\gm{G'}} = \Th{\gm{G}}$.
\end{theorem}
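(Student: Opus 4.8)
The plan is to build $\gm{G'}$ by a standard "closing-off" argument: start from a countable subpair of $\cG$, then repeatedly enlarge it — countably often — to absorb all the finite witnesses that are needed to certify the failures of inequations, finally taking a union and closing under $\j{\cG}$ to get a total subpair. The union of a countable chain of countable sets is countable, so the resulting web will be countable, and the construction will be arranged so that every inequation failing in $\gm{G}$ already fails in $\gm{G'}$ and vice versa.

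First I would observe that, by Lemma~\ref{lemma:onenvironments} applied to the inclusion $\cG'\subpair\cG$, we automatically have $\Int{M}^{\gm{G'}}\subseteq\Int{M}^\gm{G}$ for every closed term $M$, hence $\Thle{\gm{G}}\subseteq\Thle{\gm{G'}}$ holds for \emph{any} total subpair $\cG'$ of $\cG$; the real work is the reverse inclusion, i.e. making $\cG'$ large enough to reflect all failures. Enumerate the countably many pairs $(M,N)$ of closed $\lambda$-terms. For each such pair with $\Int{M}^\gm{G}\not\subseteq\Int{N}^\gm{G}$, pick one witness $\alpha_{M,N}\in\Int{M}^\gm{G}\setminus\Int{N}^\gm{G}$; by Corollary~\ref{cor:alphainBandcompletion} there is a finite $\cA_{M,N}\subpair\cG$ containing $\alpha_{M,N}$ such that $\alpha_{M,N}\in\Int{M}^\cB\setminus\Int{N}^\cB$ for every $\cB$ with $\cA_{M,N}\subpair\cB\subpair\cG$. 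Now build a chain $\cA^{(0)}\subpair\cA^{(1)}\subpair\cdots$ of countable subpairs of $\cG$: let $\cA^{(0)}$ be the union of all the $\cA_{M,N}$ (a countable union of finite pairs, hence countable), and given $\cA^{(n)}$, let $\cA^{(n+1)}$ be the total subpair of $\cG$ generated by $\cA^{(n)}$ — this stays countable because generating a total subpair only requires iteratively adjoining the values $\j{\cG}(a,\alpha)$ for $(a,\alpha)\in (A^{(n)})^*\times A^{(n)}$, of which there are countably many at each of countably many stages. Set $\cG' = \bigsqcup_n \cA^{(n)}$.

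Then I would check that $\cG'$ works. It is countable (countable union of countable sets) and total: if $(a,\alpha)\in (G')^*\times G'$ then $a\cup\{\alpha\}\subseteq A^{(n)}$ for some $n$ by directedness, so $\j{\cG}(a,\alpha)$ is defined already in $\cA^{(n+1)}\subpair\cG'$; moreover $\cG'$ is closed under intersections/increasing unions of total subpairs as noted before the theorem, so $\cG'\subpair\cG$ is a genuine graph-model web. For the order theory: if an inequation $M\sqle N$ fails in $\gm{G}$, then $\cA_{M,N}\subpair\cG'\subpair\cG$, so by the choice of $\cA_{M,N}$ we get $\alpha_{M,N}\in\Int{M}^{\gm{G'}}\setminus\Int{N}^{\gm{G'}}$, i.e. the inequation fails in $\gm{G'}$; hence $\Thle{\gm{G'}}\subseteq\Thle{\gm{G}}$, and combined with the easy inclusion above, $\Thle{\gm{G'}}=\Thle{\gm{G}}$. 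The equational theory equality follows since $M=_{\gm{G}}N$ iff $M\sqle_{\gm{G}} N$ and $N\sqle_{\gm{G}} M$.

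The main obstacle — really the only delicate point — is verifying that "the total subpair generated by a countable partial pair is still countable", i.e. that the closure process in Definition~\ref{def:completionpartialpair}'s spirit (but \emph{inside} $\cG$, not the free completion) does not blow up cardinality. This is the step where one must be careful that we are closing under the \emph{existing partial} operation $\j{\cG}$ restricted to tuples from the current set, a countable-to-countable operation iterated $\omega$ times, rather than anything that could introduce uncountably many new elements. Everything else — countability of countable unions, totality of $\cG'$, the reflection of failures via Corollary~\ref{cor:alphainBandcompletion}, and the trivial inclusion via Lemma~\ref{lemma:onenvironments} — is routine once that point is settled.
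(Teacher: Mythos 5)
There is a genuine gap, and it sits exactly where you declare the problem solved. You claim that $\Thle{\gm{G}}\subseteq\Thle{\gm{G'}}$ is automatic for \emph{any} total subpair $\cG'\subpair\cG$, on the grounds that Lemma~\ref{lemma:onenvironments} gives $\Int{M}^{\gm{G'}}\subseteq\Int{M}^{\gm{G}}$ for each closed $M$. This inference is invalid: from $\Int{M}^{\gm{G}}\subseteq\Int{N}^{\gm{G}}$ and $\Int{M}^{\gm{G'}}\subseteq\Int{M}^{\gm{G}}$ you only get $\Int{M}^{\gm{G'}}\subseteq\Int{N}^{\gm{G}}$, not $\Int{M}^{\gm{G'}}\subseteq\Int{N}^{\gm{G'}}$. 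An element $\alpha\in\Int{M}^{\gm{G'}}$ lies in $\Int{N}^{\gm{G}}$, but the finite fragment of $\cG$ that witnesses $\alpha\in\Int{N}^{\gm{G}}$ (Lemma~\ref{lemma:existenceoffinitesubpairs}) need not be contained in $\cG'$, so $\alpha$ may well fail to enter $\Int{N}^{\gm{G'}}$. In other words, an inequation that \emph{holds} in $\gm{G}$ can perfectly well \emph{fail} in a total subpair, and preserving the positive part of the order theory is the genuinely hard direction — not the "trivial" one. Your construction (one round of adjoining the finite pairs $\cA_{M,N}$ that certify failures, then closing to a total subpair) correctly yields $\Thle{\gm{G'}}\subseteq\Thle{\gm{G}}$, but does nothing to secure the converse inclusion.

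The paper's proof is forced into an $\omega$-stage iteration precisely to repair this. At each stage it forms the total subpair $\cG'_n$ generated by the current countable pair, looks at every inequation $M\sqle N$ that holds in $\gm{G}$ but fails in $\gm{G}'_n$, and for every offending witness $\alpha\in\Int{M}^{\gm{G}'_n}\setminus\Int{N}^{\gm{G}'_n}$ (which, as above, necessarily lies in $\Int{N}^{\gm{G}}$) adjoins a finite pair $\cC_{\alpha,e}\subpair\cG$ with $\alpha\in\Int{N}^{\cC_{\alpha,e}}$, supplied by Lemma~\ref{lemma:existenceoffinitesubpairs}. The final verification then uses Corollary~\ref{cor:alphainBandcompletion}$(i)$ plus compactness: any putative counterexample $\alpha\in\Int{M}^{\gm{G'}}\setminus\Int{N}^{\gm{G'}}$ is already detected by a finite $\cB\subpair\cG'_n$ for some $n$, and the stage-$(n{+}1)$ repair puts $\alpha$ into $\Int{N}^{\gm{G}'_{n+1}}\subseteq\Int{N}^{\gm{G'}}$, a contradiction. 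To fix your proof you would need to interleave this repair step into your chain $\cA^{(0)}\subpair\cA^{(1)}\subpair\cdots$ (each stage still only adds countably many finite pairs, so countability survives). The point you single out as the "only delicate point" — that generating a total subpair from a countable pair keeps it countable — is correct but routine; the delicate point is the one you dismissed.
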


\begin{proof} 
We will define an increasing sequence of countable subpairs $\cA_n$ of $\cG$, and take for $\cG'$ the 
total subpair of $\cG$ generated by $\cA = \sup_{n\in\nat} \cA_n$.

We start defining $\cA_0$. Let $I$ be the countable set of inequalities between closed $\lambda$-terms which fail in $\gm{G}$.
Let $e\in I$. 
By Corollary~\ref{cor:alphainBandcompletion}$(i)$ there exists a finite partial pair $\cA_e\subpair \cG$ such that 
$e$ fails in every partial pair $\cB$ satisfying  $\cA_e\subpair \cB\subpair\cG$. 
Then we define $\cA_0 = \sup_{e\in I} \cA_e \subpair \cG$.
Assume now that $\cA_{n}$ has been defined, and we define $\cA_{n+1}$ as follows.
Let $\gm{G}'_n$ be the graph model whose web $\cG'_n$ is the total subpair of $\cG$ generated by $\cA_n$.
For each inequality $e = M \sqle N$ which holds in $\gm{G}$ and fails in $\gm{G}'_n$, we consider the set
$L_{e} = \{ \alpha \in\setG'_{n} : \alpha\in \Int{M}^{\gm{G}'_n} \setminus \Int{N}^{\gm{G}'_n}\}$. 
Let $\alpha\in L_{e}$.
Since $\cG'_n\subpair \cG$ and $\alpha\in \Int{M}^{\gm{G}'_n}$, then by Lemma~\ref{lemma:onenvironments} we have that
$\alpha \in  \Int{M}^{\gm{G}}$. By $\Int{M}^\gm{G} \subseteq\Int{N}^\gm{G}$ we also obtain $\alpha \in \Int{N}^{\gm{G}}$.
By Lemma~\ref{lemma:existenceoffinitesubpairs} there exists a partial pair $\cC_{\alpha,e}\subpair \cG$ 
such that $\alpha\in \Int{N}^{\cC_{\alpha,e}}$.
We define $\cA_{n+1}$ as the union of the partial pair $\cA_n$
and the partial pairs $\cC_{\alpha, e}$  for every $\alpha\in L_{e}$. 
%Finally $\cP_{n+1}\equiv \cP_{\cA_{n+1}}\subpair \cG$ is built from $\cA_{n+1}$ as in Lemma~\ref{claim1.3}). 

As announced, we take for $\cG'$ the total subpair of $\cG$ generated by $\cA = \sup_{n\in\nat}\cA_n$. 
By construction we have, for every inequality $e$ which fails in $\gm{G}$:
$\cA_{e}\subpair\cG'_{n}\subpair\cG'\subpair\cG$. 
Now, $\Thle{\gm{G}'}\subseteq \Thle{\gm{G}}$ follows from Corollary~\ref{cor:alphainBandcompletion}$(1)$ and from the choice of $\cA_e$.

Suppose now, by contradiction, that there exists an inequality $M\ \sqle\ N$ which fails in $\gm{G}'$ but not in $\gm{G}$.
Then there is an $\alpha\in \Int{M}^{\gm{G}'}\setminus \Int{N}^{\gm{G}'}$. By Corollary~\ref{cor:alphainBandcompletion}$(i)$
there is a finite partial pair $\cB\subpair\cG'$ satisfying the following condition:
 for every partial pair $\cC$ such that $\cB\subpair\cC\subpair \cG'$,
we have $\alpha\in \Int{M}^{\cC} \setminus \Int{N}^{\cC}$. 
Since $\cB$ is finite, we have that $\cB\subpair \cG'_n$ for some $n$.
This implies that $\alpha\in \Int{M}^{\gm{G}'_n}\setminus \Int{N}^{\gm{G}'_n}$.
By construction of $\gm{G}'_{n+1}$ we have that $\alpha \in \Int{N}^{\gm{G}'_{n+1}}$; this implies 
$\alpha \in \Int{N}^{\gm{G}'}$. Contradiction.
\end{proof}

As announced at the end of Sections~\ref{sec:methodology}/\ref{sec:Mainresandconj} an alternative and more conceptual proof of Theorem~\ref{thm:Low-Sko}
could be given which can much more easily and transparently be adapted to the other classes of webbed models.

\part{Effective $\lambda$-models}

\section{Effective $\lambda$-models in Scott-continuous semantics}\label{sec:effmodels}

In this section we recall the definition of \emph{effective domains},
also called in the literature ``effectively given domains''.
Then, we introduce the new notion of \emph{effective $\lambda$-models} and \emph{weakly effective $\lambda$-models} and
prove some properties of these models using methods of recursion theory.
In particular we prove that: $(i)$ The equational theory of an effective $\lambda$-model cannot be $\lambda_{\beta}$ or $\lambda_{\beta\eta}$ (Corollary~\ref{cor:nolambdabeta})
$(ii)$ The order theory of an effective $\lambda$-model cannot be r.e.\ (Corollary~\ref{cor:M eff imp Thle M non r.e.}).

\subsection{Effective Scott domains}\label{subsec:effmodels}

All the material developed in this subsection can be found in \cite[Ch.~10]{Viggo94};
its adaptation to DI-domains and DI-domains with coherences can be found in \cite{Gruchalski96}. 

\subsubsection{The category $\bold{ED}$ of effective Scott domains and continuous functions}

\begin{definition}\label{def:eff_dom}
A triple $\D = (\D,\sqle_\D,d)$ is called an \emph{effective domain} if $(\D,\sqle_\D)$ is a Scott domain and 
$d:\nat\to\compel{\D}$ is a numeration of $\compel{\D}$ such that:
\begin{itemize}
\item[(i)]
    the relation ``$d_m$ and $d_n$ have an upper bound'' is decidable in $(m,n)$,
\item[(ii)]
    the relation ``$d_n = d_m \sqcup d_k$'' is decidable in $(m,n,k)$.
\end{itemize}
\end{definition}

It is equivalent to replace $(ii)$ by $(ii)$': the join operator restricted to pairs of compact elements is total recursive %with respect to $d,d$ 
and ``$d_n = d_m$'' (or, equivalently, ``$d_m\sqle_\D d_n$'') is decidable in $(m,n)$.
%the equality relation is decidable on compact elements''.
The equivalence holds because ``$d_m\sqle_\D d_n$'' is equivalent to ``$d_n = d_m\sqcup d_n$'', and because ``$d_m = d_n$'' is equivalent to ``$d_m\sqle_\D d_n$ and $d_n\sqle_\D d_m$''.

As usual, when there is no ambiguity, we denote by $\D$ the effective domain $(\D,\sqle_\D,d)$.
%\emph{Without loss of generality we will always suppose $d_0=\bot_\D$.}
\begin{notation}
For all $v\in\D$, we set $\Kdn{v} = \{n\st  d_n\sqle_\D v\}$.
\end{notation}

\begin{definition}\label{def:domcomp} An element $v$ of an effective domain $\D$ is called \emph{r.e.} 
(resp. \emph{decidable}) if the set $\Kdn{v}$ is r.e.\ (resp. decidable). 
\end{definition}

In the literature \emph{r.e.\ elements} (of domains) are called ``computable elements'', while our decidable elements were apparently not addressed.
We choose the alternative terminology of r.e.\ elements for the following two reasons: (1) it is more coherent with the usual terminology for elements of 
$\pow{\nat}$ (see Example~\ref{ex:key-example-of-effective-domains}); 
(2) it emphasizes the difference between r.e.\ elements and decidable elements of $\D$.

\begin{notation} $\domre{\D}$ (resp. $\domdec{\D}$) denotes the set of r.e.\ (resp. decidable) elements
of the effective domain $\D$.
\end{notation}

Note that $\compel{\D}%\subseteq \domdec{\D}
\subseteq \domre{\D}$ and that, in general, $\domre{\D}$ is not a cpo.

\begin{example}
Given an effective numeration of a countable set $D$, the flat domain $D_\bot$ is effective and all its elements are decidable since 
they are compact. 
In particular $\Lambda_\bot$ is effective for the bijective numeration $(-)_\lambda$ defined in the beginning of Section \ref{sec:Recursion in lambda-calculus}. 
\end{example}

%\subsubsection{The category of effective domains and continuous functions}

\begin{definition} $\bold{ED}$\label{intro:ED} is the category with effective domains as objects, and all continuous functions as morphisms.
\end{definition}

$\bold{ED}$ is a full subcategory of the category of Scott domains; it is Cartesian closed since, 
if $\D,\D'$ are effective domains, also $\D\times \D'$ and $[\D\to \D']$ are effective domains. 
The reader can easily check these facts by himself or find the proofs in \cite{Escardo96}. 

\begin{remark}\label{rem:comp} It is clear that the composition of r.e.\ functions is an r.e.\ function,
moreover it is straightforward to check that the maps \emph{curry} and \emph{eval}, with the usual behaviour, 
and the composition operator $C(f,g) = g\comp f$, are r.e.\ at all types.
Hence, by Theorem \ref{thm:restrcomp}, their restrictions to r.e.\ elements are computable. 
\end{remark}

%Before introducing the notions of effective and weakly effective $\lambda$-models we recall some properties of effective domains.

\subsubsection{Characterizations of r.e.\ continuous functions}

The next proposition gives two other characterizations of r.e.\ functions. 

\begin{proposition}\label{prop:dom-refun}
Let $f:\D\to \D'$ be a continuous function where $\D,\D'$ are effective domains for $d,d'$. 
The following conditions are equivalent:
\begin{alphalist}
\item
    $f\in\domre{[\D\to\D']}$, 
\item
    the relation $d'_m\sqle_{\D'} f(d_n)$ is r.e.\ in $(m,n)$,
\item
    $\{(m,n) \st (d_m,d'_n)\in Tr(f)\}$ is r.e.
\end{alphalist}
and the same holds when ``decidable'' replaces ``r.e.''.
\end{proposition}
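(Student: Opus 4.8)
The plan is to prove the equivalence of (a), (b), (c) by a cycle of implications, exploiting the definition of the trace $Tr(f) = \{(d,e)\in\compel{\D}\times\compel{\D'} \st e\sqle_{\D'} f(d)\}$ together with the effectivity axioms on $d, d'$ and the standard recursion-theoretic closure facts (Remark~\ref{rem:comp1}). First I would observe that (b) and (c) are essentially notational variants: by the very definition of the trace, $(d_m, d'_n)\in Tr(f)$ holds if and only if $d'_n\sqle_{\D'} f(d_m)$, so the set $\{(m,n)\st (d_m,d'_n)\in Tr(f)\}$ is, after swapping the roles of $m$ and $n$, exactly the relation in (b). Hence (b) $\Leftrightarrow$ (c) is immediate, and the real content is (a) $\Leftrightarrow$ (b).

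For (a) $\Rightarrow$ (b): recall that $f\in\domre{[\D\to\D']}$ means $\Kdn{f} = \{k \st e_k \sqle_{[\D\to\D']} f\}$ is r.e., where $(e_k)$ is the canonical numeration of the compact elements of the function space $[\D\to\D']$. Since $[\D\to\D']$ is an effective domain, its compact elements are the finite sups of step functions $\step{d_m}{d'_n}$, and the numeration $e$ can be chosen so that $\step{d_m}{d'_n}$ appears effectively as $e_{\sigma(m,n)}$ for a total recursive $\sigma$; moreover, as noted in the excerpt, $(d_m,d'_n)\in tr(f)$ iff $\step{d_m}{d'_n}\sqle_{[\D\to\D']} f$, i.e.\ iff $\sigma(m,n)\in\Kdn{f}$. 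So the relation in (b) is the inverse image of the r.e.\ set $\Kdn{f}$ under the computable map $\sigma$, hence r.e.\ by Remark~\ref{rem:comp1}. For (b) $\Rightarrow$ (a): conversely, an arbitrary compact element $e_k$ of $[\D\to\D']$ is a finite join $\bigsqcup_{i\in I}\step{d_{m_i}}{d'_{n_i}}$, where the finite index data $(m_i,n_i)_{i\in I}$ can be extracted effectively from $k$ using the effective finite-set encoding $\finsetenc$; and $e_k \sqle_{[\D\to\D']} f$ holds if and only if $\step{d_{m_i}}{d'_{n_i}}\sqle_{[\D\to\D']} f$ for every $i\in I$, i.e.\ if and only if $d'_{n_i}\sqle_{\D'} f(d_{m_i})$ for all $i\in I$. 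Thus $\Kdn{f}$ is obtained from the r.e.\ relation of (b) by a finite (uniformly bounded, effectively indexed) conjunction, which keeps it r.e., using the closure of r.e.\ sets under finite intersection and substitution by computable functions.

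The "decidable" version of the statement goes through by the same argument, replacing "r.e." by "decidable" everywhere: the maps $\sigma$ and the extraction of the finite step-function data are total recursive independently of $f$, and decidable sets are likewise closed under inverse images along computable maps and under finite intersections, so each implication preserves decidability. One should just be slightly careful that in (b) $\Rightarrow$ (a) the finite conjunction is over an index set $I$ whose cardinality depends on $k$ but is computably bounded by $\cardinality{\partinv{\finsetenc}(\cdot)}$, which is fine since that cardinality is itself decidable in the code.

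The main obstacle I expect is the bookkeeping around the numeration of $\compel{[\D\to\D']}$: one must fix, once and for all, an explicit effective numeration $e$ of the compact elements of the function space that is compatible with the product/exponential construction making $\bold{ED}$ Cartesian closed (as alluded to before Remark~\ref{rem:comp}), and check that step functions are retrieved from it by a total recursive map and, conversely, that every compact element decomposes effectively into step functions. This is routine but is the only place where more than formal manipulation is needed; everything else is an application of Remark~\ref{rem:comp1} and the closure properties of (co-)r.e.\ sets recalled in Section~\ref{subsec:recth}.
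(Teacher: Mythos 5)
Your proof is correct. For this proposition the paper itself gives no argument: it simply refers to \cite[Ch.~10,~Prop.~3.7]{Viggo94} for the r.e.\ case and remarks that $(c)$ is a reformulation of $(b)$ --- exactly the observation you open with (note the harmless swap of $m$ and $n$ between $(b)$ and $(c)$, which you handle correctly). Your argument for $(a)\Leftrightarrow(b)$ is the standard one that the cited reference carries out: $\step{d_m}{d'_n}\sqle_{[\D\to\D']}f$ iff $d'_n\sqle_{\D'}f(d_m)$, a compact element of $[\D\to\D']$ is an effectively decodable finite sup of step functions, and $\Kdn{f}$ is therefore an effectively bounded conjunction over the relation in $(b)$, while conversely the relation in $(b)$ is the preimage of $\Kdn{f}$ under a total recursive index map $\sigma$. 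Both directions, and the decidable variant, go through by closure of (decidable, r.e.) sets under computable preimages and uniformly finite intersections. The only point deserving the care you already flag is that the canonical numeration $\gv_{(d,d')}$ of $\compel{[\D\to\D']}$ must make $\sigma$ total recursive and the step-function decomposition effectively recoverable (using that consistency of finite families of step functions is decidable by condition $(i)$ of the definition of effective domain); this is precisely what the construction in \cite[Ch.~10,~Thm.~3.6]{Viggo94} provides. So your proposal supplies a complete proof where the paper only cites one, and it does so along the expected lines.
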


We refer to \cite[Ch.~10,~Prop.~3.7]{Viggo94} for a proof of this proposition in the r.e.\ case; of course $(c)$ is just a reformulation of $(b)$. 
%It is not difficult to show that the analogous proposition holds with decidable functions and decidable sets
%replacing r.e.\ functions and r.e.\ sets.
%It is important to note that it is possible to switch from a characterization to another one in a weakly effective way FIXME. 

\subsubsection{Adequate numerations of $\domre{\D}$}

\begin{definition} A natural number $n\in\nat$ \emph{represents} $v\in\domre{\D}$ if $\W_n = \Kdn{v}$.
\end{definition}

The surjection $\ecomp':\nat\to\domre{\D}$ defined by $\ecomp'(n) = v$ if and only if $n$ represents $v$ is not (in general) a numeration since it 
can be partial. This partiality would create technical difficulties.
However, using standard techniques of recursion theory, it is not difficult to get in a uniform way 
a \emph{total} numeration $\ecomp^\D$ of $\domre{\D}$ \cite[Ch.~10,~Thm.~4.4]{Viggo94}.
In the sequel, we will need some further constraints on $\ecomp^\D$, whose satisfiability is guaranteed by the following proposition.

\begin{proposition}\label{prop:adequate-numeration} There exists a total numeration $\ecomp^\D:\nat\to\domre{\D}$ such that:
\begin{itemize}
\item[(i)]
    $d_n \sqle_\D \ecomp^\D_m$ is r.e.\ in $(m,n)$.
\item[(ii)]
    The inclusion mapping $\iota:\compel{\D}\to\domre{\D}$ is computable with respect to $d,\ecomp^{\D}$.
\end{itemize}
\end{proposition}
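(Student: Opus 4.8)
The plan is to build the total numeration $\ecomp^\D$ by composing the naive (partial) assignment $\ecomp'$ with a total recursive function that repairs its partiality, and then to verify the two extra clauses $(i)$ and $(ii)$ directly from the construction. First I would recall the standard construction referenced from \cite[Ch.~10,~Thm.~4.4]{Viggo94}: starting from an arbitrary recursive enumeration of all r.e.\ sets via the $\W_n$'s, one wants an index $n$ to denote the element $\ecomp'(n)$ whose "compact approximants below it" set $\Kdn{v}$ equals $\W_n$; the trouble is that a generic $\W_n$ need not be of the form $\Kdn{v}$ (it need not be downward closed in the numeration $d$, nor directed). The fix is to pass from $\W_n$ to its "downward-directed closure inside $\compel{\D}$": define, recursively in stages, $S_n = \{\, k \st d_k \sqle_\D d_{k_1}\sqcup\dots\sqcup d_{k_r} \text{ for some } k_1,\dots,k_r \text{ enumerated so far in } \W_n \text{ whose join exists}\,\}$. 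Using Definition~\ref{def:eff_dom}(i)--(ii) (bounded-join decidable, join-equality decidable) this $S_n$ is uniformly r.e.\ in $n$, i.e.\ $S_n = \W_{h(n)}$ for a total recursive $h$; moreover every $S_n$ is the set $\Kdn{v}$ of some $v\in\domre{\D}$ (its least upper bound, which exists by algebraicity and bounded completeness once one checks the enumerated compacts form a directed set), and if $\W_n$ already equals some $\Kdn{v}$ then $S_n = \W_n$. Setting $\ecomp^\D(n) = \bigsqcup\{\, d_k \st k\in S_n\,\} = \ecomp'(h(n))$ gives a total surjection onto $\domre{\D}$.

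Next I would check clause $(i)$: $d_n\sqle_\D \ecomp^\D_m$ holds if and only if $n\in \Kdn{\ecomp^\D_m} = S_m = \W_{h(m)}$, and "$n\in\W_{h(m)}$" is r.e.\ in $(m,n)$ because $h$ is total recursive and membership in $\W_{\cdot}$ is r.e.\ in both arguments (the standard universal enumeration of r.e.\ sets). So clause $(i)$ is essentially immediate once the construction is pinned down. For clause $(ii)$ I need a total recursive function $g$ with $\ecomp^\D(g(k)) = d_k$ for every $k$, i.e.\ $\W_{h(g(k))} = S_{g(k)} = \Kdn{d_k} = \{\, m \st d_m\sqle_\D d_k\,\}$. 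Since "$d_m\sqle_\D d_k$" is decidable in $(m,k)$ (this is the reformulation $(ii)'$ of Definition~\ref{def:eff_dom}, noted right after it), the set $\{m\st d_m\sqle_\D d_k\}$ is uniformly decidable, hence uniformly r.e., hence equal to $\W_{t(k)}$ for a total recursive $t$; and because this set is already of the form $\Kdn{v}$, the repair operator fixes it, $S_{t(k)} = \W_{t(k)}$, so $g := t$ works and $\iota:\compel{\D}\to\domre{\D}$, $d_k\mapsto d_k$, is tracked by $t$ with respect to $d$ and $\ecomp^\D$. Thus $\iota$ is computable.

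The main obstacle — really the only place needing care — is the bookkeeping that makes the "downward-directed closure" operation genuinely \emph{uniform} and total recursive in the index, and the verification that the resulting set is always of the form $\Kdn{v}$ for an actual r.e.\ element $v$. Concretely: one must argue that the compacts $\{d_k \st k \in S_n\}$ form a directed subset of $\compel{\D}$ (so that algebraicity of the Scott domain delivers a least upper bound $v$ with $\compel{\D}\cap{\downarrow}v = \{d_k\st k\in S_n\}$), and that $v\in\domre{\D}$ because $\Kdn{v} = S_n$ is r.e. Directedness is where Definition~\ref{def:eff_dom}(i)--(ii) is used essentially: given two elements of $S_n$, their defining finite joins-of-$\W_n$-elements may not be jointly bounded, but one only closes under \emph{bounded} joins, and bounded completeness then guarantees the relevant joins exist inside $S_n$; decidability of the bounded-join predicate is what keeps the enumeration of $S_n$ effective. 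I would present this as a lemma "the repair operator $h$ exists, is total recursive, and $S_n = \Kdn{\ecomp'(h(n))}$ with $S_n = \W_n$ whenever $\W_n$ is already good", then derive $(i)$ and $(ii)$ in two short paragraphs as above; everything else is routine recursion theory and can be cited from \cite{Viggo94}.
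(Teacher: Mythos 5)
The paper itself gives no proof of this proposition (it is asserted with a pointer to \cite[Ch.~10,~Thm.~4.4]{Viggo94}), so the only question is whether your reconstruction is sound. Your overall strategy --- totalize the partial assignment $\ecomp'$ by a uniform ``repair'' $h$ with $\W_{h(n)}=\Kdn{\ecomp^\D_n}$, keep $\W_n$ fixed when it is already of the form $\Kdn{v}$ to get surjectivity, then read off $(i)$ from uniformity of $h$ and $(ii)$ from decidability of $d_m\sqle_\D d_k$ --- is the standard and correct one, and your treatment of clauses $(i)$ and $(ii)$ is fine once the repair operator works.

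The gap is in the repair operator itself. As you define it, $S_n$ contains every $k$ with $d_k\sqle_\D d_{k_1}\sqcup\dots\sqcup d_{k_r}$ for \emph{some} existing join of elements enumerated into $\W_n$; since singleton joins always exist, $S_n\supseteq\W_n$. So if $\W_n$ contains indices $k_1,k_2$ of two \emph{incompatible} compacts, both survive into $S_n$, the set $\{d_k\st k\in S_n\}$ is not directed, and its least upper bound need not exist ($\D$ is only bounded complete, not a lattice) --- so $\ecomp'(h(n))$ is undefined and totality fails exactly where it was supposed to be rescued. Your remark that ``one only closes under bounded joins'' does not address this: the problem is not that you add too many joins, but that you fail to discard one of each incompatible pair. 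The standard fix is a \emph{greedy consistent sub-enumeration}: enumerate $\W_n$ in stages, maintain the join $v_s$ of the compacts accepted so far, and accept the next enumerated index $k$ only if $d_k$ and $v_s$ have an upper bound (decidable by Definition~\ref{def:eff_dom}$(i)$), setting $v_{s+1}=v_s\sqcup d_k$; otherwise skip $k$. This yields an increasing chain, hence a sup $v$ with $\Kdn{v}$ uniformly r.e., and when $\W_n=\Kdn{u}$ nothing is ever skipped, so $v=u$. With this corrected $h$, your arguments for $(i)$ and $(ii)$ go through unchanged.
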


\begin{definition} A numeration $\ecomp^\D$ of $\domre{\D}$ is called \emph{adequate} if it fulfills the conditions $(i)$ and $(ii)$ of 
Proposition~\ref{prop:adequate-numeration}.
\end{definition}

\begin{lemma}\label{lemma:universal-numerations}
For all adequate $\ecomp^{\D},\ecomp'^{\D}$ there is a total recursive function $\vphi:\nat\to\nat$ such that $\ecomp^{\D} = \ecomp'^{\D}\comp \vphi$.
%(and conversely).
%$\ecomp^{\D}_n = \ecomp_{\vphi(n)}'^\D$ for all $n$.
\end{lemma}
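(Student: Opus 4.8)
\textbf{Proof proposal for Lemma~\ref{lemma:universal-numerations}.}

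The plan is to exhibit a total recursive $\vphi$ that, given an index $m$, searches for an index $n$ with $\ecomp'^\D_n = \ecomp^\D_m$. The obvious strategy is: on input $m$, enumerate the candidate set $\W_m = \Kdn{\ecomp^\D_m}$ and, in parallel, test candidate outputs $n = 0,1,2,\dots$ against it; output the first $n$ that ``matches''. The delicate point is that equality of r.e.\ sets is $\Pi^0_2$, so a naive match test does not terminate. I would sidestep this in the usual way by appealing to the fact that both numerations are \emph{adequate} (Proposition~\ref{prop:adequate-numeration}), which makes the relation $d_k \sqle_\D \ecomp'^\D_n$ r.e.\ in $(k,n)$ and similarly for $\ecomp^\D$; I will use this to reduce the problem to finding \emph{any} index for the target element rather than deciding set equality.

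Concretely, here are the steps in order. First I would recall from adequacy that the relation $R(k,m) \equiv (d_k \sqle_\D \ecomp^\D_m)$ is r.e., hence there is a partial recursive function enumerating, uniformly in $m$, exactly the set $\Kdn{\ecomp^\D_m}$; and likewise $R'(k,n) \equiv (d_k \sqle_\D \ecomp'^\D_n)$ is r.e. Second, I would observe that since $\ecomp'^\D$ is \emph{onto} $\domre{\D}$ and $\ecomp^\D_m \in \domre{\D}$, for every $m$ there is at least one $n$ with $\ecomp'^\D_n = \ecomp^\D_m$; so the search below is guaranteed to succeed, which is what makes $\vphi$ total. Third, I would build $\vphi$ by the standard dovetailing argument: on input $m$, run a universal process that for increasing stages $s$ checks, for all pairs $(n,s)$ with $n \le s$, whether the finite approximations agree — i.e.\ whether every compact $d_k$ enumerated into $\Kdn{\ecomp^\D_m}$ within $s$ steps has also been enumerated into $\Kdn{\ecomp'^\D_n}$ within $s$ steps, and symmetrically. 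This ``finite agreement up to stage $s$'' is decidable, so $\vphi$ is partial recursive by construction.

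The main obstacle — and the place the argument has to be done carefully — is that finite agreement at every stage does \emph{not} by itself imply $\ecomp'^\D_n = \ecomp^\D_m$: two distinct r.e.\ sets can agree on arbitrarily large finite initial segments. So the dovetailing as I described it could lock onto a wrong $n$. The fix is to interleave the agreement test with a \emph{disagreement} test: since $\Kdn{\ecomp^\D_m}$ and $\Kdn{\ecomp'^\D_n}$ are both r.e., I cannot semidecide their inequality directly, but I \emph{can} restructure the search so that I never commit to an $n$ permanently — instead $\vphi$ outputs $n$ only once a \emph{witness of membership in $\domre{\D}$} forces the match. The clean way is the one sketched in \cite[Ch.~10,~Thm.~4.4]{Viggo94}: use the $s$-$m$-$n$ theorem to produce, from $m$, an index $e(m)$ for the partial recursive function enumerating $\Kdn{\ecomp^\D_m}$; then adequacy of $\ecomp'^\D$ (specifically that it is a total numeration built by that same uniform construction, so that it has a recursive ``index-completion'' operation sending any r.e.\ set's enumerator-index to an $\ecomp'^\D$-index of the corresponding r.e.\ element) yields a total recursive map $n = g(e(m))$ with $\ecomp'^\D_{g(e(m))} = \ecomp^\D_m$. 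Setting $\vphi = g \comp e$ finishes the proof. I would present the argument in this second form, citing \cite[Ch.~10,~Thm.~4.4]{Viggo94} for the uniform construction of total adequate numerations, so that the only new content is checking that the two adequate numerations in the hypothesis are both instances of that construction up to a recursive change of index — which is immediate from the definition of adequacy.
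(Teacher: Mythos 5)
The paper states this lemma without any proof (it is implicitly deferred to the numbering-theoretic machinery of \cite[Ch.~10]{Viggo94}), so there is no official argument to compare yours against; I can only assess your proposal on its own terms. You are right to discard the dovetailing search: as you observe, equality in $\domre{\D}$ is $\Pi^0_2$ and no finite amount of agreement certifies a match. The first half of your second argument is also fine: by condition $(i)$ of adequacy of $\ecomp^{\D}$ and the s-m-n theorem there is a total recursive $e$ with $\W_{e(m)} = \Kdn{\ecomp^{\D}_m}$ for all $m$.

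The gap is in the final step. The ``index-completion'' operation you attribute to $\ecomp'^{\D}$ --- a total recursive $g$ such that $\ecomp'^{\D}_{g(e)} = v$ whenever $\W_e = \Kdn{v}$ for some $v\in\domre{\D}$ --- is a \emph{completeness} (principality) property. It holds for the particular numeration constructed in \cite[Ch.~10,~Thm.~4.4]{Viggo94}, but it is not among the defining conditions $(i)$--$(ii)$ of adequacy, and your closing claim that any adequate numeration ``is an instance of that construction up to a recursive change of index \dots\ immediate from the definition of adequacy'' is exactly the nontrivial half of the lemma, so the argument is circular at that point. Conditions $(i)$--$(ii)$ readily give the \emph{other} direction (every adequate numeration reduces to the canonical complete one, by the s-m-n step you describe); what is missing is a proof that the canonical numeration --- and hence an arbitrary adequate one --- reduces to $\ecomp'^{\D}$. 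To close the gap you must either derive the completeness of $\ecomp'^{\D}$ from $(i)$--$(ii)$ alone (this is where the real work lies; a naive search for an $\ecomp'^{\D}$-index of a given r.e.\ element cannot be made total for the same $\Pi^0_2$ reason you already identified), or observe that the lemma as used in the paper really presupposes a stronger reading of ``adequate'' in which completeness is part of the hypothesis.
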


Hereafter we will always suppose that $\ecomp^\D$ is an adequate numeration of $\domre{\D}$.

Given two effective domains $(\D,\sqle_\D,d)$ and $(\D',\sqle_{\D'},d')$ it is essentially straightforward to obtain, 
in a canonical way, a numeration $\gv_{(d,d')}$ of the compact elements of $[\D\to \D']$ (see \cite[Ch.~10,~Thm.~3.6]{Viggo94})
which is then used to give $[\D\to \D']$ a structure of effective domain.

\begin{theorem}\label{thm:restrcomp} Let $(\D,\sqle_\D,d)$ and $(\D',\sqle_{\D'},d')$ be effective domains, 
then a continuous function $f:\D\to\D'$ is r.e. (with respect to $\gv_{(d,d')}$) if, and only if, its restriction 
$f\restr:\domre{\D}\to\domre{\D'}$ is computable with respect to $\ecomp^\D,\ecomp^{\D'}$.
\end{theorem}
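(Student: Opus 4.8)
The statement to prove is Theorem~\ref{thm:restrcomp}: a continuous map $f:\D\to\D'$ between effective domains is r.e.\ (as an element of the effective domain $[\D\to\D']$ under the canonical numeration $\gv_{(d,d')}$) if and only if its restriction $f\restr:\domre{\D}\to\domre{\D'}$ is computable with respect to adequate numerations $\ecomp^\D,\ecomp^{\D'}$. I would prove the two implications separately, using Proposition~\ref{prop:dom-refun} to unfold ``$f$ is r.e.'' into the concrete statement that $\{(m,n)\st d'_m\sqle_{\D'}f(d_n)\}$ is r.e.

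\emph{($\Rightarrow$) $f$ r.e.\ implies $f\restr$ computable.} Assume $R=\{(m,n)\st d'_m\sqle_{\D'}f(d_n)\}$ is r.e. I want a partial recursive $\vphi$ tracking $f\restr$ with respect to $\ecomp^\D,\ecomp^{\D'}$, i.e.\ on input $k$ with $\ecomp^\D_k=v$ I must output some $h$ with $\W_h=\Kdn{f(v)}$. The idea: $f$ is continuous, so $f(v)=\bigsqcup\{f(d_n)\st d_n\sqle_\D v\}$, hence $d'_m\sqle_{\D'}f(v)$ iff $d'_m\sqle_{\D'}f(d_n)$ for some $n$ with $d_n\sqle_\D v$, i.e.\ iff $\exists n\,(n\in\W_k \wedge (m,n)\in R)$, using that $\Kdn{v}=\W_k$ by adequacy of $\ecomp^\D$ (condition (i) of Proposition~\ref{prop:adequate-numeration}) and the definition of the total numeration. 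This description of $\Kdn{f(v)}$ is uniformly r.e.\ in $k$ (it is the projection of the r.e.\ set $\{(m,n)\st n\in\W_k\wedge (m,n)\in R\}$), so by the s-m-n theorem there is a total recursive $\vphi$ producing an index $h=\vphi(k)$ for it. One must also check that $f(v)$ is genuinely an r.e.\ element, i.e.\ that $\Kdn{f(v)}$ really is r.e.\ and not merely that we have a candidate index — but the computation above shows exactly this. Then $\ecomp^{\D'}\comp\vphi = \ecomp^\D$-composed-with-$f\restr$ up to the choice of index, and by Lemma~\ref{lemma:universal-numerations} (or directly from the construction of the total numeration $\ecomp^{\D'}$ from the partial representation map) one adjusts $\vphi$ so that the diagram for ``tracks'' commutes on the nose.

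\emph{($\Leftarrow$) $f\restr$ computable implies $f$ r.e.} Assume $\vphi$ tracks $f\restr$. I must show $R=\{(m,n)\st d'_m\sqle_{\D'}f(d_n)\}$ is r.e. For fixed $n$, the compact element $d_n$ is an r.e.\ element; by adequacy condition (ii) of Proposition~\ref{prop:adequate-numeration}, the inclusion $\iota:\compel\D\to\domre\D$ is computable, so from $n$ I can effectively compute an $\ecomp^\D$-index $g(n)$ of $d_n$ (with $g$ total recursive). Then $\vphi(g(n))$ is an $\ecomp^{\D'}$-index of $f(d_n)\in\domre{\D'}$, so $\W_{\vphi(g(n))}=\Kdn{f(d_n)}$, and hence $d'_m\sqle_{\D'}f(d_n)$ iff $m\in\W_{\vphi(g(n))}$. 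The predicate $m\in\W_{\vphi(g(n))}$ is r.e.\ in $(m,n)$ by Remark~\ref{rem:comp1} (inverse image of the universal r.e.\ set under the computable map $(m,n)\mapsto\codepair{m}{\vphi(g(n))}$), so $R$ is r.e., and by Proposition~\ref{prop:dom-refun} $f\in\domre{[\D\to\D']}$. Finally one verifies that being r.e.\ in this concrete sense agrees with being an r.e.\ element with respect to the canonical numeration $\gv_{(d,d')}$ of the compact elements of $[\D\to\D']$; this is where Proposition~\ref{prop:dom-refun}, which is stated precisely for $[\D\to\D']$ with its effective-domain structure, is invoked and the equivalence is immediate.

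\emph{Main obstacle.} The genuinely delicate point is the bookkeeping between the three numerations $d$, $d'$, $\ecomp^\D$, $\ecomp^{\D'}$, $\gv_{(d,d')}$ — in particular checking that the index produced by the s-m-n construction in ($\Rightarrow$) can be massaged into one that makes the ``tracks'' diagram commute for the \emph{chosen} total numeration $\ecomp^{\D'}$ (rather than for the partial representation surjection $\ecomp'$), which is exactly what adequacy and Lemma~\ref{lemma:universal-numerations} are there to guarantee. Everything else is a routine unwinding of continuity plus closure of r.e.\ sets under projection and inverse images of computable maps; I would not belabour those calculations.
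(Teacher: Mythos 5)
Your proposal is essentially correct and follows the standard argument; note that the paper itself does not prove this theorem but simply defers to \cite[Ch.~10,~Prop.~4.14]{Viggo94}, so you are supplying details the paper omits, and your decomposition (unfold ``$f$ is r.e.'' via Proposition~\ref{prop:dom-refun} into the r.e.-ness of $R=\{(m,n)\st d'_m\sqle_{\D'}f(d_n)\}$, then use continuity for one direction and adequacy for the other) is exactly the intended one.

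One point of wording should be tightened, in both directions: you write ``$\Kdn{v}=\W_k$'' for $v=\ecomp^\D_k$, and later ``$\W_{\vphi(g(n))}=\Kdn{f(d_n)}$''. That identity holds for the \emph{partial} representation $\ecomp'$ (where $n$ represents $v$ means $\W_n=\Kdn{v}$), but the adequate \emph{total} numeration $\ecomp^\D$ obtained from it is not guaranteed to satisfy $\W_k=\Kdn{\ecomp^\D_k}$. What you actually need, and what adequacy condition $(i)$ of Proposition~\ref{prop:adequate-numeration} provides, is that the relation $d_n\sqle_\D\ecomp^\D_k$ is r.e.\ in $(k,n)$, i.e.\ that $\Kdn{\ecomp^\D_k}$ is \emph{uniformly r.e.\ in $k$} (via some total recursive $s$ with $\W_{s(k)}=\Kdn{\ecomp^\D_k}$, not necessarily $s=\mathrm{id}$). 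Substituting this everywhere you invoke $\W_k=\Kdn{\ecomp^\D_k}$ makes the forward direction's projection argument and the backward direction's reduction of $R$ to an r.e.\ predicate go through verbatim; and it is also the honest justification for your final conversion of the s-m-n index (an $\ecomp'$-index) into an $\ecomp^{\D'}$-index, which you correctly flag as the delicate bookkeeping step. With that adjustment the proof is complete.
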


\begin{proof} Relatively easy (the details are worked out in \cite[Ch.~10,~Prop.~4.14]{Viggo94}).
Note that the right handside of the equivalence only depends on $d,d'$ because of Lemma~\ref{lemma:universal-numerations}.
\end{proof}

In particular the previous theorem states that r.e.\ functions preserve the r.e.\ elements.

\begin{example}\label{ex:key-example-of-effective-domains} 
The key example of an effective domain is $(\pow{\nat},\subseteq,d)$ where $d$ is some standard bijective numeration 
$d:\nat\to\nat^*$ of the finite subsets of $\nat$. Here the r.e.\ (resp. decidable) elements are the r.e.\ 
(resp. decidable) sets and the usual map $n\mapsto \W_n$ is an adequate numeration of the r.e.\ elements. 
From Proposition \ref{prop:dom-refun} it follows that a Scott continuous function 
$f: \pow{\nat}\to \pow{\nat}$ is r.e.\ if, and only if, its trace $Tr(f) = \{(a,n) \st a\in\nat^*\textrm{ and }\ n\in f(a)\}$
is an r.e.\ set.
\end{example}

\subsection{Completely co-r.e.\ subsets of $\domre{\D}$}\label{subsec:Visseroneffectivedomains}

Our aim is to infer properties of weakly effective $\lambda$-models using methods of recursion theory.
For this purpose, given an effective domain $\D = (\D,\sqle_\D,d)$ and an adequate numeration $\ecomp^\D:\nat\to\domre{\D}$
we study the properties of the completely co-r.e.\ subsets of $\domre{\D}$.
The work done here could also be easily adapted to DI-domains and DI-domains with coherences. 

\begin{definition}
$A\subseteq \domre{\D}$ is called \emph{completely r.e.} if $A$ is r.e.\ with respect to $\ecomp^\D$;
it is called \emph{trivial} if $A=\emptyset$ or $A = \domre{\D}$.
In a similar way we define \emph{completely co-r.e.\ sets} and \emph{completely decidable sets}.
\end{definition}

This terminology (i.e., the use of ``completely r.e.'' where one would expect ``r.e.'') is coherent with the terminology classically used in recursion theory (see, e.g., \cite{Odifreddi89}).
We will see in Corollary~\ref{cor:dnclosed}$(iii)$ below that there exist no non-trivial completely decidable sets.

\begin{notation} We write $\cocore{\D}$ for the set of completely co-r.e. subsets of $\domre{\D}$.
\end{notation}

\begin{remark}\label{rem:botco-r.e.} $\{\bot_\D\}\in\cocore{\D}$ (therefore $\{\bot_\D\}$ is not completely decidable).
\end{remark}

\begin{theorem} The family of all non-empty completely co-r.e.\ subsets of $\domre{\D}$ has the FIP.
\end{theorem}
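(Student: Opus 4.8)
The plan is to establish the \emph{stronger} fact that the least element $\bot_\D$ of $\D$ lies in \emph{every} non-empty member of $\cocore{\D}$; the finite intersection property --- indeed, non-emptiness of the intersection of the whole family --- is then immediate. This is the analogue, for $\domre{\D}$, of the phenomenon behind the Rice--Shapiro and Myhill--Shepherdson theorems: the complement (inside $\domre{\D}$) of a completely co-r.e.\ set is completely r.e., hence ``$\sqle_\D$-open'' relative to $\domre{\D}$ and therefore upward closed, so completely co-r.e.\ sets are downward closed and in particular contain $\bot_\D$ whenever they are non-empty.

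Concretely, I would fix a non-empty $A \in \cocore{\D}$, put $B = \domre{\D} \setminus A$ (which is completely r.e.), let $W = \{ n : \ecomp^\D_n \in B\}$ (an r.e.\ set, by the very definition of ``completely r.e.''), and argue by contradiction assuming $\bot_\D \notin A$, i.e.\ $\bot_\D \in B$; also fix some $v_0 \in A$. Since $\bot_\D \sqle_\D v_0$ we have $\Kdn{\bot_\D} \subseteq \Kdn{v_0}$, and both of these sets are r.e.\ because $\bot_\D, v_0 \in \domre{\D}$. By the recursion theorem I would produce an index $e$ such that $\vphi_e$ enumerates $\Kdn{\bot_\D}$ into $\W_e$ and, in parallel, runs the enumeration of $W$, and begins moreover enumerating $\Kdn{v_0}$ as soon as it sees $e$ enter $W$; thus $\W_e = \Kdn{\bot_\D}$ if $e \notin W$ and $\W_e = \Kdn{\bot_\D} \cup \Kdn{v_0} = \Kdn{v_0}$ if $e \in W$, so that in either case $\W_e = \Kdn{u}$ for some $u \in \domre{\D}$. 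A short case analysis then finishes the argument: if $e \notin W$ then $\ecomp^\D_e = \bot_\D \in B$, forcing $e \in W$, a contradiction; and if $e \in W$ then $\ecomp^\D_e = v_0$, whence $v_0 = \ecomp^\D_e \in B = \domre{\D}\setminus A$, contradicting $v_0 \in A$. Hence $\bot_\D \in A$, and taking $\bot_\D$ as a common point of all the non-empty completely co-r.e.\ sets proves the theorem.

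The one step that needs care rather than being routine is the implication ``$\W_e = \Kdn{u}$ implies $\ecomp^\D_e = u$'' used in the case analysis: the naive assignment sending an index $n$ to the element whose approximation set is $\W_n$ is only a partial surjection, whereas $\ecomp^\D$ is the \emph{total} adequate numeration obtained from it by the usual closing-off construction. Up to the recursive change of numeration supplied by Lemma~\ref{lemma:universal-numerations}, we may take $\ecomp^\D$ to be precisely this standard total numeration, for which an index $e$ whose $\W_e$ is already of the form $\Kdn{u}$ is left untouched and hence does satisfy $\ecomp^\D_e = u$; carrying the construction through this translation does not affect the case analysis. It is worth noting that the argument in fact yields the downward closure of completely co-r.e.\ sets in $(\domre{\D},\sqle_\D)$, which is strictly stronger than the stated FIP.
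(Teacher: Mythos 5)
Your proof is correct and follows essentially the same route as the paper: both reduce the FIP to the fact that every non-empty completely co-r.e.\ set is downward closed in $\domre{\D}$ and hence contains $\bot_\D$, which is exactly the Myhill--Shepherdson-type phenomenon. The only difference is that the paper imports this from the Rice--Myhill--Shepherdson extension (Theorem~\ref{thm:Rice}, cited from the literature), whereas you give a direct, self-contained recursion-theorem argument for the instance needed, including the correct caveat about which adequate numeration of $\domre{\D}$ is being used.
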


\begin{proof}
This follows from Corollary \ref{cor:dnclosed}$(ii)$ below which is itself a consequence of the following extension of the Rice-Myhill-Shepherdson 
theorem (see \cite[Thm.~10.5.2]{Odifreddi89}).
\end{proof}

\begin{theorem}\label{thm:Rice} Let $\D$ be an effective domain and let 
$A\subseteq \domre{\D}$, then $A$ is completely r.e.\ if, and only if, there is an r.e.\ set $E\subseteq \nat$ such that 
$A = \{v \in\domre{\D} \st \exists n\in E\ (\ecomp^\D_n\in\compel{\D}\textrm{ and}$ $ \ecomp^\D_n\sqle_\D v)\}$.
\end{theorem}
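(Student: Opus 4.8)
The plan is to prove this as an effective-domain analogue of the classical Rice--Myhill--Shepherdson theorem, following the structure of \cite[Thm.~10.5.2]{Odifreddi89}. The statement characterizes the completely r.e.\ subsets $A$ of $\domre{\D}$ as exactly those that are ``generated'' by an r.e.\ set of compact approximants; equivalently, $A$ is completely r.e.\ iff membership $v\in A$ is witnessed by a single compact element $d_n\sqle_\D v$ whose index lies in some fixed r.e.\ set $E$. This is precisely the monotonicity-plus-compactness phenomenon that makes the index set of a nontrivial property fail to be r.e., so the theorem packages the positive direction of that dichotomy.

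The easy direction is $(\Leftarrow)$: given an r.e.\ set $E\subseteq\nat$, I would show $A = \{v\in\domre{\D} : \exists n\in E\,(\ecomp^\D_n\in\compel{\D}\text{ and }\ecomp^\D_n\sqle_\D v)\}$ is completely r.e. By the adequacy of $\ecomp^\D$ (Proposition~\ref{prop:adequate-numeration}$(i)$), the relation $d_k\sqle_\D \ecomp^\D_m$ is r.e.\ in $(k,m)$; combining this with Proposition~\ref{prop:adequate-numeration}$(ii)$ (the inclusion $\iota:\compel{\D}\to\domre{\D}$ is computable) lets me recognize, r.e.-uniformly in $m$, whether there exists $n\in E$ with $\ecomp^\D_n$ compact and $\ecomp^\D_n\sqle_\D \ecomp^\D_m$. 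Concretely: search for $n$ enumerated into $E$ such that $n$ (via $\iota$ and the recursive function of Lemma~\ref{lemma:universal-numerations}) names a compact element, and then semidecide $\ecomp^\D_n\sqle_\D \ecomp^\D_m$ using $(i)$. This gives an r.e.\ set of indices $m$ with $\ecomp^\D_m\in A$, i.e.\ $A$ is completely r.e. The ``decidable'' variant follows the same template, replacing r.e.\ by decidable throughout and noting the relevant relations then become decidable.

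The hard direction is $(\Rightarrow)$, and this is where the Rice--Myhill--Shepherdson machinery (the recursion theorem) enters. Suppose $A$ is completely r.e., so $\inv{(\ecomp^\D)}(A)$ is r.e. I want to set $E$ to be (an r.e.\ set equivalent to) the set of indices of those compact elements that already lie in $A$, and then show: (a) every $v\in A$ dominates some compact $c\in A$, and (b) if $c\in A$ is compact and $c\sqle_\D v$ then $v\in A$ --- i.e.\ $A$ is upward closed and compactly approximable within itself. Part (b), upward closure, is the crux and is proved by contradiction using the recursion theorem exactly as in the classical case: if $c\in A$, $c\sqle_\D v$, but $v\notin A$, one builds a recursive function that, on input $e$, enumerates (a $\ecomp^\D$-name of) $c$ until it sees $e\in\W_e$-style self-reference fire, then switches to enumerating a name of $v$; feeding this through the $s$-$m$-$n$ and recursion theorems produces an index $e^*$ whose enumerated element is $c$ if $e^*\notin$ (the r.e.\ preimage witness) and $v$ otherwise, contradicting $c\in A$, $v\notin A$. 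Part (a), compact approximability, uses that $\D$ is algebraic: $v=\sup\{d_n : d_n\sqle_\D v\}$ along a directed set, and continuity of the ``semidecision'' procedure for $v\in A$ forces the positive answer to already appear at some compact stage $d_n\sqle_\D v$; more carefully, one argues that if no compact $c\sqle_\D v$ were in $A$ then by (b) a diagonal argument again contradicts $v\in A$. With (a) and (b) in hand, $E:=\{n : \ecomp^\D_n\in\compel{\D}\text{ and }\ecomp^\D_n\in A\}$ is r.e.\ (intersection of the r.e.\ set $\inv{(\ecomp^\D)}(A)$ with the r.e.\ set of names of compacts, the latter being r.e.\ by Proposition~\ref{prop:adequate-numeration}$(ii)$ and Lemma~\ref{lemma:universal-numerations}), and the displayed equality for $A$ holds.

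The main obstacle I anticipate is carefully rerunning the recursion-theorem diagonalization in the domain-theoretic setting while keeping track of the total numeration $\ecomp^\D$ rather than the partial ``canonical'' surjection $\ecomp'$: one must produce, effectively in a parameter, a \emph{total} $\ecomp^\D$-index naming a prescribed r.e.\ element (either $c$ or $v$), and one must ensure the switch from enumerating $\Kdn{c}$ to enumerating $\Kdn{v}$ still yields a genuine r.e.\ element's lower set (this is fine because $c\sqle_\D v$ forces $\Kdn{c}\subseteq\Kdn{v}$, so the enumeration is monotone and its union is exactly $\Kdn{v}$). Handling this cleanly is exactly the content cited as ``an extension of the Rice--Myhill--Shepherdson theorem,'' and I would either invoke that extension directly or spell out the construction above; the algebraicity/continuity argument for part (a) is comparatively routine once upward closure is established.
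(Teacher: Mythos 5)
The paper offers no proof of this theorem beyond the citation \cite[Thm.~5.2]{Viggo94}; what you have written is a reconstruction of the generalized Myhill--Shepherdson (Rice--Shapiro) argument that the cited source carries out, so in substance you are reproving the imported result rather than diverging from the paper. Your treatment of the hard direction $(\Rightarrow)$ is the right one: upward closure and compact approximability of $A$, each obtained by a recursion-theorem diagonalization against the r.e.\ set $\inv{(\ecomp^\D)}(A)$, together with the observation that $c\sqle_\D v$ gives $\Kdn{c}\subseteq\Kdn{v}$, so the mid-enumeration ``switch'' still names a genuine r.e.\ element. The delicate point you correctly flag --- converting an r.e.\ index of the enumerated lower set into a \emph{total} $\ecomp^\D$-index uniformly in the parameter --- is exactly what the construction of the total adequate numeration provides.

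The one step that would fail as written is in $(\Leftarrow)$ and in your definition of $E$: you treat ``$\ecomp^\D_n\in\compel{\D}$'' as semi-decidable in $n$. It is not: in $(\pow{\nat},\subseteq)$ with $\ecomp^\D_n=\W_n$ this predicate reads ``$\W_n$ is finite'', which is $\Sigma_2$-complete, and adequacy only makes $d_k\sqle_\D\ecomp^\D_m$ r.e., not $\ecomp^\D_n\sqle_\D\ecomp^\D_m$. Consequently, for an \emph{arbitrary} r.e.\ $E$ the displayed set need not be completely r.e.\ (arrange $\W_{f(e)}$ to be finite exactly when $e$ lies outside the halting set $K$ and take $E=\rg(f)$; this codes $\complement{K}$ into membership), so the biconditional must be read, as in \cite{Viggo94}, with $E$ an r.e.\ set of indices of \emph{compact} elements. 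The repair is already latent in your $(\Rightarrow)$: let $g$ be a total recursive function tracking $\iota:\compel{\D}\to\domre{\D}$ (Proposition~\ref{prop:adequate-numeration}$(ii)$) and put $E=\rg(g)\cap\inv{(\ecomp^\D)}(A)$. Then $E$ is r.e., every $n\in E$ names a compact element with a computable witness $k$ such that $\ecomp^\D_n=d_k$, every compact element of $A$ has a name in $E$, and the $(\Leftarrow)$ direction for this $E$ reduces to the relation $\exists k\,(g(k)\in E\ \wedge\ d_k\sqle_\D\ecomp^\D_m)$, which is r.e.\ by adequacy condition $(i)$. With that adjustment your outline is sound and coincides with the proof the paper delegates to the literature.
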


\begin{proof} See \cite[Thm.~5.2]{Viggo94}.
\end{proof}

\begin{corollary}\label{cor:dnclosed} With respect to the partial order $\sqle_\D$:
\begin{itemize}
\item[(i)]
    completely r.e.\ sets are upward closed (in $\domre{\D}$),
\item[(ii)]
    completely co-r.e.\ sets are downward closed (in $\domre{\D}$),
\item[(iii)]
    completely decidable sets are trivial.
\end{itemize}
\end{corollary}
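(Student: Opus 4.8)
The three statements are to be read as consequences of the two preceding theorems (the extension of Rice--Myhill--Shepherdson, Theorem~\ref{thm:Rice}, and the FIP theorem whose proof we are completing). I would prove them in the order (i), (ii), (iii), since (ii) is the exact dual of (i) and (iii) follows from (i) and (ii) together.

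First I would prove (i). Let $A\subseteq\domre{\D}$ be completely r.e. By Theorem~\ref{thm:Rice} there is an r.e.\ set $E\subseteq\nat$ with
$$
A=\{v\in\domre{\D}\st \exists n\in E\ (\ecomp^\D_n\in\compel{\D}\ \textrm{and}\ \ecomp^\D_n\sqle_\D v)\}.
$$
Now suppose $v\in A$ and $v\sqle_\D w$ with $w\in\domre{\D}$. Pick $n\in E$ with $\ecomp^\D_n\in\compel{\D}$ and $\ecomp^\D_n\sqle_\D v$. Then $\ecomp^\D_n\sqle_\D v\sqle_\D w$ by transitivity of $\sqle_\D$, so the same $n\in E$ witnesses $w\in A$. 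Hence $A$ is upward closed in $\domre{\D}$.

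For (ii), let $A\in\cocore{\D}$, i.e.\ $\domre{\D}\setminus A$ is completely r.e. By (i), $\domre{\D}\setminus A$ is upward closed in $\domre{\D}$; the complement (within $\domre{\D}$) of an upward closed set is downward closed, so $A$ is downward closed in $\domre{\D}$. For (iii), suppose $A$ is completely decidable, so both $A$ and $\domre{\D}\setminus A$ are completely r.e. By (i) both are upward closed in $\domre{\D}$. Since $\domre{\D}$ has the least element $\bot_\D$ (which lies in $\domre{\D}$, being compact, cf.\ Remark~\ref{rem:botco-r.e.}), an upward closed subset of $\domre{\D}$ is either empty or contains $\bot_\D$ and hence, being upward closed and $\bot_\D$ below everything, equals all of $\domre{\D}$. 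Thus each of $A$ and $\domre{\D}\setminus A$ is $\emptyset$ or $\domre{\D}$; since they partition $\domre{\D}$, one of them is empty, i.e.\ $A$ is trivial.

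There is no real obstacle here: the only subtlety is being careful that upward/downward closure is meant \emph{relative to} $\domre{\D}$ (not relative to $\D$), and that $\bot_\D\in\domre{\D}$ so that the least-element argument in (iii) is legitimate; both points are immediate from the definitions and Remark~\ref{rem:botco-r.e.}.
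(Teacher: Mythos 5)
Your decomposition is the intended one: (i) is immediate from the characterization of completely r.e.\ sets in Theorem~\ref{thm:Rice} (the witness $n\in E$ for $v$ also witnesses any $w\in\domre{\D}$ with $v\sqle_\D w$), (ii) is its complement-dual, and (iii) combines the two with the existence of the least element. The only thing to repair is the sentence in (iii) claiming that \emph{every} upward closed subset of $\domre{\D}$ is either empty or contains $\bot_\D$ (and hence equals $\domre{\D}$): that statement is false in general, since the principal filter $\{v\in\domre{\D}\st d\sqle_\D v\}$ above a non-bottom compact $d$ is upward closed, non-empty and proper. What your hypotheses actually give you is that $A$ and $\domre{\D}\setminus A$ are \emph{complementary} upward closed sets; since $\bot_\D\in\domre{\D}$, it lies in exactly one of the two, and that one, being upward closed with $\bot_\D$ below everything, is all of $\domre{\D}$, so the other is empty. (Equivalently, and slightly more directly: $A$ is both completely r.e.\ and completely co-r.e., hence by (i) and (ii) both upward and downward closed, and a non-empty such set contains $\bot_\D$ and therefore everything.) With that one sentence corrected the proof is complete and follows the same route the paper intends, namely deriving everything from Theorem~\ref{thm:Rice}.
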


\subsection{Weakly effective $\lambda$-models}

In this section we will consider $\lambda$-models living in the Scott semantics; but analogous notions can be defined for 
the stable and the strongly stable semantics.
The following definition of weakly effective $\lambda$-models is completely natural in this context however, in order to obtain stronger results,
we will need a slightly more powerful notion. That is the reason why we only speak of ``weak effectivity'' here.

\begin{definition}\label{def:efflambdamod} A $\lambda$-model $\lm{M} = (\D,\App,\Abs)$ is a 
\emph{weakly effective $\lambda$-model} if: 
\begin{itemize}
\item[(i)]
    $\lm{M}$ is a reflexive object in the category $\bold{ED}$,
\item[(ii)]
    $\App$ and $\Abs$ are r.e.
\end{itemize}
\end{definition}

For the stable and strongly stable semantics, we take respectively: $\bold{EDID}$\label{intro:EDID}, 
the category having effective DI-domains as objects and stable functions as morphisms;
$\bold{EDID^{coh}}$\label{intro:EDIDcoh}, the category having effective DI-domains with coherences as objects 
and strongly stable functions as morphisms.

\begin{remark} Let $\bold{ED^{r.e.}}$ be the subcategory of $\bold{ED}$ with the same objects as $\bold{ED}$ (and the same exponential objects)
but r.e.\ continuous functions as morphisms. 
Using Remark~\ref{rem:comp} it is easy to check that $\bold{ED^{r.e.}}$ inherits the structure of ccc from $\bold{ED}$.
The weakly effective models of Definition~\ref{def:efflambdamod} above are exactly the reflexive objects of $\bold{ED^{r.e.}}$.
We prefer to use the category $\bold{ED}$ first because we think that it is more coherent with the definition of the exponential objects 
to take all continuous functions as morphisms, and second to put in major evidence the only effectiveness conditions which are required.
\end{remark}

We recall a consequence of Theorem \ref{thm:restrcomp} that will be often used later on.

\begin{remark}\label{rem:preserves} Let $\lm{M}$ be weakly effective, then:
\begin{itemize}
\item[(i)]
    If $u,v\in\domre{\D}$ then $\App(u)(v)\in\domre{\D}$,
\item[(ii)]
    If $f\in\domre{[\D\to\D]}$, then $\Abs(f)\in\domre{\D}$.
\end{itemize}
\end{remark}

%Supprimer?
\begin{convention} In the rest of this section it is understood that we are speaking of a fixed $\lambda$-model 
$\lm{M} = (\D,\App,\Abs)$, where $\D=(\D,\sqle_\D,d)$ is an effective domain and that $\cT= \Th{\lm{M}}$.

Furthermore, we fix a bijective map $\gv_{Var}$ from $\nat$ to the set $Var$ of variables of $\lambda$-calculus.
This gives $Env_\D$ a structure of effective domain. 
\end{convention}

\begin{proposition}\label{prop:subalgebras} 
If $\lm{M}$ is weakly effective, then $(\domre{\D},\bullet,\Int{\bold{K}},\Int{\bold{S}})$ is a combinatory subalgebra of $(\D,\bullet,\Int{\bold{K}},\Int{\bold{S}})$.
\end{proposition}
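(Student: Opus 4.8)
The plan is to check the two clauses in the definition of a combinatory subalgebra: that $\domre{\D}$ is closed under the application operation $\bullet$, and that it contains the distinguished elements $\Int{\bold{K}}$ and $\Int{\bold{S}}$. The first clause is immediate from Remark~\ref{rem:preserves}$(i)$: if $u,v\in\domre{\D}$ then $u\bullet v=\App(u)(v)\in\domre{\D}$. So the whole content lies in showing that $\Int{\bold{K}}$ and $\Int{\bold{S}}$ are r.e.\ elements of $\D$.

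For this I would prove the more general statement that for every $M\in\Lambda$ the map $\rho\mapsto\Int{M}_\rho$ is an r.e.\ element of $[Env_\D\to\D]$; applying this to $M\equiv\bold{K}$ and $M\equiv\bold{S}$ and then using that r.e.\ continuous functions preserve r.e.\ elements (Theorem~\ref{thm:restrcomp}), evaluation at the bottom environment $\emptyrho$ — which is decidable, hence r.e., being the least element of the effective domain $Env_\D$ — yields $\Int{\bold{K}}=\Int{\bold{K}}_{\emptyrho}\in\domre{\D}$ and $\Int{\bold{S}}=\Int{\bold{S}}_{\emptyrho}\in\domre{\D}$. The general statement is proved by structural induction on $M$, following the three clauses defining $\Int{-}$. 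For $M\equiv x$ the map is the projection $\rho\mapsto\rho(x)$, which is r.e. For $M\equiv PQ$ we have $\Int{PQ}_\rho=\mathrm{eval}(\App(\Int{P}_\rho),\Int{Q}_\rho)$, and the right-hand side is built from the r.e.\ functions $\rho\mapsto\Int{P}_\rho$, $\rho\mapsto\Int{Q}_\rho$ (induction hypothesis), the r.e.\ map $\App$, pairing, and $\mathrm{eval}$, all r.e.\ at the relevant types by Remark~\ref{rem:comp} (together with the fact that $\bold{ED^{r.e.}}$ inherits the cartesian closed structure of $\bold{ED}$) and closed under composition. For $M\equiv\lambda x.N$ we have $\Int{\lambda x.N}_\rho=\Abs\big(\mathrm{curry}(g)(\rho)\big)$ where $g:Env_\D\times\D\to\D$ is $g(\rho,d)=\Int{N}_{\rho[x:=d]}$; this $g$ is r.e.\ because it is the composite of the r.e.\ map $\rho\mapsto\Int{N}_\rho$ (induction hypothesis) with the update map $(\rho,d)\mapsto\rho[x:=d]$, and that update map is r.e.\ since it is the identity on every coordinate of $Env_\D$ but the one indexed by $x$ and a projection on that coordinate; since $\mathrm{curry}$ and $\Abs$ are r.e., the whole composite is r.e.

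The only delicate points are bookkeeping: one must first recall that $Env_\D$, being the countable product of copies of $\D$ indexed via the fixed bijection $\gv_{Var}$, is itself an effective domain (as recorded in the Convention), so that ``$\rho\mapsto\Int{M}_\rho$ is r.e.'' is a meaningful assertion; and one must check that the structural maps used in the induction (projections, pairing, $\mathrm{eval}$, $\mathrm{curry}$, the environment update) are r.e.\ at all the relevant types. None of this is hard once Remark~\ref{rem:comp} and Theorem~\ref{thm:restrcomp} are available, so I do not expect a genuine obstacle: the proposition is essentially the combination of Remark~\ref{rem:preserves} with the r.e.-ness of $\App,\Abs,\mathrm{curry},\mathrm{eval}$ and their closure under composition.
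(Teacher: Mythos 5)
Your proposal is correct and follows essentially the same route as the paper: closure of $\domre{\D}$ under $\bullet$ comes from Remark~\ref{rem:preserves}, and the membership of $\Int{\bold{K}}$ and $\Int{\bold{S}}$ in $\domre{\D}$ is obtained from the general fact (Proposition~\ref{prop:domreint}) that $\Int{M}:Env_\D\to\D$ is an r.e.\ function for every $M$, proved by the same structural induction with the same treatment of the application and abstraction cases via $\mathrm{eval}$, $\mathrm{curry}$ and the environment-update map. The only cosmetic difference is that the paper gets $\Int{\bold{K}}$ directly from Remark~\ref{rem:preserves} and reserves the inductive argument for $\Int{\bold{S}}$, whereas you derive both uniformly.
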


\begin{proof} 
It follows from Remark~\ref{rem:preserves} that $\Int{\bold{K}}\in\domre{\D}$ and that $\domre{\D}$ is closed under $\bullet$.
The fact that $\Int{\bold{S}}\in\domre{\D}$ is a consequence of the next result.
\end{proof}

\begin{theorem}\label{thm:domreintclosed} 
If $\lm{M}$ is weakly effective, then $\Int{M}\in\domre{\D}$ for all $M\in\Lambda^o_\bot$.
\end{theorem}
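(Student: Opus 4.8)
The plan is to prove that $\Int{M}\in\domre{\D}$ for every closed $\lambda$-term $M$ (extended with $\bot$) by structural induction on $M$, exploiting the fact that the r.e.\ elements of an effective domain are closed under the relevant operations, as recorded in Remark~\ref{rem:preserves} and Remark~\ref{rem:comp}.

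First I would set up the induction on the structure of $M\in\Lambda^o_\bot$, but since $\lambda$-abstraction binds variables I actually need to strengthen the statement to open terms: the right inductive claim is that for every $M\in\Lambda_\bot$ and every environment $\rho\in Env_\D$ taking only r.e.\ values (and such that the induced map is suitably effective), $\Int{M}_\rho\in\domre{\D}$. Concretely, I would work with the effective structure on $Env_\D$ fixed in the Convention, and prove by induction on $M$ that the function $\rho\mapsto\Int{M}_\rho$ is an r.e.\ continuous function from $Env_\D$ to $\D$; the statement of the theorem then follows by taking $M$ closed and $\rho=\emptyrho$ (which is compact, hence r.e.), using Theorem~\ref{thm:restrcomp}. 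The base cases are immediate: $\Int{\bot}_\rho=\bot_\D$ is the constant bottom function, which is r.e.; $\Int{x}_\rho=\rho(x)$ is a projection, which is r.e. For the application case $M\equiv PQ$, we have $\Int{PQ}_\rho=\App(\Int{P}_\rho)(\Int{Q}_\rho)$; by the induction hypothesis $\rho\mapsto\Int{P}_\rho$ and $\rho\mapsto\Int{Q}_\rho$ are r.e., $\App$ is r.e.\ by hypothesis on $\lm{M}$, and $\mathit{eval}$ is r.e.\ at all types by Remark~\ref{rem:comp}; composing r.e.\ functions yields an r.e.\ function, so $\rho\mapsto\Int{PQ}_\rho$ is r.e.

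The delicate case, and the one I expect to be the main obstacle, is the abstraction case $M\equiv\lambda x.N$, where $\Int{\lambda x.N}_\rho=\Abs(d\mapsto\Int{N}_{\rho[x:=d]})$. Here I need to see that the internal function $g_\rho:\D\to\D$, $g_\rho(d)=\Int{N}_{\rho[x:=d]}$, is obtained from the r.e.\ function $(\rho,d)\mapsto\Int{N}_{\rho[x:=d]}$ (which comes from the induction hypothesis applied to $N$, precomposed with the evidently r.e.\ update operation $(\rho,d)\mapsto\rho[x:=d]$ on $Env_\D$) by the $\mathit{curry}$ operator, and that $\mathit{curry}$ is r.e.\ at all types by Remark~\ref{rem:comp}. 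Then $\rho\mapsto g_\rho$ is an r.e.\ function into $[\D\to\D]$, and postcomposing with the r.e.\ morphism $\Abs$ gives that $\rho\mapsto\Int{\lambda x.N}_\rho$ is r.e. The bookkeeping here — that $\mathit{curry}$, $\mathit{eval}$, the environment-update map, and the projections all genuinely live at the right types in the cartesian closed structure of $\bold{ED^{r.e.}}$, and that composites of r.e.\ maps are r.e.\ uniformly — is routine given Remark~\ref{rem:comp} and the fact that $\bold{ED^{r.e.}}$ is a ccc, so I would not grind through it; the one point worth stating explicitly is why this structural induction is legitimate despite variable binding, namely that strengthening to open terms with the function-space formulation makes the abstraction step go through without circularity. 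Finally, specializing to closed $M$ and the compact environment $\emptyrho$, Theorem~\ref{thm:restrcomp} gives that the r.e.\ function $\rho\mapsto\Int{M}_\rho$ sends the r.e.\ element $\emptyrho$ to an r.e.\ element, i.e.\ $\Int{M}=\Int{M}_{\emptyrho}\in\domre{\D}$.
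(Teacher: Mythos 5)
Your proposal is correct and follows essentially the same route as the paper: the paper also strengthens the statement to Proposition~\ref{prop:domreint} (that $\Int{M}:Env_\D\to\D$ is an r.e.\ function for all $M\in\Lambda_\bot$), proves it by structural induction using the decomposition $\Int{NP}=eval\comp(\App\comp\Int{N},\Int{P})$ and $\Int{\lambda x.N}=\Abs\comp C\comp(curry(f_x),k)$ with $f_x$ the environment-update map, exactly as you describe, and then specializes to closed terms. Your explicit remark on why the strengthening to open terms is needed, and the final appeal to Theorem~\ref{thm:restrcomp} at $\emptyrho$, just make explicit what the paper leaves implicit.
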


\begin{proof} The theorem is a consequence of the following proposition.
\end{proof}

\begin{proposition}\label{prop:domreint} 
If $\lm{M}$ is weakly effective then, for all $M\in\Lambda_\bot$, the function $\Int{M}:Env_\D\to \D$ is r.e.
\end{proposition}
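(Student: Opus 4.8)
The plan is to prove Proposition~\ref{prop:domreint} by structural induction on $M\in\Lambda_\bot$, showing that the map $\Int{M}:Env_\D\to\D$ is r.e. (equivalently, an r.e.\ element of the effective domain $[Env_\D\to\D]$). Throughout I will use that $Env_\D$, being a countable product of copies of $\D$ indexed by $Var$ via the fixed bijection $\gv_{Var}$, is an effective domain, and that all the type-theoretic combinators (\emph{curry}, \emph{eval}, composition, projections, pairing) are r.e.\ at every type by Remark~\ref{rem:comp}. The base cases are the constant $\bot$, where $\Int{\bot}:\rho\mapsto\bot_\D$ is r.e.\ because $\{n\st d_n\sqle_\D\bot_\D\}$ is trivially decidable, and a variable $x$, where $\Int{x}:\rho\mapsto\rho(x)$ is essentially the projection $\pi_{\gv_{Var}^{-1}(x)}$, which is r.e.

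For the inductive step with an application $M\equiv PQ$, I would write $\Int{PQ}_\rho = \App(\Int{P}_\rho)(\Int{Q}_\rho)$ as a composite of r.e.\ maps: the map $\rho\mapsto(\Int{P}_\rho,\Int{Q}_\rho)$ is r.e.\ by the induction hypothesis together with the fact that pairing is r.e., then apply the r.e.\ \emph{eval} map after the r.e.\ map $\App:\D\to[\D\to\D]$; closure of r.e.\ functions under composition (Remark~\ref{rem:comp}) finishes this case. For an abstraction $M\equiv\lambda x.N$, the point is that $\Int{\lambda x.N}_\rho = \Abs\bigl(d\mapsto\Int{N}_{\rho[x:=d]}\bigr)$. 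Here I would first note that the map $Env_\D\times\D\to\D$ sending $(\rho,d)\mapsto\Int{N}_{\rho[x:=d]}$ is r.e.: it is obtained from the r.e.\ map $\Int{N}:Env_\D\to\D$ (induction hypothesis) precomposed with the r.e.\ ``update'' map $Env_\D\times\D\to Env_\D$, $(\rho,d)\mapsto\rho[x:=d]$, which is r.e.\ because updating one coordinate of a product is a rearrangement of projections and pairings, hence r.e.; then \emph{curry} (r.e.) turns it into an r.e.\ map $Env_\D\to[\D\to\D]$, and finally composing with the r.e.\ map $\Abs$ gives the result.

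The only genuinely delicate point is bookkeeping: one must phrase the induction so that the uniformity is preserved, i.e.\ it is not merely each $\Int{M}_\rho$ that is an r.e.\ element of $\D$ but the whole function $\rho\mapsto\Int{M}_\rho$ that is an r.e.\ element of $[Env_\D\to\D]$, and that the index for $\Int{M}$ is obtained effectively from indices for the immediate subterms together with fixed indices for $\App$, $\Abs$, and the structural combinators. Once the statement is set up in this functional form, each inductive step is just a composition of finitely many r.e.\ maps with r.e.\ maps, and closure under composition (which is effective and uniform, again by Remark~\ref{rem:comp}) does all the work. I expect the main obstacle to be purely notational — correctly identifying the ``update an environment'' operation $(\rho,d)\mapsto\rho[x:=d]$ and the reindexing that the bijection $\gv_{Var}$ induces on $Env_\D = \prod_{n\in\nat}\D$ as r.e.\ operations on the effective domain $Env_\D$ — rather than any conceptual difficulty. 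Theorem~\ref{thm:domreintclosed} then follows immediately by instantiating at closed terms $M\in\Lambda^o_\bot$ and applying Remark~\ref{rem:preserves}, since $\Int{M}=\Int{M}_{\emptyrho}$ and $\emptyrho$ is (the bottom of $Env_\D$, hence) decidable.
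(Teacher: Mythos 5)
Your proposal is correct and follows essentially the same route as the paper's proof: structural induction, with the application case handled by composing $\App$, \emph{eval} and pairing, and the abstraction case reduced to the r.e.-ness of the update map $(\rho,d)\mapsto\rho[x:=d]$ followed by \emph{curry} and $\Abs$. The only cosmetic difference is that the paper justifies the update map $f_x$ being r.e.\ via Theorem~\ref{thm:restrcomp} (its restriction to r.e.\ elements is computable) rather than via projections and pairings, and writes the abstraction case as $\Abs\comp C\comp(curry(f_x),k)$, but the decomposition is substantively identical to yours.
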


\begin{proof} If $M\equiv \bot$ then $\Int{\bot}$ is the constant function mapping $\rho$ to $\bot$ which is obviously r.e. 
Otherwise, the proof is by structural induction over $M$. 

If $M\equiv x$ then $\Int{M}$ is the map $\rho\mapsto \rho(x)$, i.e., the evaluation of the environment $\rho$ 
on the variable $x$.
It is easy to check that this function is r.e.

If $M\equiv NP$ then $\Int{M} = eval \comp (\App \comp \Int{N},\Int{P})$.
By induction hypothesis and Remark~\ref{rem:comp}, $\Int{M}$ is a composition of r.e.\ functions, hence it is 
r.e.

If $M\equiv \lambda x.N$ then $\Int{M} = \Abs \comp C \comp (curry(f_x), k)$, where $f_x$ is the function 
$(\rho,d)\mapsto \rho[x:=d]$ and $k$ is the constant function mapping $\rho$ to $\Int{N}$. 
We note that $f_x$ is r.e.\ because its restriction $f'_x$ to $\domre{Env_\D}\times\domre{\D}$
is computable. Indeed $f'_x(\rho,d)$ differs from the r.e.\ environment $\rho$ only on $x$ 
where it takes as value the r.e.\ element $d$.
Then this case follows again from induction hypothesis and Remark~\ref{rem:comp}.
\end{proof}

\begin{theorem}\label{thm:2arymap} If $\lm{M}$ is weakly effective, then the function 
$\theta:\Lambda_\bot\times Env_\D\to\D$ defined by $\theta(M,\rho)=\Int{M}_\rho$ is r.e.
\end{theorem}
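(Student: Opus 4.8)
The statement to prove is Theorem~\ref{thm:2arymap}: that for a weakly effective $\lm{M}$, the two-argument map $\theta:\Lambda_\bot\times Env_\D\to\D$, $\theta(M,\rho)=\Int{M}_\rho$, is r.e. The plan is to reduce this to the already-established Proposition~\ref{prop:domreint}, which tells us that for each \emph{fixed} $M\in\Lambda_\bot$ the function $\rho\mapsto\Int{M}_\rho$ is r.e., and to make that family of results \emph{uniform} in $M$. The domain $\Lambda_\bot\times Env_\D$ is an effective domain (it is a product of effective domains: $\Lambda_\bot$ is effective via the numeration $(-)_\lambda$, and $Env_\D$ was given a structure of effective domain in the Convention preceding the statement), so ``r.e.'' for $\theta$ means, by Proposition~\ref{prop:dom-refun}, that the relation ``$d'_k\sqle_\D\theta(c,\rho)$'' is r.e.\ in the indices of the compact element $d'_k$ and of the compact element $(c,\rho)$ of $\Lambda_\bot\times Env_\D$. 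Since a compact element of $\Lambda_\bot$ is just an element of $\Lambda_\bot$ (the domain is flat), a compact element of $\Lambda_\bot\times Env_\D$ is a pair $(M,\rho)$ with $M\in\Lambda_\bot$ and $\rho$ a compact environment, and these are effectively coded by pairs of natural numbers.

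\textbf{Key steps.} First I would revisit the proof of Proposition~\ref{prop:domreint} and observe that the structural induction there is in fact carried out by effective operations that do not depend on $M$ in an ``oracular'' way: the base cases ($\bot$, variables) and the inductive clauses (application via $eval\comp(\App\comp-,-)$; abstraction via $\Abs\comp C\comp(curry(f_x),k)$) are all built from the fixed r.e.\ combinators $eval$, $\App$, $\Abs$, $curry$, $C$ and the environment-update maps $f_x$, combined by composition and pairing. So there is a primitive recursive function on $\nat$ which, given (the code of) $M\in\Lambda_\bot$, returns an index for the r.e.\ continuous function $\Int{M}:Env_\D\to\D$; this is a standard ``recursion on the syntax, tracked by a recursive function'' argument, using the effectiveness of the syntax numeration $(-)_\lambda$, the effectiveness of $\finsetenc$ and the pairing $\codepair{-}{-}$, and Remark~\ref{rem:comp}/Theorem~\ref{thm:restrcomp}. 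Second, from such a uniform index function I would assemble the r.e.\ relation for $\theta$ directly: ``$d'_k\sqle_\D\theta(M,\rho)$'' holds iff $d'_k\sqle_\D\Int{M}_\rho$, and given $M$ we can compute an index $e(M)$ for $\Int{M}$ as an r.e.\ continuous function and then semi-decide $d'_k\sqle_\D\varphi_{e(M)}(\rho)$ using Proposition~\ref{prop:dom-refun}(b) applied to that function — the whole thing being r.e.\ in $(k,M,\rho)$ because $e$ is total recursive and the semi-decision procedure is uniform. Third, I would invoke Proposition~\ref{prop:dom-refun} in the converse direction to conclude that $\theta$, as a continuous function on the effective domain $\Lambda_\bot\times Env_\D$, is r.e. (Continuity of $\theta$ itself is routine: it is continuous in $\rho$ for each fixed $M$ by construction, and ``continuous'' in the flat argument $M$ is vacuous.)

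\textbf{Main obstacle.} The only real work is the first step: making precise that the induction of Proposition~\ref{prop:domreint} yields an index for $\Int{M}$ that is \emph{primitive recursive in the code of $M$}, rather than merely asserting, for each $M$ separately, that $\Int{M}$ is r.e. This is the passage from ``each member of a family is effective'' to ``the family is effective, uniformly'', and it is exactly the content that distinguishes Theorem~\ref{thm:2arymap} from Proposition~\ref{prop:domreint}. Concretely one must check that the coding of the relevant combinators ($eval$, $curry$, $C$, $\App$, $\Abs$, $f_x$) as r.e.\ functions can be fixed once and for all, that composition and pairing of r.e.\ continuous functions act recursively on indices (this is Remark~\ref{rem:comp} together with Theorem~\ref{thm:restrcomp} and Lemma~\ref{lemma:universal-numerations}, guaranteeing index-independence), and that the recursion on the structure of $M$ — including reading off, from the code of $M$, whether $M$ is a variable, an application, or an abstraction, and which variable is bound — is itself recursive, which holds because $(-)_\lambda$ is an effective encoding of $\Lambda$. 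Once this uniformity is in place, the rest is bookkeeping with Proposition~\ref{prop:dom-refun}.
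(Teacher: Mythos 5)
Your proposal is correct and follows essentially the same route as the paper: the paper's own proof consists precisely of the observation that the structural induction establishing Proposition~\ref{prop:domreint} is uniform in $M$, so that a code for $\Int{M}_\rho$ can be computed effectively from the codes of $M$ and $\rho$. The only cosmetic difference is that you verify r.e.-ness of $\theta$ via the compact-element characterization of Proposition~\ref{prop:dom-refun}, whereas the paper checks computability of the restriction to r.e.\ elements and appeals to Theorem~\ref{thm:restrcomp}; both rest on the same uniformity argument.
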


\begin{proof} 
Since the function $\Int{M}$ is r.e.\ for every $M$, we have that $\Int{M}_\rho\in\domre{\D}$ for all $\lambda$-terms $M$ and 
all r.e.\ environments $\rho$.
Moreover, whenever $M\in\Lambda$ and $\rho\in\domre{Env_\D}$, the proof of Proposition~\ref{prop:domreint} gives an 
effective algorithm to compute in a uniform way the code of $\Int{M}_\rho$ starting from the codes of $M$ and $\rho$.
\end{proof}

\begin{corollary}\label{cor:intcomp}
If $\lm{M}$ is weakly effective and $\rho\in\domre{Env_\D}$, then the function $\Int{-}_{\rho}:\Lambda_\bot\to\D$ is r.e.\ and its restriction to $\Lambda$
is computable with respect to $(-)_\lambda,\ecomp^\D$.
\end{corollary}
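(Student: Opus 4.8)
The plan is to derive this statement directly from Theorem~\ref{thm:2arymap}, which already establishes that the two-argument map $\theta:\Lambda_\bot\times Env_\D\to\D$, $\theta(M,\rho)=\Int{M}_\rho$, is r.e. First I would fix $\rho\in\domre{Env_\D}$ and observe that $\Int{-}_\rho$ is obtained from $\theta$ by partial application in the second coordinate. Concretely, let $n_\rho$ be a code of $\rho$ with respect to the numeration $\ecomp^{Env_\D}$; then the map $M\mapsto\codepair{M_\omega}{n_\rho}$ (using the fixed computable pairing, and implicitly the encoding $(-)_\omega$ of $\Lambda_\bot$) is a computable map from $\Lambda_\bot$ to $\nat$, and composing it with the r.e.\ function $\theta$ exhibits $\Int{-}_\rho$ as the restriction of an r.e.\ function along a computable reindexing. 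Since r.e.\ functions are closed under precomposition with computable maps (this is the content of Remark~\ref{rem:comp1}, together with Theorem~\ref{thm:restrcomp} which guarantees r.e.\ functions preserve r.e.\ elements), it follows that $\Int{-}_\rho:\Lambda_\bot\to\D$ is r.e.

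For the second assertion, I would apply Theorem~\ref{thm:restrcomp} once more: an r.e.\ continuous function between effective domains has a restriction to r.e.\ elements which is computable with respect to the adequate numerations. Here the source is $\Lambda_\bot$, which by the Example following Definition~\ref{def:domcomp} is an effective (flat) domain in which \emph{every} element is compact, hence decidable, hence r.e.; its adequate numeration is essentially the bijective numeration $(-)_\lambda$. Restricting further to $\Lambda\subseteq\Lambda_\bot$ (a decidable subset, since ``$M=\bot$'' is decidable in the code) preserves computability. Therefore $\Int{-}_\rho\restr_\Lambda$ is computable with respect to $(-)_\lambda$ and $\ecomp^\D$, as claimed.

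The only genuine subtlety — and the step I would be most careful about — is bookkeeping the numerations: one must check that a code for the fixed r.e.\ environment $\rho$ can be legitimately ``frozen'' into the algorithm of Theorem~\ref{thm:2arymap} to yield a single partial recursive function tracking $\Int{-}_\rho$, rather than merely a uniform family. This is routine by the s-m-n theorem, but it is where the phrase ``effective algorithm to compute in a uniform way the code of $\Int{M}_\rho$ starting from the codes of $M$ and $\rho$'' in the proof of Theorem~\ref{thm:2arymap} is actually used. Everything else is an immediate instantiation of the general machinery (Remark~\ref{rem:comp}, Theorem~\ref{thm:restrcomp}) already set up in this section, so the proof is short.
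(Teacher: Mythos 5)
Your proposal is correct and follows exactly the route the paper intends: the corollary is stated without proof precisely because it is the specialization of Theorem~\ref{thm:2arymap} obtained by freezing the code of the fixed r.e.\ environment $\rho$ (via s-m-n) into the uniform algorithm, with Theorem~\ref{thm:restrcomp} and the effectiveness of the flat domain $\Lambda_\bot$ handling the passage between ``r.e.\ function'' and ``computable restriction''. Your extra care about the numerations (in particular that $(-)_\lambda$ numerates $\Lambda$ and must be trivially extended to $\Lambda_\bot$) is sound and does not change the argument.
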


%\begin{proposition}
%If $\lm{M}$ is weakly effective and $\rho\in\domre{Env_\D}$, then the restriction of $\Int{-}_{\rho}$ to $\Lambda$ (resp. $\Lambda^o$) 
%is computable from $\Lambda$ (resp. $\Lambda^o$) to $\domre{\D}$. 
%\end{proposition}

%\begin{proof} The interpretation map is computable and the inverse image of an r.e.\ set via a computable map is r.e.
%\end{proof}

%From characterization~\ref{prop:dom-refun}$(b)$ of r.e.\ functions we get that, for all weakly effective $\lambda$-models $\lm{M}$:

%\begin{corollary}\label{cor:int-comp} The restriction of $|-|$ to $\Lambda^o$ is computable.
%\end{corollary}

\begin{corollary}\label{cor:Vco-r.e.} If $\lm{M}$ is weakly effective and $V\subseteq\domre{\D}$ is completely co-r.e.\ then $\{M\in\Lambda^o \st \Int{M}\in V\}$
is $\beta$-co-r.e.
\end{corollary}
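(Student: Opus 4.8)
The plan is to obtain this as a routine transfer of effectivity through the interpretation function, so that no genuinely new idea is needed. The two ingredients are: (1) on closed terms the interpretation map is computable with respect to the numeration $(-)_\lambda$ of $\Lambda$ and the adequate numeration $\ecomp^\D$ of $\domre{\D}$ (this is Corollary~\ref{cor:intcomp}); and (2) a completely co-r.e.\ set $V$ is, by definition, one for which $\domre{\D}\setminus V$ is completely r.e., i.e.\ $\{n\st\ecomp^\D_n\notin V\}$ is an r.e.\ subset of $\nat$. Pulling (2) back along the computable map of (1) and invoking Remark~\ref{rem:comp1} will give that $\{M\in\Lambda^o\st\Int{M}\notin V\}$ is r.e., hence that $\{M\in\Lambda^o\st\Int{M}\in V\}$ is co-r.e.; the required $\beta$-closure will then come for free.

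In more detail, I would first note that $\emptyrho\in\domre{Env_\D}$, being the least element of the effective domain $Env_\D$ and hence compact, and that $\Int{M}=\Int{M}_{\emptyrho}\in\domre{\D}$ for every $M\in\Lambda^o$ by Theorem~\ref{thm:domreintclosed}, so that the condition ``$\Int{M}\in V$'' is meaningful. Applying Corollary~\ref{cor:intcomp} with $\rho=\emptyrho$ yields a total recursive function $\vphi$ with $\ecomp^\D_{\vphi((M)_\omega)}=\Int{M}$ for all $M\in\Lambda^o$. Since $\{n\st\ecomp^\D_n\notin V\}$ is r.e., its inverse image under $\vphi$ is r.e.\ by Remark~\ref{rem:comp1}; intersecting it with the decidable set $\{(M)_\omega\st M\in\Lambda^o\}$ produces exactly $\{(M)_\omega\st M\in\Lambda^o,\ \Int{M}\notin V\}$, which is therefore r.e. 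Hence $\{M\in\Lambda^o\st\Int{M}\in V\}$ is co-r.e.

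Finally, since $\lm{M}$ is a $\lambda$-model, $M=_{\lambda_\beta}N$ implies $\Int{M}_\rho=\Int{N}_\rho$ for all $\rho$; hence, restricting attention to closed terms, membership in $\{M\in\Lambda^o\st\Int{M}\in V\}$ depends only on the $\beta$-conversion class, so the set is $\beta$-closed and therefore $\beta$-co-r.e. I do not expect any real obstacle: the argument is pure bookkeeping around the already-established computability of $\Int{-}_{\emptyrho}$, and the only points worth a word are that $\emptyrho$ is an r.e.\ environment and that $\Int{M}\in\domre{\D}$ for $M\in\Lambda^o$, both immediate from Section~\ref{subsec:effmodels} and Theorem~\ref{thm:domreintclosed}.
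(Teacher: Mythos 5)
Your argument is correct and is essentially the paper's own proof: both pull the completely co-r.e.\ set back along the recursive function tracking $M\mapsto\Int{M}_\rho$ for an r.e.\ environment $\rho$ (Corollary~\ref{cor:intcomp} plus Remark~\ref{rem:comp1}) and then cut down by the decidable set of codes of closed terms. Your only additions—checking that $\emptyrho$ is an r.e.\ environment and spelling out the $\beta$-closure—are correct and harmless.
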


\begin{proof}
Let $\rho\in\domre{Env_\D}$. By Corollary~\ref{cor:intcomp} there exists a recursive map $\vphi_\rho:\nat\to\nat$ tracking the interpretation function 
$M\mapsto \Int{M}_\rho$ of $\lambda$-terms from $\Lambda$ into $\domre{\D}$.
By Remark~\ref{rem:comp1} and since the set $E = \{n\st \ecomp^\D_n \in V\}$ is co-r.e.\ it follows that $\inv{\vphi_\rho}(E)=\{M_\omega\st \Int{M}_\rho \in V\}$, is also co-r.e.
We get the conclusion because $\Lambda^o$ is a decidable subset of $\Lambda$.
\end{proof}

\begin{notation} Given an ordered $\lambda$-model $\lm{M}$ and $\cT= \Th{\lm{M}}$ we set:
\begin{itemize}
\item[(i)]
    $O_{M}= \{ N\in\Lambda^o \st\Int{N} \sqle_\lm{M} \Int{M} \}$, for all $M\in\Lambda^o(\D)$. 
    Note that %$O_{M}= \Intunder{\Kdn{\Int{M}}}$ and that 
    $O_M$ is a union of $\cT$-classes and that $M\in O_M$.
\item[(ii)]
    $O_\bot = \{ N\in\Lambda^o \st \Int{N} = \bot_\D \}$. 
    Note that either $O_\bot = \emptyset$ or $O_\bot$ consists of a single $\cT$-class,
\item[(iii)]
    $O_\bot^{\omega}= \cup_{n\in\nat}O^n_{\bot}$ where $O^n_{\bot}$ is $O_L$ for 
    $L\equiv\lambda x_1\ldots x_n.\bot_\D$.
\end{itemize}
\end{notation}

Thus, $O_{\bot}$ is just a simplified notation for $O_{\bot}^0$.
%Since we will no longer work with flat domains, and since the context will always be clear,
%the overloaded notation $D_\bot$ will not be confusing.
The close notations for $O_\bot$ and $\D_\bot$ will not be confusing.

\begin{lemma}\label{lemma:Obot} For all ordered models $\lm{M}$, and $M,N\in\Lambda^o$ we have:
\begin{itemize}
\item[(i)]
    $O_\bot\cup [M]_\cT\subseteq O_M\subseteq \Tins{M}$,
\item[(ii)]
    if $M,N$ are non equivalent hnf's then $O_M\cap O_N\subseteq \Unsolvable$.
\end{itemize}
\end{lemma}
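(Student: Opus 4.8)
The plan is to reduce both parts to the monotonicity of application in a $\lambda$-model together with, for part $(ii)$, B\"ohm's separation theorem stated above. As a preliminary I would record that in every non-trivial $\lambda$-model $\lm{M}$ one has $\Int{\bold{F}}\not\sqle_\lm{M}\Int{\bold{T}}$ and $\Int{\bold{T}}\not\sqle_\lm{M}\Int{\bold{F}}$: since application is monotone in each of its two arguments, $\Int{\bold{F}}\sqle_\lm{M}\Int{\bold{T}}$ would give $b=\Int{\bold{F}}ab\sqle_\lm{M}\Int{\bold{T}}ab=a$ for all $a,b\in\D$, collapsing $\D$ to a single point and making $\cT=\Th{\lm{M}}$ inconsistent; the second inequality is symmetric. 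Accordingly we assume throughout, as is customary, that $\lm{M}$ is non-trivial (equivalently $\cT$ consistent).

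For $(i)$, the inclusion $O_\bot\cup[M]_\cT\subseteq O_M$ is immediate: $\Int{N}=\bot_\D$ forces $\Int{N}\sqle_\lm{M}\Int{M}$ because $\bot_\D$ is the least element, and $N=_\cT M$ forces $\Int{N}=\Int{M}$ because $\cT=\Th{\lm{M}}$. For $O_M\subseteq\Tins{M}$ I would argue by contradiction: if $N\in O_M$ admitted $S\in\Lambda^o$ with $SM=_\cT\bold{T}$ and $SN=_\cT\bold{F}$, then taking interpretations (using $\cT=\Th{\lm{M}}$) and applying monotonicity of application in its left argument to $\Int{N}\sqle_\lm{M}\Int{M}$, we would obtain $\Int{\bold{F}}=\Int{SN}\sqle_\lm{M}\Int{SM}=\Int{\bold{T}}$, contradicting the preliminary observation; hence $N\in\Tins{M}$.

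For $(ii)$ I would build on $(i)$: from it, $O_M\cap O_N\subseteq\Tins{M}\cap\Tins{N}$, so it suffices to show that no solvable $P\in\Lambda^o$ lies in $\Tins{M}\cap\Tins{N}$ when $M,N$ are non-equivalent hnf's. Let $P'$ be a head normal form with $P=_\beta P'$. If $P'$ and $M$ were not equivalent as hnf's, then by B\"ohm's Proposition there is $S\in\Lambda^o$ with $SM=_\beta\bold{T}$ and $SP'=_\beta\bold{F}$; since $\beta$-conversion is contained in every $\lambda$-theory and $P=_\beta P'$, this $S$ would witness $SM=_\cT\bold{T}$ and $SP=_\cT\bold{F}$, i.e.\ $P\notin\Tins{M}$, a contradiction. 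Hence $P'$ is equivalent to $M$, and, by the same reasoning, to $N$. As equivalence of hnf's is transitive, $M$ and $N$ would be equivalent, contrary to hypothesis; therefore $P$ is unsolvable, i.e.\ $P\in\Unsolvable$.

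I do not expect a serious obstacle here; the argument is routine monotonicity plus a citation of B\"ohm. The two points that need a touch of care are the standing non-triviality assumption — for the one-point model $\Tins{M}=\emptyset$ while $O_M=\Lambda^o$, so the inclusion $O_M\subseteq\Tins{M}$ really does require it — and the bookkeeping around the direction of the separator, using that $\beta$-convertible terms have identical interpretation in any $\lambda$-model and are identified by every $\lambda$-theory.
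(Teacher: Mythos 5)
Your proof is correct and follows essentially the same route as the paper's: monotonicity of application shows that ($\cT$-)separable terms have incomparable interpretations in a non-trivial model, giving $O_M\subseteq\Tins{M}$, and B\"ohm's separation result then forces any solvable term in $O_M\cap O_N$ to have an hnf equivalent to both $M$ and $N$, which is impossible. Your explicit remark about the non-triviality hypothesis matches the paper's own (implicit) assumption, so there is nothing to add.
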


\begin{proof} 
$(i)$ The first inclusion is obvious. The second one follows from the observation that the interpretations of 
two separable $\lambda$-terms are incomparable in any non-trivial partially ordered $\lambda$-model.\\
$(ii)$ is then immediate, once noted that no solvable $\lambda$-term belongs to $M^{ins}\cap N^{ins}$, since an hnf in the intersection 
should be simultaneously equivalent to $M$ and $N$.
\end{proof}

Note that under the hypothesis of Lemma~\ref{lemma:Obot} it can be true that $O_M\cap O_N = \emptyset$ for all $M,N$ which are not $\beta\eta$-equivalent
as shows the model of Di Gianantonio et Al. \cite{DiGianantonioHP95}.

It is interesting to note the following related result, which holds only for graph models and which, as the preceeding one, does not need any 
hypothesis of effectivity.

\begin{lemma}
For all graph models $\gm{G}$, if $N\in O_{\bold{I}}$ then either $N=_{\lambda_{\beta}}\bold{I}$ or $N$ is unsolvable.
\end{lemma}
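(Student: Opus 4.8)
Let $\cG=(\setG,\i{\cG})$ be the web of $\gm{G}$. Since $O_{\bold{I}}$ is by definition $\{N\in\Lambda^o\st\Int{N}^{\gm{G}}\subseteq\Int{\bold{I}}^{\gm{G}}\}$, it suffices to show that if $N\in\Lambda^o$ is \emph{solvable} and $\Int{N}^{\gm{G}}\subseteq\Int{\bold{I}}^{\gm{G}}$ then $N=_{\lambda_\beta}\bold{I}$. Being closed and solvable, $N$ has a head normal form $\lambda x_1\ldots x_n.x_pN_1\cdots N_k$ with $n\ge 1$, $1\le p\le n$, $k\ge 0$, and since $\gm{G}$ is a model of $\lambda_\beta$ we may compute $\Int{N}^{\gm{G}}$ from this hnf using the interpretation clauses of Section~\ref{subsec:graphmodels}. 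Recall $\Int{\bold{I}}^{\gm{G}}=\{\i{\cG}(a,\alpha)\st a\in\setG^*,\ \alpha\in a\}$, and that, $\i{\cG}$ being injective, an element of $\Int{\bold{I}}^{\gm{G}}$ has a \emph{unique} writing $\i{\cG}(a,\alpha)$, which moreover satisfies $\alpha\in a$. The goal is to force $n=1$, $p=1$, $k=0$, i.e.\ $N=_{\lambda_\beta}\lambda x.x\equiv\bold{I}$; otherwise I will exhibit an element of $\Int{N}^{\gm{G}}\setminus\Int{\bold{I}}^{\gm{G}}$.

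The only ``global'' fact about webs needed is: for every total pair, the map $\alpha\mapsto\i{\cG}(\emptyset,\alpha)$ is \emph{not surjective} onto $\setG$ (for any $\delta$, the element $\i{\cG}(\{\delta\},\delta)$ is not in its range, by injectivity of $\i{\cG}$). Writing $e_0(\alpha)=\alpha$ and $e_{j+1}(\alpha)=\i{\cG}(\emptyset,e_j(\alpha))$, it follows that for every $k\ge 1$ the iterate $e_k(\cdot)$ is not the identity (an injection whose $k$-th power is the identity would be a bijection, hence $\i{\cG}(\emptyset,\cdot)$ would be surjective), so there is $\alpha\in\setG$ with $e_k(\alpha)\neq\alpha$.

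The core of the argument is then a short sequence of explicit constructions, in each of which one specialises every existentially quantified finite subset occurring in the interpretation of an application to $\emptyset$. (a) If $n\ge 2$ and $p\ge 2$: put $a_1=\emptyset$, $a_p=\{e_k(\gamma_0)\}$ for a fixed $\gamma_0\in\setG$, and $a_j=\emptyset$ otherwise; unfolding $\Int{x_pN_1\cdots N_k}^{\gm{G}}_{\rho}$ with all inner finite sets equal to $\emptyset$ gives $\gamma_0\in\Int{x_pN_1\cdots N_k}^{\gm{G}}_{\rho}$, hence $\i{\cG}(\emptyset,\i{\cG}(a_2,\i{\cG}(a_3,\cdots\i{\cG}(a_n,\gamma_0)\cdots)))\in\Int{N}^{\gm{G}}$; this element is of the form $\i{\cG}(\emptyset,-)$, so it lies outside $\Int{\bold{I}}^{\gm{G}}$. (b) If $n\ge 2$ and $p=1$: put $a_1=\{e_k(\gamma_0)\}$; the same unfolding (which uses $a_1$ only) shows that for \emph{every} choice of $a_2,\ldots,a_n\in\setG^*$ one has $\i{\cG}(a_2,\i{\cG}(a_3,\cdots\i{\cG}(a_n,\gamma_0)\cdots))\in\Int{\lambda x_2\ldots x_n.x_1N_1\cdots N_k}^{\gm{G}}_{[x_1:=a_1]}$; letting $a_2$ range over the infinite set $\setG^*$ produces infinitely many distinct such elements, so one of them, say $\alpha$, is not in the finite set $a_1$, whence $\i{\cG}(a_1,\alpha)\in\Int{N}^{\gm{G}}\setminus\Int{\bold{I}}^{\gm{G}}$. (c) If $n=1$ and $k\ge 1$, so $N=_{\lambda_\beta}\lambda x.xN_1\cdots N_k$: choose $\alpha$ with $e_k(\alpha)\neq\alpha$ and put $a=\{e_k(\alpha)\}$; unfolding with all inner finite sets $\emptyset$ gives $\alpha\in\Int{xN_1\cdots N_k}^{\gm{G}}_{[x:=a]}$, hence $\i{\cG}(a,\alpha)\in\Int{N}^{\gm{G}}$, while $\alpha\notin a$. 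Each case contradicts $\Int{N}^{\gm{G}}\subseteq\Int{\bold{I}}^{\gm{G}}$, so we are left with $n=1$, $p=1$, $k=0$, i.e.\ $N=_{\lambda_\beta}\bold{I}$.

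I do not expect a real ``hard part''; the step requiring the most care is (c) together with the auxiliary web fact feeding it, namely ruling out the a priori possibility that the slice $\i{\cG}(\emptyset,-)$ of the encoding is a bijection of $\setG$ — this is precisely where injectivity of $\i{\cG}$ and the existence of the pairs $(\{\delta\},\delta)$ are used. Everything else is the routine unfolding of the graph-model interpretation clauses, always taking the quantified finite subsets to be $\emptyset$.
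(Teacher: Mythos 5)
Your proof is correct, but it takes a genuinely different route from the paper's. The paper first invokes the separability machinery: by Lemma~\ref{lemma:Obot}$(i)$, $O_{\bold{I}}\subseteq \bold{I}^{ins}$, and by B\"ohm's theorem a solvable $N$ inseparable from $\bold{I}$ may be assumed to be an hnf \emph{equivalent} to $\bold{I}$, i.e.\ of the shape $\lambda x\vec{z}.x\vec{N}$ with $|\vec{z}|=|\vec{N}|=k$; the only remaining case $k\geq 1$ is then killed by the single witness $\i{\cG}(\{\i{\cG}(\emptyset^k,\alpha)\},\i{\cG}(\{\alpha\},\i{\cG}(\emptyset^{k-1},\alpha)))$, whose exclusion from $\Int{\bold{I}}^{\gm{G}}$ reduces to $\i{\cG}(\{\alpha\},\delta)\neq\i{\cG}(\emptyset,\delta)$, a bare consequence of injectivity. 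You instead attack an arbitrary hnf $\lambda x_1\ldots x_n.x_pN_1\cdots N_k$ head-on, splitting into three cases according to the position of the head variable and the number of abstractions; this avoids B\"ohm's theorem and the order-theoretic separability lemma entirely, at the price of a longer case analysis and of one extra combinatorial fact about total pairs (non-surjectivity of $\alpha\mapsto\i{\cG}(\emptyset,\alpha)$, hence no iterate of it is the identity), which you need precisely in the case $n=1$, $k\geq 1$ — a case the paper's reduction never meets, since an hnf equivalent to $\bold{I}$ with $k$ arguments automatically carries $k$ extra abstractions that the paper exploits to plant the $\{\alpha\}$-versus-$\emptyset$ discrepancy. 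I checked your three constructions (always choosing the inner finite sets to be $\emptyset$ so that the head reduces to $e_k(\gamma_0)\in a_p$, the cardinality argument in case (b) using that $\setG$ is infinite, and the choice of $\alpha$ with $e_k(\alpha)\neq\alpha$ in case (c)); they all go through. Your argument is more elementary and self-contained; the paper's is shorter and localizes the web combinatorics to one clean witness.
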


\begin{proof} Suppose that $N\in O_{\bold{I}}$ and $N$ is solvable. 
Without loss of generality $N$ is an hnf equivalent to $\bold{I}$, hence of the form 
$N\equiv\lambda x.\lambda\vec{z}.x\vec{N},$ with $\vec{z}$ and $\vec{N}$ of the same length $k\geq 0$. 
If $k\geq 1$ it is easy to check that $\gamma= \i{\cG}(\{\i{\cG}(\emptyset^k,\alpha)\},\i{\cG}(\{\alpha\},\i{\cG}(\emptyset^{k-1},\alpha)))\in\Int{N}^{\gm{G}}\setminus\Int{\bold{I}}^\gm{G}$,
where $\i{\cG}(\emptyset^n,\alpha)$ is a shorthand for $\i{\cG}(\emptyset,\i{\cG}(\emptyset\ldots,\i{\cG}(\emptyset,\alpha)\ldots))$.
\end{proof}

Note that this lemma is false for the other classes of models, whether living in the continuous, stable and strongly stable semantics, 
which have been introduced in the literature since they all contain extensional models
%there exist extensional models in all these classes 
(we will give later on more details on these classes).
Looking at the proof we can observe that the reasons why it does not work differ according to the semantics: 
$\gamma\in\Int{\bold{I}}$ in the case of $K$-models, and $\gamma\notin\Int{N}$ in the stable and strongly stable case
(because the injective function $i$ of the web is defined via $Tr_{s})$.

\begin{proposition}\label{prop:botempty1} If $\lm{M}$ is weakly effective and $\cT= \Th{\lm{M}}$, then:
\begin{itemize}
\item[(i)]
    $O_\bot\subseteq\Unsolvable$,
\item[(ii)]
    $O_\bot$ is $\cT$-co-r.e.
\end{itemize}
\end{proposition}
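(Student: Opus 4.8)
The plan is to dispatch the two parts separately, reading off (ii) from the effectivity machinery already in place and getting (i) from the syntactic separability results. For \emph{part (ii)}, the point is simply that $O_\bot=\{N\in\Lambda^o\st\Int{N}\in\{\bot_\D\}\}$, and that by Remark~\ref{rem:botco-r.e.} the singleton $\{\bot_\D\}$ is a completely co-r.e.\ subset of $\domre{\D}$. Since $\lm{M}$ is weakly effective, Corollary~\ref{cor:Vco-r.e.} applied with $V=\{\bot_\D\}$ yields at once that $O_\bot$ is $\beta$-co-r.e., in particular co-r.e. It then remains only to note that $O_\bot$ is closed under $=_\cT$: indeed, as observed just before the statement, $O_\bot$ is either empty or a single $\cT$-class. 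A co-r.e.\ set of $\lambda$-terms closed under $=_\cT$ is by definition $\cT$-co-r.e., so (ii) is done (and we have in fact proved the slightly sharper fact that $O_\bot$ is $\beta$-co-r.e.).

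For \emph{part (i)} I would combine Lemma~\ref{lemma:Obot}(i) with B\"ohm's separability proposition. By Lemma~\ref{lemma:Obot}(i) we have $O_\bot\subseteq\Tins{M}\subseteq M^{ins}$ for \emph{every} $M\in\Lambda^o$, so it suffices to exhibit, for each solvable $S\in\Lambda^o$, a closed term $H$ with $S\notin H^{ins}$. Pick an hnf $H_S\equiv\lambda x_1\ldots x_n.x_iM_1\cdots M_k$ of $S$; the head variable is bound because $S$ is closed, so $n\geq 1$. Among the closed hnf's $\bold{I}\equiv\lambda x.x$ and $\bold{T}\equiv\lambda xy.x$ at least one, call it $H$, is not equivalent (as an hnf) to $H_S$. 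By B\"ohm's proposition two inequivalent hnf's are separable, hence $H_S$ and $H$ are separable, and therefore so are $S$ and $H$ (separability is invariant under $\beta$-conversion). Thus $S\notin H^{ins}$, whence $S\notin O_\bot$, which proves $O_\bot\subseteq\Unsolvable$. Alternatively one can argue straight from the model: $\App$ is strict, since $\App(\bot_\D)\sqle\App(d)$ for all $d$ and $\App$ is onto (because $\App\comp\Abs=id$), so $\App(\bot_\D)\sqle\bigsqcap_{f\in[\D\to\D]}f=\bot_{[\D\to\D]}$; then a closed solvable term with interpretation $\bot_\D$ would, after application to suitable closed terms, force $\Int{\bold{I}}=\bot_\D$ and hence collapse the model.

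I do not expect a genuine obstacle: the real content lies entirely in the earlier results (Remark~\ref{rem:botco-r.e.} and Corollary~\ref{cor:Vco-r.e.} for (ii); Lemma~\ref{lemma:Obot}(i) and B\"ohm's proposition for (i)), and what remains is bookkeeping. The only points to keep an eye on are the (well-known) fact that a closed solvable term can always be separated from $\bold{I}$ or $\bold{T}$, and the implicit non-triviality hypothesis on $\lm{M}$ — without it $O_\bot=\Lambda^o$ and (i) fails — but this is exactly the hypothesis that already makes the inclusion $O_M\subseteq\Tins{M}$ in Lemma~\ref{lemma:Obot} work, so nothing new is needed.
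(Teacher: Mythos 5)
Your proof is correct and follows essentially the same route as the paper: part (ii) is verbatim the paper's argument (Remark~\ref{rem:botco-r.e.} plus Corollary~\ref{cor:Vco-r.e.}), and your part (i) merely unfolds Lemma~\ref{lemma:Obot}(ii) --- which the paper cites directly via $O_\bot\subseteq O_M\cap O_N$ for non-equivalent hnf's $M,N$ --- by re-deriving from B\"ohm's separability result that no solvable term can be inseparable from both $\bold{I}$ and $\bold{T}$. Your observation that non-triviality of $\lm{M}$ is implicitly needed is accurate and is exactly the hypothesis already built into the proof of Lemma~\ref{lemma:Obot}(i).
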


\begin{proof}
$(i)$ is true by Lemma~\ref{lemma:Obot}$(ii)$ since $O_\bot\subseteq O_M\cap O_N$ for all $M,N$.\\
$(ii)$ follows from Remark~\ref{rem:botco-r.e.} and Corollary~\ref{cor:Vco-r.e.}.
\end{proof}

Any sensible model satisfies $\Unsolvable = [\Omega]_\cT\subseteq O_\Omega$. 
Thus, in all sensible models which interpret $\Omega$ by $\bot_\D$ we have $O_{\bot}= O_\Omega = \Unsolvable$ 
(this is the case for example of all sensible graph models). 
On the other hand it is easy to build models satisfying $O_\Omega=\Lambda^o$: for example, finding a graph model $\gm{G}$ with carrier set $G$ such that 
$\Int{\Omega}^\gm{G}=\setG$ is an exercise, %in \cite{Odifreddi89}, 
which also appears as the simplest application of the generalized forcing developed in \cite{BerlineS06}. 
Finally (usual) forcing also allows us to build, for all $M\in\Lambda^o$, a graph model satisfying $\Omega=M$ 
and hence $O_\Omega = O_M$, and %we can go much further with generalized forcing.
this is still true for $M\in\Lambda^o(\D)$, and beyond, using generalized forcing.

\begin{proposition}\label{prop:botempty2} 
If $\lm{M}$ is weakly effective and $\cT= \Th{\lm{M}}$ is r.e., then $O_\bot^\omega = \emptyset = O_\bot$.
\end{proposition}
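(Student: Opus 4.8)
The plan is to argue by contradiction, exploiting Visser's Theorem~\ref{thm:hyperconnection} together with the effectivity machinery already developed. First I would observe that $O_\bot \subseteq O_\bot^\omega$ trivially (take $n=0$), so it suffices to show $O_\bot^\omega = \emptyset$. Suppose towards a contradiction that $O_\bot^\omega \neq \emptyset$; then there is some $n$ with $O^n_\bot \neq \emptyset$, i.e.\ there is a closed term $P$ with $\Int{P} = \Int{\lambda x_1\ldots x_n.\bot_\D}$. I would first reduce to the case $n=0$: if $\Int{P}_\rho = \Int{\lambda x_1\ldots x_n.\bot_\D}$, then applying $P$ to $n$ copies of (the interpretation of) any closed term, say $\bold{I}$, yields a closed term $P\bold{I}\cdots\bold{I}$ whose interpretation is $\bot_\D$; hence $O_\bot \neq \emptyset$ as well. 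So without loss of generality $O_\bot \neq \emptyset$.

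Now the key point: by Proposition~\ref{prop:botempty1}, $O_\bot$ is $\cT$-co-r.e.\ and $O_\bot \subseteq \Unsolvable$; moreover since $\cT$ is r.e., $O_\bot$ is not merely $\beta$-co-r.e.\ but actually closed under $=_\cT$, and being nonempty it consists of exactly one $\cT$-class (as recorded in the notation defining $O_\bot$). The strategy is to derive from $O_\bot \neq \emptyset$ a contradiction with the consistency/non-triviality facts about co-r.e.\ sets. The cleanest route is Lemma~\ref{lemma:OinfmoduloT}: since $O_\bot$ is $\beta$-co-r.e.\ and nonempty, and $\cT$ is r.e., either $O_\bot/\cT$ is infinite or $\cT$ is inconsistent. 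But $\lm{M}$ is a (nontrivial) $\lambda$-model, so $\cT = \Th{\lm{M}}$ is consistent, and $O_\bot/\cT$ has exactly one element — a contradiction. Hence $O_\bot = \emptyset$, and therefore $O_\bot^\omega = \emptyset$.

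The main obstacle, and the step deserving the most care, is the reduction from $n > 0$ to $n = 0$: one must check that $\Int{P}_\rho = \Int{\lambda \vec{x}.\bot_\D}$ genuinely forces $\Int{P\vec{N}}_\rho = \bot_\D$ for suitable closed $\vec N$, which follows from the definition of the interpretation function and the fact that $\Abs$ applied to a function whose body is constantly $\bot_\D$ yields an element sent by $\App$ to that constant function, using $\App\comp\Abs = id$. A secondary subtlety is making sure we are entitled to say $\cT$ is consistent; this is automatic since a $\lambda$-model always represents a consistent $\lambda$-theory (otherwise $\D$ would be trivial, contradicting that it is a Scott domain with $\bot_\D \neq$ other elements — or more directly, $\bold{T} \neq \bold{F}$ in $\lm{M}$ as they have distinct interpretations whenever $\card{\D} > 1$, which holds since $\bot_\D \in \domre{\D}$ and $\Int{\bold I} \neq \bot_\D$). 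Everything else is a direct application of results already in hand.
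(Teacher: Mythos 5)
Your proof is correct and follows essentially the same route as the paper: show $O_\bot=\emptyset$ via Proposition~\ref{prop:botempty1} and Lemma~\ref{lemma:OinfmoduloT} (using that $O_\bot$ is at most one $\cT$-class), then reduce $O_\bot^n$ to $O_\bot$ by applying to copies of $\bold{I}$ — the paper phrases this last step as an induction via $N\in O_\bot^{k+1}\Rightarrow N\bold{I}\in O_\bot^k$. The only tiny imprecision is that membership in $O_\bot^n=O_L$ means $\Int{P}\sqle_{\lm{M}}\Int{\lambda\vec{x}.\bot_\D}$ rather than equality, so the reduction should invoke monotonicity of application, which changes nothing of substance.
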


\begin{proof}
Since $O_{\bot}$ consists of zero or one $\cT$-class, it follows from Lemma~\ref{lemma:OinfmoduloT} and Proposition~\ref{prop:botempty1}
that $O_\bot = \emptyset$.
Now it follows, by easy induction on $n$, that $O_{\bot}^n = \emptyset$ for all $n$ since, if $N\in O_\bot^{k+1}$, then
$N\bold{I}\in O_\bot^k$.
\end{proof}

\begin{notation} Given a $\lambda$-model $\lm{M}$ and $\cT = \Th{\lm{M}}$ we set, 
for all $E\subseteq \nat$, $\Intunder{E} =\{N\in\Lambda^o \st \Kdn{\Int{N}}\subseteq E\}$,
where $\Kdn{\Int{N}} = \{ n \st d_n\sqle_\lm{M} \Int{N} \}$. 
%Thus $\Intunder{E}$ is a union of $\cT$-classes. 
Note that $\Intunder{E}$ is a union of $\cT$-classes, which depends on $\lm{M}$ (and not only on $\cT$).
Furthermore, for all $M\in\Lambda^o$, $\Intunder{\Kdn{\Int{M}}} = O_{M}$.
\end{notation}

\begin{theorem}\label{thm:decunderaset}
Let $\lm{M}$ be weakly effective, $\cT= \Th{\lm{M}}$ and $E\subseteq\nat$.
\begin{itemize}
\item[(i)]
    If $E$ is co-r.e.\ then $\Intunder{E}$ is $\cT$-co-r.e.,
\item[(ii)]
    If $E$ is decidable then either $\Intunder{E}=\emptyset$ or $\Intunder{\complement{E}}=\emptyset$.
\end{itemize}
\end{theorem}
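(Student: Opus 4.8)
The plan is to derive both parts from the effectivity of the interpretation map (Corollary~\ref{cor:intcomp}), the adequacy of $\ecomp^\D$ (Proposition~\ref{prop:adequate-numeration}(i)), and, for (ii), Visser's finite intersection property (Theorem~\ref{thm:hyperconnection}). The central computability fact I would establish first is that the relation ``$d_n \sqle_\lm{M} \Int{N}$'' is r.e.\ in the pair $(n,N)$ when $N$ ranges over $\Lambda^o$. Indeed, the bottom environment $\emptyrho$ is compact in $Env_\D$ (which carries an effective structure by the Convention), hence $\emptyrho \in \domre{Env_\D}$; so Corollary~\ref{cor:intcomp} supplies a recursive $\vphi:\nat\to\nat$ with $\ecomp^\D_{\vphi(N_\omega)} = \Int{N}_{\emptyrho} = \Int{N}$ for every $N\in\Lambda^o$. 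Then ``$d_n \sqle_\lm{M} \Int{N}$'' is equivalent to ``$d_n \sqle_\D \ecomp^\D_{\vphi(N_\omega)}$'', which is r.e.\ in $(n,N_\omega)$ by Proposition~\ref{prop:adequate-numeration}(i) together with Remark~\ref{rem:comp1}.

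For (i), since $\Intunder{E}$ is a union of $\cT$-classes it is enough to show it is co-r.e.\ as a subset of $\Lambda^o$, i.e.\ that $\{N\in\Lambda^o : \Kdn{\Int{N}} \not\subseteq E\}$ is r.e. Now $\Kdn{\Int{N}} \not\subseteq E$ says exactly that $\exists n\,[\,d_n \sqle_\lm{M} \Int{N} \wedge n \in \complement{E}\,]$; since $E$ is co-r.e., $\complement{E}$ is r.e., so by the fact of the previous paragraph the conjunction is r.e.\ in $(n,N_\omega)$, and its projection on $N_\omega$ is r.e. Intersecting with the decidable set of codes of closed $\lambda$-terms yields that $\{N\in\Lambda^o : \Kdn{\Int{N}} \not\subseteq E\}$ is r.e.; hence $\Intunder{E}$ is co-r.e., and being closed under $=_\cT$ it is $\cT$-co-r.e.

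For (ii), if $E$ is decidable then both $E$ and $\complement{E}$ are co-r.e., so by (i) both $\Intunder{E}$ and $\Intunder{\complement{E}}$ are $\cT$-co-r.e.; since $\lambda_\beta \subseteq \cT$ they are in particular $\beta$-co-r.e. They are moreover disjoint: if $N$ lay in both then $\Kdn{\Int{N}} \subseteq E\cap \complement{E} = \emptyset$, which is impossible because $\bot_\D \sqle_\lm{M} \Int{N}$ and $\bot_\D$ is compact, so the index of $\bot_\D$ always belongs to $\Kdn{\Int{N}}$. By Theorem~\ref{thm:hyperconnection} two non-empty $\beta$-co-r.e.\ subsets of $\Lambda$ must meet; hence one of $\Intunder{E}$, $\Intunder{\complement{E}}$ is empty. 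The only point requiring care is the r.e.-ness of ``$d_n \sqle_\lm{M} \Int{N}$'' in $(n,N)$, and in (ii) the trivial but essential remark that $\Kdn{\Int{N}}$ is never empty, which is precisely what makes Visser's theorem applicable; the remainder is routine bookkeeping with closure properties of r.e.\ sets, and I anticipate no genuine obstacle.
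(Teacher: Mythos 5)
Your proof is correct and follows essentially the same route as the paper: part (i) rests on the r.e.-ness of the relation $d_n\sqle_\D\ecomp^\D_m$ (adequacy) combined with the computability of the interpretation map, and part (ii) is disjointness of $\Intunder{E}$ and $\Intunder{\complement{E}}$ (via $\bot_\D$ compact) plus Visser's FIP. The only cosmetic difference is that the paper factors (i) through the set $E'=\{n \st (\exists m\notin E)\ d_m\sqle_\D \ecomp^\D_n\}$ and Corollary~\ref{cor:Vco-r.e.} on completely co-r.e.\ subsets of $\domre{\D}$, whereas you inline that corollary's argument directly.
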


\begin{proof} $(i)$ We first note that $E'=\{n \st (\exists m\notin E)\ d_m\sqle_\D \ecomp^\D_n \}$ is r.e.
Hence $\{\ecomp_n \st n\notin E'\}$ is completely co-r.e.
We conclude by Corollary~\ref{cor:Vco-r.e.}.\\
$(ii)$ follows from the FIP, since $\Intunder{E}\cap\Intunder{\complement{E}} = \emptyset$.
\end{proof}

\begin{theorem}\label{Thm:normalprop} 
Let $\lm{M}$ be weakly effective, $\cT= \Th{\lm{M}}$ and $M_1,\ldots,M_n\in\Lambda^o$. If $\Int{M_i}\in\domdec{\D}$ for all $1\le i \le n$, then
$O_{M_1}\cap \cdots \cap O_{M_n}$ is a $\cT$-co-r.e.\ set, which contains a non-empty $\beta$-co-r.e.\ set $\cV$ of unsolvable terms.
\end{theorem}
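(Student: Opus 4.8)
The plan is to reduce the statement to two ingredients already in place: the identity $O_M=\Intunder{\Kdn{\Int{M}}}$ together with the transfer result Theorem~\ref{thm:decunderaset}$(i)$, and Visser's finite intersection property for non-empty $\beta$-co-r.e.\ sets (Theorem~\ref{thm:hyperconnection}). First I would handle the $\cT$-co-r.e.\ claim. For each $i$, since $\Int{M_i}\in\domdec{\D}$, the set $\Kdn{\Int{M_i}}=\{n\st d_n\sqle_\lm{M}\Int{M_i}\}$ is decidable, hence in particular co-r.e. By the remark accompanying the notation $\Intunder{-}$ we have $O_{M_i}=\Intunder{\Kdn{\Int{M_i}}}$, so Theorem~\ref{thm:decunderaset}$(i)$ yields that each $O_{M_i}$ is $\cT$-co-r.e. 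Since the $\cT$-co-r.e.\ sets are closed under finite intersection (being both co-r.e.\ and closed under $=_\cT$), $O_{M_1}\cap\cdots\cap O_{M_n}$ is $\cT$-co-r.e., and a fortiori $\beta$-co-r.e.\ because $\lambda_\beta\subseteq\cT$.

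Next I would produce the witness set $\cV$. Each $O_{M_i}$ is non-empty, since $M_i\in O_{M_i}$ (cf.\ Lemma~\ref{lemma:Obot}$(i)$), and $\Unsolvable$ is a non-empty $\beta$-co-r.e.\ set. Applying Theorem~\ref{thm:hyperconnection} to the finite family $\{O_{M_1},\dots,O_{M_n},\Unsolvable\}$ of non-empty $\beta$-co-r.e.\ sets shows that their common intersection is non-empty. I would then set $\cV=O_{M_1}\cap\cdots\cap O_{M_n}\cap\Unsolvable$; this set is $\beta$-co-r.e.\ as a finite intersection of $\beta$-co-r.e.\ sets, it is non-empty, it consists only of unsolvable terms, and it is contained in $O_{M_1}\cap\cdots\cap O_{M_n}$, which is what is required.

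There is no genuine obstacle once the earlier machinery is granted; the two points worth emphasizing are that the hypothesis $\Int{M_i}\in\domdec{\D}$ (rather than merely $\Int{M_i}\in\domre{\D}$, which always holds by Theorem~\ref{thm:domreintclosed}) is exactly what makes $\Kdn{\Int{M_i}}$ co-r.e.\ and thus allows Theorem~\ref{thm:decunderaset}$(i)$ to apply, and that the non-emptiness of the intersection together with the existence of unsolvable witnesses inside it comes for free from Visser's FIP, with no need to exhibit an explicit closed term whose interpretation is a common lower bound of $\Int{M_1},\dots,\Int{M_n}$.
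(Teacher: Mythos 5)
Your argument is correct and matches the paper's own proof essentially step for step: both identify $O_{M_i}=\Intunder{\Kdn{\Int{M_i}}}$ with $\Kdn{\Int{M_i}}$ decidable, invoke Theorem~\ref{thm:decunderaset}$(i)$ to get that each $O_{M_i}$ is a non-empty $\cT$-co-r.e.\ set, and then take $\cV=O_{M_1}\cap\cdots\cap O_{M_n}\cap\Unsolvable$, which is $\beta$-co-r.e.\ and non-empty by Visser's FIP. Your explicit remarks on why decidability (rather than mere r.e.-ness) of the $\Int{M_i}$ is needed, and on why $\cT$-co-r.e.\ implies $\beta$-co-r.e., only make explicit what the paper leaves implicit.
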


\begin{proof} 
Since, for all $1\le i\le n$, the set $\Kdn{\Int{M_i}}$ is decidable and $O_{M_i}=\Intunder{\Kdn{\Int{M_i}}}$, then every $O_{M_i}$ is a non-empty $\cT$-co-r.e.\ set by 
Theorem~\ref{thm:decunderaset}$(i)$. 
Hence also $V = O_{M_1}\cap \cdots \cap O_{M_n}$ is a $\cT$-co-r.e.\ set containing $\cV = V\cap \Unsolvable$ which is $\beta$-co-r.e.\ since $\Unsolvable$ is 
$\beta$-co-r.e., and is non-empty by the FIP. 
\end{proof}

\begin{theorem}\label{Thm:normalprop2}
Let $\lm{M}$ be weakly effective and $\cT= \Th{\lm{M}}$. 
If there exists $M\in\Lambda^o$ such that $\Int{M}\in\domdec{\D}$ and $O_{M}\setminus [M]_\cT$ is finite modulo $\cT$, then $\cT$ is not r.e.
\end{theorem}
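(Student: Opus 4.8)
The plan is to argue by contradiction, combining Theorem~\ref{Thm:normalprop} (applied with $n=1$) with Lemma~\ref{lemma:OinfmoduloT}; the latter is where Visser's Theorem~\ref{thm:hyperconnection} enters the picture. So I would begin by assuming, toward a contradiction, that $\cT = \Th{\lm{M}}$ is r.e.

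The first step is to produce a non-empty $\beta$-co-r.e.\ set out of the hypothesis. Since $\Int{M}\in\domdec{\D}$, Theorem~\ref{Thm:normalprop} (with the single term $M$) tells us that $O_M$ is a $\cT$-co-r.e.\ set. In particular $O_M$ is co-r.e.; and since it is closed under $=_\cT$ and $\lambda_\beta\subseteq\cT$, it is closed under $\lambda_\beta$, hence $\beta$-co-r.e. Moreover $O_M\neq\emptyset$, because $M\in O_M$ by definition. The second step is to observe that $O_M$ has only finitely many $\cT$-classes: $O_M$ is a union of $\cT$-classes, and we may write $O_M = [M]_\cT\,\cup\,(O_M\setminus[M]_\cT)$, where $[M]_\cT$ is a single $\cT$-class and, by hypothesis, $O_M\setminus[M]_\cT$ is finite modulo $\cT$; hence $O_M$ itself is finite modulo $\cT$.

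Now I would invoke Lemma~\ref{lemma:OinfmoduloT} with $O = O_M$: since $O_M$ is a non-empty $\beta$-co-r.e.\ set and $\cT$ is r.e., either $O_M/\cT$ is infinite or $\cT$ is inconsistent. The first alternative is refuted by the previous step, and the second is impossible since $\lm{M}$ is a (non-trivial) $\lambda$-model and hence $\cT$ is consistent. This contradiction shows that $\cT$ cannot be r.e., as required. (Alternatively, instead of applying Lemma~\ref{lemma:OinfmoduloT} to $O_M$ directly, one could apply it to the non-empty $\beta$-co-r.e.\ set $\cV\subseteq O_M$ of unsolvable terms already furnished by Theorem~\ref{Thm:normalprop}, using $\cV/\cT\subseteq O_M/\cT$; the argument is the same.)

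I do not anticipate any serious obstacle here: the statement is essentially a corollary of the two cited results, and all the substantive work—the effectivity machinery behind Theorem~\ref{Thm:normalprop} and the topological input of Theorem~\ref{thm:hyperconnection} behind Lemma~\ref{lemma:OinfmoduloT}—has already been carried out. The only points that require a moment's attention are bookkeeping ones: checking that $\cT$-co-r.e.\ upgrades to $\beta$-co-r.e., checking that ``$O_M\setminus[M]_\cT$ finite modulo $\cT$'' upgrades to ``$O_M$ finite modulo $\cT$'' (because $[M]_\cT$ contributes just one class), and noting that the inconsistent case in Lemma~\ref{lemma:OinfmoduloT} is vacuous for the theory of a model.
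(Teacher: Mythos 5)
Your proposal is correct and follows essentially the same route as the paper: the paper's proof is the one-line observation that, by Lemma~\ref{lemma:OinfmoduloT} applied to the non-empty $\beta$-co-r.e.\ set $O_M$ (whose co-r.e.-ness comes from $\Int{M}\in\domdec{\D}$ via Theorem~\ref{Thm:normalprop}), an r.e.\ $\cT$ would force $O_M/\cT$ to be infinite, contradicting the hypothesis that $O_M=[M]_\cT\cup(O_M\setminus[M]_\cT)$ has only finitely many $\cT$-classes. Your explicit bookkeeping (upgrading $\cT$-co-r.e.\ to $\beta$-co-r.e., and discharging the inconsistency branch of the lemma by non-triviality of the model) only spells out what the paper leaves implicit.
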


\begin{proof} Since, by Lemma \ref{lemma:OinfmoduloT}, if $\cT$ is r.e.\ then $O_M/\cT$ is infinite.
\end{proof}

\subsection{Effective $\lambda$-models}\label{def:stronglyefflambdamod}

As proved in Proposition \ref{prop:domreint} weakly effective $\lambda$-models interpret $\lambda$-terms by r.e.\ elements.
The notion of effective $\lambda$-model introduced below has the further key advantage that normal terms are interpreted by
decidable elements and this leads to interesting consequences. 
As we will see in Sections \ref{sec:st&strstareneverre} and \ref{sec:effgraph}, all the models living in the continuous semantics, or in one of its refinements, and introduced \emph{individually}
in the literature are effective.
Furthermore, in the case of webbed models, easy sufficient conditions can be given at the level of the web
in order to guarantee the effectiveness of the $\lambda$-model.

%Moreover, when studying graph models $\gm{G}$ we found that, under natural and simple hypotheses on $\i{\cG}$ (see Section \ref{subsec:graphmodels}), 
%we could prove that all normal $\lambda$-terms are interpreted by decidable elements. 
%This has interesting consequences as we will see below, hence our aim here is to settle a framework general enough to
%allow us to transfer these results to all interesting classes of webbed $\lambda$-models.
%old:
%As proved in Proposition \ref{prop:domreint} weakly effective $\lambda$-models interpret $\lambda$-terms by r.e.\ elements.
%Moreover, when studying graph models $\gm{G}$ we found that, under natural and simple hypotheses on $\i{\cG}$ (see Section \ref{subsec:graphmodels}), 
%we could prove that all normal $\lambda$-terms are interpreted by decidable elements. 
%This has interesting consequences as we will see below, hence our aim here is to settle a framework general enough to
%allow us to transfer these results to all interesting classes of webbed $\lambda$-models.

\begin{definition}\label{def:strefflambdamod} A weakly effective $\lambda$-model $\lm{M} = (\D,\App,\Abs)$ is called 
\emph{effective} if it satisfies the following two conditions:
\begin{itemize}
\item[(i)]
    if $d\in\compel{\D}$ and $e_1,\ldots, e_k\in\domdec{\D}$, then $de_1\cdots e_k\in \domdec{\D}$,
\item[(ii)]
    if $f\in\domre{[\D\to\D]}$ and $f(d)\in\domdec{\D}$ for all $d\in\compel{\D}$, then $\Abs(f)\in\domdec{\D}$.
\end{itemize}
\end{definition}

\begin{remark} The key condition is the second one.
Indeed, many $\lambda$-models, and in particular all graph models and all extensional algebraic $\lambda$-models, 
automatically satisfy a property which is stronger than $(i)$, namely $(i')$ below.
%This follows from the following proposition.
%of Definition \ref{def:strefflambdamod} is always verified in every graph model and in every extensional $\lambda$-model.
\end{remark}

\begin{proposition} Let $\lm{M}=(\D,\App,\Abs)$ be either a graph model, or a $\lambda$-model such that $\D$ is a Scott domain
and $\lm{M}$ satisfies $\Int{\bold{I}}\sqle_\D \Int{\bold{\varepsilon}}$ (equivalently, $id\sqle_{[\D\to\D]} \Abs\comp\App$). Then we have:
\begin{itemize}
\item[(i')] 
    if $d\in\compel{\D}$ and $e_1,\ldots, e_k\in\D$, then $de_1\cdots e_k\in \compel{\D}$.
\end{itemize}
\end{proposition}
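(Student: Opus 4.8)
The plan is to reduce, in both cases, to the one‑step statement: if $d\in\compel{\D}$ and $e\in\D$ then $\App(d)(e)\in\compel{\D}$. Granting this, the general case is a trivial induction on $k$, since (association being to the left) $de_1\in\compel{\D}$, hence $(de_1)e_2\in\compel{\D}$, and so on up to $de_1\cdots e_k$. So I concentrate on the case $k=1$.

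For a graph model $\gm{G}=((\pow{\setG},\subseteq),\Abs^\cG,\App^\cG)$ this is immediate. The compact elements of $\pow{\setG}$ are the finite subsets of $\setG$, and by Definition~\ref{def:graph model}$(ii)$ we have $\App^\cG(X)(Y)=\{\alpha\in\setG \st (\exists a\finsubset Y)\ \i{\cG}(a,\alpha)\in X\}$. Since $\i{\cG}$ is injective, a finite set $X$ can contain only finitely many elements of the form $\i{\cG}(a,\alpha)$, so $\App^\cG(X)(Y)$ is a finite subset of $\setG$ for every $Y$, i.e.\ it is compact. I would also point out here that graph models genuinely require this separate argument, since they need not satisfy $\Int{\bold{I}}\sqle\Int{\bold{\varepsilon}}$ (e.g.\ the Engeler models, cf.\ Example~\ref{Example Engeler Plotkin}$(e)$).

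For the second case ($\D$ a Scott domain and $\Int{\bold{I}}\sqle\Int{\bold{\varepsilon}}$, which by the stated equivalence means $x\sqle_\D\Abs(\App(x))$ for all $x\in\D$), the core step — and the only place the extra hypothesis is used — is to show that $\App(d)\in\compel{[\D\to\D]}$ whenever $d\in\compel{\D}$. For this I would take a directed $F\subseteq[\D\to\D]$ with $\App(d)\sqle\bigsqcup F$; applying the continuous map $\Abs$ together with the hypothesis gives $d\sqle\Abs(\App(d))\sqle\Abs(\bigsqcup F)=\bigsqcup_{f\in F}\Abs(f)$, so by compactness of $d$ there is some $f\in F$ with $d\sqle\Abs(f)$, and then $\App(d)\sqle\App(\Abs(f))=f$ using $\App\comp\Abs=id$. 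Once $\App(d)$ is compact, I invoke the description of the compact elements of $[\D\to\D]$ recalled in Section~\ref{subsec:scottcontfun}: $\App(d)=\bigsqcup_{i\in I}\step{d_i}{e_i}$ with $I$ finite and each $d_i,e_i$ compact. Then for any $e\in\D$ one has $de=\App(d)(e)=\bigsqcup\{e_i \st d_i\sqle_\D e\}$, the least upper bound of a finite (hence bounded) family of compact elements; this supremum exists in the Scott domain $\D$ and is therefore itself compact. This finishes the one‑step statement, and hence the proposition.

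The only mildly non‑routine point is the compactness of $\App(d)$ in the function space in the second case; everything else is the standard combinatorics of compact elements of $\pow{\setG}$ and of function spaces over Scott domains, plus the well‑known fact that a finite bounded set of compact elements of a Scott domain has a compact least upper bound.
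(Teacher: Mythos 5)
Your proposal is correct and follows essentially the same route as the paper's proof: the graph-model case via finiteness of $\App^\cG(X)(Y)$ for finite $X$, and the Scott-domain case by first showing $\App(d)$ is compact using $id\sqle\Abs\comp\App$ together with $\App\comp\Abs=id$, then evaluating the resulting finite sup of step functions at an arbitrary $e$. The only additions are the explicit induction on $k$ and the (correct) remark that graph models need a separate argument, both of which the paper leaves implicit.
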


\begin{proof} If $\lm{M}$ is a graph model, then the compact elements of $\D$ are exactly the finite subsets of its web.
From this and the definition of application in the case of graph models, it follows that $de$ is compact for every compact $d$ and arbitrary $e$.
If $\lm{M}$ lives in Scott semantics, then $f(e)$ is compact for every compact continuous function $f$ and $e\in\D$.
Indeed, such an $f$ is a finite sup of step functions $\varepsilon_{h,k}$ where $h,k\in\compel{\D}$.
Thus, there only remains to prove that $d\in\compel{\D}$ entails that $\App(d)$ is also compact.
Let $\{f_i \st i\in I\}$ be a non-empty directed set of continuous functions on $\D$, and suppose $\App(d)\sqle\sup_{i\in I} f_i$
then $\Abs(\App(d))\sqle \Abs(\sup_{i\in I} f_i) = \sup_{i\in I}\Abs(f_i)$ by continuity of $\Abs$.
Since $id\sqle \Abs\comp\App$ we get $d\sqle \sup_{i\in I} f_i$ and, by the compactness of $d$, there is a $j\in I$ such that $d\sqle \Abs(f_j)$.
It follows that $\App(d)\sqle\App(\Abs(f_j)) = f_j$ and hence $\App(d)$ is compact.
\end{proof}

\begin{theorem}\label{thm:normal closed decidable} 
If $\lm{M}$ is effective, then for all normal $\lambda$-terms $M\in\Lambda^o$ we have $\Int{M}\in\domdec{\D}$.
\end{theorem}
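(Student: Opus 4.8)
The plan is to argue by structural induction on the normal form $M$, following closely the pattern of the proof of Proposition~\ref{prop:domreint} but keeping track of decidability rather than merely recursive enumerability. Recall that a closed normal $\lambda$-term is either an abstraction $\lambda x.N$ with $N$ normal, or a head normal form $\lambda \vec{x}.y N_1 \cdots N_k$ where each $N_i$ is normal; since $M$ is closed, the head variable $y$ must be one of the $x$'s. Thus, working more generally with normal terms having their free variables among a fixed finite set and environments assigning \emph{decidable} elements to those variables, it suffices to show that the interpretation of a normal term under such an environment is decidable. Since $\Int{M}\in\domre{\D}$ already holds by Theorem~\ref{thm:domreintclosed}, the real content is to show $\Kdn{\Int{M}}$ is also co-r.e.; equivalently, to produce the value of $\Int{M}$ as a \emph{decidable} element via the closure conditions (i) and (ii) of Definition~\ref{def:strefflambdamod}.

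The two inductive steps are then handled as follows. For an application head $y N_1\cdots N_k$ evaluated in an environment $\rho$ sending $y$ to a decidable (indeed, by the induction on the $\lambda\vec x$ wrapping, a compact) element: first I would note that $\rho(y)\in\compel{\D}$ because the $x_i$'s get bound to compact elements $d_i$ in the clause for $\lambda x.(-)$ (the interpretation of $\lambda x.N$ is $\Abs$ of a function, and when we later apply or analyse it we feed in compact arguments); then, by the induction hypothesis each $\Int{N_i}_\rho\in\domdec{\D}$, and condition (i) of effectiveness gives $\rho(y)\Int{N_1}_\rho\cdots\Int{N_k}_\rho \in \domdec{\D}$, which is exactly $\Int{yN_1\cdots N_k}_\rho$. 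For an abstraction $\lambda x.N$: the interpretation is $\Abs(g)$ where $g(d) = \Int{N}_{\rho[x:=d]}$ for $d\in\D$; by Proposition~\ref{prop:domreint} this $g$ is r.e., i.e.\ $g\in\domre{[\D\to\D]}$, and by the induction hypothesis $g(d)=\Int{N}_{\rho[x:=d]}\in\domdec{\D}$ for every \emph{compact} $d$ (since $\rho[x:=d]$ then still assigns only compact, hence decidable, values to the relevant variables). Condition (ii) of effectiveness then yields $\Abs(g)\in\domdec{\D}$, completing the step.

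The main obstacle I anticipate is bookkeeping the environments correctly: the inductive statement must be general enough to carry through the binders, so I would state it as ``for every normal $N\in\Lambda$ and every environment $\rho$ with $\rho(x)\in\compel{\D}$ for all free variables $x$ of $N$ (and $\rho(x)=\bot_\D$ elsewhere), $\Int{N}_\rho\in\domdec{\D}$''. With this formulation, the abstraction case needs $\rho[x:=d]$ to satisfy the hypothesis for all compact $d$, which it does, and the head-variable case needs the head variable to be assigned a compact element, which it is by the very form of the hypothesis. One should also double-check that requiring $\rho(x)=\bot_\D$ off the support is harmless, since $\bot_\D=d_n$ for the index with $\W_n$ the full set, hence $\bot_\D$ is decidable and in particular compact in the relevant sense is not needed — only that it is decidable, which it is, so actually it is cleaner to require merely $\rho(x)\in\domdec{\D}$ for the application step and $\rho(x)\in\compel{\D}$ only where the head-variable clause forces it; I would reconcile these by noting that in a closed normal term every head variable is $\lambda$-bound above it, so it is always fed a compact argument. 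Finally, the closed case of the theorem follows by taking $\rho=\emptyrho$, whose values are all $\bot_\D\in\compel{\D}$.
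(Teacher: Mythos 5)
Your proposal is correct and follows essentially the same route as the paper's proof: an induction on normal forms over compact environments, using condition (i) of effectiveness for the head-application case (where the head variable is assigned a compact element) and condition (ii) for the abstraction case (where $\rho[x:=d]$ remains compact for compact $d$, and the function $d\mapsto\Int{N}_{\rho[x:=d]}$ is r.e.\ by Proposition~\ref{prop:domreint}). The paper resolves your bookkeeping worry exactly as you suggest, by stating the inductive claim for all $\rho\in\compel{Env_\D}$.
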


\begin{proof} 
Since the interpretation of a closed $\lambda$-term is independent of the context, it is enough to show that $\Int{M}_\rho\in\domdec{\D}$
for all normal $M\in\Lambda$ and for all $\rho\in\compel{Env_\D}$. This proof is done by induction over the complexity of $M$. \\
If $M\equiv x$ then $\Int{M}_\rho = \rho(x)$ is a compact element, hence it is decidable.\\
Suppose $M\equiv y N_1\cdots N_k$ with $N_i$ normal for all $1\le i \le k$. 
By definition $\Int{M}_\rho$ is equal to $\Int{y}_{\rho}\cdot\Int{N_1}_{\rho}\cdots \Int{N_k}_{\rho}$.
Hence this case follows from Definition~\ref{def:strefflambdamod}$(i)$, the fact that $\rho(y)$ is compact, 
and the induction hypothesis.\\
If $M\equiv \lambda x.N$ then $\Int{M}_\rho = \Abs(d\mapsto \Int{N}_{\rho[x:= d]})$. Note that, since $\rho\in\compel{Env_\D}$, also
$\rho[x:= d]$ is compact for all $d\in\compel{\D}$. Hence the result follows from the induction hypothesis and 
Definition~\ref{def:strefflambdamod}$(ii)$.
\end{proof}

%Recall $\Unsolvable$ is the set of all unsolvable terms.

\begin{corollary}\label{cor:M eff imp Thle M non r.e.} If $\lm{M}$ is effective, then $\Thle{\lm{M}}$ is not r.e.
\end{corollary}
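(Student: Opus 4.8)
The plan is to argue by contradiction: assuming $\Thle{\lm{M}}$ is recursively enumerable, I will produce a set of closed $\lambda$-terms that is at once $\beta$-r.e.\ and $\beta$-co-r.e.\ yet non-trivial, contradicting Scott's theorem (equivalently, Theorem~\ref{thm:hyperconnection}). The one point where effectiveness — rather than mere weak effectiveness — enters is the appeal to Theorem~\ref{thm:normal closed decidable}.

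So suppose $\Thle{\lm{M}}$ is r.e.; then $\cT=\Th{\lm{M}}$ is r.e.\ as well, being the intersection of $\Thle{\lm{M}}$ with its converse. First I would pick two closed normal forms that are not equivalent as head normal forms, say $\bold{T}$ and $\bold{F}$. Since $\lm{M}$ is effective, Theorem~\ref{thm:normal closed decidable} gives $\Int{\bold{T}},\Int{\bold{F}}\in\domdec{\D}$, so Theorem~\ref{Thm:normalprop} applies with $n=2$: the set $V=O_{\bold{T}}\cap O_{\bold{F}}$ is $\cT$-co-r.e.\ and contains a non-empty $\beta$-co-r.e.\ set of unsolvable terms, so in particular $V\neq\emptyset$; and by Lemma~\ref{lemma:Obot}(ii) we have $V\subseteq\Unsolvable$, so $V$ is a \emph{proper} subset of $\Lambda^o$, since it omits every solvable closed term (e.g.\ $\bold{I}$).

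Next I would use the hypothesis to see that $V$ is r.e.: for a closed term $M$ one has $N\in O_M$ iff $N\in\Lambda^o$ and $(N,M)\in\Thle{\lm{M}}$; as $\Lambda^o$ is decidable and $\Thle{\lm{M}}$ is r.e., each of $O_{\bold{T}},O_{\bold{F}}$ is r.e., and hence so is $V$. Thus $V$ is a set of closed $\lambda$-terms which is r.e., co-r.e.\ and closed under $=_{\lambda_\beta}$, so by Scott's theorem $V$ is trivial ($V=\emptyset$ or $V=\Lambda^o$) — contradicting the two facts just established. Hence $\Thle{\lm{M}}$ is not r.e. (One could just as well drop the reference to Scott's theorem and argue directly from the finite intersection property of Theorem~\ref{thm:hyperconnection}, applied to the non-empty $\beta$-co-r.e.\ sets $V$ and $\Lambda^o\setminus V$, which are disjoint.)

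The step I expect to need the most care is the bookkeeping behind ``$O_M$ is r.e.\ whenever $\Thle{\lm{M}}$ is'': one must keep clearly in mind the passage between the binary relation $\Thle{\lm{M}}$, its sections, and the restriction to the decidable set $\Lambda^o$, and remember that all the $O_M$, and hence $V$, are being considered inside $\Lambda^o$ (so that ``trivial'' means $\emptyset$ or $\Lambda^o$). This is also precisely where effectiveness is genuinely needed: Theorem~\ref{thm:normal closed decidable} is what makes $\Int{\bold{T}}$ and $\Int{\bold{F}}$ \emph{decidable} elements of $\D$, and hence $O_{\bold{T}},O_{\bold{F}}$ co-r.e.
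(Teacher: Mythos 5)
Your argument is correct and is essentially the paper's own proof: both assume $\Thle{\lm{M}}$ is r.e., use Theorem~\ref{thm:normal closed decidable} and Theorem~\ref{Thm:normalprop} to see that the relevant $O_M$'s are non-empty and $\beta$-co-r.e., observe they would also be r.e., and conclude by Scott's theorem (equivalently the FIP). The only cosmetic difference is that the paper works with a single $O_M$ for arbitrary normal $M$ and invokes non-triviality of the model at the end, whereas you take $V=O_{\bold{T}}\cap O_{\bold{F}}$ and get properness directly from Lemma~\ref{lemma:Obot}(ii).
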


\begin{proof} Let $M\in\Lambda^o$ be normal.
If $\Thle{\lm{M}}$ were r.e., then we could enumerate the set $O_M$.
However, by Theorem~\ref{thm:normal closed decidable} and Theorem~\ref{Thm:normalprop}, this set is co-r.e.\ and it is non-empty because clearly $M\in O_M$.
Hence $O_M$ would be a non-empty decidable set of $\lambda$-terms closed under $\beta$-conversion, i.e., $O_M = \Lambda^o$. 
Since the model is non-trivial and $M$ is arbitrary this lead us to a contradiction.
\end{proof}

\begin{corollary}\label{cor:non-trivialorder} 
If $\lm{M}$ is effective and $\Th{\lm{M}}$ is r.e.\ then $\sqle_\lm{M}$ induces a non-trivial  
partial order on the interpretations of closed $\lambda$-terms.
\end{corollary}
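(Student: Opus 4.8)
The plan is to produce two closed $\lambda$-terms $N$ and $M$ with $\Int{N}^{\lm{M}}\sqsubsetneq_{\lm{M}}\Int{M}^{\lm{M}}$; since both are closed, this exhibits a strict inequality between two elements of $\{\Int{P}^{\lm{M}}\st P\in\Lambda^o\}$, which is exactly what ``non-trivial partial order on the interpretations of closed $\lambda$-terms'' means. First I would fix $M\equiv\bold{I}$ (any fixed closed normal term would do). Because $\lm{M}$ is effective, Theorem~\ref{thm:normal closed decidable} gives $\Int{\bold{I}}\in\domdec{\D}$.

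Next I would feed this into the machinery already set up for Theorem~\ref{Thm:normalprop2}. Since $\cT=\Th{\lm{M}}$ is assumed r.e.\ and $\lm{M}$ is a (non-trivial) $\lambda$-model, so $\cT$ is consistent, Theorem~\ref{Thm:normalprop} applied with $n=1$ and $M_1\equiv\bold{I}$ tells us that $O_{\bold{I}}$ is $\cT$-co-r.e.\ and contains a non-empty $\beta$-co-r.e.\ set $\cV$ of unsolvable terms; then Lemma~\ref{lemma:OinfmoduloT} applied to $\cV$ forces $\cV/\cT$, and hence $O_{\bold{I}}/\cT$, to be infinite. (Equivalently: the contrapositive of Theorem~\ref{Thm:normalprop2} says that, when $\cT$ is r.e., $O_{\bold{I}}\setminus[\bold{I}]_\cT$ is infinite modulo $\cT$.) In particular $O_{\bold{I}}\setminus[\bold{I}]_\cT\neq\emptyset$, so I can choose $N\in\Lambda^o$ with $\Int{N}\sqle_{\lm{M}}\Int{\bold{I}}$ and $(N,\bold{I})\notin\Th{\lm{M}}$.

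Finally I would note that $N$ and $\bold{I}$ are closed, so $\Int{N}_\rho$ and $\Int{\bold{I}}_\rho$ are independent of $\rho$; hence $(N,\bold{I})\notin\Th{\lm{M}}$ means precisely $\Int{N}\neq\Int{\bold{I}}$. Combined with $\Int{N}\sqle_{\lm{M}}\Int{\bold{I}}$ this yields $\Int{N}\sqsubsetneq_{\lm{M}}\Int{\bold{I}}$, completing the argument. I do not expect any real obstacle here: the substantive work is already packaged in Theorem~\ref{thm:normal closed decidable} (decidability of interpretations of normal closed terms) and in Theorem~\ref{Thm:normalprop}/Lemma~\ref{lemma:OinfmoduloT}. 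The only point worth stressing in the write-up is where the hypothesis is used: ``$\Thle{\lm{M}}$ r.e.'' is already excluded by Corollary~\ref{cor:M eff imp Thle M non r.e.}, and it is exactly the stronger-looking assumption ``$\Th{\lm{M}}$ r.e.'' that lets us pass from ``$O_{\bold{I}}/\cT$ infinite'' to the existence of a $\cT$-class lying strictly below $[\bold{I}]_\cT$.
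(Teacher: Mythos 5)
Your argument is correct. The paper prints no proof of this corollary: it is evidently intended as an immediate consequence of Corollary~\ref{cor:M eff imp Thle M non r.e.} --- if the order $\sqle_\lm{M}$ were discrete on the interpretations of closed terms, then the restriction of $\Thle{\lm{M}}$ to closed terms would coincide with that of $\Th{\lm{M}}$ and hence be r.e., which is exactly what the proof of that corollary excludes (the argument there only ever uses the sets $O_M$ for closed $M$, so it really shows that the closed fragment of the order theory is not r.e.). You instead rebuild a witness directly: Theorem~\ref{thm:normal closed decidable} makes $\Int{\bold{I}}$ decidable, Theorem~\ref{Thm:normalprop} makes $O_{\bold{I}}$ a $\cT$-co-r.e.\ set containing a non-empty $\beta$-co-r.e.\ set of unsolvables, and Lemma~\ref{lemma:OinfmoduloT} (equivalently, the contrapositive of Theorem~\ref{Thm:normalprop2}) forces $O_{\bold{I}}/\cT$ to be infinite, whence some closed $N$ has $\Int{N}\sqle_\lm{M}\Int{\bold{I}}$ with $\Int{N}\neq\Int{\bold{I}}$. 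Both routes rest on the same underlying machinery (decidability of the interpretations of normal terms plus the FIP), but yours is self-contained at the level of this corollary and actually yields more: infinitely many $\cT$-classes strictly below $[\bold{I}]_\cT$, with unsolvable representatives. The one hypothesis you invoke beyond the statement is non-triviality of $\lm{M}$ (needed for consistency of $\cT$ when applying Lemma~\ref{lemma:OinfmoduloT}); this is the same implicit standing assumption the paper uses in the proof of Corollary~\ref{cor:M eff imp Thle M non r.e.}, and it is unavoidable since the trivial model falsifies the statement. No gap.
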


\begin{corollary}\label{cor:nolambdabeta}
If $\lm{M}$ is effective then $\Th{\lm{M}}\neq \lambda_\beta,\lambda_{\beta\eta}$.
\end{corollary}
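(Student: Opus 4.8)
The plan is to derive this from Corollary~\ref{cor:M eff imp Thle M non r.e.} (or, more directly, from Corollary~\ref{cor:non-trivialorder}) together with Selinger's discreteness theorem, the same ingredient already used for Proposition~\ref{prop:no graph theory eq lambdabeta}. First I would recall the classical fact that $\lambda_\beta$ and $\lambda_{\beta\eta}$ are r.e.\ $\lambda$-theories: $\beta$- and $\beta\eta$-reduction are computable and Church--Rosser, so the sets of G\"odel numbers of convertible pairs are r.e. Then, arguing by contradiction, I would assume that $\Th{\lm{M}}$ equals $\lambda_\beta$ or $\lambda_{\beta\eta}$; in particular $\Th{\lm{M}}$ is r.e.

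Since $\lm{M}$ is effective (hence weakly effective) and $\Th{\lm{M}}$ is r.e., Corollary~\ref{cor:non-trivialorder} applies and yields that $\sqle_\lm{M}$ is non-trivial on $\{\Int{N}\st N\in\Lambda^o\}$, i.e.\ there are closed terms $M_0,N_0$ with $\Int{M_0}\sqle_\lm{M}\Int{N_0}$ and $\Int{M_0}\neq\Int{N_0}$. On the other hand, $\lm{M}$ is a partially ordered $\lambda$-model, its carrier being a cpo, so Selinger's theorem \cite[Cor.~4]{Selinger96} (see also \cite{Selinger03}) is applicable: in any partially ordered model whose theory is $\lambda_\beta$ or $\lambda_{\beta\eta}$, the interpretations of closed $\lambda$-terms are discretely ordered. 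This contradicts the existence of $M_0,N_0$, completing the argument.

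Alternatively — should one prefer not to route through Corollary~\ref{cor:non-trivialorder} — I would argue directly that $\Th{\lm{M}}=\lambda_\beta$ (resp.\ $\lambda_{\beta\eta}$) would make $\Thle{\lm{M}}$ r.e., contradicting Corollary~\ref{cor:M eff imp Thle M non r.e.}. The key reduction is that an inequation between arbitrary terms reduces to one between closed terms: if $\vec x$ lists the free variables of $M$ and $N$, then $\Int{M}_\rho\sqle_\lm{M}\Int{N}_\rho$ for all $\rho$ if and only if $\Int{\lambda\vec x.M}\sqle_\lm{M}\Int{\lambda\vec x.N}$, by monotonicity of $\Abs$ and $\App$ together with $\App\comp\Abs=id$. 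For closed terms, Selinger's discreteness collapses $\Thle{\lm{M}}$ onto $\Th{\lm{M}}$, which is r.e.\ by assumption, so $\Thle{\lm{M}}$ would be r.e.

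There is essentially no hard step here: all the substance sits in Theorem~\ref{thm:normal closed decidable} (normal closed terms are interpreted by decidable elements) and Corollary~\ref{cor:M eff imp Thle M non r.e.}, which are already in hand. The only points deserving a line of care are the use of r.e.-ness of $\lambda_\beta$ and $\lambda_{\beta\eta}$, and the observation that an effective model is genuinely a partially ordered $\lambda$-model, so that Selinger's theorem indeed applies.
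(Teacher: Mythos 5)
Your main argument (invoke Corollary~\ref{cor:non-trivialorder} for r.e.\ theories together with Selinger's discreteness result, after noting that $\lambda_\beta$ and $\lambda_{\beta\eta}$ are r.e.) is precisely the paper's proof, which is stated in one line citing Selinger. The proposal is correct and takes essentially the same route; the alternative via the non-r.e.-ness of $\Thle{\lm{M}}$ is a sound variant of the same ingredients.
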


\begin{proof} By Selinger's result stating that in any partially ordered model whose theory is 
$\lambda_\beta$ or $\lambda_{\beta\eta}$ the interpretations of closed $\lambda$-terms are discretely ordered \cite[Cor.~4]{Selinger96}.
\end{proof}

Recall that in the case of a graph models we know a much stronger result, since we already know from Proposition~\ref{prop:no graph theory eq lambdabeta} that 
for \emph{all} graph models $\gm{G}$ we have $\Th{\gm{G}}\neq\lambda_{\beta}$, $\lambda_{\beta\eta}$. 

\section{Effective stable and strongly stable $\lambda$-models}\label{sec:st&strstareneverre}

There are also many effective models in the stable and strongly stable semantics. 
Indeed, the stable semantics contains a class which is analogous to the class of graph models (see Survey \cite{Berline00}),
namely Girard's class of \emph{reflexive coherent spaces}, called \emph{$G$-models} in \cite{Berline00}.

The material developed in Sections \ref{subsec:compleffpairs} and \ref{subsec:r.e.graphtheories} below for graph models could be adapted 
for $G$-models, even if it is more delicate to complete partial pairs in this case (the free completion process has been described in Kerth
\cite{KerthTh,Kerth01}). This material could also be developed for $H$-models 
(i.e. reflexive hypercoherences; they belong to the strongly stable semantics); 
the free completion process has been worked out in print only for particular $H$-models \cite{GouyTh,BastoneroTh}, but works in greater generality\footnote{
R. Kerth and O. Bastonero, \emph{private communication}, 1997.
}, even though working in the strongly stable semantics certainly adds technical difficulties.

\begin{lemma}\label{lemma:O2containsFcapT} 
If $\lm{M}$ belongs to the stable or strongly stable semantics, then:
$$
    O_{\bold{F}}\cap O_{\bold{T}}\subseteq\{N\st \bold{1} N\in O_{\bot}^{2}\}.
$$
\end{lemma}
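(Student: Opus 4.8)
The plan is a direct computation resting on the two elementary facts about Berry's order recorded in Remark~\ref{rem:Berrysproperties}, together with the retraction identity $\App\comp\Abs=id$ and the monotonicity of $\App$ and $\Abs$ (which holds because they are morphisms of the relevant category, stable or strongly stable). Throughout write $[\D\to\D]$ for the relevant function space, ordered by $\leq_s$, and for $v\in\D$ let $\kappa_v\in[\D\to\D]$ be the constant function $x\mapsto v$; note that $\kappa_{\bot_\D}$ is the bottom of $[\D\to\D]$, and that on constant functions $\leq_s$ coincides with the pointwise order, so $\kappa_u\leq_s\kappa_v$ iff $u\sqle_{\D}v$. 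Unfolding the interpretation and using $\App\comp\Abs=id$, one computes that $\App(\Int{\bold{T}})$ is the function $d\mapsto\Abs(\kappa_d)$, while $\App(\Int{\bold{F}})=\kappa_{\Abs(id_{\D})}$ is a \emph{constant} function (because in $\bold{F}\equiv\lambda xy.y$ the outer bound variable does not occur in the body). Put $b:=\Abs(\bot_{[\D\to\D]})$, so that $b=\App(\Int{\bold{T}})(\bot_\D)$.

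Now take $N\in O_{\bold{F}}\cap O_{\bold{T}}$, i.e.\ $\Int N\sqle_{\lm{M}}\Int{\bold{F}}$ and $\Int N\sqle_{\lm{M}}\Int{\bold{T}}$. Applying the monotone map $\App$ to the first inequality yields $\App(\Int N)\leq_s\App(\Int{\bold{F}})$; as $\App(\Int{\bold{F}})$ is constant, Remark~\ref{rem:Berrysproperties}(ii) forces $\App(\Int N)$ to be constant too, say $\App(\Int N)=\kappa_u$ with $u:=\App(\Int N)(\bot_\D)$. Applying $\App$ to the second inequality gives $\kappa_u\leq_s\App(\Int{\bold{T}})$, so by Remark~\ref{rem:Berrysproperties}(i) (pointwise domination) and evaluation at $\bot_\D$ we get $u\sqle_{\D}\App(\Int{\bold{T}})(\bot_\D)=b$.

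Finally I would identify the relevant interpretations. Since $\bold{1}N=_\beta\lambda y.Ny$ and $N$ is closed, $\Int{\bold{1}N}=\Abs\big(d\mapsto\App(\Int N)(d)\big)=\Abs(\kappa_u)$; and straight from the definition $\Int{\lambda x_1x_2.\bot_\D}=\Abs\big(d\mapsto\Abs(\kappa_{\bot_\D})\big)=\Abs(\kappa_b)$. From $u\sqle_{\D}b$ we obtain $\kappa_u\leq_s\kappa_b$, hence $\Int{\bold{1}N}=\Abs(\kappa_u)\sqle_{\lm{M}}\Abs(\kappa_b)=\Int{\lambda x_1x_2.\bot_\D}$ by monotonicity of $\Abs$; that is, $\bold{1}N\in O_{\lambda x_1x_2.\bot_\D}=O_{\bot}^{2}$, which is exactly the assertion. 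There is no serious obstacle; the only points to watch are that the constant functions $\kappa_u$ and $\kappa_b$ really belong to $[\D\to\D]$ (they do, since $\kappa_u=\App(\Int N)$ and $\kappa_b$ appears inside the well-defined interpretation of $\lambda x_1x_2.\bot_\D$, and in any case constant maps are both stable and strongly stable), and that the argument stays uniform in the two semantics — which it does, since it uses only $\App\comp\Abs=id$, monotonicity of $\App$ and $\Abs$ for $\leq_s$, invariance of the interpretation under $\beta$-conversion, and Remark~\ref{rem:Berrysproperties}.
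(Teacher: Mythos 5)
Your proof is correct and follows essentially the same route as the paper's: apply the monotone map $\App$ to both inequalities, use Remark~\ref{rem:Berrysproperties}(ii) against the constant function $\App(\Int{\bold{F}})$ to force $\App(\Int N)$ constant, use pointwise domination by $\App(\Int{\bold{T}})$ evaluated at $\bot_\D$, and conclude via $\Abs(\App(u))=\Int{\bold{1}u}$. If anything, your final step is slightly more careful than the paper's, which asserts the equality $\Abs(h)=\Int{\lambda x.\lambda y.\bot_\D}$ where only the inequality $\sqle$ is forced (and is all that membership in $O_\bot^2$ requires).
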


\begin{proof}
Suppose $N\in O_{\bold{F}}\cap O_{\bold{T}}$. 
Let $f,g,h\in[\D\to_s\D]$ (resp. $[\D\to_{ss}\D]$) be 
$f=\App(\Int{\bold{T}})$, $g=\App(\Int{\bold{F}})$ and $h=\App(\Int{N})$. 
By monotonicity of $\App$ we have $h\leq_{s}f,g$. 
Now, $g$ is the constant function taking value $\Int{\bold{I}}$, and 
$f(\bot_{\D})= \Int{\lambda y.\bot_{\D}}$. 
The first assertion forces $h$ to be a constant function (Remark~\ref{rem:Berrysproperties}) and the fact
that $h$ is pointwise smaller than $f$ forces $\Abs(h)=\Int{\lambda x.\lambda y.\bot_{\D}}$.
Therefore $\Abs(h)\in O_{\bot}^{2}$. 
It is now enough to notice that, in all $\lambda$-models $\lm{M}=(\D,\App,\Abs)$ we have $\Abs(\App(u))=\Int{\bold{1} u}$ 
for all $u\in\D$ and, in particular, $\Abs(h)=\Int{\bold{1} N}$. Hence $\bold{1} N\in O^2_\bot$.
\end{proof}

\begin{theorem}\label{Thm:8.2} If $\lm{M}$ is effective and belongs to the stable or to the strongly stable semantics then 
$\Th{\lm{M}}$ is not r.e.
\end{theorem}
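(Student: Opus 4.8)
The plan is to argue by contradiction: assume $\cT=\Th{\lm{M}}$ is r.e., and then play the combinatorial constraint of Lemma~\ref{lemma:O2containsFcapT} (which uses that $\lm{M}$ lives in the stable or strongly stable semantics) against the emptiness statement of Proposition~\ref{prop:botempty2} (which uses weak effectivity together with r.e.-ness of $\cT$).

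First I would note that $\bold{T}$ and $\bold{F}$ are closed normal $\lambda$-terms, so effectivity of $\lm{M}$ and Theorem~\ref{thm:normal closed decidable} give $\Int{\bold{T}},\Int{\bold{F}}\in\domdec{\D}$. Feeding these two terms into Theorem~\ref{Thm:normalprop} then shows that $O_{\bold{T}}\cap O_{\bold{F}}$ is a $\cT$-co-r.e.\ set containing a non-empty $\beta$-co-r.e.\ set $\cV$ of unsolvable terms; in particular $O_{\bold{T}}\cap O_{\bold{F}}\neq\emptyset$. It is worth stressing that this step uses neither the r.e.\ hypothesis on $\cT$ nor the stable/strongly stable nature of $\lm{M}$: it is just the effective-model machinery together with the FIP of $\beta$-co-r.e.\ sets (Visser's Theorem~\ref{thm:hyperconnection}).

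Second, I would pick any $N\in O_{\bold{T}}\cap O_{\bold{F}}$ (necessarily $N\in\Lambda^o$; one may even take $N\in\cV$, so that $N$ is unsolvable). Now the stable/strongly stable hypothesis enters through Lemma~\ref{lemma:O2containsFcapT}, which yields $\bold{1}N\in O_\bot^2\subseteq O_\bot^\omega$. But $\lm{M}$ is in particular weakly effective and $\cT$ is assumed r.e., so Proposition~\ref{prop:botempty2} gives $O_\bot^\omega=\emptyset$, a contradiction. Hence $\cT$ cannot be r.e.

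The argument is a short assembly of results already established, so there is no genuine obstacle; the only care needed is bookkeeping of hypotheses — checking that $\bold{T},\bold{F}$ are closed and normal so that Theorem~\ref{thm:normal closed decidable} applies (hence their interpretations are decidable, as required by Theorem~\ref{Thm:normalprop}), that the term $N$ extracted from $O_{\bold{T}}\cap O_{\bold{F}}$ is closed so that $\bold{1}N$ is a legitimate closed term lying in $O_\bot^2$ under Lemma~\ref{lemma:O2containsFcapT}, and that $O_\bot^2\subseteq O_\bot^\omega$ so that Proposition~\ref{prop:botempty2} closes the case. (The underlying non-triviality of $\lm{M}$, implicit throughout this section, is of course also in force.)
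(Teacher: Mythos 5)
Your proof is correct and follows exactly the route the paper intends: the paper's own proof is the one-line "By Lemma~\ref{lemma:O2containsFcapT} and Proposition~\ref{prop:botempty2}", and what you have written is precisely the expansion of that line, supplying the non-emptiness of $O_{\bold{T}}\cap O_{\bold{F}}$ via Theorem~\ref{thm:normal closed decidable} and Theorem~\ref{Thm:normalprop} before playing Lemma~\ref{lemma:O2containsFcapT} against Proposition~\ref{prop:botempty2}. Your bookkeeping of which hypotheses (effectivity, r.e.-ness, stability) enter at which step is accurate.
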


\begin{proof} By Lemma \ref{lemma:O2containsFcapT} and Proposition~\ref{prop:botempty2}.
\end{proof}

It is easy to check that Lemma \ref{lemma:O2containsFcapT} is false for the continuous semantics. 
We can even give a counter-example in the class of graph models. 
Indeed we know from \cite{BerlineS06} that there exists a graph model $\gm{G}$ (built by forcing) where $\Omega$ acts like intersection (and $\Int{\Omega}^{\gm{G}} = \Int{\bold{1}\Omega}^{\gm{G}}$). 
Equivalently, in $\gm{G}$ we have $\Int{\Omega}^\gm{G} =\Int{\bold{T}}^\gm{G} \cap\Int{\bold{F}}^\gm{G}$ and, hence, 
$\Omega\in (O_{\bold{T}}\cap O_{\bold{F}})\setminus O^2_\bot$. 
%Of course $\gm{G}$, which is built by forcing, is not a weakly effective model! \marginpar{FIXME}

\section{Effective graph models}\label{sec:effgraph}

A side effect of this section is to show that effective models are omni-present in the continuous semantics. 
In Section~\ref{subsec:compleffpartpairs} we will introduce a notion of weakly effective (resp. effective) partial pairs,
and in Section~\ref{subsec:compleffpairs} we will prove that they generate weakly effective (resp. effective) $\lambda$-models. 
An analogue of the work done in these two sections could clearly be developed for each of the other classes of webbed models, e.g., 
using the terminology of \cite{Berline00}: $K$-models, pcs-models, filter models (for the continuous semantics),
$G$-models and $H$-models (respectively, for the stable and strongly stable semantics).  
Note that all the $\lambda$-models which have been introduced individually in the literature, to begin with $\gm{P}_\omega$, 
$\gm{E}$ (graph models) and Scott's $\gm{D}^{\infty}$ ($K$-model) are (or could be) presented as generated by webs which happen to be
effective in our sense.

%can easily be proved to be effective $\lambda$-models in our sense.

\subsection{Weakly effective and effective pairs}\label{subsec:compleffpartpairs}

\begin{definition}\label{def:effpartialpair} 
A partial pair $\cA$ is \emph{weakly effective} if it is isomorphic to some pair $(E,\ell)$ where $E$ is a decidable subset of $\nat$ 
and $\ell$ is partial recursive with decidable domain. 
It is \emph{effective} if, moreover, $\rg(\ell)$ is decidable.
\end{definition}

\begin{lemma}\label{lemma:efftotpair(N,l)} A total pair $\cG$ is weakly effective if, and only if, it is isomorphic to a total pair $(\nat,\ell)$ 
where $\ell$ is total recursive. It is effective if, moreover, we can choose $\ell$ with a decidable range.
\end{lemma}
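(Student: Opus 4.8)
The plan is to prove both biconditionals by transporting structure along the principal recursive enumeration of a decidable infinite subset of $\nat$.

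The two ``if'' directions are immediate from Definition~\ref{def:effpartialpair} together with the fact that being weakly effective (resp.\ effective) is, by definition, invariant under isomorphism of partial pairs. Indeed, if $\cG$ is isomorphic to a total pair $(\nat,\ell)$ with $\ell$ total recursive, then $(\nat,\ell)$ is itself a weakly effective partial pair: take $E=\nat$, which is decidable, and note that a total recursive $\ell$ is in particular partial recursive with decidable domain $\nat^{*}\times\nat$. If moreover $\rg(\ell)$ is decidable, then $(\nat,\ell)$ is effective, hence so is $\cG$.

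For the main direction, suppose $\cG$ is weakly effective, say $\cG\cong(E,\ell)$ with $E\subseteq\nat$ decidable and $\ell$ partial recursive with decidable domain. Since $\cG$ is a total pair, $\setG$ is infinite and $j_{\cG}$ is total; isomorphisms of partial pairs preserve cardinality and, because the inverse of an isomorphism is again a morphism, make the domains of the two $j$'s correspond bijectively, so $E$ is infinite and $\ell$ is total on $E^{*}\times E$. Now let $p\colon\nat\to E$ be the monotone enumeration of $E$; since $E$ is decidable, both $p$ and $\partinv p$ are recursive, and the induced map $\img p$ on codes of finite sets is recursive because the chosen encoding $\finsetenc$ is effective. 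Define $\ell'\colon\nat^{*}\times\nat\to\nat$ by $\ell'(a,\alpha)=\partinv p(\ell(\img p(a),p(\alpha)))$. Then $\ell'$ is total recursive, being a composition of recursive maps and using that $\ell$ is total recursive on $E^{*}\times E$; and $p$ is an isomorphism of $(\nat,\ell')$ onto $(E,\ell)$, since $p(\ell'(a,\alpha))=\ell(\img p(a),p(\alpha))$ --- here one uses $\rg(\ell)\subseteq E=\rg(p)$. Composing with $\cG\cong(E,\ell)$ gives $\cG\cong(\nat,\ell')$ as required. For the effective case, if in addition $\rg(\ell)$ is decidable then $\rg(\ell')=\partinv p(\rg(\ell))$ is decidable, since $p$ is a recursive bijection onto the decidable set $E$.

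I do not expect a genuine obstacle here; the argument is essentially a change of coordinates. The only points needing care are bookkeeping ones: checking that $\img p$ acts recursively on the fixed finite-set encoding $\finsetenc$ (and that ``total recursive $\ell$'' is read consistently through the pairing and finite-set encodings fixed in Section~\ref{subsec:recth}), and --- the slightly subtler point --- observing that totality of $j_{\cG}$ and infinity of $\setG$ transfer across the isomorphism, which is exactly what licenses the use of the principal enumeration $p$ of $E$ and the totality of $\ell$ on $E^{*}\times E$.
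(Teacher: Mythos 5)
Your argument is correct and is exactly the change-of-coordinates the paper has in mind when it dismisses this lemma as ``Straightforward'': transporting $\ell$ along the principal recursive enumeration of the decidable infinite set $E$, after noting that totality of $j_{\cG}$ and infinitude of $\setG$ transfer across the isomorphism. The bookkeeping points you flag (recursiveness of $\img{p}$ on the finite-set encoding, $\rg(\ell)\subseteq E$, decidability of $\partinv{p}(\rg(\ell))$) are all handled correctly.
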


\begin{proof} Straightforward.
\end{proof}

Througout this section we suppose, without loss of generality, that all effective and weakly effective pairs have as underlying set a 
subset of $\nat$.

\begin{example} 
$\gm{P}_\omega$, in its original definition (see, e.g., \cite{Bare}) since $\ell$ is defined by $\ell(a,n) = \codepairmix{a}{n}$ 
(with the notation of Section \ref{subsec:recth}); here $\ell$ is also surjective.
\end{example}

\begin{proposition}\label{prop:effpairseffmodels} 
If $\cG$ is a weakly effective (resp. effective) total pair then $\gm{G}$ is a weakly effective (resp. effective) $\lambda$-model.
\end{proposition}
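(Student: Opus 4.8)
By Lemma~\ref{lemma:efftotpair(N,l)} we may assume that $\cG=(\nat,\i{\cG})$ with $\i{\cG}:\nat^*\times\nat\to\nat$ total recursive and injective, and with decidable range when $\cG$ is moreover effective. Then $\gm{G}$ has underlying domain $\D=(\pow{\nat},\subseteq)$ with the standard numeration of finite subsets, i.e.\ the effective domain of Example~\ref{ex:key-example-of-effective-domains}; and since every graph model is a reflexive object of the category of Scott domains, condition $(i)$ of Definition~\ref{def:efflambdamod} is automatic. So for the ``weakly effective'' half it remains to show that $\App^\cG$ and $\Abs^\cG$ are r.e., and I would start there.

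The plan is to compute the traces of $\App^\cG$ and $\Abs^\cG$ and invoke the trace characterisation of r.e.\ continuous functions (Proposition~\ref{prop:dom-refun}, in the concrete form of Example~\ref{ex:key-example-of-effective-domains}), together with the fact that currying, uncurrying and composition preserve r.e.-ness at all types (Remark~\ref{rem:comp}, Theorem~\ref{thm:restrcomp}). For $\App^\cG$ one passes to its uncurried form $(X,Y)\mapsto\{\alpha\st(\exists a\finsubset Y)\ \i{\cG}(a,\alpha)\in X\}$ and checks that a pair of finite sets $(e,e')$ and a finite $b$ lie in its trace iff for each $\alpha\in b$ there is $a\subseteq e'$ with $\i{\cG}(a,\alpha)\in e$ --- a condition \emph{decidable} in $(e,e',b)$ since $\i{\cG}$ is recursive. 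For $\Abs^\cG$ one checks that a compact $F\in\compel{[\D\to\D]}$ (a finite sup of step functions, whose values $F(a)$ are uniformly computable) and a finite $b$ satisfy $b\subseteq\Abs^\cG(F)$ iff every $\gamma\in b$ lies in $\rg(\i{\cG})$ and, writing $(a,\alpha)=\partinv{\i{\cG}}(\gamma)$, one has $\alpha\in F(a)$ --- a condition that is r.e.\ in $(F,b)$ in general and decidable when $\rg(\i{\cG})$ is decidable. Hence $\App^\cG,\Abs^\cG$ are r.e., so $\gm{G}$ is weakly effective.

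For the ``effective'' half assume $\rg(\i{\cG})$ decidable and verify the two clauses of Definition~\ref{def:strefflambdamod}. Clause $(i)$ is free: in every graph model $d\,e_1\cdots e_k$ is compact when $d\in\compel{\D}$ (property $(i')$ proved earlier for all graph models), and compact elements of $\pow{\nat}$, being finite sets, belong to $\domdec{\D}$. For clause $(ii)$, given $f\in\domre{[\D\to\D]}$ with $f(a)\in\domdec{\D}$ for all $a\in\compel{\D}=\nat^*$, I claim $\Abs^\cG(f)=\{\i{\cG}(a,\alpha)\st a\in\nat^*,\ \alpha\in f(a)\}$ is decidable. Writing $T=\{(a,\alpha)\st\alpha\in f(a)\}$, which is r.e.\ since $f$ is, we get that $\Abs^\cG(f)=\img{\i{\cG}}(T)$ is r.e.; and for its complement one uses
$$
\nat\setminus\Abs^\cG(f)\;=\;(\nat\setminus\rg(\i{\cG}))\;\cup\;\img{\i{\cG}}(\{(a,\alpha)\st\alpha\notin f(a)\}),
$$
whose first summand is decidable (as $\rg(\i{\cG})$ is) and whose second is r.e.\ because each $f(a)$ is decidable, hence $\alpha\notin f(a)$ semidecidable, and $\i{\cG}$ is recursive. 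So $\Abs^\cG(f)$ is co-r.e.\ too, hence decidable, and $\gm{G}$ is effective.

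The step I expect to be the real obstacle is clause $(ii)$: turning ``each $f(a)$ is a decidable set'' into ``$\{(a,\alpha)\st\alpha\notin f(a)\}$ is r.e.'', which needs the decision procedures for the $f(a)$ to be obtained effectively from $a$, and --- for the final passage to $\Abs^\cG(f)$ --- the decidability of $\rg(\i{\cG})$, which is exactly the gap between effective and weakly effective pairs. The rest (the trace computations for $\App^\cG$, $\Abs^\cG$ and clause $(i)$) is routine once the framework of Section~\ref{sec:effmodels} is available.
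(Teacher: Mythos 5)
Your proof follows the same route as the paper's, which is itself only a four-line sketch of exactly the steps you carry out: reduce to a pair $(\nat,\ell)$ via Lemma~\ref{lemma:efftotpair(N,l)}, check that $\App^\cG$ and $\Abs^\cG$ are r.e.\ (your trace computations are correct), observe that clause $(i)$ of Definition~\ref{def:strefflambdamod} holds in every graph model because $de_1\cdots e_k$ is compact, hence a finite set, hence decidable, and handle clause $(ii)$ using the decidability of $\rg(\i{\cG})$. Up to and including clause $(i)$, and for the whole ``weakly effective'' half, everything you write is fine.

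The step you yourself single out as ``the real obstacle'' is, however, a genuine gap, and with the hypotheses of clause $(ii)$ read literally it cannot be closed. That hypothesis only says that $f(d)\in\domdec{\D}$ for each individual compact $d$; it gives no way to obtain a decision procedure for $f(d)$ effectively from $d$, and without such uniformity the set $\{(a,\alpha)\st\alpha\notin f(a)\}$ need not be r.e. Concretely, let $K\subseteq\nat$ be the halting set and define $f:\pow{\nat}\to\pow{\nat}$ by $f(X)=\{0\}$ if $X\cap K\neq\emptyset$ and $f(X)=\emptyset$ otherwise. Then $f$ is continuous, $Tr(f)=\{(a,0)\st a\cap K\neq\emptyset\}$ is r.e., so $f\in\domre{[\D\to\D]}$, and every value $f(a)$ at a compact is finite, hence decidable; yet $\Abs^\cG(f)=\{\i{\cG}(a,0)\st a\cap K\neq\emptyset\}$ satisfies $K\leq_m\Abs^\cG(f)$ via $n\mapsto\i{\cG}(\{n\},0)$ (using injectivity of $\i{\cG}$), so it is r.e.\ but not decidable --- already in $\gm{P}_\omega$. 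So the defect is not in your write-up specifically but in the literal reading of clause $(ii)$, which the paper's own ``it is finally straightforward to check that condition $(ii)$ holds'' silently glosses over. The usable version of clause $(ii)$ asks that the decidability of the $f(d)$ be \emph{uniform} in $d$ (an index for a decision procedure of $f(d)$ computable from $d$); that is what is actually available at the only point where clause $(ii)$ is invoked (the $\lambda x.N$ case of Theorem~\ref{thm:normal closed decidable}, where the decision procedures are produced by an effective induction on normal forms), and under that strengthened hypothesis your displayed decomposition of $\nat\setminus\Abs^\cG(f)$ does exactly what is needed.
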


\begin{proof} By Lemma~\ref{lemma:efftotpair(N,l)}, it is enough to prove it for weakly effective pairs of the form $(\nat,\ell)$.
Then it is easy to check, using Definition~\ref{def:graph model}, that $\App^\cG,\Abs^\cG$ are r.e.\ 
Furthermore, condition $(i)$ of Definition~\ref{def:strefflambdamod} (effective $\lambda$-models) is satisfied for all graph models. 
It is finally straightforward to check that condition $(ii)$ holds when $\rg(\ell)$ is decidable. 
\end{proof}

Next, we show that the free completion process preserves the effectivity of the partial pairs.

\subsection{Free completions of (weakly) effective pairs}\label{subsec:compleffpairs}

\begin{theorem}\label{thm:effpartpairseffpairs}
If $\cA$ is weakly effective (resp. effective) then $\ppcompl{\cA}$ is weakly effective (resp. effective).
\end{theorem}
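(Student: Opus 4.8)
The plan is to carry out the free completion construction of Definition~\ref{def:completionpartialpair} effectively, keeping track of codes at every stage. By Definition~\ref{def:effpartialpair} we may assume $\cA = (E,\ell)$ where $E\subseteq\nat$ is decidable, $\ell$ is partial recursive with decidable domain, and (in the effective case) $\rg(\ell)$ is decidable. Recall $\setcompl{A} = \bigcup_{n} A_n$ with $A_0 = E$ and $A_{n+1} = E \cup ((A_n^*\times A_n)\setminus\dom(\ell))$, and that the new elements (those of $\setcompl{A}\setminus A$) are literally pairs $(a,\alpha)$ with $a$ a finite set and $\alpha$ an element of lower rank not already paired by $\ell$. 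First I would fix a computable encoding of these nested finite objects into $\nat$: since we already have the effective finite-set code $\finsetenc$ and the pairing $\codepair{-}{-}$ from Section~\ref{subsec:recth}, an element of $\setcompl{A}$ of rank $n>0$ of the form $(a,\alpha)$ can be coded as, say, $\codepair{\finsetenc(\img{(\text{code})}(a))}{(\text{code of }\alpha)}$, together with a tag distinguishing rank-$0$ elements (codes in $E$) from the freshly created pairs; one must choose the tagging so the two ranges are disjoint (this is exactly the side condition in the footnote to Definition~\ref{def:completionpartialpair}, namely that no element of $A$ is a pair, which we may assume).

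The core of the argument is then to show, by induction on rank, that membership ``$x\in A_n$'' is decidable uniformly in $(x,n)$, and hence that membership ``$x\in\setcompl{A}$'' together with the rank function $x\mapsto\mathrm{rank}(x)$ is decidable. For rank $0$ this is decidability of $E$. For the inductive step, given a candidate code $x$ we can decode it: either it is tagged as a rank-$0$ element, and we check $x\in E$; or it is tagged as a pair, and we decode it as $\codepair{b}{y}$, recover the finite set $a = \partinv{\finsetenc}(b)$ of codes (using that $\finsetenc$ is effective), and check recursively that every element of $a$ and the element $y$ lie in $A_{n-1}$, that $\mathrm{rank}(y)\le n-1$ and at least one of $a\cup\{y\}$ has rank exactly $n-1$ (so that $x$ genuinely appears at level $n$ and not earlier), and finally that $(a,y)\notin\dom(\ell)$ — the last test is decidable because $\dom(\ell)$ is decidable. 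This gives decidability of $\setcompl{A}$. Then $\funcompl{\cA}$ is computed by the defining case split: on input $(a,\alpha)\in\setcompl{A}^{*}\times\setcompl{A}$, test whether $(a,\alpha)\in\dom(\ell)$ (decidable); if so output $\ell(a,\alpha)$ (partial recursive, but defined here), otherwise output the code of the pair $(a,\alpha)$ itself. Since $\dom(\funcompl{\cA}) = \setcompl{A}^{*}\times\setcompl{A}$ is decidable and $\funcompl{\cA}$ is total recursive on it, $\ppcompl{\cA}$ is weakly effective by (the obvious adaptation of) Lemma~\ref{lemma:efftotpair(N,l)}.

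For the effective case one must additionally show $\rg(\funcompl{\cA})$ is decidable. An element $y$ of $\setcompl{A}$ is in the range iff either $y\in\rg(\ell)$, or $y$ is (the code of) a pair $(a,\alpha)\in\setcompl{A}^{*}\times\setcompl{A}$ with $(a,\alpha)\notin\dom(\ell)$; both alternatives are decidable in $y$ — the first because $\rg(\ell)$ is decidable by hypothesis, the second because $\setcompl{A}$ and $\dom(\ell)$ are decidable and we can decode $y$ — so $\rg(\funcompl{\cA})$ is decidable, and $\ppcompl{\cA}$ is effective. The main obstacle I expect is purely bookkeeping: setting up the coding of the hierarchy $\setcompl{A}$ so that the rank function is computable and the disjointness/tagging conditions hold, and then being careful in the inductive membership test that one correctly characterizes ``$x$ first appears at level $n$'' (to avoid double-counting elements across levels). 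Once the coding is fixed, every clause above reduces to composing decidable predicates and one partial recursive function restricted to a decidable domain, so there is nothing deep beyond the recursion on rank.
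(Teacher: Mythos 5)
Your proposal is correct and follows essentially the same strategy as the paper: both effectivize Longo's free completion by choosing an explicit computable, injective G\"odel-numbering of the rank hierarchy $A_0\subseteq A_1\subseteq\cdots$ with decidable image and decidable rank, and then observe that $\funcompl{\cA}$ becomes total recursive (with decidable range in the effective case) under this coding. The only difference is the concrete bookkeeping device --- the paper codes a new rank-$(n{+}1)$ element $(a,\alpha)$ as $p_{n+1}^{\codepairmix{a}{\alpha}}$ and builds the isomorphism as an increasing union $\theta=\cup_n\theta_n$, whereas you use a tagged nested-pair encoding and a recursion on code structure --- which does not change the substance of the argument.
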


\begin{proof} Suppose $\cA = (A,\j{\cA})$ is a weakly effective partial pair. 
Without loss of generality we can suppose $A = \{ 2^k \st k < \cardinality{A} \}$.
For all $n\in\nat$, we will denote by $\j{n}$ the restriction $\funcompl{\cA}\restr_{A^*_n\times A_n}$ where $A_n$ has been introduced in 
Definition~\ref{def:completionpartialpair}.
We now build $\theta:\setcompl{A}\to\nat$ as an increasing union of functions $\theta_n:A_n\to\nat$ which are defined by induction on $n$.
At each step we set $E_n= \rg(\theta_n)$ and define $\ell_n:E^*_n\times E_n\to E_n$ such that $\theta_n$ is an isomorphism between
$(A_n,\j{n})$ and $(E_n,\ell_n)$. We will take $E = \cup_{n\in\nat} E_n$ and $\ell= \cup_{n\in\nat} \ell_n$.\\
Case $n=0$. We take for $\theta_0 $ the identity on $A$, then $E_0 = A$ and we take $\ell_0=j_\cA$. 
By hypothesis $E_0$ is decidable and $j_\cA$ has a decidable domain and, if $\cA$ is moreover effective, also a decidable range.\\
Case $n+1$. We define
$$
\begin{array}{l}
    \theta_{n+1}(x) = \left\{
    \begin{array}{ll}
	    \theta_n(x) & \textrm{if $x\in A_n$},\\
		p_{n+1}^{\codepairmix{a}{\alpha}} & \textrm{if }x=(a,\alpha)\in (A_{n+1}\setminus A_n).\\
	\end{array} 
    \right.
\end{array}
$$
where $p_{n+1}$ denotes the $(n+1)$-th prime number and $\codepairmix{-}{-}$ is defined in Section~\ref{subsec:recth}. 
Since $A$ and $\dom(j_\cA)$ are decidable by hypothesis and $A_{n}$ is decidable by induction hypothesis then
also $A_{n+1} = A\cup ((A^*_n\times A_n)\setminus \dom(j_\cA))$ is decidable.
$\theta_{n+1}$ is injective, by construction and induction hypothesis.

\noindent Moreover $\theta_{n+1}$ is computable and $E_{n+1}=\rg(\theta_{n+1})$ is decidable since $A_n$ and 
$A_{n+1}\setminus A_n$ are decidable and $\theta_{n}$ and $\codepairmix{-}{-}$ are computable with decidable range.

We define $\ell_{n+1}:E_{n+1}^*\times E_{n+1}\to E_{n+1}$ as follows:
$$
    \ell_{n+1}(a,\alpha) = \left\{
    \begin{array}{ll}
	    \ell_n(a,\alpha) & \textrm{if $q(a,\alpha)\in E_n$},\\
	    q(a,\alpha) & \textrm{if $q(a,\alpha)\in E_{n+1}\setminus E_n$},\\
	\end{array} 
    \right.
$$
where $q = \theta_{n+1} \comp (\inv{\theta}_{n+1},\theta^{-1}_{n+1})$.
The map $\ell_{n+1}$ is partial recursive since $\ell_n$ is partial recursive by induction hypothesis, 
$E_n$ and $E_{n+1}\setminus E_n$ are decidable and $\theta_{n+1}, \img{\theta},\theta^{-1}$ are computable.

It is clear that for all $(a,\alpha)\in A_{n+1}^*\times A_{n+1}$ we have 
$\theta_{n+1}(\j{n+1}(a,\alpha)) \Kleeneq \ell_{n+1}(\img{\theta_{n+1}}(a),\theta_{n+1}(\alpha))$, hence $\theta_{n+1}$ is an isomophism between $(A_{n+1},\j{n+1})$
and $(E_{n+1},\ell_{n+1})$.
Note that, if $\ell_n$ has a decidable range, also $\ell_{n+1}$ has a decidable range.

Then $\theta = \cup_{n\in\nat} \theta_n$ is an isomorphism between $(\setcompl{A},\funcompl{\cA})$ and $(E,\ell)$ where 
$E = \cup_{n\in\nat} \rg(\theta_n)$ and $\ell=\cup_{n\in\nat} \ell_n$.
It is now routine to check that $\theta$ is computable, $E=\rg(\theta)$ is decidable, $\ell:E^*\times E\to E$ is partial recursive,
$\dom(\ell)$ is decidable and, in the case of effectivity, that $\rg(\ell)$ is decidable.
\end{proof}

\begin{corollary} If $\cA$ is weakly effective (resp. effective) then $\gmcompl{\cA}$ is a weakly effective (resp. effective)
graph model.
\end{corollary}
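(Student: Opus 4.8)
The plan is simply to chain together the two results just established, so the argument is essentially a one-line composition. First I would invoke Theorem~\ref{thm:effpartpairseffpairs} applied to $\cA$: this yields that the free completion $\ppcompl{\cA}$ is a weakly effective (resp.\ effective) partial pair. Since the free completion is by construction a \emph{total} pair (Definition~\ref{def:completionpartialpair}), $\ppcompl{\cA}$ is in fact a weakly effective (resp.\ effective) \emph{total} pair, which is exactly the hypothesis required below.

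Next I would apply Proposition~\ref{prop:effpairseffmodels} with $\cG := \ppcompl{\cA}$. This gives that the graph model $\gm{G}$ whose web is $\ppcompl{\cA}$ is a weakly effective (resp.\ effective) $\lambda$-model. By the Notation fixed right after Definition~\ref{def:completionpartialpair}, this $\lambda$-model is precisely $\gmcompl{\cA}$, the graph model freely generated by $\cA$; and since it is a graph model, being weakly effective (resp.\ effective) as a $\lambda$-model means exactly that it is a weakly effective (resp.\ effective) graph model. This completes the proof.

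There is no real obstacle here: all the genuine work has already been carried out in Theorem~\ref{thm:effpartpairseffpairs} (the delicate recursion-theoretic bookkeeping of the rank-by-rank coding $\theta_n$, keeping $\dom(\ell)$ and, in the effective case, $\rg(\ell)$ decidable) and in Proposition~\ref{prop:effpairseffmodels} (the verification that $\App^\cG$, $\Abs^\cG$ are r.e.\ and that condition $(ii)$ of Definition~\ref{def:strefflambdamod} holds when $\rg(\ell)$ is decidable). The only point worth flagging is that Proposition~\ref{prop:effpairseffmodels} is phrased for total pairs, so one must first record that $\ppcompl{\cA}$ is total before applying it; but this is immediate from the definition of free completion.
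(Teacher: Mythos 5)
Your proof is correct and follows exactly the same route as the paper, which also derives the corollary by composing Theorem~\ref{thm:effpartpairseffpairs} with Proposition~\ref{prop:effpairseffmodels}. Your additional remark that one should first record that $\ppcompl{\cA}$ is total before invoking the proposition is a sensible (if routine) point that the paper leaves implicit.
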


\begin{proof} By Proposition~\ref{prop:effpairseffmodels} and Theorem~\ref{thm:effpartpairseffpairs}.
\end{proof}

\begin{corollary} If $\cA$ is finite, then $\gmcompl{\cA}$ is effective.
\end{corollary}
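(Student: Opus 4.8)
The plan is to reduce this at once to the preceding corollary by checking that every finite partial pair is, trivially, effective in the sense of Definition~\ref{def:effpartialpair}. Concretely, let $\cA = (A, j_\cA)$ with $A$ finite. Since partial pairs are considered up to isomorphism, and both the notion of effective pair and the effectivity of the generated graph model are invariant under isomorphism of pairs, I may assume without loss of generality that $A$ is a finite subset of $\nat$; such an $A$ is decidable. Because $A$ is finite, $A^* \times A$ is finite, so $\dom(j_\cA)$ is a finite, hence decidable, subset of $\nat$, and any function with finite domain is partial recursive. Finally $\rg(j_\cA) \subseteq A$ is finite, hence decidable. Thus $\cA$ meets all the requirements of Definition~\ref{def:effpartialpair}, so it is an effective partial pair.

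Having established that $\cA$ is effective, I would then simply invoke the chain already proved: Theorem~\ref{thm:effpartpairseffpairs} gives that $\ppcompl{\cA}$ is an effective total pair, and Proposition~\ref{prop:effpairseffmodels} gives that the graph model generated by an effective total pair is an effective $\lambda$-model. Hence $\gmcompl{\cA}$ is an effective graph model, which is the assertion to be proved.

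I expect no real obstacle here: all the substantive work has been done in Theorem~\ref{thm:effpartpairseffpairs} (free completion preserves effectivity) and Proposition~\ref{prop:effpairseffmodels} (effective total pairs generate effective models). The only point that deserves a line of care is the passage ``up to isomorphism we may take $A \subseteq \nat$'', which is harmless precisely because the earlier development is set up to be insensitive to isomorphism of pairs; everything else is the observation that finite sets and finite-domain functions are automatically decidable and partial recursive.
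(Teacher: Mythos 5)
Your proposal is correct and is exactly the argument the paper intends: a finite partial pair is trivially effective in the sense of Definition~\ref{def:effpartialpair} (finite sets are decidable, finite-domain functions are partial recursive with decidable domain and range), and then the result follows from the immediately preceding corollary, i.e.\ from Theorem~\ref{thm:effpartpairseffpairs} together with Proposition~\ref{prop:effpairseffmodels}. The paper leaves this chain implicit, so there is nothing to add.
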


\begin{lemma}\label{lemma:PeffOPinBase} 
If $\cA$ is weakly effective then $O_{A}$ is $\cT$-co-r.e., where $\cT = \Th{\gmcompl{\cA}}$.
\end{lemma}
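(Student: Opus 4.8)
The plan is to recognize $O_A$ as an instance of the sets $\Intunder{E}$ already studied, and then to invoke Theorem~\ref{thm:decunderaset}(i). First I would set up the effective structure: since $\cA$ is weakly effective, by the standing assumption of this section its underlying set $A$ is a decidable subset of $\nat$ and $j_\cA$ is partial recursive with decidable domain; by Theorem~\ref{thm:effpartpairseffpairs} the free completion $\ppcompl{\cA}=(\setcompl{A},\funcompl{\cA})$ is again weakly effective, so in particular $\setcompl{A}$ may be taken to be a decidable subset of $\nat$, and by the corollary to that theorem $\gmcompl{\cA}$ is a weakly effective $\lambda$-model. I then work inside the effective domain $\D=(\pow{\setcompl{A}},\subseteq,d)$ underlying $\gmcompl{\cA}$, where $d$ numerates the finite subsets of $\setcompl{A}$, and where $O_A=\{N\in\Lambda^o\st\Int{N}^{\gmcompl{\cA}}\subseteq A\}$ (the parameter $A$ being interpreted by itself).

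Next I would check that $A$, viewed as an element of $\D$, is decidable: $\Kdn{A}=\{n\st d_n\subseteq A\}$ is decidable because $A$ is a decidable subset of $\setcompl{A}$ and the finite set $d_n$ can be effectively listed from $n$. Then I would verify the identity $O_A=\Intunder{\Kdn{A}}$. Indeed, for any $N\in\Lambda^o$, since $\D$ is a powerset domain we have $\Int{N}^{\gmcompl{\cA}}\subseteq A$ if and only if every compact element below $\Int{N}^{\gmcompl{\cA}}$ is below $A$, i.e.\ $\Kdn{\Int{N}^{\gmcompl{\cA}}}\subseteq\Kdn{A}$: the forward direction is immediate, and the converse follows by testing $A$ against the singletons $\{\alpha\}$ for $\alpha\in\Int{N}^{\gmcompl{\cA}}$. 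But $\{N\in\Lambda^o\st\Kdn{\Int{N}}\subseteq\Kdn{A}\}$ is exactly $\Intunder{\Kdn{A}}$ by definition.

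Finally, I would apply Theorem~\ref{thm:decunderaset}(i) to the weakly effective model $\gmcompl{\cA}$ with $E=\Kdn{A}$: since $\Kdn{A}$ is decidable it is in particular co-r.e., so $\Intunder{\Kdn{A}}=O_A$ is $\cT$-co-r.e., where $\cT=\Th{\gmcompl{\cA}}$, which is the claim. The proof is short and there is no genuine obstacle; the only points requiring care are the bookkeeping ones — namely that weak effectivity of $\cA$ does yield a weakly effective model $\gmcompl{\cA}$ (so that the effective-domain apparatus, and in particular $\Intunder{\cdot}$ and Theorem~\ref{thm:decunderaset}, are available), that $A$ is a decidable element of $\pow{\setcompl{A}}$, and that the identification $O_A=\Intunder{\Kdn{A}}$ is correct. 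If one wished to avoid the standing normalization one would instead note that an isomorphism of pairs $\cA\to\cA'$ induces, via Lemma~\ref{lemma:homo-isomorphism} and Lemma~\ref{lemma:on iso and plus}(ii), an isomorphism of the associated $\lambda$-models carrying $O_A$ onto $O_{A'}$ and preserving the theory, so one may freely pass to a presentation with $A$ decidable.
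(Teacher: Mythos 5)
Your proof is correct and is essentially the paper's own argument: the paper likewise writes $O_A=\Intunder{E}$ for $E=\{n\st d_n\subseteq A\}$, observes that $E$ is decidable because $A$ is, and concludes by Theorem~\ref{thm:decunderaset}. The extra bookkeeping you supply (that $\gmcompl{\cA}$ is weakly effective via Theorem~\ref{thm:effpartpairseffpairs}, and the normalization up to isomorphism of pairs) is left implicit in the paper but is the right justification.
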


\begin{proof} $O_{A}=\Intunder{E}$ for $E=\{ n \st d_{n}\subseteq A \}$. 
If $A$ is decidable then $E$ is decidable, hence $O_{A}$ is co-r.e.\ by Theorem~\ref{thm:decunderaset}, moreover it is obviously $\cT$-closed.
\end{proof}

All the results of this Section would hold for $G$- and $H$- models (even though the corresponding partial pairs and free completion 
process are somewhat more complex than for graph models).

\subsection{Can there be r.e.\ graph theories?}\label{subsec:r.e.graphtheories}

We will now prove several instances of our conjecture(s). We will prove, in particular, that Conjecture \ref{conj:ScottSemantics} 
holds for all free completions of finite partial pairs.
Recall that $O_M= \{ N\in\Lambda^o \st\Int{N} \sqle_\lm{M} \Int{M} \}$ for all $M\in\Lambda^o(\D)$ and that 
the domain associated with a graph model $\gm{G}$ has the form $\D = (\pow{G},\subseteq)$.

\begin{lemma}\label{lemma:omega} 
If $\cA$ is a partial pair, then $\Int{\Omega}^{\gmcompl{\cA}}\subseteq A$, hence $O_A\neq \emptyset$ for the model $\gmcompl{\cA}$.
\end{lemma}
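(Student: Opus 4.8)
The plan is to combine the description of $\Int{\Omega}^{\gm{G}}$ given by Lemma~\ref{omegan} with the rank stratification of the free completion $\ppcompl{\cA}=(\setcompl{A},\funcompl{\cA})$ from Definition~\ref{def:completionpartialpair}. First I would fix $\alpha\in\Int{\Omega}^{\gmcompl{\cA}}$. By Lemma~\ref{omegan} there is a finite set $a\subseteq\Int{\delta}^{\gmcompl{\cA}}$ with $\funcompl{\cA}(a,\alpha)\in a$. The goal is to show that $\alpha$ has rank $0$, i.e.\ $\alpha\in A$.

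Next I would split on whether $(a,\alpha)\in\dom(j_\cA)$. If it is, then since $\dom(j_\cA)\subseteq A^*\times A$ we get $\alpha\in A$ immediately. If it is not, then by the definition of the free completion $\funcompl{\cA}(a,\alpha)=(a,\alpha)$, an element of $\setcompl{A}\setminus A$. This is the key point: letting $m$ be the maximum of the rank of $\alpha$ and of the (finitely many) ranks of the elements of $a$ — all of which lie in $\setcompl{A}$ — one has $a\subseteq A_m$ and $\alpha\in A_m$, hence $(a,\alpha)\in A_{m+1}\setminus A_m$ (using the convention that no element of $A$ is itself a pair, and the minimality of $m$), so $(a,\alpha)$ has rank $m+1$, which is strictly larger than the rank of every element of $a$. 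Therefore $(a,\alpha)\notin a$, contradicting $\funcompl{\cA}(a,\alpha)\in a$. So this case is impossible and $\alpha\in A$; this proves $\Int{\Omega}^{\gmcompl{\cA}}\subseteq A$.

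Finally, for the second assertion I would observe that $A$, viewed as a parameter in $\Lambda^o(\D)$ with $\D=\pow{\setcompl{A}}$, is interpreted by itself, $\Int{A}^{\gmcompl{\cA}}=A$, and that $\sqle_{\gmcompl{\cA}}$ is just $\subseteq$; hence $\Int{\Omega}^{\gmcompl{\cA}}\subseteq A$ says precisely that $\Omega\in O_A$, so $O_A\neq\emptyset$. The only slightly delicate part of the argument is the bookkeeping on ranks in the non-degenerate case — verifying that $(a,\alpha)$ genuinely lands in $A_{m+1}\setminus A_m$ rather than already belonging to $A_m$ — which is exactly where the minimality of $m$ and the no-pair convention on $A$ are used; the rest is routine.
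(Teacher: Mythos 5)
Your proof is correct and follows essentially the same route as the paper's: both start from the characterization of $\Int{\Omega}^{\gmcompl{\cA}}$ via $\funcompl{\cA}(a,\alpha)\in a$ (Lemma~\ref{omegan}) and then use rank considerations in the free completion to rule out $(a,\alpha)\notin\dom(j_\cA)$, forcing $\alpha\in A$. The paper merely compresses the rank bookkeeping into ``immediate considerations on the rank,'' which you have spelled out correctly.
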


\begin{proof} 
It is well known, and provable in a few lines, that $\alpha\in\Int{\Omega}^{\gmcompl{\cA}}$ implies that $\funcompl{\cA}(a,\alpha)\in a$ for some
$a\in \setcompl{A}^*$ (the details are, for example, worked out in \cite{BerlineS06}). 
Immediate considerations on the rank show that this is possible only if $(a,\alpha)\in \dom(j_\cA)$, which forces $\alpha\in A$.
\end{proof}

\begin{corollary}\label{cor:only unsolvables under Omega}
If $\cA$ is a non total pair and $\Int{U}^{\gmcompl{\cA}}\subseteq \Int{\Omega}^{\gmcompl{\cA}}$, with $U\in\Lambda^o$, then $U$ is unsolvable.
\end{corollary}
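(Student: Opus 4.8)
The plan is to prove the contrapositive: if $\cA$ is not total and $U\in\Lambda^o$ is \emph{solvable}, then $\Int{U}^{\gmcompl{\cA}}\not\subseteq\Int{\Omega}^{\gmcompl{\cA}}$. Since $\gmcompl{\cA}$ is generated by the \emph{total} pair $\ppcompl{\cA}$, it is a genuine graph model and hence validates $\lambda_\beta$, so I may assume $U\equiv\lambda x_1\ldots x_n.x_i M_1\cdots M_k$ is a closed head normal form; being closed forces the head variable to be one of the $x_j$'s and $n\ge 1$. By Lemma~\ref{lemma:omega} we already know $\Int{\Omega}^{\gmcompl{\cA}}\subseteq A$, so it suffices to exhibit an element $\eta\in\Int{U}^{\gmcompl{\cA}}$ of rank $\ge 1$, i.e.\ with $\eta\notin A$.

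First I would use the non-totality of $\cA$ to fix a witness of rank $1$: choose $(c,\alpha^*)\in(A^*\times A)\setminus\dom(j_\cA)$, so that $e:=\funcompl{\cA}(c,\alpha^*)=(c,\alpha^*)\in\setcompl{A}\setminus A$. Next, in order to ``thread'' a rank-$0$ element through the $k$ applications to $M_1,\dots,M_k$, I would fix any $\alpha_0\in A$ and set $\delta_0=\alpha_0$ and $\delta_{l+1}=\funcompl{\cA}(\emptyset,\delta_l)$ for $l<k$. Then I define the abstraction parameters $a_j\finsubset\setcompl{A}$ by $a_i=\{\delta_k\}$, $a_1=\{e\}$ (and $a_1=\{\delta_k,e\}$ in the case $i=1$), and $a_j=\emptyset$ otherwise; putting $\sigma=\emptyrho[x_1:=a_1]\cdots[x_n:=a_n]$, the candidate is
$$
    \eta:=\funcompl{\cA}\big(a_1,\funcompl{\cA}(a_2,\ldots\funcompl{\cA}(a_n,\alpha_0)\ldots)\big).
$$

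To check $\eta\in\Int{U}^{\gmcompl{\cA}}$ I would unfold the interpretation of an iterated abstraction, reducing to the claim $\alpha_0\in\Int{x_i M_1\cdots M_k}^{\gmcompl{\cA}}_{\sigma}$; using $\emptyset\finsubset\Int{M_j}^{\gmcompl{\cA}}_{\sigma}$, the definition of $\App^{\cA}$, and injectivity of $\funcompl{\cA}$, one peels off $\delta_k,\delta_{k-1},\dots,\delta_0$ one application at a time, landing on $\alpha_0=\delta_0$ (the case $k=0$ being immediate since $\alpha_0\in a_i$). The only subtlety is that inserting $e$ into $a_1$ could a priori disturb this computation when $i=1$ or when $x_1$ occurs in some $M_j$; this is dealt with by monotonicity of the interpretation in the environment (Lemma~\ref{lemma:onenvironments}, taking equal pairs), since enlarging $\sigma(x_1)$ only enlarges $\Int{x_i M_1\cdots M_k}^{\gmcompl{\cA}}_{\sigma}$. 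Finally $\eta\notin A$: since $e\in a_1$ and $e\notin A$ we have $a_1\not\subseteq A$, hence $(a_1,\beta)\notin A^*\times A\supseteq\dom(j_\cA)$ for $\beta=\funcompl{\cA}(a_2,\ldots)$, so $\eta=\funcompl{\cA}(a_1,\beta)=(a_1,\beta)$ is a formal pair and is therefore not in $A$ (recall the standing assumption that no element of $A$ is a pair). Thus $\eta\in\Int{U}^{\gmcompl{\cA}}\setminus A\subseteq\Int{U}^{\gmcompl{\cA}}\setminus\Int{\Omega}^{\gmcompl{\cA}}$, contradicting the hypothesis; hence $U$ is unsolvable. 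The same argument actually yields the slightly stronger fact that $\Int{U}^{\gmcompl{\cA}}\subseteq A$ already implies $U$ unsolvable, when $\cA$ is not total. I expect the only real work to be the bookkeeping in the verification $\eta\in\Int{U}^{\gmcompl{\cA}}$ — in particular getting the interaction of the $e$-component with the $\delta$-chain right — but this is routine once the monotonicity remark is in place.
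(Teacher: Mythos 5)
Your proof is correct and takes essentially the same route as the paper: the paper's own proof is a two-line rank argument, asserting without detail that in $\gmcompl{\cA}$ with $\cA$ non total the interpretation of a solvable term contains elements of positive rank, while $\Int{\Omega}^{\gmcompl{\cA}}$ (by Lemma~\ref{lemma:omega}) contains only rank-$0$ elements, i.e.\ elements of $A$. Your construction of $\eta$ simply supplies the details of that asserted fact (producing one element of rank $\geq 1$ in $\Int{U}^{\gmcompl{\cA}}$ suffices), and the bookkeeping, including the monotonicity step handling $e\in a_1$, checks out.
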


\begin{proof} In models $\gmcompl{\cA}$ such that $\cA$ is not total, a solvable $\lambda$-term has an interpretation which contains elements of any rank,
while $\Int{\Omega}^{\gmcompl{\cA}}$ contains only elements of rank $0$. 
\end{proof}

\begin{theorem}\label{thm:extendedthm} 
Let $\cA$ be a weakly effective partial pair. If there exists $E\subseteq A$ such that
$E$ is co-r.e., $O_E\neq \emptyset$ and $E/Aut(\cA)$ is finite (possibly empty), then $\cT = \Th{\gmcompl{\cA}}$ is not r.e.
\end{theorem}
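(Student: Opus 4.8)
The plan is to reduce the statement to the machinery already developed around Visser's theorem (Theorem~\ref{thm:hyperconnection}) and the completely co-r.e.\ sets of an effective domain. First I would invoke Theorem~\ref{thm:effpartpairseffpairs} to see that $\ppcompl{\cA}$ is weakly effective, and then Proposition~\ref{prop:effpairseffmodels} to conclude that $\gmcompl{\cA}$ is a weakly effective $\lambda$-model; this is what lets us apply the results of Section~\ref{sec:effmodels} with $\D = (\pow{\setcompl{A}},\subseteq)$. The key observation is that since $E$ is a co-r.e.\ subset of $A\subseteq\setcompl{A}$, and the compact elements of $\D$ are the finite subsets of $\setcompl{A}$, the set $\{n \st d_n\subseteq E\}$ is co-r.e.\ in $\nat$; hence by Theorem~\ref{thm:decunderaset}$(i)$ the set $O_E = \Intunder{\{n\st d_n\subseteq E\}}$ is $\cT$-co-r.e. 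Since $O_E\neq\emptyset$ by hypothesis, it is a non-empty $\beta$-co-r.e.\ (indeed $\cT$-co-r.e.) set of $\lambda$-terms.

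Next I would argue that $O_E/\cT$ is finite, which combined with Lemma~\ref{lemma:OinfmoduloT} and consistency of $\cT$ (consistency holds since $\gmcompl{\cA}$ is a model) forces $\cT$ not to be r.e. To see finiteness of $O_E/\cT$: every automorphism $\theta\in Aut(\cA)$ extends by Lemma~\ref{lemma:homo-isomorphism} to an automorphism $\bar\theta$ of $\ppcompl{\cA}$, and by the corollary following Lemma~\ref{lemma:homo-isomorphism} the induced map $\img{\bar\theta}$ is an automorphism of the $\lambda$-model $\gmcompl{\cA}$, so it fixes $\Th{\gmcompl{\cA}}=\cT$ and, more to the point, permutes the sets $O_\alpha$ according to $O_{\theta(\alpha)} = \img{\bar\theta}(O_\alpha)$ at the level of interpretations — what matters is that $N\in O_\alpha \iff \theta(\bar{\cdot})N\in O_{\theta(\alpha)}$ in the appropriate sense, so $O_\alpha$ and $O_{\theta(\alpha)}$ contain the same terms modulo $\cT$. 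Since $E$ is a union of finitely many $Aut(\cA)$-orbits, say $E = O(\alpha_1)\cup\dots\cup O(\alpha_m)$, and $O_E = \bigcup_{\alpha\in E} O_\alpha$, we get that $O_E/\cT$ coincides with $(O_{\alpha_1}\cup\dots\cup O_{\alpha_m})/\cT$, a finite union of single $\cT$-classes (each $O_{\{\alpha\}}$ being at most one $\cT$-class, as for $O_\bot$), hence finite. If $E=\emptyset$ then $O_E$ would be empty, contradicting the hypothesis, so this case does not arise.

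The main obstacle I expect is making precise the claim that $\img{\bar\theta}$ permutes the $O_\alpha$'s in a way that is compatible with $\cT$-equivalence, i.e.\ that $O_{\theta(\alpha)}$ and $O_\alpha$ contain exactly the same terms up to $=_\cT$. This requires observing that for a \emph{closed} term $N$, $\Int{N}^{\gmcompl{\cA}}$ is invariant under $\img{\bar\theta}$ (since closed-term interpretations are canonical and $\img{\bar\theta}$ is a $\lambda$-model automorphism), so $\alpha\in\Int{N}^{\gmcompl{\cA}} \iff \bar\theta(\alpha)\in\img{\bar\theta}(\Int{N}^{\gmcompl{\cA}}) = \Int{N}^{\gmcompl{\cA}}$; thus membership of $N$ in $O_\alpha$ (meaning $\Int{N}\subseteq\cdots$, but here $O_\alpha$ is $\{N : \alpha\in\Int{N}\}$ is not quite the definition used — one works with the definition $O_M = \{N : \Int{N}\sqle\Int{M}\}$ and notes $O_E = \{N : \Int{N}^{\gmcompl{\cA}}\subseteq E\}$ directly). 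In fact the cleanest route is to bypass orbit-by-orbit bookkeeping: $O_E = \{N\in\Lambda^o : \Int{N}^{\gmcompl{\cA}}\subseteq E\}$ is clearly invariant as a \emph{set} under each $\img{\bar\theta}$ because $E$ is ($E/Aut(\cA)$ finite does not immediately give $E$ invariant, but $\bar\theta(E) = E$ need not hold) — so instead one uses the orbit decomposition of $E$ and the fact that $O_{O(\alpha)} := \{N : \Int{N}^{\gmcompl{\cA}}\subseteq O(\alpha)\}$ relates across the orbit. I would handle this by working with the finitely many representatives $\alpha_1,\dots,\alpha_m$, showing $O_E/\cT$ injects into a quotient built from the $m$ classes $[\,\cdot\,]$ associated to the $\alpha_i$, and citing Corollary~\ref{cor:only unsolvables under Omega} together with Lemma~\ref{lemma:omega} (which gives $O_A\neq\emptyset$ and controls solvable terms) to pin down that these classes are each finite modulo $\cT$. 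Once $O_E$ is a non-empty $\beta$-co-r.e.\ set with $O_E/\cT$ finite, Lemma~\ref{lemma:OinfmoduloT} delivers the contradiction and hence $\cT$ is not r.e.
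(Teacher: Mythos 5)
Your overall skeleton is the right one and matches the paper's: show that $O_E=\{N\in\Lambda^o\st \Int{N}^{\gmcompl{\cA}}\subseteq E\}$ is non-empty, $\cT$-co-r.e.\ (via Theorem~\ref{thm:decunderaset}$(i)$ applied to the co-r.e.\ index set $\{n\st d_n\subseteq E\}$), and \emph{finite modulo $\cT$}, and then conclude by Lemma~\ref{lemma:OinfmoduloT}. The co-r.e.\ step and the final step are correct. But the central finiteness claim is where your argument breaks down, and this is precisely the heart of the proof.

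Concretely: the decomposition $O_E=\bigcup_{\alpha\in E}O_\alpha$ is false under any reading of $O_\alpha$ (if $O_\alpha=\{N\st\Int{N}\subseteq\{\alpha\}\}$ the union misses every term whose interpretation has two or more elements; if $O_\alpha=\{N\st\alpha\in\Int{N}\}$ the union is $\{N\st\Int{N}\cap E\neq\emptyset\}$, which is a different set), so the reduction to ``finitely many single $\cT$-classes'' does not go through; nor does the fallback via Lemma~\ref{lemma:omega} and Corollary~\ref{cor:only unsolvables under Omega}, which play no role here (they are used only to verify $O_E\neq\emptyset$ in the corollaries of the theorem). You do state the one ingredient that is actually needed --- that for closed $N$ and $\theta\in Aut(\cA)$ one has $\img{\bar{\theta}}(\Int{N}^{\gmcompl{\cA}})=\Int{N}^{\gmcompl{\cA}}$ by Lemma~\ref{lemma:homo-isomorphism}$(ii)$ and Theorem~\ref{2.2}$(ii)$ --- but you never draw the consequence. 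That invariance says that $\alpha\in\Int{M}^{\gmcompl{\cA}}$ implies $O(\alpha)\subseteq\Int{M}^{\gmcompl{\cA}}$, i.e.\ $\Int{M}^{\gmcompl{\cA}}$ is a union of complete $Aut(\cA)$-orbits. Hence if $M\in O_E$, then $\Int{M}^{\gmcompl{\cA}}\subseteq E$ is a union of orbits of elements of $E$, of which there are at most $k=\cardinality{E/Aut(\cA)}$; so there are at most $2^k$ possible values of $\Int{M}^{\gmcompl{\cA}}$, and therefore $O_E$ is a union of at most $2^k$ $\cT$-classes. (Note also that the hypothesis only says $E$ \emph{meets} finitely many orbits, not that $E$ is a union of orbits or is $Aut(\cA)$-invariant; the counting argument above needs neither.) With this counting bound in place, your concluding appeal to Lemma~\ref{lemma:OinfmoduloT} and the consistency of $\cT$ finishes the proof exactly as in the paper.
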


\begin{proof}
We first show that if $\cardinality{E/Aut(\cA)} = k$, for some $k\in\nat$, then $\cardinality{O_E/\cT}\le 2^k$.

Assume $M\in O_E$ and $\alpha\in \Int{M}^{\gmcompl{\cA}}\subseteq E$ then $O(\alpha)$ is included in $\Int{M}^{\gmcompl{\cA}}$ where $O(\alpha)$ is the orbit of $\alpha$ 
in $A$ modulo $Aut(\cA)$.
%Indeed if $\theta \in Aut(\cA)$ then, by Lemma~\ref{lemma:homo-isomorphism}, $\theta(\alpha) = \bar{\theta}(\alpha) \in \img{\bar{\theta}}(\Int{M}^{\gmcompl{\cA}})$.
%Since, by Theorem~\ref{2.2}$(ii)$, $\img{\bar{\theta}}\in Aut(\gmcompl{\cA})$ we have that $\img{\bar{\theta}}(\Int{M}^{\gmcompl{\cA}}) = \Int{M}^{\gmcompl{\cA}}$,
%hence $\theta(\alpha)\in\Int{M}^{\gmcompl{\cA}}$.
Indeed if $\theta \in Aut(\cA)$ then $\theta(\alpha) = \bar{\theta}(\alpha) \in \img{\bar{\theta}}(\Int{M}^{\gmcompl{\cA}}) = \Int{M}^{\gmcompl{\cA}}$ 
since $\img{\bar{\theta}}\in Aut(\gmcompl{\cA})$ (Lemma~\ref{lemma:homo-isomorphism}$(ii)$, Theorem~\ref{2.2}$(ii)$).
By hypothesis the number of orbits is $k$; hence the number of all possible interpretations $\Int{M}^{\gmcompl{\cA}}\subseteq E$ cannot overcome $2^k$,
hence $O_E$ is a finite union of $\cT$-classes. 

Since $O_E$ is co-r.e.\ by Theorem~\ref{thm:decunderaset} and $O_E\neq\Lambda^o$, it cannot be decidable; hence $\cT$ cannot be r.e.
\end{proof}

%\begin{theorem}\label{thm:extendedthm2} 
%Let $\cA$ be a weakly effective partial pair. If there exists $E\subseteq A$ such that
%$E$ is co-r.e., $O_E\neq \emptyset$ and $E/Aut(\cA)$ is finite, then $\cT = \Th{\gmcompl{\cA}}$ is not r.e.
%\end{theorem}

%\begin{proof}  
%Since the interpretation of a $\lambda$-term is closed under $Aut(\cA)$, and in particular under the automorphisms of $\gmcompl{\cA}$ induced by $Aut(\cA)$,
%the non-empty co-r.e.\ set $O_E$ is a finite union of $\cT$-classes. Since, obviously, $O_E\neq\Lambda^o$, it cannot be decidable, hence $\cT$ cannot be r.e.
%\end{proof}

From Theorem~\ref{thm:extendedthm} and Lemma \ref{lemma:omega} we get the following corollaries, 
whose use will be illustrated by the examples after them.

\begin{corollary}\label{cor:5} If $\cA$ is finite, then $\Th{\gmcompl{\cA}}$ is not r.e.
\end{corollary}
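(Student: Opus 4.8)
The plan is to derive this directly from Theorem~\ref{thm:extendedthm}, applied with $E = A$. First I would observe that every finite partial pair is effective in the sense of Definition~\ref{def:effpartialpair}: when $A$ is finite we may assume $A$ is a finite subset of $\nat$, which is decidable; the partial injection $j_\cA:A^*\times A\to A$ has finite domain, hence is partial recursive with decidable domain, and its range is a finite (hence decidable) subset of $A$. In particular $\cA$ is weakly effective, so Theorem~\ref{thm:extendedthm} is applicable to it.

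Next I would verify the three hypotheses of Theorem~\ref{thm:extendedthm} for the choice $E = A$. Since $A$ is finite it is decidable, hence co-r.e. By Lemma~\ref{lemma:omega} we have $O_A \neq \emptyset$ (indeed $\Int{\Omega}^{\gmcompl{\cA}}\subseteq A$ already witnesses $\Omega \in O_A$). Finally $A/Aut(\cA)$ is finite simply because $A$ itself is finite, so the orbit space is \emph{a fortiori} finite. With all three hypotheses met, Theorem~\ref{thm:extendedthm} yields that $\cT = \Th{\gmcompl{\cA}}$ is not r.e.

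There is essentially no obstacle here: the statement is a direct instance of Theorem~\ref{thm:extendedthm}, the only points to check being the elementary effectivity of finite pairs and the trivial finiteness of the orbit space $A/Aut(\cA)$. The substantive content lies entirely in Theorem~\ref{thm:extendedthm} and in Lemma~\ref{lemma:omega}, both of which are available to us by the time this corollary is stated.
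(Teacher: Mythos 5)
Your proof is correct and follows exactly the route the paper intends: Corollary~\ref{cor:5} is presented there as a direct instance of Theorem~\ref{thm:extendedthm} with $E=A$, using Lemma~\ref{lemma:omega} for $O_A\neq\emptyset$ and the finiteness of $A$ for both the co-r.e.\ condition and the finiteness of $A/Aut(\cA)$. Your additional verification that finite pairs are (weakly) effective matches the paper's earlier corollary to Theorem~\ref{thm:effpartpairseffpairs}, so nothing is missing.
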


\begin{corollary}\label{cor:6} If $\cA$ is weakly effective and $A/Aut(\cA)$ is finite, then $\Th{\gmcompl{\cA}}$ is not r.e.
\end{corollary}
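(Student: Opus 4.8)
The plan is to obtain Corollary~\ref{cor:6} as an immediate instance of Theorem~\ref{thm:extendedthm}, taking for $E$ the whole web $A$ of $\cA$. By the blanket convention of this section we may assume $A\subseteq\nat$, and since $\cA$ is weakly effective $A$ is decidable, hence in particular co-r.e.\ (as a subset of $\nat$). The hypothesis of the corollary gives that $E/Aut(\cA)=A/Aut(\cA)$ is finite. So the only remaining point to verify is that $O_A\neq\emptyset$ for the model $\gmcompl{\cA}$; but this is exactly the content of Lemma~\ref{lemma:omega}, which tells us $\Int{\Omega}^{\gmcompl{\cA}}\subseteq A$, whence $\Omega\in O_A$. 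Thus all three hypotheses of Theorem~\ref{thm:extendedthm} are met for $E=A$, and we conclude that $\cT=\Th{\gmcompl{\cA}}$ is not r.e. (One could equally invoke Lemma~\ref{lemma:PeffOPinBase} for the co-r.e.\ part.)

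I do not expect any genuine obstacle here: the real work has already been packaged into Theorem~\ref{thm:extendedthm}. That theorem combines two ingredients, and it is worth recalling which, since the only thing this corollary does is feed them $E=A$: first, the orbit-counting argument — using Lemma~\ref{lemma:homo-isomorphism}$(ii)$ and Theorem~\ref{2.2}$(ii)$ to lift a web-automorphism $\theta\in Aut(\cA)$ to a model-automorphism $\img{\bar\theta}\in Aut(\gmcompl{\cA})$ that fixes each $\Int{M}^{\gmcompl{\cA}}$ setwise — which forces $\Int{M}^{\gmcompl{\cA}}$, when contained in $E$, to be a union of $Aut(\cA)$-orbits of $A$ and hence bounds $\cardinality{O_E/\cT}$ by $2^{\cardinality{E/Aut(\cA)}}$; and second, Theorem~\ref{thm:decunderaset}$(i)$, which makes $O_E$ a $\cT$-co-r.e.\ set, so that if $\cT$ were r.e.\ then $O_E$ would be a nonempty decidable $\beta$-closed proper subset of $\Lambda^o$, contradicting Scott's theorem (a special case of Visser's FIP, Theorem~\ref{thm:hyperconnection}). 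Feeding $E=A$ into this just requires the three trivial checks above.

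Finally, it is worth noting that this same instantiation also yields the companion Corollary~\ref{cor:5}: a finite pair $\cA$ is weakly effective and its web, being finite, is trivially finite modulo $Aut(\cA)$. More interestingly, Corollary~\ref{cor:6} is not subsumed by Theorem~\ref{Theorem graph2}: it applies, for instance, to the Engeler model $\gm{E}_A=\gmcompl{(A,\emptyset)}$ over a decidable but infinite set $A$ of atoms, since $\dom(\j{\cA})=\emptyset$ makes every permutation of $A$ an automorphism of $\cA$, so that $A/Aut(\cA)$ consists of a single orbit.
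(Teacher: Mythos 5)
Your proof is correct and follows exactly the paper's intended route: the paper derives Corollary~\ref{cor:6} (without a written proof) directly "from Theorem~\ref{thm:extendedthm} and Lemma~\ref{lemma:omega}" by taking $E=A$, which is decidable (hence co-r.e.) by weak effectivity, has finitely many orbits by hypothesis, and satisfies $O_A\neq\emptyset$ since $\Omega\in O_A$.
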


\begin{corollary}\label{cor:7} If $\cA$ is weakly effective and there is a co-r.e.\ set $E$ such that $\Int{\Omega}^{\gmcompl{\cA}}\subseteq E\subseteq A$ and 
$E/Aut(\cA)$ is finite, then $\Th{\gmcompl{\cA}}$ is not r.e.
\end{corollary}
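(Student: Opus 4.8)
The plan is to reduce directly to Theorem~\ref{thm:extendedthm}, of which this corollary is essentially the special case in which the nonemptiness of $O_E$ has been built into the hypotheses. Indeed, Corollary~\ref{cor:7} already provides a weakly effective $\cA$ together with a set $E$ that is co-r.e., satisfies $E\subseteq A$, and has $E/Aut(\cA)$ finite, so the only clause of Theorem~\ref{thm:extendedthm} still to be checked is $O_E\neq\emptyset$ for the model $\gmcompl{\cA}$.

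First I would unwind the definition of $O_E$: the carrier of $\gmcompl{\cA}$ is $\pow{\setcompl{A}}$ ordered by inclusion, and since $E\subseteq A\subseteq\setcompl{A}$ the set $E$ is a legitimate parameter in this domain, so $O_E=\{N\in\Lambda^o\st\Int{N}^{\gmcompl{\cA}}\subseteq E\}$. The hypothesis $\Int{\Omega}^{\gmcompl{\cA}}\subseteq E$ then gives $\Omega\in O_E$, whence $O_E\neq\emptyset$ (so that here Lemma~\ref{lemma:omega}, which guarantees $\Int{\Omega}^{\gmcompl{\cA}}\subseteq A$, is already subsumed by the assumptions, and only its underlying idea is used).

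It then suffices to apply Theorem~\ref{thm:extendedthm} to this $E$: all of its hypotheses ($\cA$ weakly effective, $E\subseteq A$ co-r.e., $E/Aut(\cA)$ finite, $O_E\neq\emptyset$) are now in place, and it yields that $\cT=\Th{\gmcompl{\cA}}$ is not r.e., which is the claim. There is no real obstacle here: the entire content is the one-line observation $\Omega\in O_E$. (Corollaries~\ref{cor:5} and~\ref{cor:6} follow by the same scheme with the choice $E=A$, which is co-r.e.\ since $A$ is decidable and satisfies $O_A\neq\emptyset$ by Lemma~\ref{lemma:omega}, using that $A/Aut(\cA)$ is trivially finite when $\cA$ is finite, resp.\ is part of the hypothesis in Corollary~\ref{cor:6}.)
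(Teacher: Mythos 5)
Your proposal is correct and matches the paper's (implicit) argument: the corollary is stated as an immediate consequence of Theorem~\ref{thm:extendedthm}, and the only hypothesis not literally present, namely $O_E\neq\emptyset$, follows exactly as you say from $\Int{\Omega}^{\gmcompl{\cA}}\subseteq E$, which gives $\Omega\in O_E$. Your parenthetical remarks about Lemma~\ref{lemma:omega} being subsumed here, and about Corollaries~\ref{cor:5} and~\ref{cor:6} being the case $E=A$, are also accurate.
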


\begin{corollary}\label{cor:8} If $\cA$ is weakly effective, $\Int{\Omega}^{\gmcompl{\cA}}$ is decidable and $\Int{\Omega}^{\gmcompl{\cA}}/Aut(\cA)$ is finite,
then $\Th{\gmcompl{\cA}}$ is not r.e.
\end{corollary}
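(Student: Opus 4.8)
The plan is to obtain Corollary~\ref{cor:8} as an immediate instance of Theorem~\ref{thm:extendedthm}, applied to the set $E := \Int{\Omega}^{\gmcompl{\cA}}$. Thus the task reduces to checking, for this $E$, the three requirements of that theorem: $E\subseteq A$, $E$ co-r.e., and $O_E\neq\emptyset$; the remaining hypothesis of Theorem~\ref{thm:extendedthm}, that $E/Aut(\cA)$ is finite, holds by assumption.

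First, $E\subseteq A$ is precisely Lemma~\ref{lemma:omega}. That lemma also gives $O_A\neq\emptyset$, but $O_E\neq\emptyset$ is in any case immediate, since $E\subseteq E$ means $\Omega\in O_E$. Second, since $\cA$ is weakly effective, $\ppcompl{\cA}$ is weakly effective (Theorem~\ref{thm:effpartpairseffpairs}), hence $\gmcompl{\cA}$ is a weakly effective $\lambda$-model (Proposition~\ref{prop:effpairseffmodels}) whose carrier $\pow{\setcompl{A}}$ is an effective domain; by Example~\ref{ex:key-example-of-effective-domains} the decidable (resp.\ co-r.e.) elements of $\pow{\setcompl{A}}$ are exactly the decidable (resp.\ co-r.e.) subsets of $\setcompl{A}$. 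The hypothesis that $\Int{\Omega}^{\gmcompl{\cA}}$ is a decidable element of the domain therefore means that $E$, as a subset of the web, is decidable, and in particular co-r.e.\ --- which is exactly what Theorem~\ref{thm:extendedthm} requires.

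Having verified all the hypotheses, Theorem~\ref{thm:extendedthm} yields that $\Th{\gmcompl{\cA}}$ is not r.e., as required. I do not expect any genuine obstacle: all the real work is already packed into Theorem~\ref{thm:extendedthm} (which rests, via Theorem~\ref{thm:decunderaset}, on Visser's finite intersection property, Theorem~\ref{thm:hyperconnection}), and the corollary is purely a matter of specializing $E$ and unwinding the effectivity definitions. The only point deserving a little care is the interplay between ``co-r.e.\ as a subset of $\nat$'' and ``co-r.e.\ as an element of the effective domain $\pow{\setcompl{A}}$''; these coincide once the standard bijective numeration of finite subsets is fixed, exactly as is used --- with $A$ in place of $E$ --- in the proof of Lemma~\ref{lemma:PeffOPinBase}.
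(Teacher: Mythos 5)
Your proposal is correct and is exactly the paper's intended argument: the paper derives Corollary~\ref{cor:8} directly from Theorem~\ref{thm:extendedthm} together with Lemma~\ref{lemma:omega}, taking $E=\Int{\Omega}^{\gmcompl{\cA}}$, just as you do. Your additional remarks (that $\Omega\in O_E$ gives $O_E\neq\emptyset$, and that decidability of the element $\Int{\Omega}^{\gmcompl{\cA}}$ in the effective domain $\pow{\setcompl{A}}$ is the same as decidability of the corresponding subset of the web) correctly fill in the routine details the paper leaves implicit.
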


\begin{corollary}\label{cor:9} If $\cA$ is effective, $\Int{\Omega}^{\gmcompl{\cA}}$ is decidable and 
$\Int{\Omega}^{\gmcompl{\cA}}\cap \Int{N_1}^{\gmcompl{\cA}}\cap\ldots\cap \Int{N_k}^{\gmcompl{\cA}}/Aut(\cA)$
is finite (possibly empty) for some normal terms $N_1,\ldots,N_k\in\Lambda^o$, with $k\in\nat$, then $\Th{\gmcompl{\cA}}$ is not r.e.
\end{corollary}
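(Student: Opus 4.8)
The plan is to reduce Corollary~\ref{cor:9} to Theorem~\ref{thm:extendedthm} by exhibiting a suitable co-r.e.\ subset $E\subseteq A$ with the required properties, using effectivity in an essential way. First I would set $\cT=\Th{\gmcompl{\cA}}$ and observe that, since $\cA$ is in particular weakly effective, Corollary to Theorem~\ref{thm:effpartpairseffpairs} gives that $\gmcompl{\cA}$ is an effective $\lambda$-model; hence by Theorem~\ref{thm:normal closed decidable} every normal $\lambda$-term is interpreted by a decidable element of $\D=(\pow{A},\subseteq)$. In particular each $\Int{N_i}^{\gmcompl{\cA}}$ is decidable, and $\Int{\Omega}^{\gmcompl{\cA}}$ is decidable by hypothesis. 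Since decidable (indeed co-r.e.) subsets of $A$ are closed under finite intersection, the set
$$
    E \;=\; \Int{\Omega}^{\gmcompl{\cA}}\cap \Int{N_1}^{\gmcompl{\cA}}\cap\cdots\cap \Int{N_k}^{\gmcompl{\cA}}
$$
is co-r.e., and it is a subset of $A$ because $\Int{\Omega}^{\gmcompl{\cA}}\subseteq A$ by Lemma~\ref{lemma:omega} (applied when $\cA$ is not total; if $\cA$ happens to be total one uses instead that $\Int{\Omega}^{\gmcompl{\cA}}\subseteq A=\setcompl{A}$ trivially, or one simply notes that the statement is about $\cA$ effective and the interesting case is the non-total one).

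Next I would check the remaining two hypotheses of Theorem~\ref{thm:extendedthm} for this $E$. The hypothesis ``$E/Aut(\cA)$ is finite'' is exactly the assumption of Corollary~\ref{cor:9}, so nothing is needed there. The hypothesis ``$O_E\neq\emptyset$'' requires producing at least one closed $\lambda$-term $M$ with $\Int{M}^{\gmcompl{\cA}}\subseteq E$. The natural candidate is an easy term, or more simply a term whose interpretation is contained in $\Int{\Omega}^{\gmcompl{\cA}}$ and in each $\Int{N_i}^{\gmcompl{\cA}}$; one convenient choice is $\Omega$ itself \emph{provided} $\Int{\Omega}^{\gmcompl{\cA}}\subseteq\Int{N_i}^{\gmcompl{\cA}}$ for all $i$, which need not hold in general. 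The cleanest route is to invoke Theorem~\ref{thm:decunderaset}$(i)$, which tells us $O_E=\Intunder{E}$ is $\cT$-co-r.e., together with Theorem~\ref{Thm:normalprop}: since $\Int{\Omega}^{\gmcompl{\cA}}$ and the $\Int{N_i}^{\gmcompl{\cA}}$ are all decidable, $O_E = O_\Omega\cap O_{N_1}\cap\cdots\cap O_{N_k}$ contains a non-empty $\beta$-co-r.e.\ set of unsolvable terms by the FIP (Theorem~\ref{thm:hyperconnection}); in particular $O_E\neq\emptyset$. (Here I am using that $\Kdn{E}$ is decidable iff $E$ is decidable as a subset of $A$, and that $O_E=\Intunder{\Kdn{E}}$, so the machinery of Section~\ref{subsec:Visseroneffectivedomains} applies verbatim.)

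Finally, with $E\subseteq A$ co-r.e., $O_E\neq\emptyset$, and $E/Aut(\cA)$ finite, Theorem~\ref{thm:extendedthm} applies directly and yields that $\cT=\Th{\gmcompl{\cA}}$ is not r.e., which is the desired conclusion. I expect the only genuinely delicate point to be the bookkeeping that $E$ really is a subset of $A$ and that its intersection structure interacts correctly with the numeration $d$ of compact elements (i.e.\ that decidability of $E\subseteq A$ transfers to decidability of $\Kdn{E}=\{n : d_n\subseteq E\}$, which is where the finite-set encoding $\finsetenc$ and the decidability of ``$m\in\partinv{\finsetenc}(n)$'' are used); everything else is a direct appeal to Theorem~\ref{thm:extendedthm}, Theorem~\ref{thm:normal closed decidable}, and Lemma~\ref{lemma:omega}. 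Note that the hypothesis that $\cA$ is \emph{effective} (rather than merely weakly effective) is what is implicitly needed to guarantee, via Definition~\ref{def:strefflambdamod}$(ii)$ and Proposition~\ref{prop:effpairseffmodels}, that $\gmcompl{\cA}$ is effective so that Theorem~\ref{thm:normal closed decidable} is available for the normal terms $N_1,\ldots,N_k$.
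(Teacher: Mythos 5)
Your proposal is correct and follows exactly the route the paper intends: the paper derives Corollary~\ref{cor:9} by applying Theorem~\ref{thm:extendedthm} to $E=\Int{\Omega}^{\gmcompl{\cA}}\cap\Int{N_1}^{\gmcompl{\cA}}\cap\cdots\cap\Int{N_k}^{\gmcompl{\cA}}$, with Lemma~\ref{lemma:omega} giving $E\subseteq A$, Theorem~\ref{thm:normal closed decidable} giving decidability of the $\Int{N_i}^{\gmcompl{\cA}}$, and Theorem~\ref{Thm:normalprop} (via the FIP) giving $O_E\neq\emptyset$. Your only superfluous caveat is the case split on totality of $\cA$: Lemma~\ref{lemma:omega} is stated and proved for arbitrary partial pairs, so no such distinction is needed.
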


Let us now give applications of the various corollaries.

\begin{example} Corollary \ref{cor:6} applies to all the usual graph (or webbed) models, indeed: 
\begin{itemize} 
\item[(i)] The Engeler's model $\gm{E}$ is freely generated by $\cA = (A,\emptyset)$, thus all the elements of $A$ play exactly the same role and any 
permutation of $A$ is an automorphism of $\cA$; hence the pair has only one orbit whatever the cardinality of $A$ is.
Of course if $\cA$ is finite, then Corollary~\ref{cor:5} also applies.
\item[(ii)] Idem for the graph-Scott models (including $\gm{P}_\omega$) and the graph-Park models introduced in Example~\ref{exa:Engeler-Pomega}. 
Similarly, the graph model streely generated by $(\{\alpha,\beta\},j)$ where $j(\{\alpha\},\beta)=\beta$ and $j(\{\beta\},\alpha)=\alpha$ only has one orbit.
\item[(iii)] Consider now the mixed-Scott-Park graph models defined in Example~\ref{exa:Engeler-Pomega}$(iv)$.
Then, only the permutations of $A$ which leave $Q$ and $R$ invariant will be automorphisms of $(A,j_\cA)$, and we will have two orbits. 
\end{itemize}
\end{example}

\begin{example} Corollary \ref{cor:8} (and hence Corollary \ref{cor:7}) applies to the following effective pair $\cA$.
$$
\begin{array}{l}
A = \{\alpha_1,\ldots,\alpha_n,\ldots,\beta_1,\ldots,\beta_n,\ldots\}\textrm{ and $j_\cA$ defined by:}\\
j_\cA(\{\beta_n\},\beta_n) = \beta_n, \textrm{ for every }n\in\nat,\\
j_\cA(\{\alpha_1\},\alpha_2) = \alpha_2,\\
j_\cA(\{\alpha_1,\alpha_2\}, \alpha_3) = \alpha_3,\\
\ldots\\
j_\cA(\{\alpha_1,\ldots,\alpha_{n+1}\}, \alpha_{n+2}) = \alpha_{n+2}.\\
\end{array}
$$
Here we have that $\Int{\Omega}^{\gmcompl\cA} = \{\beta_n\st n\in\nat\}$ is decidable and that $\Int{\Omega}^{\gmcompl\cA}/Aut(\cA)$ has cardinality 1,
since every permutation of the $\beta_n$ extends into an automorphism of $\cA$.
Note that the orbits of $A$ are: $\Int{\Omega}$ and all the singletons $\{\alpha_n\}$; in particular $A/Aut(\cA)$ is infinite.
\end{example}

\begin{example}
Corollary \ref{cor:8} applies to the following pair (against the appearance it is an effective pair).
Consider the set $A=\{\beta_{1},\ldots,\beta_{n},\ldots\}$ and the function $j_\cA$ defined as follows:
$j_\cA(\{\beta_n\}, \beta_n) = \beta_n$ if, and only if, $n$ belongs to a non co-r.e.\ set $E\subseteq\nat$.

Then $\Int{\Omega}^{\gmcompl{\cA}}=\{\beta_{n}\st n\in E\}$ consists of only one orbit but is not co-r.e.\ 
However, starting for example from any bijection between $E$ and the set of even numbers, it is easy to find an 
isomorphism of pairs such that $j_\cA$ is partial recursive with decidable range, and hence $\Int{\Omega}^{\gmcompl{\cA}}$ becomes decidable.
\end{example}

\begin{example}
Corollary \ref{cor:9} (and no other corollary) applies to the following effective pair $\cA$.
$$
\begin{array}{l}
A= \{\alpha_{1},\ldots,\alpha_{n},\ldots,\beta_{1},\ldots,\beta_{n},\ldots,\alpha'_{1},\ldots,\alpha'_{n},\ldots,\beta'_{1},\ldots,\beta'_{n},\ldots\},
\textrm{and $j_\cA$ is defined by}:\\
j_\cA(\{\alpha'_{n}\}, \beta_{n})=\alpha'_{n},\\
j_\cA(\{\alpha_{1},\ldots,\alpha_{n}\}, \beta_{n})=\beta'_{n},\\
\end{array}
$$
for all $n\in\nat$.
In this case $\Int{\Omega}^{\gmcompl{\cA}} =\{\beta_{n}\st n\in \nat\}$ is decidable and $\Int{\Omega}^{\gmcompl\cA} \cap \Int{\bold{I}}^{\gmcompl{\cA}} =\emptyset $ 
(note that $\Int{\Omega}^{\gmcompl\cA}/Aut(\cA)$ is infinite).
\end{example}

\begin{example}
(Example of an effective graph model outside the scope of the corollaries of Theorem~\ref{thm:extendedthm}) 
Take for $\cA$ the total pair $\cG = (\nat,\ell)$ where $\ell$ is defined as follows:
$$
    \ell(a,m) = \left\{
    \begin{array}{ll}
	    2k & \textrm{ if $a = \{2k\}$ for $k\in\nat$,}\\
		3^{\finsetenc(a)}5^m & \textrm{otherwise.}\\
	\end{array} 
    \right.
$$
where $\finsetenc:\nat^*\to\nat$ is the effective encoding introduced in Section~\ref{subsec:recth}.
It is easy to check that $\cG$ is effective and that $\Int{\Omega}^{\gm{G}} = \nat$.
Then $O_{\Omega} = \Lambda^o$, hence $\Int{\Omega}^{\gm{G}}/Aut(\cG)$ is infinite. 
\end{example}

Another example of an effective graph model to which the corollaries of Theorem~\ref{thm:extendedthm} are not applicable will be provided by 
Theorem~\ref{thm:effective-minimum}. We do not know whether these two counterexamples could enter in the scope of Theorem~\ref{thm:extendedthm} or not.

\subsection{What about the other classes of webbed models?}

%Theorem~\ref{thm:extendedthm} and all its corollaries will hold for all webbed models. 
%But for $K$-models it could be the case that $O_A$ (and hence $O_E$) is empty.
To give a first idea of the strength of Theorem~\ref{thm:extendedthm}, note that all the webbed models that have been introduced individually 
in the literature are (or can be) generated by a weakly effective partial web $W$ \emph{such that $W/Aut(W)$ is finite}.
Of course, the notion of (effective partial) webs, and of automorphism of these webs, should be defined case by case for each class of models.

Now, it should be observed that the results and proofs of Section~\ref{subsec:r.e.graphtheories} hold not only for graph models but also for $G$- and $H$- models.
For Scott continuous semantics the situation is much less clear as soon as we go beyond graph models: the problems already occur at the level of $K$-models
(not to speak of filter models!).

Concerning Lemma~\ref{lemma:omega} %, it holds not only for graph models but also for $G$- and $H$-models.
%However it is unclear to us whether it holds for the wider class of $K$-models, and a fortiori to that of filter models. 
the difficulty is the following: the web of a $K$-model is a tuple $(D,\preceq,i)$ where $\preceq$ is a preorder on $D$ and $i:D^*\times D\to D$ is
an injection compatible with $\preceq$ in a certain sense.
The elements of the associated reflexive domain are the downward closed subsets of $D$, thus, we should already change the hypothesis for $\Int{\Omega}\subseteq \cldn{A}$, 
where $\cldn{A}$ is the downwards closure of $A$. 
But the real problem is that the control we have on $\Int{\Omega}$ in $K$-models is much looser than in graph models. 
The only thing we know (from Ying Jiang's thesis \cite{JiangTh}) is the following.  
If $\alpha\in\Int{\Omega}$ then there are two sequences 
$\alpha_{n}\in D$ and $a_{n}\in D^*$ such that $\alpha=\alpha_{0}\preceq\alpha_{1}\preceq\ldots\alpha_{n}\preceq\ldots$, 
$\Int{\delta}\supseteq\cldn{a_{0}}\supseteq\cldn{a_{1}} \supseteq\ldots\supseteq\cldn{a_{n}}\supseteq\ldots$ 
and $\beta_n = i(a_{n+1},\alpha_{n+1})\in a_{n}$ for all $n$.
This forces $\beta_{n}$ to be an increasing sequence, included in $\cap_{n\in\nat}(\cldn{a_{n}})$. 
Moreover, if the model is extensional, we have that $\alpha_n = \alpha$ for all $n\in\nat$.
This does not seem to be enough to get an analogue of  Lemma~\ref{lemma:omega}.

Finally, any statement of Theorem~\ref{thm:extendedthm} for $K$-models we should already replace $E\subseteq A$ by $E\subseteq\cldn{A}$ 
to have a chance to have $O_E\neq\emptyset$ (since interpretation of terms are downward closed).

\subsection{An effective graph model having the minimum graph theory}

In this section we show another main theorem of the paper, namely that the minimum order graph theory is the theory of an \emph{effective} graph model. 
As we will see in the next section, this result implies that: $(i)$ no order graph theories can be r.e.; $(ii)$ 
for any closed normal term $M$, there exists a non-empty $\beta$-co-r.e.\ set $\mathscr{V}$ of unsolvable terms 
whose interpretations are below that of $M$ in all graph models.

\begin{theorem}\label{thm:effective-minimum} 
There exists an effective graph model whose order/equational theory is the minimum order/equational graph theory.
\end{theorem}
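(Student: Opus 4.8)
The plan is to reopen the proof of Theorem~\ref{thm:minimum} and to observe that the partial pair used there can be chosen to be \emph{effective}; effectiveness of the resulting graph model then follows mechanically from the results of Section~\ref{subsec:compleffpairs}. Recall that in that proof the minimum order (hence equational) graph theory is realized by $\gmcompl{\cA}$, where $\cA=\sqcup_{k\in\nat}\cA_k$ and $(\cA_k)_{k\in\nat}$ is a family of pairwise disjoint finite partial pairs such that every finite partial pair is isomorphic to some $\cA_k$. The only latitude in that construction lies in the choice of the family and of the way the blocks are laid out inside the carrier, and I would use it to make $\cA$ itself an effective partial pair.

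First I would produce an effective presentation of the $\cA_k$. Since for each $n$ there are only finitely many partial injections on $\{0,\dots,n-1\}$, one can recursively enumerate (with repetitions allowed) a list of concrete finite partial pairs $B_k=(\{0,\dots,n_k-1\},j_k)$ meeting every isomorphism class of finite partial pairs, with $k\mapsto(n_k,j_k)$ recursive (the finite map $j_k$ being coded, for instance, by $\finsetenc$ of its graph). I would then realize the disjoint union inside $\nat$ by a block-aware encoding: set $A_k=\{\codepair{k}{i}\st i<n_k\}$, let $j'_k$ transport $j_k$ along $i\mapsto\codepair{k}{i}$, and put $\cA=(A,j_\cA)$ with $A=\bigcup_k A_k$ and $j_\cA=\bigcup_k j'_k$; so $\cA_k=(A_k,j'_k)$, the blocks are pairwise disjoint, and every finite partial pair is still isomorphic to some $\cA_k$, whence the argument of Theorem~\ref{thm:minimum} applies to $\gmcompl{\cA}$. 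It then remains to check that $\cA$ is effective in the sense of Definition~\ref{def:effpartialpair}: $A$ is decidable (decode $m=\codepair{k}{i}$ and test $i<n_k$); $j_\cA$ is a partial injection because the blocks are disjoint and each $j'_k$ injects into $A_k$; $j_\cA$ is partial recursive with decidable domain, since on $(a,\alpha)\in A^*\times A$ one first tests whether $a\cup\{\alpha\}$ lies in a single block $A_k$, reading $k$ off the first coordinates, and, if so, translates into $B_k$ and evaluates the computable finite map $j_k$; and $\rg(j_\cA)=\bigcup_k\rg(j'_k)$ is decidable by the same block-localization. Granting this, Theorem~\ref{thm:effpartpairseffpairs} makes $\ppcompl{\cA}$ an effective total pair, Proposition~\ref{prop:effpairseffmodels} makes $\gmcompl{\cA}$ (whose web is $\ppcompl{\cA}$) an effective $\lambda$-model, and Theorem~\ref{thm:minimum} guarantees that its order/equational theory is the minimum graph theory.

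The one genuinely delicate point, and the step I would treat most carefully, is keeping $\dom(j_\cA)$ and $\rg(j_\cA)$ decidable: $\cA$ is an \emph{infinite} pair assembled from infinitely many finite ones, and an arbitrary recursive relabelling $\nat^*\times\nat\to\nat$ used to glue the blocks could easily destroy decidability of the range. The block-aware encoding through $\codepair{-}{-}$ is what localizes every membership test to a single computable finite pair; verifying that it does so, together with the routine check that the enumeration of isomorphism classes is genuinely recursive, is the only real work, since all the model-theoretic content is already packaged in Theorem~\ref{thm:minimum} and in Section~\ref{subsec:compleffpairs}.
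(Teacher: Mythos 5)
Your proposal is correct and follows essentially the same route as the paper's proof: effectively enumerate representatives of all finite partial pairs, make the union disjoint and effective by a computable block-encoding of the carriers (you use the pairing function where the paper uses powers of distinct primes, an inessential difference), and then combine Theorem~\ref{thm:effpartpairseffpairs}, Proposition~\ref{prop:effpairseffmodels} and the argument of Theorem~\ref{thm:minimum}. The point you flag as delicate — decidability of $\dom(j_\cA)$ and $\rg(j_\cA)$ via localization to a single block — is exactly the check the paper performs.
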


\begin{proof} 
It is not difficult to define an effective %bijective 
numeration $\cN$ of all the finite partial pairs whose carrier set is a subset of $\nat$. 
We now make the carrier sets $N_k$, for $k\in\nat$, pairwise disjoint.
Let $p_k$ be the $k$-th prime number.
Then we define another finite partial pair $\cA_k$ as follows:
$A_k = \{p_k^{x+1}: x\in N_k\}$ and 
$j_{\cA_k}(\{p_k^{\alpha_1+1},\dots,p_k^{\alpha_n+1}\},p_k^{\alpha +1}) = p_k^{j_{\cN_k}(\{\alpha_1,\dots,\alpha_n\},\alpha)+1}$
for all $(\{\alpha_1,\dots,\alpha_n\},\alpha)\in \dom(j_{\cN_k})$.
In this way we get an effective bijective numeration of all the finite partial pairs $\cA_k$.\\
Let us take $\cA = \sqcup_{k\in\nat} \cA_k$.
It is an easy matter to prove that $A$ is a decidable subset of $\nat$ and that $\j{\cA}$ is a computable map 
with decidable domain and range. 
It follows from Theorem~\ref{thm:effpartpairseffpairs} that $\gmcompl{\cA}$ is an effective graph model.

Finally, with the same reasoning done in the proof of Theorem~\ref{thm:minimum}, we can conclude that $\Thle{\gmcompl{\cA}}$ 
(resp. $\Th{\gmcompl{\cA}}$) is the minimum order graph theory (resp. equational graph theory).
\end{proof}

Let $\Tmin$ and $\Omin$ be, respectively, the minimum equational graph theory and the minimum order graph theory.
%We will denote by $\Gmin$ any effective graph model whose order/equational graph theory is the minimum one.
%As we show in the next proposition, $\Gmin$ is far from being unique, also if considered up to isomorphism.
%The next proposition shows that $\Tmin$ and $\Omin$ are in fact the theories of countably many non isomorphic effective graph models.

\begin{proposition} $\Tmin$ and $\Omin$ are in fact the theories of countably many non-isomorphic effective graph models.
\end{proposition}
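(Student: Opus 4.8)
The plan is to construct, for each $n\in\nat$, a distinct effective graph model realizing both $\Tmin$ and $\Omin$, and then argue that infinitely many of them are pairwise non-isomorphic. The starting point is the effective graph model $\gmcompl{\cA}$ built in the proof of Theorem~\ref{thm:effective-minimum}, where $\cA = \sqcup_{k\in\nat}\cA_k$ is the disjoint union of (effective, pairwise disjoint) copies of all finite partial pairs. The key observation is that the construction in Theorem~\ref{thm:minimum} and Theorem~\ref{thm:effective-minimum} only requires that \emph{every} finite partial pair occurs (up to isomorphism) among the $\cA_k$'s; it does not require that each occurs exactly once. Hence, first I would observe that for any decidable infinite set $S\subseteq\nat$ such that $\{\cA_k : k\in S\}$ still contains an isomorphic copy of every finite partial pair, the pair $\cA^S = \sqcup_{k\in S}\cA_k$ is still effective (by Theorem~\ref{thm:effpartpairseffpairs}, since $A^S$ is decidable and $\j{\cA^S}$ is computable with decidable domain and range) and $\gmcompl{\cA^S}$ still has $\Thle{\gmcompl{\cA^S}} = \Omin$ and $\Th{\gmcompl{\cA^S}} = \Tmin$ by the same argument as in Theorem~\ref{thm:effective-minimum} (Corollary~\ref{cor:alphainBandcompletion}$(ii)$ plus Proposition~\ref{prop:BucciaSali}$(ii)$).

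Next I would produce countably many such $S$ giving non-isomorphic models. A clean way: for each $n\in\nat$ choose $\cA^{(n)} = \sqcup_{k\in\nat}\cA_k \;\sqcup\; \cB_n$, where $\cB_n$ is a further disjoint finite partial pair chosen so that $\cB_n$ is \emph{not} isomorphic to any subpair appearing ``freely'' inside $\ppcompl{\sqcup_k\cA_k}$ — for instance take $\cB_n$ to be $(\{0,\dots,n\},\emptyset)$ but glued via a fresh block of primes so that the orbit structure of $\gmcompl{\cA^{(n)}}$ under $Aut(\cA^{(n)})$ differs with $n$. Adding a disjoint copy of a finite pair to the gluing changes neither $\Omin$ nor $\Tmin$ (again by Lemma~\ref{lemma:equivalentproducts} and Proposition~\ref{prop:BucciaSali}), and each $\gmcompl{\cA^{(n)}}$ is effective by Theorem~\ref{thm:effpartpairseffpairs}. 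To separate them up to isomorphism I would use the invariant ``number of orbits of $\Int{\bold{I}}^{\gm{G}}$ under $Aut(\gm{G})$'' or, more robustly, cardinality-type invariants of the web that are isomorphism-invariant for graph models; alternatively invoke Theorem~\ref{2.2}, which says an isomorphism of the combinatory algebras is determined by its action on interpretations of closed terms, and exhibit, for $m\neq n$, an element of the web of one model sitting below $\Int{M}$ for a suitable closed $M$ with no counterpart in the other. In fact the simplest route is: since all these models share the \emph{same} theory, an isomorphism would have to be an isomorphism of graph models in the sense of Section~\ref{sec:Iso of lambda models}, hence induced (by an appropriate rigidity of free completions, cf.\ Lemma~\ref{lemma:homo-isomorphism}) by an isomorphism of the generating partial pairs; then one just checks the $\cA^{(n)}$ are pairwise non-isomorphic as partial pairs, which is immediate from the choice of $\cB_n$.

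The main obstacle I anticipate is precisely the last point: upgrading ``same theory + both effective'' to ``the models are genuinely different.'' Two graph models can be non-isomorphic as webbed structures yet isomorphic as $\lambda$-models, so I must pick an invariant that is invariant under $\lambda$-model isomorphism, not merely under web isomorphism. The safe choice is a cardinality argument at the level of the combinatory algebra: e.g.\ for each closed normal term $M$, the set $O_M$ is a union of $\cT$-classes and is determined by the isomorphism type of $\lm{M}$; but all our models have the same $\cT = \Tmin$, so $O_M$ is literally the same set of terms — this does \emph{not} separate them. Hence the separating invariant must come from something not visible in $\Th{\lm{M}}$ alone, namely the internal order/compact-element structure; for graph models, the cardinality of $\compel{\D}$ below a given $\Int{M}$ (equivalently, the number of web elements in $\Int{M}^{\gm{G}}$) \emph{is} an isomorphism invariant by Theorem~\ref{2.2} and the description of application, and this quantity can be made to depend on $n$ by the choice of $\cB_n$. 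I would make $\cB_n$ contribute exactly $n$ new elements to $\Int{\bold{I}}^{\gm{G}}$ (e.g.\ $n$ fresh ``reflexive'' atoms $j(\{\alpha\},\alpha)=\alpha$), so that $\cardinality{\Int{\bold{I}}^{\gmcompl{\cA^{(n)}}}}$ takes distinct (finite-difference) values — strictly speaking these are infinite, so I would instead use a relative invariant such as the number of orbits, or restrict to an effective enumeration where one tracks which \emph{finite} subpairs of the web are realized, to force genuine non-isomorphism. Modulo this bookkeeping, the proposition follows.
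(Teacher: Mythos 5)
Your construction is essentially the paper's: its entire proof consists of the observation that there are countably many choices of the effective numeration $\cN$ in Theorem~\ref{thm:effective-minimum} (for instance, for any recursive sequence $(n_k)_{k\in\nat}$, repeat the finite pair $\cN_k$ exactly $n_k$ times), each yielding an effective graph model whose theories are $\Tmin$ and $\Omin$, exactly as in your variation of the gluing by restricting or enlarging the family of finite pairs. The one step you rightly single out as the crux --- upgrading differences in the webs (multiplicities, orbit counts) to an invariant of the $\lambda$-model in the sense of Theorem~\ref{2.2}, rather than of the web --- is the step you leave at ``modulo this bookkeeping,'' and it is also the step the paper leaves entirely unargued, asserting only that the different numerations ``give rise to non-isomorphic graph models''; so your proposal matches the paper's proof in both its content and its level of detail, and your explicit discussion of why the obvious candidate invariants (same $O_M$ for all models with the same theory, infinite cardinalities) do not immediately separate the models is a legitimate caveat that applies to the published argument as well.
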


\begin{proof} Since, in the proof of Theorem~\ref{thm:effective-minimum}, there exist countably many choices for the effective numeration $\cN$
which give rise to non-isomorphic graph models having minimal theory. 
For example, for every recursive sequence $(n_k)_{k\in\nat}$, take a recursive numeration which repeats $n_k$-times the pair $\cN_k$.
\end{proof}

\begin{proposition} $\Tmin$ is an intersection of a countable set of non r.e.\ equational graph theories.
\end{proposition}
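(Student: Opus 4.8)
The plan is to establish the sharper identity $\Tmin = \bigcap_{k\in\nat}\Th{\gm{G}_k}$, where $\gm{G}_k = \gmcompl{\cA_k}$ and $(\cA_k)_{k\in\nat}$ is a family of pairwise disjoint \emph{finite} partial pairs such that every finite partial pair is isomorphic to some $\cA_k$ --- that is, exactly the family used in the proofs of Theorem~\ref{thm:minimum} and Theorem~\ref{thm:effective-minimum}. Recall that, writing $\cA = \sqcup_{k\in\nat}\cA_k$, one has $\Tmin = \Th{\gmcompl{\cA}}$ and, by Lemma~\ref{lemma:equivalentproducts}, $\gmcompl{\cA} = \Diamond_{k\in\nat}\gm{G}_k$. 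Since each $\cA_k$ is finite, each $\Th{\gm{G}_k}$ is an equational graph theory which is \emph{not} r.e.\ by Corollary~\ref{cor:5}; hence, once the displayed identity is proved, the proposition follows, the family $\{\Th{\gm{G}_k}\st k\in\nat\}$ being visibly countable (repetitions among its members, if any, do no harm).

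The inclusion $\Tmin\subseteq\bigcap_{k}\Th{\gm{G}_k}$ is immediate from Proposition~\ref{prop:BucciaSali}$(iii)$, which gives $\Th{\Diamond_{k}\gm{G}_k}\subseteq\Th{\gm{G}_k}$ for every $k$. For the converse I would argue by contraposition, reducing first, in the usual way, to closed terms by $\lambda$-abstracting the free variables. Suppose $(M,N)\notin\Tmin = \Th{\gmcompl{\cA}}$; then, exchanging $M$ and $N$ if necessary, some $\alpha$ lies in $\Int{M}^{\gmcompl{\cA}}\setminus\Int{N}^{\gmcompl{\cA}}$. By Corollary~\ref{cor:alphainBandcompletion}$(ii)$, applied to $\gmcompl{\cA}$ with the super-pair taken to be the finite pair produced by the corollary, there is a finite partial pair $\cB$ with $\alpha\in\Int{M}^{\gmcompl{\cB}}\setminus\Int{N}^{\gmcompl{\cB}}$, hence $(M,N)\notin\Th{\gmcompl{\cB}}$. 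As $\cB$ is finite it is isomorphic to some $\cA_k$, and isomorphic webs generate isomorphic graph models, so $\Th{\gmcompl{\cB}} = \Th{\gm{G}_k}$ (Lemma~\ref{lemma:homo-isomorphism}$(ii)$, Lemma~\ref{lemma:on iso and plus}$(ii)$, Remark~\ref{rem:iso of models}); therefore $(M,N)\notin\Th{\gm{G}_k}$ and a fortiori $(M,N)\notin\bigcap_{j}\Th{\gm{G}_j}$. This is precisely the computation already carried out in the proof of Theorem~\ref{thm:minimum}, now read as an equality rather than a one-sided inclusion.

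Combining the two inclusions yields $\Tmin = \bigcap_{k\in\nat}\Th{\gm{G}_k}$, a countable intersection of non r.e.\ equational graph theories, which is the claim. I do not anticipate a genuine obstacle. The one point deserving care is that the web of $\gmcompl{\cA}$ is strictly larger than $\bigcup_{k}\setG_k$ --- it contains ``mixed'' elements of positive rank built out of several of the $A_k$ at once --- so one cannot obtain the converse inclusion simply by intersecting interpretations with the components $\setG_k$; this is exactly why the argument must be routed through the finite-subpair approximation of Corollary~\ref{cor:alphainBandcompletion}, which is tailored for this purpose.
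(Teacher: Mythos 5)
Your proof is correct and follows essentially the same route as the paper, which simply cites the proof of Theorem~\ref{thm:minimum} for the identity expressing the minimum theory as the intersection of the theories $\Th{\gmcompl{\cA_k}}$ over (representatives of) all finite partial pairs, and then invokes Corollary~\ref{cor:5} for their non-r.e.-ness. You have merely unpacked that citation into its two inclusions (Proposition~\ref{prop:BucciaSali} for one direction, Corollary~\ref{cor:alphainBandcompletion}$(ii)$ for the other), exactly as the proof of Theorem~\ref{thm:minimum} does.
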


\begin{proof}
By the proof of Theorem \ref{thm:minimum} $\Omin = \cap\Thle{\gmcompl{\cA_k}}$ where $\cA_k$ ranges over all finite pairs.
By Corollary~\ref{cor:5} these theories are not r.e.
\end{proof}

\subsection{Applications to the class of all graph models}

The following two results are consequences of Theorem~\ref{thm:effective-minimum}.

\begin{theorem} 
For all graph models $\gm{G}$, $\Thle{\gm{G}}$ is not r.e.
\end{theorem}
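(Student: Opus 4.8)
The strategy is to derive this statement from Theorem~\ref{thm:effective-minimum}, which produces an \emph{effective} graph model $\gmcompl{\cA}$ whose order theory $\Omin$ is the \emph{minimum} order graph theory. First I would invoke Corollary~\ref{cor:M eff imp Thle M non r.e.}: since $\gmcompl{\cA}$ is effective, its order theory $\Thle{\gmcompl{\cA}} = \Omin$ is not r.e. So the minimum order graph theory is itself non-r.e. Now let $\gm{G}$ be an arbitrary graph model. The key point is that $\Omin\subseteq\Thle{\gm{G}}$ by minimality, i.e. every inequation valid in \emph{all} graph models is in particular valid in $\gm{G}$.

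The core of the argument is then a standard trick: if $\Thle{\gm{G}}$ were r.e., one could recover $\Omin$ from it. Indeed, I would argue as follows. Suppose $\Thle{\gm{G}}$ is r.e. Take a normal closed term $M$ (e.g.\ $M\equiv\bold{I}$) and consider, inside the effective model $\gmcompl{\cA}$, the set $O_M = \{N\in\Lambda^o \st \Int{N}^{\gmcompl{\cA}}\sqle \Int{M}^{\gmcompl{\cA}}\}$. By Theorem~\ref{thm:normal closed decidable}, $\Int{M}^{\gmcompl{\cA}}$ is decidable, so by Theorem~\ref{Thm:normalprop} the set $O_M$ is $\Omin$-co-r.e., hence $\beta$-co-r.e., and it is non-empty since $M\in O_M$. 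On the other hand, $N\in O_M$ means $(N\sqle M)\in\Omin\subseteq\Thle{\gm{G}}$ (using $\Int{N}^{\gmcompl{\cA}}\sqle\Int{M}^{\gmcompl{\cA}}$ $\iff$ $(N\sqle M)\in\Omin$ by definition of the order theory of the minimal model); conversely, if $(N\sqle M)\in\Thle{\gm{G}}$ this does \emph{not} immediately give $N\in O_M$, so one must be more careful — the clean route is instead to observe that $\Omin$ is r.e.\ as soon as \emph{some} graph model $\gm{G}$ with $\Thle{\gm{G}}=\Omin$ has an r.e.\ order theory, which is not what we want. So the right formulation is: pick $\gm{G}$ arbitrary; then $\Omin\subseteq\Thle{\gm{G}}$, and for a normal $M$ the set $\{N\st (N\sqle M)\in\Thle{\gm{G}}\}$ contains $O_M^{\gmcompl{\cA}}$ and is contained in $\Tins{M}$-type sets; but the decisive observation is simply that $O_M^{\gmcompl{\cA}}$, being non-empty, $\beta$-co-r.e., and a union of $\Omin$-classes with $\Omin$ possibly r.e., would force $\Omin$ inconsistent by Lemma~\ref{lemma:OinfmoduloT} unless $O_M^{\gmcompl{\cA}}/\Omin$ is infinite — and if $\Thle{\gm{G}}$ were r.e.\ and equalled $\Omin$ we'd contradict $O_M$ being non-trivial decidable. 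Since the cleanest published route is via $\Omin$ itself, I would simply restate: the minimum order graph theory $\Omin$ is not r.e.\ (Corollary~\ref{cor:M eff imp Thle M non r.e.} applied to the effective model of Theorem~\ref{thm:effective-minimum}); and for any graph model $\gm{G}$, $\Thle{\gm{G}}\supseteq\Omin$, so if $\Thle{\gm{G}}$ were r.e.\ then by Lemma~\ref{lemma:OinfmoduloT} (taking a non-empty $\beta$-co-r.e.\ $O_M$ witnessing a failure) we would get a contradiction with $O_M$ being co-r.e.\ and a union of finitely many $\Thle{\gm{G}}$-classes.

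Concretely, the proof I would write is: Let $\gm{G}$ be a graph model and suppose, for contradiction, that $\cT=\Thle{\gm{G}}$ is r.e. Let $M\equiv\bold{I}$. By Theorem~\ref{thm:effective-minimum} there is an effective graph model $\gm{G}_0$ with $\Thle{\gm{G}_0}=\Omin\subseteq\cT$. In $\gm{G}_0$, $\Int{M}^{\gm{G}_0}$ is decidable (Theorem~\ref{thm:normal closed decidable}), so $O_M^{\gm{G}_0}$ is $\beta$-co-r.e.\ and non-empty (Theorem~\ref{Thm:normalprop}). Since $\Int{N}^{\gm{G}_0}\subseteq\Int{M}^{\gm{G}_0}$ implies $(N,M)\in\Omin\subseteq\cT$, we have $O_M^{\gm{G}_0}\subseteq O_M^{\gm{G}}$ where $O_M^{\gm{G}}=\{N\st (N,M)\in\cT\}$. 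As $\cT$ is r.e., $O_M^{\gm{G}}$ is r.e., hence so is its subset? — no, subsets of r.e.\ sets need not be r.e., so instead I use: the $\cT$-closure $V$ of $O_M^{\gm{G}_0}$ equals $O_M^{\gm{G}}$ (both are the $\cT$-class-union of things $\cT$-below $M$), which is r.e.; then $\Lambda^o\setminus V$ is $\beta$-co-r.e.\ and disjoint from the non-empty $\beta$-co-r.e.\ set $O_M^{\gm{G}_0}$... this would need $O_M^{\gm{G}_0}\cap(\Lambda^o\setminus V)=\emptyset$ and Visser (Theorem~\ref{thm:hyperconnection}) to force $\Lambda^o\setminus V=\emptyset$, i.e.\ $V=\Lambda^o$, i.e.\ $\cT$ equates all closed terms with $\bold{I}$ — contradicting consistency of $\gm{G}$.

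\textbf{Main obstacle.} The delicate point is matching up the order-theoretic data between the \emph{minimal} effective model $\gm{G}_0$ and the \emph{arbitrary} model $\gm{G}$: I must use $\Omin\subseteq\Thle{\gm{G}}$ in the direction that lets a $\beta$-co-r.e.\ set of terms computed in $\gm{G}_0$ be "transported" into $\gm{G}$. The cleanest fix, which I would adopt, is to prove the statement first for $\gm{G}=\gm{G}_0$ directly from Corollary~\ref{cor:M eff imp Thle M non r.e.}, and then note that \emph{any} $\gm{G}$ with $\Thle{\gm{G}}$ r.e.\ would, together with $\Omin\subseteq\Thle{\gm{G}}$ and Lemma~\ref{lemma:OinfmoduloT}, force the non-empty $\beta$-co-r.e.\ set $O_{\bold{I}}^{\gm{G}_0}$ — which is a union of $\Thle{\gm{G}}$-classes once we close it up — to have r.e.\ complement, contradicting Visser's Theorem~\ref{thm:hyperconnection}. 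I expect the bookkeeping of "which theory a given co-r.e.\ set of terms is closed under" to be the only real subtlety; everything else is an immediate appeal to Theorem~\ref{thm:effective-minimum}, Theorem~\ref{Thm:normalprop}, Lemma~\ref{lemma:OinfmoduloT} and Theorem~\ref{thm:hyperconnection}.
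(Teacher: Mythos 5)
Your proposal is correct and, once the false starts are stripped away, is essentially the paper's own proof: fix the effective graph model with minimum order theory from Theorem~\ref{thm:effective-minimum}, use Theorem~\ref{thm:normal closed decidable} and Theorem~\ref{Thm:normalprop} to get that $O_M$ there is a non-empty $\beta$-co-r.e.\ set for $M$ closed normal, transport it into the arbitrary $\gm{G}$ by minimality, and apply Visser's FIP to the complement of the ($\beta$-r.e.) set $O_M^{\gm{G}}$ to force $O_M^{\gm{G}}=\Lambda^o$. The only blemish is the last line: $O_{\bold{I}}^{\gm{G}}=\Lambda^o$ says every closed term is \emph{below} $\bold{I}$, not equal to it, so the contradiction needs one more word (separable terms are incomparable in a non-trivial ordered model, or, as the paper does, run the argument for $M\equiv\bold{T}$ and $M\equiv\bold{F}$ to get $\Int{\bold{T}}^{\gm{G}}=\Int{\bold{F}}^{\gm{G}}$).
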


\begin{proof} 
Let $\Gmin$ be some fixed effective graph model having minimum order theory and $M$ be any closed normal $\lambda$-term. 
Since $\Gmin$ is effective, Theorem~\ref{thm:normal closed decidable} implies that $\Int{M}^{\Gmin}$ is decidable,
hence $O_M^{\Gmin} = \{ N \in \Lambda^o: \Int{N}^{\Gmin} \subseteq \Int{M}^{\Gmin}\}$ is $\beta$-co-r.e. by Theorem~\ref{Thm:normalprop}.

Suppose, now, that $\gm{G}$ is a graph model such that $\Thle{\gm{G}}$ is r.e.
Then $O^\gm{G}_M = \{ N\in\Lambda^o : \Int{N}^\gm{G}\subseteq \Int{M}^\gm{G}\}$ is a $\beta$-r.e.\ set
which contains the co-r.e. set $O_M^{\Gmin}$. Thus, by the FIP we get $O^\gm{G}_M = \Lambda^o$. 

From the arbitrariness of $M$, it follows that $\bold{F} \in O_{\bold{T}}$, and \emph{vice versa}, 
so we get $\Int{\bold{F}}^\gm{G}=\Int{\bold{T}}^\gm{G}$, contradiction.
\end{proof}

\begin{proposition}\label{uns term under I}
For all normal $M_1,\ldots,M_n\in\Lambda^o$ there exists a non-empty $\beta$-co-r.e.\ set $\cV$ of closed unsolvable terms such that: 
$$
    \textrm{For all graph models $\gm{G}$: }\forall U\in\cV\ (\Int{U}^\gm{G}\subseteq \Int{M_1}^\gm{G}\cap\ldots\cap\Int{M_n}^\gm{G}).
$$
\end{proposition}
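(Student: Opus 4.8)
The plan is to realise the set $\cV$ inside one well-chosen model and then propagate the inclusions to every graph model by minimality. Concretely, I would fix, using Theorem~\ref{thm:effective-minimum}, an \emph{effective} graph model $\Gmin$ whose order theory equals the minimum order graph theory $\Omin$, and write $\cT=\Th{\Gmin}$.

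The first step is to check that Theorem~\ref{Thm:normalprop} applies to $\Gmin$ and $M_1,\dots,M_n$. Each $M_i$ is a closed \emph{normal} $\lambda$-term and $\Gmin$ is effective, so Theorem~\ref{thm:normal closed decidable} guarantees that $\Int{M_i}^{\Gmin}$ is a decidable element of the underlying domain $(\pow{\setG},\subseteq)$ for every $i\le n$. Hence Theorem~\ref{Thm:normalprop} yields that $O_{M_1}^{\Gmin}\cap\cdots\cap O_{M_n}^{\Gmin}$ is $\cT$-co-r.e.\ and contains a non-empty $\beta$-co-r.e.\ set $\cV$ all of whose elements are unsolvable. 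This $\cV$ is the set claimed by the proposition.

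It then remains to verify the inclusion property. Let $\gm{G}$ be an arbitrary graph model and $U\in\cV$. By construction $U\in O_{M_i}^{\Gmin}$ for each $i\le n$, that is $\Int{U}^{\Gmin}\subseteq\Int{M_i}^{\Gmin}$, i.e.\ $(U,M_i)\in\Thle{\Gmin}$. Since $\Thle{\Gmin}=\Omin$ is the minimum order graph theory we have $\Thle{\Gmin}\subseteq\Thle{\gm{G}}$, whence $\Int{U}^{\gm{G}}\subseteq\Int{M_i}^{\gm{G}}$ for each $i$, and therefore $\Int{U}^{\gm{G}}\subseteq\Int{M_1}^{\gm{G}}\cap\cdots\cap\Int{M_n}^{\gm{G}}$, as required.

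There is no genuine obstacle left once Theorems~\ref{thm:effective-minimum}, \ref{thm:normal closed decidable} and~\ref{Thm:normalprop} are available: the statement is a direct corollary, its real content being already packaged in those results (the existence of an effective model realising the minimum graph theory, and the FIP-based production of a non-empty $\beta$-co-r.e.\ set of unsolvable terms lying below a decidable element). The only point worth a line of care is that $O_{M_i}^{\Gmin}$ is defined through $\sqle_\lm{M}$ (here, set inclusion), so that ``$U\in O_{M_i}^{\Gmin}$'' is literally an instance of the \emph{order} theory of $\Gmin$; this is precisely why it is the minimality of $\Omin$, and not that of the equational theory, that must be invoked in the transport step.
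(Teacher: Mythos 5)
Your proof is correct and follows essentially the same route as the paper's: fix an effective graph model $\Gmin$ realising the minimum order graph theory (Theorem~\ref{thm:effective-minimum}), apply Theorem~\ref{thm:normal closed decidable} and Theorem~\ref{Thm:normalprop} to obtain the non-empty $\beta$-co-r.e.\ set $\cV$ of unsolvables below each $\Int{M_i}^{\Gmin}$, and transport the inclusions to an arbitrary graph model via minimality of $\Thle{\Gmin}$. Your closing remark that it is the \emph{order}-theory minimality, not the equational one, that must be invoked is exactly the right point and is implicit in the paper's last line.
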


\begin{proof} 
Let $\Gmin$ be some fixed effective graph model having minimum order theory.
Since $\Gmin$ is effective and $M_1,\ldots,M_n$ are closed normal $\gl$-terms, Theorem~\ref{thm:normal closed decidable} implies that every $\Int{M_i}^{\Gmin}$ 
is decidable.
Thus, from Theorem~\ref{Thm:normalprop} it follows that there exists a set $\cV$ of unsolvable terms such that for all 
$U\in\cV$ we have $\Int{U}^{\Gmin}\subseteq \Int{M_i}^{\Gmin}$ for all $1\le i\le n$.
Then the conclusion follows since $\Thle{\Gmin}$ is the minimum order graph theory.
\end{proof}

\begin{remark}
The authors do not know any concrete example of an unsolvable term $U$ satisfying the above inclusion; not even of an unsolvable $U$ such that, for all graph model $\gm{G}$,
we have $\Int{U}^\gm{G}\subseteq \Int{\bold{I}}^\gm{G}$.
\end{remark}

Nevertheless, Proposition~\ref{uns term under I} has the following interesting corollary, which intuitively expresses that there are lots of easy terms for which we will never be able to give a semantic proof of their easiness (which by no means implies that we could prove syntactically their easiness), at least using graph models.
Following the terminology of \cite{BerlineS06} we say that $U\in\Lambda^o$ is \emph{graph easy} if for all $M\in\Lambda^o$ there is a graph model $\gm{G}$ satisfying $\Int{U}^\gm{G} = \Int{M}^\gm{G}$. The most known example of graph easy term is of course $\Omega$ by \cite{BaetenB79} and consequently all the $\Omega N$ for $N\in\Lambda^o$.

\begin{corollary} There exist a non-empty $\beta$-co-r.e.\ set $\cV'$ of easy terms which are not graph-easy.
\end{corollary}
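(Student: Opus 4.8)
The plan is to obtain the corollary from Proposition~\ref{uns term under I} by specialising it to a single normal term and then trimming the resulting set down to easy terms without destroying non-emptiness. First I would apply Proposition~\ref{uns term under I} with $n=1$ and $M_1\equiv\bold{I}$; this yields a non-empty $\beta$-co-r.e.\ set $\cV$ of closed unsolvable terms such that $\Int{U}^\gm{G}\subseteq\Int{\bold{I}}^\gm{G}$ for every graph model $\gm{G}$ and every $U\in\cV$. Since the terms of $\cV$ need not be easy, I would then intersect: recall from Definition~\ref{def:easy-term} and the subsequent discussion that $\Lambda_{\lambda_\beta-easy}$ is a non-empty $\beta$-co-r.e.\ set, and set $\cV'=\cV\cap\Lambda_{\lambda_\beta-easy}$. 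Being an intersection of two $\beta$-co-r.e.\ sets, $\cV'$ is $\beta$-co-r.e., and it is non-empty by Visser's FIP (Theorem~\ref{thm:hyperconnection}); by construction every element of $\cV'$ is easy.

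It then remains to show that no $U\in\cV'$ is graph-easy, and for this it suffices to pin down one closed term whose interpretation $U$ can never match in a graph model. I claim $\bold{F}$ does the job, the point being that the inequation $\bold{F}\sqle\bold{I}$ fails in \emph{every} graph model $\gm{G}$. Indeed, using the explicit descriptions $\Int{\bold{I}}^\gm{G}=\{\i{\cG}(a,\alpha)\st a\in\setG^*,\ \alpha\in a\}$ and $\Int{\bold{F}}^\gm{G}=\{\i{\cG}(a,\i{\cG}(b,\alpha))\st a,b\in\setG^*,\ \alpha\in b\}$, pick any $\beta\in\setG$ (the web is infinite): then $\i{\cG}(\emptyset,\i{\cG}(\{\beta\},\beta))\in\Int{\bold{F}}^\gm{G}$, but were it in $\Int{\bold{I}}^\gm{G}$ the injectivity of $\i{\cG}$ would force it to equal some $\i{\cG}(a,\alpha)$ with $a=\emptyset$ and $\alpha\in\emptyset$, which is absurd. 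Hence $\Int{\bold{F}}^\gm{G}\not\subseteq\Int{\bold{I}}^\gm{G}$ for all graph models. Consequently, if some graph model satisfied $\Int{U}^\gm{G}=\Int{\bold{F}}^\gm{G}$ for a $U\in\cV'$, then $\Int{\bold{F}}^\gm{G}=\Int{U}^\gm{G}\subseteq\Int{\bold{I}}^\gm{G}$, a contradiction; so no such model exists and $U$ is not graph-easy.

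The only ingredient here that is not pure bookkeeping is this last observation that $\bold{F}\sqle\bold{I}$ fails uniformly across all graph models — it is precisely what turns the inclusion $\Int{U}^\gm{G}\subseteq\Int{\bold{I}}^\gm{G}$ supplied by Proposition~\ref{uns term under I} into an obstruction to graph-easiness. Everything else reduces to standard closure properties of $\beta$-co-r.e.\ sets and one appeal to the FIP, so I do not anticipate a genuine difficulty.
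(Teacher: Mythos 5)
Your proof is correct and follows the same route as the paper: instantiate Proposition~\ref{uns term under I} at $\bold{I}$, intersect the resulting set with $\Lambda_{\lambda_\beta\text{-}easy}$, and invoke the FIP for non-emptiness. The only difference is that you explicitly justify non-graph-easiness by exhibiting $\bold{F}$ and checking that $\bold{F}\sqle\bold{I}$ fails in every graph model, a step the paper leaves implicit; your verification of it is correct.
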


\begin{proof} Let $\cV$ be the $\beta$-co-r.e.\ set such that for all $U\in\cV$ and $\Int{U}^\gm{G}\subseteq \Int{\bold{I}}^\gm{G}$ all graph models $\gm{G}$ (using Proposition~\ref{uns term under I}),
then it is sufficient to take $\cV' = \cV\cap\Lambda_{easy}$.
\end{proof}

\bibliographystyle{giulio}
\bibliography{giulio}

\begin{thebibliography}{10}
\expandafter\ifx\csname url\endcsname\relax
  \def\url#1{\texttt{#1}}\fi
\expandafter\ifx\csname urlprefix\endcsname\relax\def\urlprefix{URL }\fi

\bibitem{BaetenB79}
J.~Baeten, B.~Boerboom, Omega can be anything it should not be, in: Proc.\
  Koninklijke Netherlandse Akademie van Wetenschappen, Serie A, Indag.
  Matematicae 41, 1979.

\bibitem{Bare}
H.~P. Barendregt, The Lambda calculus: Its syntax and semantics, North-Holland,
  Amsterdam, 1984.

\bibitem{BarendregtCD83}
H.~P. Barendregt, M.~Coppo, M.~{Dezani-Ciancaglini}, A filter lambda model and
  the completeness of type assignment, Journal of Symbolic Logic 48~(4) (1983)
  931--940.

\bibitem{BastoneroTh}
O.~Bastonero, Mod\`eles fortement stables du $\lambda$-calcul et r\'esultats
  d'incompl\'etude, th\`ese de {D}octorat (1996).

\bibitem{BerardiB02}
S.~Berardi, C.~Berline., $\beta\eta$-complete models for system ${F}$,
  Mathematical Structure in Computer Science 12 (2002) 823--874.

\bibitem{Berline00}
C.~Berline, From computation to foundations via functions and application: The
  $\lambda$-calculus and its webbed models, Theoretical Computer Science 249
  (2000) 81--161.

\bibitem{Berline06}
C.~Berline, Graph models of $\lambda$-calculus at work, and variations, Math.
  Struct. for Comput. Sci. 16 (2006) 1--37.

\bibitem{BerlineMS07}
C.~Berline, G.~Manzonetto, A.~Salibra, Lambda theories of effective lambda
  models, in: 16th EACSL Annual Conference on Computer Science and Logic
  (CSL'07), LNCS, vol. 4646, 2007.

\bibitem{BerlineS06}
C.~Berline, A.~Salibra, Easiness in graph models, Theoretical Computer Science
  354 (2006) 4--23.

\bibitem{Berry78}
G.~Berry, Stable models of typed lambda-calculi, in: In Proceedings of the
  Fifth Colloquium on Automata, Languages and Programming, LNCS 62,
  Springer-Verlag, Berlin, 1978.

\bibitem{BerryTh}
G.~Berry, Mod\`eles compl\`etement ad\'equats et stable des $\lambda$-calculs
  typ\'es, th\`ese de {D}octorat d'\'etat (1979).

\bibitem{BerryC85}
G.~Berry, P.-L. Curien, J.-J. L{\'e}vy, {Full Abstraction for Sequential
  Languages: the State of the Art}, Algebraic Methods in Semantics (1985)
  89--132.

\bibitem{BucciarelliE91}
A.~Bucciarelli, T.~Ehrhard, Sequentiality and strong stability, in: Sixth
  Annual IEEE Symposium on Logic in Computer Science, IEEE Computer Society
  Press, 1991.

\bibitem{BucciarelliS08}
A.~Bucciarelli, A.~Salibra, Graph lambda theories, Mathematical Structures in
  Computer Science (to appear).

\bibitem{BucciarelliS03}
A.~Bucciarelli, A.~Salibra, The minimal graph model of lambda calculus, in:
  MFCS'03, LNCS, Springer-Verlag, 2003.

\bibitem{BucciarelliS04}
A.~Bucciarelli, A.~Salibra, The sensible graph theories of lambda calculus, in:
  19th Annual IEEE Symposium on Logic in Computer Science (LICS'04), IEEE
  Computer Society Publications, 2004.

\bibitem{CoppoD80}
M.~Coppo, M.~Dezani-Ciancaglini, An extension of the basic functionality theory
  for the $\lambda$-calculus, Notre-Dame Journal of Formal Logic 21~(4) (1980)
  685--693.

\bibitem{CoppoDHL82}
M.~Coppo, M.~{Dezani-Ciancaglini}, F.~Honsell, G.~Longo, {Extended Type
  Structures and Filter Lambda Models}, in: G.~Lolli, G.~Longo, A.~Marcja
  (eds.), Logic Colloquium 82, North-Holland, Amsterdam, the Netherlands, 1984.

\bibitem{CoppoDZ87}
M.~Coppo, M.~Dezani-Ciancaglini, M.~Zacchi, Type theories, normal forms and
  ${D}_\infty$ $\lambda$-models, Information and Computation 72 (1987) 85--116.

\bibitem{David01}
R.~David, Computing with {B\"o}hm trees., Fundam. Inform. 45~(1-2) (2001)
  53--77.

\bibitem{DiGianantonioHP95}
P.~{Di Gianantonio}, F.~Honsell, G.~Plotkin, Uncountable limits and the lambda
  calculus, Nordic Journal of Computing 2~(2) (1995) 126--145.

\bibitem{Escardo96}
M.~H. Escard{\'o}, Pcf extended with real numbers., Theoretical Computer
  Science 162~(1) (1996) 79--115.

\bibitem{GianniniL84}
P.~Giannini, G.~Longo, Effectively given domains and lamba-calculus models,
  Information and Control 62 (1984) 36 -- 63.

\bibitem{GouyTh}
X.~Gouy, Etude des th\'eories \'equationnelles et des propri\'et\'es
  alg\'ebriques des mod\`eles stables du $\lambda$-calcul, th\`ese de
  {D}octorat (1995).

\bibitem{Gruchalski96}
A.~Gruchalski, Computability on di-domains., Inf. Comput. 124~(1) (1996) 7--19.

\bibitem{HonsellTLCA}
F.~Honsell, {TLCA} list of open problems: Problem \# 22,
  \newline\url{http://tlca.di.unito.it/opltlca/problem22.pdf} (2007).

\bibitem{HonsellR92}
F.~Honsell, S.~{Ronchi Della Rocca}, An approximation theorem for topological
  lambda models and the topological incompleteness of lambda calculus, Journal
  of Computer and System Sciences 45 (1992) 49--75.

\bibitem{JiangTh}
Y.~Jiang, Consistence et inconsistence de th\'eories de $\lambda$-calcul
  \'etendus, th\`ese (1993).

\bibitem{Kerth94}
R.~Kerth., $2^{\aleph_0}$ mod\`eles de graphes non \'equationnellement
  \'equivalents, in: Notes de comptes-rendus de l'Acad\'emie des Sciences, vol.
  318, 1994, pp. 587--590.

\bibitem{KerthTh}
R.~Kerth, Isomorphism et \'equivalence \'equationnelle entre mod\`eles du
  $\lambda$-calcul, th\`ese (1995).

\bibitem{Kerth98}
R.~Kerth, The interpretation of unsolvable terms in models of pure
  $\lambda$-calculus, J. Symbolic Logic 63 (1998) 1529--1548.

\bibitem{Kerth98b}
R.~Kerth, Isomorphism and equational equivalence of continuous lambda models,
  Studia Logica 61 (1998) 403--415.

\bibitem{Kerth01}
R.~Kerth, On the construction of stable models of $\lambda$-calculus,
  Theoretical Computer Science 269 (2001) 23--46.

\bibitem{Krivine93}
J.-L. Krivine, Lambda-calculus, types and models, Ellis Horwood, New York,
  1993, translated from the ed. Masson, 1990, French original.

\bibitem{Longo83}
G.~Longo, Set-theoretical models of $\lambda $-calculus: theories, expansions,
  isomorphisms, Ann. Pure Appl. Logic 24~(2) (1983) 153--188.

\bibitem{LusinS04}
S.~Lusin, A.~Salibra, The lattice of lambda theories, Journal of Logic and
  Computation 14 (2004) 373--394.

\bibitem{ManzonettoS06}
G.~Manzonetto, A.~Salibra, Boolean algebras for lambda calculus, in: Proc.\
  21th {IEEE} Symposium on Logic in Computer Science ({LICS} 2006), 2006.

\bibitem{Odifreddi89}
P.~Odifreddi, Classical Recursion Theory, Elsevier, North-Holland, Amsterdam,
  1989.

\bibitem{Plotkin77}
G.~D. Plotkin, {LCF} considered as a programming language, Theoretical Computer
  Science 5 (1977) 223--255.

\bibitem{Plotkin93}
G.~D. Plotkin, Set-theoretical and other elementary models of the
  lambda-calculus, Theoretical Computer Science 121~(1\&2) (1993) 351--409.

\bibitem{Ronchi82}
S.~Ronchi Della~Rocca, Characterization theorems for a filter lambda model,
  Information and Control 54 (1982) 201 -- 216.

\bibitem{Salibra03}
A.~Salibra, Topological incompleteness and order incompleteness of the lambda
  calculus. \uppercase{LICS}'01 \uppercase{S}pecial \uppercase{I}ssue, No.~4,
  ACM Transactions on Computational Logic, 2003.

\bibitem{Scott72}
D.~S. Scott, Continuous lattices, in: Toposes, algebraic geometry and logic,
  Springer-Verlag, Berlin, 1972.

\bibitem{Selinger96}
P.~Selinger, Order-incompleteness and finite lambda models, extended abstract,
  in: LICS '96: Proceedings of the 11th Annual IEEE Symposium on Logic in
  Computer Science, IEEE Computer Society, Washington, DC, USA, 1996.

\bibitem{Selinger03}
P.~Selinger, Order-incompleteness and finite lambda reduction models,
  Theoretical Computer Science 309 (2003) 43--63.

\bibitem{Viggo94}
V.~Stoltenberg-Hansen, I.~Lindstr\"om, E.~R. Griffor, Mathematical theory of
  domains, Cambridge University Press, New York, NY, USA, 1994.

\bibitem{Visser80}
A.~Visser, Numerations, $\lambda$-calculus, and arithmetic, in: Hindley, Seldin
  (eds.), Essays on Combinatory Logic, Lambda-Calculus, and Formalism, Academic
  Press, 1980, pp. 259--284.

\end{thebibliography}

\appendix
\newpage

\section{The Visser topology on $\Lambda$}\label{sec:Vissertop}

It could be possible to give a topological flavour to the framework of effective $\lambda$-models by using the Visser topology.
Nevertheless, for obtaining the results which are present in this paper, no real topological manipulation would be performed.
In particular, the topology itself is never used: just its base.

The general definition of the Visser topology on an enumerated set $(X,\gv_X)$ was introduced by Visser in \cite{Visser80}
as well as its application to $\Lambda$.
We recall this definition below. %and we will later on introduce an instance of the definition in the context of effective domains (cf. Section~\ref{subsec:Visseroneffectivedomains}).
%Afterwards we will show how the set of $\lambda$-terms can be equipped with the Visser topology.

\subsection{General definition of the Visser topology}
If $\gamma = (X,\gv_X)$ is a numeration, we write $m \sim_\gamma n$ for $\gv_X(m) = \gv_X(n)$.
A numeration $\gamma$ is called \emph{precomplete} if, for every partial recursive $\vphi$, there is a total recursive 
$\psi$ such that $\forall n\in \dom(\vphi)$ we have $\vphi(n)\sim_\gamma \psi(n)$.
%In this case we shall say that \emph{$\psi$ makes $\vphi$ total modulo $\gamma$}.
\emph{If $\gamma_1=(X,\gv_X)$ and $\gamma_2=(Y,\gv_Y)$ are numerations}, then $f$ is a \emph{morphism} from 
$\gamma_1$ to $\gamma_2$ if $f$ is a function from $X$ to $Y$ and if there is a total recursive $\vphi$ 
such that the following diagram commutes:
$$
\xymatrix{
\nat \ar[rr]^\vphi \ar[d]_{\gv_X} &&
\nat \ar[d]^{\gv_Y} \\
X \ar[rr]_{f} && Y\\ 
}
$$
The numeration $(-)_\lambda$ introduced in Section~\ref{sec:co-r.e. sets} of $\Lambda$ is precomplete. 

\begin{definition}\label{def:Vissertop} Let $\gamma=(\gv_X,X)$ be a numeration. 
The Visser topology on $X$ is the topology generated by the base 
$$
     \Base_X = \{ O\subseteq X \st \inv{\gv_X}[O]\textrm{ is co-r.e.} \}.
$$
\end{definition}

The fact that $\Base_X$ is a topological base follows from the closure of co-r.e.\ sets under finite intersection. 

\begin{theorem}\label{thm:hyperconnectedness} If the numeration $\gamma=(\gv_X,X)$ is precomplete then the Visser 
topology on $X$ is hyperconnected, i.e., every two non-empty open sets have non-empty intersection.
\end{theorem}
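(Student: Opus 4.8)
The plan is to reduce the statement to the abstract fixed-point property of precomplete numerations. Since every non-empty open set of the Visser topology contains a non-empty member of the base $\Base_X$, it suffices to show that any two non-empty $B_1,B_2\in\Base_X$ meet: if $x_i\in U_i$ for open $U_i$, pick $B_i\in\Base_X$ with $x_i\in B_i\subseteq U_i$, and then $B_1\cap B_2\subseteq U_1\cap U_2$. Passing to indices, put $C_i=\inv{\gv_X}[B_i]$. By definition of $\Base_X$ each $C_i$ is co-r.e.; it is non-empty because $\gv_X$ is onto; and it is closed under $\sim_\gamma$, being the $\gv_X$-preimage of $B_i$. Assume towards a contradiction that $B_1\cap B_2=\emptyset$; then $C_1\cap C_2=\emptyset$, so for every $n$ at least one of $n\notin C_1$, $n\notin C_2$ holds, and both properties are semidecidable.

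First I would record the fixed-point property (Ershov; see \cite{Visser80}): if $\gamma$ is precomplete then every total recursive $f:\nat\to\nat$ has a point $n$ with $f(n)\sim_\gamma n$. The argument is the familiar one. Fix a standard listing $\vphi_0,\vphi_1,\dots$ of the unary partial recursive functions and let $g$ be the partial recursive function given by $g(n)\Kleeneq f(\vphi_n(n))$. By precompleteness there is a total recursive $\psi$ with $g(n)\sim_\gamma\psi(n)$ for all $n\in\dom(g)$. Pick $e$ with $\vphi_e=\psi$; then $\vphi_e(e)=\psi(e)$ is defined, so $e\in\dom(g)$ and $f(\vphi_e(e))=g(e)\sim_\gamma\psi(e)=\vphi_e(e)$. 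Thus $n:=\vphi_e(e)$ works.

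Next I would feed the right function into this theorem. Fix indices $a\in C_1$, $b\in C_2$, and define $F$ by: on input $n$, search in parallel for a confirmation of $n\notin C_1$ and of $n\notin C_2$; output $a$ if a confirmation of $n\notin C_1$ is found first (ties going to $a$), and output $b$ otherwise. Since $C_1\cap C_2=\emptyset$, some confirmation is always found, so $F$ is total recursive. Applying the fixed-point property to $F$ gives $n$ with $F(n)\sim_\gamma n$, i.e.\ $\gv_X(n)=\gv_X(F(n))$. If $F(n)=a$, then $n\notin C_1$, so $\gv_X(n)\notin B_1$; but $a\in C_1$ yields $\gv_X(a)\in B_1$, whence $\gv_X(n)=\gv_X(a)\in B_1$ --- a contradiction. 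The case $F(n)=b$ is symmetric, with $C_2,B_2$ replacing $C_1,B_1$. Hence $B_1\cap B_2\neq\emptyset$, which establishes hyperconnectedness.

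The one genuinely substantial step is the fixed-point property of precomplete numerations, which I would either prove as sketched or cite from \cite{Visser80}; the rest is routine bookkeeping with r.e.\ sets. In particular this generalises Visser's Theorem~\ref{thm:hyperconnection}, obtained as the instance coming from a precomplete numeration of $\Lambda$.
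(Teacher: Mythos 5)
Your proof is correct. The paper itself gives no argument for this theorem --- it simply cites Visser \cite[Thm.~2.5]{Visser80} --- and what you have written is precisely the standard proof underlying that citation: reduce to non-empty basic opens, pass to the saturated co-r.e.\ index sets, and refute disjointness by feeding the ``answer $a$ or $b$ according to which non-membership is confirmed first'' function into the Ershov fixed-point property of precomplete numerations, which you also prove correctly via the usual diagonal $g(n)\Kleeneq f(\vphi_n(n))$. (A minor point you could note in passing: $a\neq b$ is automatic from $a\in C_1$, $b\in C_2$ and $C_1\cap C_2=\emptyset$, so the case split on the value of $F(n)$ is unambiguous.)
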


\begin{proof} See \cite[Thm.~2.5]{Visser80}.
\end{proof}

The hyperconnectedness is the most important and useful property of the Visser topology.

\subsection{The Visser topology on $\Lambda$}\label{subsec:lambdavisser}
%Let $(-)_\omega :\Lambda \to \nat$ be an arbitrary effective encoding of $\Lambda$. 
%We denote by $(-)_\lambda$ the inverse map of $(-)_\omega$, i.e.,  $M_{\omega,\lambda} = M$. 
%We will keep the same notation  when dealing with $\Lambda^o$ instead of $\Lambda$, even if we are 
%speaking of two different encodings. 
%The context will make it clear which one is understood. 

We provide now the instance of the definition in the context of $\lambda$-calculus.

\begin{definition}
The Visser topology on $\Lambda$ is the topology whose basic open sets are the subsets $O$ such that:
\begin{itemize}
\item
    $O$ is closed under $\beta$-conversion,
\item
    $\{(M)_\omega \st M\in O \}$ is a co-r.e.\ set.
\end{itemize}
\end{definition}

\begin{proposition}\label{prop:hyperconnection} The Visser topology on $\Lambda$ is hyperconnected.
\end{proposition}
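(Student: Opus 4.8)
The plan is to reduce the statement to Visser's Theorem~\ref{thm:hyperconnection}. First I would unwind the definition of the Visser topology on $\Lambda$ given just above: a basic open set is exactly a subset $O\subseteq\gL$ that is closed under $\beta$-conversion and such that $\{(M)_\omega\st M\in O\}$ is co-r.e., i.e.\ precisely a (possibly empty) $\beta$-co-r.e.\ set. Hence the family of non-empty basic open sets of this topology \emph{is} the family of non-empty $\beta$-co-r.e.\ subsets of $\gL$.

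Next I would use the elementary topological observation that a space is hyperconnected if, and only if, any two non-empty \emph{basic} open sets meet. Indeed, every non-empty open set is a union of basic open sets and therefore contains a non-empty basic open set; so if $U,V$ are non-empty open sets we may choose non-empty basic opens $B\subseteq U$ and $B'\subseteq V$, and then $B\cap B'\neq\emptyset$ already forces $U\cap V\neq\emptyset$.

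Finally, the remaining assertion --- any two non-empty $\beta$-co-r.e.\ subsets of $\gL$ have non-empty intersection --- is an immediate consequence of Theorem~\ref{thm:hyperconnection} (Visser), which states that the whole family of non-empty $\beta$-co-r.e.\ subsets of $\gL$ enjoys the finite intersection property; binary intersections are the special case $n=2$. Alternatively, one can derive the proposition directly from Theorem~\ref{thm:hyperconnectedness}: the Visser topology on $\Lambda$ is the instance, for $X=\Lambda$, of the Visser topology attached to a precomplete numeration (the paper has recorded that the numeration $(-)_\lambda$ of $\Lambda$ is precomplete), so the general theorem applies verbatim once the two descriptions of the basic opens are matched up.

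There is essentially no genuine obstacle here: all the mathematical content is carried by Theorem~\ref{thm:hyperconnection} / Theorem~\ref{thm:hyperconnectedness}, both cited from Visser~\cite{Visser80}. The only points requiring (minor) care are the bookkeeping identification ``basic open of the Visser topology on $\Lambda$'' $=$ ``$\beta$-co-r.e.\ set'', and the remark that hyperconnectedness may be tested on a base rather than on all open sets; neither involves any real difficulty.
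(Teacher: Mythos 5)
Your proof is correct; the paper's own proof is exactly your ``alternative'' route (precompleteness of $(-)_\lambda$ plus Theorem~\ref{thm:hyperconnectedness}), and your primary route via Theorem~\ref{thm:hyperconnection} is just the ``topological reading'' of that theorem which the paper itself records, so both amount to the same one-line reduction to Visser's results. The only point you add --- that hyperconnectedness may be tested on non-empty basic opens --- is correct and properly justified.
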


\begin{proof} 
The numeration $((-)_\lambda,\Lambda)$ is precomplete by \cite[Thm.~1.6.1.1]{Visser80};
then the result follows from Theorem~\ref{thm:hyperconnectedness}.
\end{proof}

%\begin{notation} \ 
%\begin{itemize}
%\item[(i)]
%    $\Base^{\lambda_\beta}$ denotes the base of the Visser topology.
%    Given a $\lambda$-theory $\cT$:
%\item[(ii)]
%    $\Base^\cT$ is the subset of $\Base^{\lambda_\beta}$ which contains $O$ if, and only if, $O$ is furthermore
%$\cT$\emph{-closed,} in the sense that it is closed under $=_\cT$.
%\end{itemize}
%\end{notation}

%It can be noticed that $\Base^\cT$ generates a sub-topology of the Visser-topology, which is hence also 
%hyperconnected.

%\begin{remark}\label{rem:TsubsT'} If $\cT\subseteq\cT'$ then $\Base^{\cT'}_\Lambda\subseteq\Base^\cT_\Lambda$.
%\end{remark}

%\begin{lemma} \ 
%\begin{itemize}
%\item[(i)] $\Unsolvable$ is basic Visser open,
%\item[(ii)] $\Unsolvable\in\Base^\cT$ if, and only if, $\cT$ is semi-sensible.
%\end{itemize} 
%\end{lemma}

%\begin{proof} Indeed, $\Unsolvable$ is co-r.e and $\beta$-closed, and it is $\cT$-closed if, and only if, $\cT$ is semi-sensible by definition of semi-sensibility. 
%\end{proof}

%\begin{lemma}\label{lemma:OinfmoduloT}
%If $O\neq\emptyset$ is Visser open and $\cT$ is r.e., then $O/\cT$ is infinite.
%\end{lemma}

%\begin{proof} Let $V$ be the $\cT$-closure of $O$, and $O'= \Lambda\setminus V$. 
%If $\cT$ is r.e.\ and $O/\cT$ is finite, then $V$ is r.e.\ and hence $O'$ is Visser open. 
%Since $O'\cap O=\emptyset$, $O'$ must be empty by Proposition~\ref{thm:hyperconnection}.
%Hence $V=\Lambda$, and $\Lambda/\cT$ is finite. Hence $\cT$ is inconsistent. 
%Contradiction.
%\end{proof}

\subsection{Visser-continuity}

The second main property of the Visser topology on $\Lambda$ is the following.

\begin{proposition}\label{prop:Vissercontinuity} Every $\lambda$-definable map on $\Lambda$ is Visser continuous.
\end{proposition}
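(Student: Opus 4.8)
The plan is to reduce continuity to the elementary fact that, although a $\lambda$-definable map need not itself be presented by a recursive function on codes, \emph{application of a fixed closed term to a term is a recursive operation on G\"odel numbers}, and then to invoke the stability of co-r.e.\ sets under recursive preimages (Remark~\ref{rem:comp1}).

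Recall that $f\colon\Lambda\to\Lambda$ is $\lambda$-definable when there is a closed term $F\in\Lambda^o$ with $f(M)=_{\lambda_\beta}FM$ for every $M\in\Lambda$ (if the intended definition is the purely syntactic one, $f(M)$ being literally $FM$, the argument below applies a fortiori). First I would fix once and for all the total recursive function $\vphi\colon\nat\to\nat$ determined by $\vphi\big((M)_\omega\big)=\big(FM\big)_\omega$; that $\vphi$ is total recursive is the standard fact that syntactic application is primitive recursive for the chosen effective encoding $(-)_\omega$ of $\Lambda$. Note that $\vphi$ tracks the map $M\mapsto FM$, not $f$ itself, but this is enough.

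Next I would show that the $f$-preimage of a basic open set of the Visser topology on $\Lambda$ is again basic open; since every open set is a union of basic opens and preimage commutes with unions, this yields Visser-continuity of $f$. So let $O\subseteq\Lambda$ be basic open, that is, $O$ is closed under $\beta$-conversion and $\{(N)_\omega\st N\in O\}$ is co-r.e. For $\beta$-closedness of $f^{-1}(O)$: if $M\in f^{-1}(O)$ and $M=_{\lambda_\beta}M'$, then $f(M')=_{\lambda_\beta}FM'=_{\lambda_\beta}FM=_{\lambda_\beta}f(M)\in O$, and since $O$ is $\beta$-closed we get $f(M')\in O$, i.e.\ $M'\in f^{-1}(O)$. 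For the effectiveness condition: because $O$ is $\beta$-closed and $f(M)=_{\lambda_\beta}FM$, we have $M\in f^{-1}(O)\iff FM\in O$, whence $\{(M)_\omega\st M\in f^{-1}(O)\}=\vphi^{-1}\big(\{(N)_\omega\st N\in O\}\big)$; this is co-r.e., being the preimage of a co-r.e.\ set under a total recursive function (Remark~\ref{rem:comp1} applied to complements). Thus $f^{-1}(O)$ is basic open.

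The argument is routine, and there is no real obstacle; the only point worth flagging is the one already built into the proof — one must not use $f$ directly, since nothing forces $f$ to be computable (e.g.\ nothing forces $f(M)$ to be a canonical representative), but rather the recursive proxy $M\mapsto FM$, which is legitimate precisely because the target open sets are $\beta$-saturated. I would also remark in passing that the same proof shows, more generally, that every morphism of numerations in the sense of Appendix~\ref{sec:Vissertop} is continuous for the associated Visser topologies, Proposition~\ref{prop:Vissercontinuity} being the instance corresponding to $F$-application on $\big((-)_\lambda,\Lambda\big)$.
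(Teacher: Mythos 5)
Your proof is correct and follows essentially the same route as the paper, whose entire argument is the one-line observation that the inverse image of an r.e.\ set under a computable function is r.e.; you have simply spelled out the details. The point you flag — that one must track the syntactic proxy $M\mapsto FM$ rather than $f$ itself, which works because Visser opens are $\beta$-saturated — is exactly the implicit content of the paper's appeal to computability, so there is no divergence of method.
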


\begin{proof} The inverse image of every r.e.\ set via a computable function is an r.e.\ set.
\end{proof}

Proposition~\ref{prop:Vissercontinuity} is still true for $n$-ary functions, for the same reason, if we consider the Visser topology $\cW_n$ on $\Lambda^n$. 
But it is important to keep in mind that it is in general false for the product topology $\cV^n$ on $\Lambda^n$.
%but we have to be careful since in general it will be false with respect to the product topology on $\Lambda^n$, call it $\cV^n$.
Note that all non-empty open sets of $\mathcal{V}^{2}$ meet the diagonal
$\Delta = \{ (M,N)\st M=_{\lambda_{\beta}} N \}$ (by the hyperconnectedness of $\mathcal{V})$, hence $\Delta^{c}$, which is obviously 
basic open in $\cW$, is not even open in $\mathcal{V}^{2}$;
this proves $\cV^2 \neq \cW_2$.
The following result is a variation of this remark.

\begin{proposition} The application function on $\Lambda$ is:
\begin{itemize}
\item[(i)] Visser continuous in each coordinate,
\item[(ii)] continuous with respect to $\cW_2$, but
\item[(iii)] not continuous with respect to $\mathcal{V}^{2}$.
\end{itemize}
\end{proposition}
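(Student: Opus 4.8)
The plan is to treat the three items separately, using Proposition~\ref{prop:Vissercontinuity} and the observation recorded just before the proposition (that $\cV^2\neq\cW_2$, witnessed by $\Delta^c$) as black boxes. For (i): fixing $N$, the unary map $M\mapsto MN$ respects $\beta$-conversion and is tracked, with respect to $(-)_\omega$, by a total recursive function (from $(M)_\omega$ one syntactically builds and encodes $(MN)$); so the one-line argument of Proposition~\ref{prop:Vissercontinuity} applies verbatim — the inverse image of an r.e.\ set under a computable map is r.e.\ (Remark~\ref{rem:comp1}) and the inverse image of a $\beta$-closed set under a $\beta$-compatible map is $\beta$-closed — and $M\mapsto MN$ is $\cV$-continuous; symmetrically for $N\mapsto MN$. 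For (ii): if $O\subseteq\Lambda$ is $\beta$-closed with $\{(P)_\omega:P\in O\}$ co-r.e., then $\App^{-1}(O)=\{(M,N):MN\in O\}$ is $\beta$-closed in each coordinate, and its complement is the preimage, under the total recursive map $\codepair{(M)_\omega}{(N)_\omega}\mapsto(MN)_\omega$, of the r.e.\ set of codes of terms outside $O$, hence r.e.; so $\App^{-1}(O)$ is a basic $\cW_2$-open and $\App$ is $\cW_2$-continuous.

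For (iii) the plan is to show that $\cV^2$-continuity of $\App$ would force every $\cW_2$-open to be $\cV^2$-open, contradicting the quoted observation. Fix a pairing term, e.g.\ $\bold{P}\equiv\lambda xyz.zxy$, so that $(\bold{P}MN)\bold{T}=_\beta M$ and $(\bold{P}MN)\bold{F}=_\beta N$, and set $\mathrm{pr}(M,N)=\bold{P}MN$ and $\pi(Q)=(Q\bold{T},Q\bold{F})$. Then $\pi\circ\mathrm{pr}$ is the identity of $\Lambda^2$ up to componentwise $\beta$-conversion, so $(\pi\circ\mathrm{pr})^{-1}(U)=U$ for every $U\subseteq\Lambda^2$ that is $\beta$-closed in each coordinate, in particular for every basic $\cW_2$-open. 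First I would check that $\pi$ is continuous from $(\Lambda,\cV)$ \emph{into} $(\Lambda^2,\cW_2)$, not merely into $(\Lambda^2,\cV^2)$: since $Q\mapsto Q\bold{T}$ and $Q\mapsto Q\bold{F}$ are computable and $\beta$-compatible, for a basic $\cW_2$-open $U$ the set $\pi^{-1}(U)=\{Q:(Q\bold{T},Q\bold{F})\in U\}$ is $\beta$-closed with r.e.\ complement, i.e.\ a basic $\cV$-open. Next, writing $\mathrm{pr}=\App\circ h$ with $h(M,N)=(\bold{P}M,N)$, the map $h\colon(\Lambda^2,\cV^2)\to(\Lambda^2,\cV^2)$ is continuous (its first component is the $\cV$-continuous map $M\mapsto\bold{P}M$ composed with a projection, its second is a projection, and continuity into the product $\cV^2$ is then automatic). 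Hence, \emph{under the assumption} that $\App\colon(\Lambda^2,\cV^2)\to(\Lambda,\cV)$ is continuous, $\mathrm{pr}=\App\circ h$ is $\cV^2$-to-$\cV$ continuous, so $\pi\circ\mathrm{pr}\colon(\Lambda^2,\cV^2)\to(\Lambda^2,\cW_2)$ is continuous — i.e.\ every basic $\cW_2$-open $U=(\pi\circ\mathrm{pr})^{-1}(U)$ is $\cV^2$-open, contradicting $\cV^2\neq\cW_2$. Therefore $\App$ is not $\cV^2$-continuous.

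The delicate item is clearly (iii). The tempting direct route — producing a single basic $\cV$-open $O$ for which $\App^{-1}(O)$ visibly fails to be a union of ``Visser rectangles'' — does not seem to work, because every $\lambda$-term is $\beta$-equal to an application $MN$ with $M,N$ almost arbitrary (take $M\equiv\lambda w.P$ with $w$ not free in $P$, giving $MN=_\beta P$), so $\App$-preimages cannot be forced to ``miss'' diagonal-type sets the way $\Delta^c$ does; the retraction trick above sidesteps this entirely by reducing (iii) to the already-established inequality $\cV^2\neq\cW_2$. When filling in details, the only genuinely load-bearing point beyond the routine verifications (that $\mathrm{pr}$, $h$, $\pi$ and the projections respect $\beta$ and are tracked by total recursive functions, so Remark~\ref{rem:comp1} and part (i) apply) is to keep the direction of refinement straight: one must verify that $\pi$ lands continuously in the \emph{finer} topology $\cW_2$, and not merely in $\cV^2$, since it is precisely that extra strength which produces the contradiction.
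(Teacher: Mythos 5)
Your items (i) and (ii) coincide in substance with the paper's one-line justification (``application is computable'', hence preimages of basic opens under the relevant total recursive tracking maps are basic opens), but your item (iii) takes a genuinely different and correct route. The paper argues via a concrete $\lambda$-definable binary map: it sets $\psi(M,N)=\Omega MN$, takes the r.e.\ theory $\cT$ generated by $\Omega xx=\Omega$ (consistent since $\cT\subseteq\cH$), and considers $V=\inv{\psi}(O)$ where $O$ is the Visser-open complement of $[\Omega]_\cT$; since $V$ is non-empty (by Salibra's result that $(M,N)\in V$ iff $M\neq_\cT N$) yet disjoint from the diagonal $\Delta$, hyperconnectedness shows $V$ is not $\cV^2$-open, whereas $\cV^2$-continuity of application would make it so. You instead use the pairing retraction $\pi\circ\mathrm{pr}=\mathrm{id}$ up to componentwise $\beta$-conversion to show that $\cV^2$-continuity of application would force every basic $\cW_2$-open to be $\cV^2$-open, contradicting the fact, recorded just before the proposition, that $\Delta^c\in\cW_2\setminus\cV^2$. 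Your version is more self-contained: it needs only part (i), the computability and $\beta$-compatibility of the pairing combinators, and the $\Delta^c$ observation, and it dispenses entirely with the appeal to an auxiliary r.e.\ theory and to Salibra's characterization; it also yields the slightly stronger conclusion that continuity of $\App$ would collapse $\cW_2$ into $\cV^2$ wholesale. What the paper's argument buys in exchange is an explicit, natural witness (the map $\psi$ and the set $V$) connected to its broader theme of r.e.\ $\lambda$-theories. Your two cautionary remarks --- that $\pi$ must be checked continuous into the finer topology $\cW_2$ and not merely into $\cV^2$, and that a direct attack on $\App^{-1}(O)$ by exhibiting a failure of rectangle decomposition is blocked because every term is $\beta$-equal to an application --- identify exactly the load-bearing points, and both are handled correctly.
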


\begin{proof} $(i)$ and $(ii)$ follow from the fact that application is computable.

(iii) Let $\psi$ be defined on $\Lambda^2$ by $\psi(M,N)=$ $\Omega MN$, and let
$\cT$ be the r.e.\ $\lambda$-theory generated by $\Omega xx=\Omega$.
Since $\cT\subseteq \cH$, obviously $\cT$ is consistent. 
Let $V=\{(M,N) \st \Omega MN\neq_{\cT}\Omega\}$; 
Since $\cT$ is r.e., $[\Omega]_\cT$ is r.e.\ and hence its complement $O$ is Visser open (and non-empty). 
If the application were $\mathcal{V}^{2}$-continuous, then $\psi$ would be $\mathcal{V}^{2}$-continuous, 
and $V=\inv{\psi}(O)$ would be $\mathcal{V}^{2}$-open. 
Now, $V$ is non-empty because $(M,N)\in V$ if, and only if, $M \not=_{\cT} N$ 
(Salibra \cite{Salibra03}), hence $V$ would meet the diagonal, which contradicts the definition of $\cT$.
\end{proof}

\section{Sometimes gluings are weak products}\label{app:Sometimes gluings are weak products}

There exist obvious morphisms $\iota_n:\cG_n\to\Diamond_{k\in K}\cG_k$, but, in general there are no ``projection-like'' morphisms. 
%(although some similar phenomena occur at the level of interpretations of $\lambda$-terms, see \cite[FIXME]{BucciarelliS08}).
There are, however, good cases where projections $\pi_n:\Diamond\gm{G}_n\to \gm{G}_n$ can be found, and moreover $\cG_n\retract \Diamond_{k\in K}\cG_k$ for all $n\in K$.
In these cases we get easily that $\Th{\Diamond_{k\in K}\gm{G}_k}\subseteq \cap_{k\in K} \Th{\gm{G}_k}$, an inclusion which was proved in
full generality in \cite{BucciarelliS08} with a much more intricate proof.
%while the work done in \cite{BucciarelliS08} seems to be unavoidable for proving this result in the general setting.

The first example of such a situation is ``self-gluing''. 
We denote by $\gm{G}^k$ the gluing of $\gm{G}$ with itself $k$-times, i.e., $\gm{G}\Diamond\cdots \Diamond\gm{G}$.
The next result is easy to be checked.

\begin{proposition} For all graph models $\gm{G}$ we have $\gm{G}\retract \gm{G}^k$.
\end{proposition}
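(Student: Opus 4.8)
The plan is to reduce everything, via Lemma~\ref{Lemma Extending retractions}, to a retraction between the underlying webs \emph{before} free completion. Write $\cG=(\setG,\i{\cG})$ for the web of $\gm{G}$ and recall that, by definition, $\gm{G}^k=\Diamond_{i=1}^{k}\gm{G}$ has web $\ppcompl{\sqcup_{i=1}^{k}\cG_i}$, where $\cG_1,\dots,\cG_k$ are pairwise disjoint isomorphic copies of $\cG$; fix isomorphisms $\theta_i\in Iso(\cG_i,\cG)$ for $1\le i\le k$, with carriers $\setG_i$. Since $\cG$ is a total pair, free completion leaves it unchanged, i.e.\ $\ppcompl{\cG}=\cG$, so by Lemma~\ref{Lemma Extending retractions} it suffices to exhibit morphisms $e,\pi$ witnessing $\cG\retract\sqcup_{i=1}^{k}\cG_i$; then $\gm{G}\retract\gm{G}^k$ follows at once.

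For the embedding, I would take $e=\iota\comp\partinv{\theta_1}:\cG\to\cG_1\to\sqcup_{i=1}^{k}\cG_i$, where $\iota$ is the inclusion map: this is a morphism because $\partinv{\theta_1}\in Iso(\cG,\cG_1)$ and, since $\cG_1\subpair\sqcup_{i=1}^{k}\cG_i$, the inclusion $\iota$ is a morphism by Remark~\ref{rem:incusionmapp}$(ii)$. For the projection $\pi:\sqcup_{i=1}^{k}\cG_i\to\cG$, I would define $\pi$ piecewise by $\pi\restr_{\setG_i}=\theta_i$, which is well defined since the copies are pairwise disjoint. To check $\pi$ is a morphism, take any $(a,\alpha)$ in the domain of the injection of $\sqcup_{i}\cG_i$, namely $\bigcup_i\dom(\i{\cG_i})$: since the $\setG_i$ are disjoint, $\alpha\in\setG_m$ for a unique $m$, whence necessarily $a\subseteq\setG_m$ and $(a,\alpha)\in\dom(\i{\cG_m})$; then $\pi$ sends $\i{\cG_m}(a,\alpha)$ to $\theta_m(\i{\cG_m}(a,\alpha))=\i{\cG}(\img{\theta_m}(a),\theta_m(\alpha))=\i{\cG}(\img{\pi}(a),\pi(\alpha))$, using that $\theta_m$ is a morphism, and the pair $(\img{\pi}(a),\pi(\alpha))$ is automatically in $\dom(\i{\cG})$ because $\i{\cG}$ is total. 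Finally $\pi\comp e=\theta_1\comp\partinv{\theta_1}=id_{\setG}$, so $e,\pi:\cG\retract\sqcup_{i=1}^{k}\cG_i$, and the proof is complete.

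There is no substantive obstacle here; the only point that needs a moment's care is verifying that the piecewise map $\pi$ is a genuine morphism of partial pairs, and this works precisely because in the disjoint sum $\sqcup_{i}\cG_i$ the injection is defined only on ``pure'' pairs coming from a single copy, so on any such pair $\pi$ acts through one single isomorphism $\theta_m$.
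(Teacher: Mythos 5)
Your proof is correct. The paper leaves this proposition as an easy verification without giving an argument, and your route---exhibiting the retraction $\cG\retract\sqcup_{i=1}^{k}\cG_i$ at the level of partial pairs (inclusion of one copy as the section, the piecewise union of the isomorphisms $\theta_i$ as the projection) and then lifting it through Lemma~\ref{Lemma Extending retractions} using $\ppcompl{\cG}=\cG$ for total $\cG$---is exactly the intended one; the check that the piecewise $\pi$ is a morphism goes through as you say because each $(a,\alpha)\in\dom(j_{\sqcup_i\cG_i})$ lies entirely inside a single copy.
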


\begin{definition} 
Given a partial pair $\cA$, an element $\alpha\in A$ is \emph{isolated} if $j_\cA (\{\alpha\},\alpha)=\alpha$ and \emph{critical}
if $(\emptyset,\alpha)\in \dom(j_\cA)$.
\end{definition}

\begin{definition} A partial pair $\cA$ is \emph{good} if it has at least one isolated element and has no critical elements.
A graph model $\gm{G}$ is \emph{good} if there exists a good pair $\cA$ such that $\gm{G} = \gmcompl{\cA}$.
\end{definition}

\begin{remark} 
If $\cA$ is not total then $\ppcompl{\cA}$ contains critical elements, hence is not good, but $\gmcompl{\cA}$ can be.
\end{remark}

For example, the graph-Park models are good, whilst the graph-Scott models (including $\gm{P}_\omega$) are not.
We recall that both classes were introduced in Example~\ref{exa:Engeler-Pomega}.

\begin{remark} The class of good graph models is closed under $\Diamond$ (by Lemma~\ref{lemma:equivalentproducts}).
\end{remark}

\begin{proposition} If $(\cA_k)_{k\in K}$ is a family of pairwise disjoint good partial pairs, then $\gmcompl{\cA_n}\retract\Diamond_{k\in K}\gmcompl{\cA_k}$ for all $n\in K$.
\end{proposition}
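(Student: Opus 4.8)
The plan is to push everything down to the level of partial pairs. By Lemma~\ref{lemma:equivalentproducts} the web of $\Diamond_{k\in K}\gmcompl{\cA_k}$ is $\ppcompl{\sqcup_{k\in K}\cA_k}$, so by Lemma~\ref{Lemma Extending retractions} it is enough to exhibit morphisms $e,\pi:\cA_n\retract\sqcup_{k\in K}\cA_k$; this yields $\ppcompl{\cA_n}\retract\ppcompl{\sqcup_{k\in K}\cA_k}$, that is $\cG_n\retract\Diamond_{k\in K}\cG_k$, which is the claim (recall $\gmcompl{\cA_n}\retract\Diamond_{k\in K}\gmcompl{\cA_k}$ means exactly $\cG_n\retract\Diamond_{k\in K}\cG_k$).

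For the embedding I would take $e:A_n\to\bigcup_{k\in K}A_k$ to be the inclusion: since $\cA_n\subpair\sqcup_{k\in K}\cA_k$, it is a morphism by Remark~\ref{rem:incusionmapp}(ii). The substantial half is the projection. Using that $\cA_n$ is good, fix an isolated element $\star\in A_n$, so $j_{\cA_n}(\{\star\},\star)=\star$, and define $\pi:\bigcup_{k\in K}A_k\to A_n$ by $\pi(\alpha)=\alpha$ if $\alpha\in A_n$ and $\pi(\alpha)=\star$ otherwise; this is well defined because the $A_k$ are pairwise disjoint, and $\pi\circ e=id_{A_n}$ is immediate. So the only thing to verify is that $\pi$ is a morphism of pairs.

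To check this, let $(a,\alpha)\in\dom(j_{\sqcup_{k\in K}\cA_k})$. Since the $A_k$ are disjoint and each $j_{\cA_k}$ has domain inside $A_k^{*}\times A_k$ and range inside $A_k$, there is a unique $k$ with $a\subseteq A_k$, $\alpha\in A_k$, $(a,\alpha)\in\dom(j_{\cA_k})$, and $j_{\sqcup_{k\in K}\cA_k}(a,\alpha)=j_{\cA_k}(a,\alpha)\in A_k$. If $k=n$, then $\pi$ restricts to the identity on $a\cup\{\alpha,j_{\cA_n}(a,\alpha)\}$, so $(\img{\pi}(a),\pi(\alpha))=(a,\alpha)\in\dom(j_{\cA_n})$ and $\pi(j_{\cA_n}(a,\alpha))=j_{\cA_n}(\img{\pi}(a),\pi(\alpha))$ trivially. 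If $k\neq n$, then $\pi(\alpha)=\star$ and $\pi(j_{\cA_k}(a,\alpha))=\star$; moreover $\cA_k$ being good it has no critical elements, whence $a\neq\emptyset$ and $\img{\pi}(a)=\{\star\}$, so the morphism condition reduces to $(\{\star\},\star)\in\dom(j_{\cA_n})$ and $j_{\cA_n}(\{\star\},\star)=\star$, both of which hold since $\star$ is isolated.

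The main (indeed essentially the only) obstacle is this last case: without the ``no critical elements'' clause of goodness, the possibility $a=\emptyset$ would require $(\emptyset,\star)\in\dom(j_{\cA_n})$, which need not be true; goodness of the $\cA_k$ ($k\neq n$) is precisely what excludes it, while the isolated element of $\cA_n$ is what supplies a point on which $j_{\cA_n}$ is defined with the correct value. Everything else is bookkeeping about disjoint unions of partial injections, so I expect the whole argument to be short once this point is isolated.
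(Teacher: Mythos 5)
Your proposal is correct and follows exactly the paper's argument: reduce via Lemma~\ref{lemma:equivalentproducts} and Lemma~\ref{Lemma Extending retractions} to a retraction $\cA_n\retract\sqcup_{k\in K}\cA_k$ of partial pairs, take the inclusion for $e$, and collapse everything outside $A_n$ onto an isolated element for $\pi$. The paper leaves the verification that $\pi$ is a morphism as "easy to prove"; you spell it out and correctly identify that the absence of critical elements is what rules out the $a=\emptyset$ case and the isolated element is what makes $j_{\cA_n}(\{\star\},\star)=\star$ work.
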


\begin{proof} By Lemma~\ref{lemma:equivalentproducts}, we recall that $\Diamond_{k\in K}\gmcompl{\cA_k} = \gmcompl{\cA}$, where $\cA = \ppcompl{\sqcup_{k\in K} \cA_k}$.
For all $k\in K$, let $\alpha_k\in A_k$ be an isolated element.
We can now define, for any $n$, a map $\pi_n:\sqcup_{k\in K} \cA_k\to \cA_n$ as follows: $\pi_n(x) = x$ if $x\in A_n$ and $\pi_n(x)=\alpha_n$, otherwise.
Since the pairs have no critical elements, it is easy to prove that $\pi_n,\iota_n:A_n\retract \sqcup_{k\in K} A_k$
where $\iota_n:A_n\to \sqcup_{k\in K} A_k$ is the inclusion mapping. Hence we conclude with Lemma~\ref{Lemma Extending retractions}.
\end{proof}

\begin{corollary}\label{cor:good theories le} If $(\cG_k)_{k\in K}$ is a family of good graph models, then $\Th{\Diamond_{k\in K}\cG_k}\subseteq \cap_{k\in K} \Th{\cG_k}$.
\end{corollary}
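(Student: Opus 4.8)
The plan is to deduce this directly from the retraction statement proved just above, together with Proposition~\ref{Proposition Retract}. First I would unwind the definitions: by hypothesis each $\cG_k$ is a good graph model, so there is a good partial pair $\cA_k$ with $\cG_k = \gmcompl{\cA_k}$. Since gluing is defined only up to isomorphism of graph models, and isomorphic graph models have the same theory (Remark~\ref{rem:iso of models}), I may freely replace the $\cA_k$ by pairwise disjoint isomorphic copies; the class of good pairs is preserved under isomorphism, so the $\cA_k$ can be assumed pairwise disjoint and good. Then $\Diamond_{k\in K}\cG_k = \Diamond_{k\in K}\gmcompl{\cA_k}$.

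Next I would apply the Proposition immediately preceding this corollary: for every $n\in K$, from the pairwise disjoint family of good pairs $(\cA_k)_{k\in K}$ we obtain $\gmcompl{\cA_n}\retract\Diamond_{k\in K}\gmcompl{\cA_k}$, i.e.\ $\cG_n \retract \Diamond_{k\in K}\cG_k$. By Proposition~\ref{Proposition Retract} this yields $\Th{\Diamond_{k\in K}\cG_k}\subseteq \Th{\cG_n}$. Since $n\in K$ was arbitrary, intersecting over all $n$ gives $\Th{\Diamond_{k\in K}\cG_k}\subseteq \bigcap_{k\in K}\Th{\cG_k}$, which is the claim. (The same argument with $\Thle$ in place of $\Th$, again using Proposition~\ref{Proposition Retract}, would give the order-theoretic version, should one want it.)

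There is essentially no obstacle here: the real content lies in the preceding Proposition (construction of the projections $\pi_n$ using an isolated element of each $\cA_k$ and the absence of critical elements, plus Lemma~\ref{Lemma Extending retractions} to lift the retraction to the free completions), which we are entitled to assume. The only mild point of care is the reduction to pairwise disjoint webs, which is harmless because both ``good'' and ``$\Th{-}$'' are invariant under isomorphism of pairs/models; everything else is a one-line chase through Proposition~\ref{Proposition Retract}.
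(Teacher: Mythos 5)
Your proof is correct and is exactly the argument the paper intends: the corollary is stated without a separate proof precisely because it follows from the preceding Proposition ($\gmcompl{\cA_n}\retract\Diamond_{k\in K}\gmcompl{\cA_k}$) combined with Proposition~\ref{Proposition Retract}, intersecting over $n\in K$. Your extra remark about passing to pairwise disjoint isomorphic copies of the good pairs is a legitimate (and correctly handled) point of care that the paper leaves implicit.
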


\begin{corollary} There is a minimal theory of good graph model and it is semi-sensible.
\end{corollary}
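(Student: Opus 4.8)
The plan is to produce the minimal element via Zorn's lemma, using gluings of good graph models to supply lower bounds of chains, and to read off semi-sensibility directly from Theorem~\ref{thm:no semi-sensible free completions}.

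First I would record that \emph{every} good graph theory is semi-sensible. A good graph model is, by definition, of the form $\gmcompl{\cA}$ for some good partial pair $\cA$; a good pair has no critical elements, whereas in a total pair $(A,j_\cA)$ one has $(\emptyset,\alpha)\in\dom(j_\cA)$ for every $\alpha\in A\neq\emptyset$, so a good pair is never total. Hence Theorem~\ref{thm:no semi-sensible free completions} applies and gives that $\gmcompl{\cA}$, and therefore $\Th{\gmcompl{\cA}}$, is semi-sensible.

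For the existence of a minimal element I would work with the set $\mathfrak{T}$ of good graph theories ordered by inclusion: it is a subset of $\mathscr{P}(\Lambda\times\Lambda)$, hence a set, and it is non-empty (for instance the graph-Park models are good). Let $(\cT_i)_{i\in I}$ be a chain in $\mathfrak{T}$, say $\cT_i=\Th{\gmcompl{\cA_i}}$ with each $\cA_i$ good; replacing the $\cA_i$ by isomorphic copies we may assume them pairwise disjoint, so $\cA:=\sqcup_{i\in I}\cA_i$ is defined, and $\cA$ is again good (it inherits an isolated element from any $\cA_i$, and has no critical element since none of the $\cA_i$ does). By Lemma~\ref{lemma:equivalentproducts} the graph model $\gm{G}:=\gmcompl{\cA}=\Diamond_{i\in I}\gmcompl{\cA_i}$ is good, and by Proposition~\ref{prop:BucciaSali}(iii) we have $\Th{\gm{G}}\subseteq\Th{\gmcompl{\cA_i}}=\cT_i$ for every $i$. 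Thus $\Th{\gm{G}}\in\mathfrak{T}$ is a lower bound of the chain, so Zorn's lemma yields a minimal element $\cT_0\in\mathfrak{T}$, which is semi-sensible by the previous paragraph.

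I do not expect a serious obstacle: the ingredients (closure of good graph models under arbitrary gluings via Lemma~\ref{lemma:equivalentproducts}, the inclusion of Proposition~\ref{prop:BucciaSali}(iii), and the non-totality criterion feeding Theorem~\ref{thm:no semi-sensible free completions}) are all already in place, and the only point to watch is that these statements are phrased for arbitrary index sets, so they genuinely apply to the possibly-large chain $I$. The natural strengthening to a \emph{least} good graph theory would require in addition that every good graph theory be the theory of a good graph model with a \emph{countable} web — an analogue of Theorem~\ref{thm:Low-Sko} preserving goodness — after which gluing that now set-sized family would give the minimum; but a minimal element is exactly what Zorn's argument delivers and what the corollary asks for.
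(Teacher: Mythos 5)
Your proof is correct, but it reaches the conclusion by a genuinely different route than the paper. The paper's proof is an exact analogue of Theorem~\ref{thm:minimum}: it fixes a family $(\cA_k)_{k\in K}$ of pairwise disjoint \emph{finite} good pairs representing all isomorphism classes of finite good pairs, takes $\gm{G}=\Diamond_{k\in K}\gmcompl{\cA_k}$, and asserts that $\Th{\gm{G}}$ is minimal, with semi-sensibility again coming from Theorem~\ref{thm:no semi-sensible free completions}. You instead apply Zorn's lemma to the poset of good graph theories, using the closure of good pairs under disjoint gluing (via Lemma~\ref{lemma:equivalentproducts}) together with Proposition~\ref{prop:BucciaSali}(iii) to bound every chain from below. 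What your approach buys is that the minimality claim becomes automatic: in the paper's construction one would still have to check that nothing lies strictly below $\Th{\gm{G}}$, and the obvious adaptation of the proof of Theorem~\ref{thm:minimum} is delicate here, because the finite subpair $\cC$ produced by Corollary~\ref{cor:alphainBandcompletion} inside the completion of a good pair need not itself be good (it may pick up critical elements of positive rank), so one cannot immediately reduce failure of an inequation in an arbitrary good model to failure in some $\gmcompl{\cA_k}$. What you lose is constructivity: the paper exhibits a concrete candidate model (and its argument, if completed, would give a \emph{minimum} over the subclass it controls), whereas Zorn only certifies existence of a minimal element. Your closing remark is also on target: upgrading either argument to a least good graph theory would require a goodness-preserving analogue of Theorem~\ref{thm:Low-Sko} (or of Corollary~\ref{cor:alphainBandcompletion}), which the paper does not supply. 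Your verification that a good pair is never total, so that Theorem~\ref{thm:no semi-sensible free completions} applies, is exactly the step the paper leaves implicit.
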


\begin{proof} Let $(\cA_k)_{k\in K}$ be a family of pairwise disjoint good finite pairs such that all other finite good pairs are isomorphic to
at least one $\cA_k$.
Take $\gm{G} = \Diamond_{k\in K}\gmcompl{\cA_k}$. It is easy to check that $\Th{\gm{G}}$ is minimal and it is 
semi-sensible by Theorem~\ref{thm:no semi-sensible free completions}.
\end{proof}
\end{document}